\documentclass[12pt, a4paper, twoside, notitlepage]{article}
\usepackage[utf8]{inputenc}
\usepackage{appendix}
\usepackage{url}
\usepackage{caption}
\usepackage[T1]{fontenc}
\usepackage{lmodern}
\usepackage[margin=2.4cm]{geometry}
\usepackage{amsmath,amssymb,amsthm,mathtools}
\usepackage{booktabs}
\usepackage{array}
\usepackage{colortbl}
\usepackage{enumitem}
\usepackage{graphicx}
\usepackage{hyperref}
\usepackage{cleveref}
\usepackage{todo}
\crefname{theorem}{Thm.}{Thms.}
\Crefname{theorem}{Thm.}{Thms.}
\crefname{proposition}{Prop.}{Props.}
\Crefname{proposition}{Prop.}{Props.}
\crefname{remark}{Rem.}{Rems.}
\Crefname{remark}{Rem.}{Rems.}
\crefname{lemma}{Lem.}{Lems.}
\Crefname{lemma}{Lem.}{Lems.}
\crefname{corollary}{Cor.}{Cors.}
\Crefname{corollary}{Cor.}{Cors.}
\crefname{definition}{Def.}{Defs.}
\Crefname{definition}{Def.}{Defs.}
\crefname{assumption}{Ass.}{Asss.}
\Crefname{assumption}{Ass.}{Asss.}
\crefname{property}{Prop.}{Props.}
\Crefname{property}{Prop.}{Props.}
\usepackage{xcolor}
\usepackage{tikz}
\usetikzlibrary{arrows.meta,calc,patterns}
\usepackage{tcolorbox}
\tcbuselibrary{theorems,breakable}

\definecolor{accent}{HTML}{2D5F8A}
\definecolor{accent2}{HTML}{8A2D5F}
\definecolor{defgreen}{HTML}{44AA99}
\definecolor{thmamber}{HTML}{CC8844}
\definecolor{lemgold}{HTML}{AA8800}
\definecolor{prfpurple}{HTML}{8866CC}
\definecolor{remgrey}{HTML}{8888AA}
\definecolor{corgold}{HTML}{DD9966}
\definecolor{warnred}{HTML}{CC4444}

\hypersetup{
  colorlinks=true, linkcolor=accent, urlcolor=accent, citecolor=accent2,
  pdftitle={Conservation as selection principle: discrete exterior calculus for the incompressible Navier–Stokes and Euler equations},
  pdfauthor={Peter Korn},
  pdfkeywords={discrete exterior calculus; incompressible Navier–Stokes; Euler equations; structure-preserving discretisation; selection principle}
}

\theoremstyle{definition}
\newtheorem{definition}{Definition}[section]

\newtheorem{proposition}{Proposition}[section]
\newtheorem{lemma}{Lemma}[section]
\newtheorem{assumption}{Assumption}[section]
\newtheorem{property}{Property}[section]
\newtheorem{convention}{Convention}[section]

\theoremstyle{plain}
\newtheorem{theorem}{Theorem}[section]
\newtheorem{corollary}{Corollary}[section]
\theoremstyle{remark}
\newtheorem{remark}{Remark}[section]

\makeatletter
\let\c@proposition\c@definition
\let\c@lemma\c@definition
\let\c@assumption\c@definition
\let\c@property\c@definition
\let\c@convention\c@definition
\let\c@theorem\c@definition
\let\c@corollary\c@definition
\let\c@remark\c@definition
\makeatother

\newtcolorbox{eqbox}{
  colback=white!5, colframe=accent!40!black,
  left=8pt, right=8pt, top=6pt, bottom=6pt,
  breakable, sharp corners, boxrule=0.8pt
}

\usepackage{titlesec}
\titleformat{\section}{\Large\bfseries\color{accent}}{\thesection.}{0.5em}{}
\titleformat{\subsection}{\large\bfseries\color{accent}}{\thesubsection}{0.5em}{}

\usepackage{csquotes}
\usepackage{mathrsfs} 
\usepackage{authblk}

\newcommand{\dd}{\mathrm{d}}
\newcommand{\RR}{\mathbb{R}}
\newcommand{\ip}[2]{\langle #1,\, #2 \rangle}

\newcommand{\Lie}{\mathcal{L}}

\newcommand{\nrm}[1]{\lVert #1 \rVert}
\newcommand{\twdg}{\;{\wedge}\;}
\newcommand{\tdd}{\tilde{\dd}}
\newcommand{\ddt}{\frac{\mathrm{d}}{\mathrm{d}t}}
\newcommand{\tD}{\tilde{D}}
\newcommand{\tLie}{{\Lie}}

\newcommand{\bv}{{v}}
\newcommand{\bu}{{u}}
\newcommand{\bw}{{w}}
\newcommand{\bom}{\omega}
\newcommand{\tU}{\widetilde{U}}

\let\caron\v

\DeclareUnicodeCharacter{0161}{\caron{s}}%
\DeclareUnicodeCharacter{0160}{\caron{S}}%
\DeclareUnicodeCharacter{010D}{\caron{c}}%
\DeclareUnicodeCharacter{010C}{\caron{C}}%
\DeclareUnicodeCharacter{017E}{\caron{z}}%
\DeclareUnicodeCharacter{017D}{\caron{Z}}%
\renewcommand{\v}{{v}}
\renewcommand{\u}{{u}}
\newcommand{\w}{{w}}

\newcommand{\Iv}{I_{{\v}}}
\newcommand{\Iw}{I_{{\w}}}

\newcommand{\bM}{M}
\newcommand{\bD}{D}

\newcommand{\KK}{\mathcal{K}}
\newcommand{\KKs}{\mathcal{K}^*}
\newcommand{\OO}{\mathcal{O}}

\newcommand{\bP}{P}
\newcommand{\bQ}{Q}
\newcommand{\bA}{A}
\newcommand{\bff}{\mathrm{f}}

\newcommand{\bPhi}{\boldsymbol{\Phi}}

\newcommand{\I}{\mathcal{I} }

\renewcommand{\div}{\mathrm{div}\,}

\newcommand{\curl}{\mathrm{curl}\,}

\newcommand{\bn}{\boldsymbol{n}}
\newcommand{\bt}{\boldsymbol{t}}

\newcommand{\br}{\boldsymbol{r}}

\newcommand{\Qh}{\mathcal{Q}_h}

\newcommand{\ekin}{e^{\mathrm{kin}}}
\newcommand{\Ekin}{E^{\mathrm{kin}}}

\renewcommand{\epsilon}{\varepsilon}

\newcommand{\codiff}{\delta_h}
\newcommand{\Deltah}{\Delta_h}
\begin{document}

\title{
{\LARGE\bfseries\color{accent} 
Exact conservation as selection principle: discrete exterior calculus for the incompressible Navier--Stokes and Euler equations
}
}
\author{Peter Korn\thanks{Max-Planck Institute for Meteorology, Imperial College London. Email: \href{mailto:peter.korn@mpimet.mpg.de}{peter.korn@mpimet.mpg.de}}}
\date{}%

\maketitle

\begin{abstract}
We formulate a new discrete-exterior calculus based discretisation of the incompressible Euler and Navier--Stokes equations that preserves the geometric structure of the continuum, and establish a rigorous convergence and structure theory for a new discretisation. The discretisation operates on prismatic Delaunay--Voronoi meshes over closed Riemannian manifolds. The geometry of Euler and Navier--Stokes equations  is maintained via a discrete Lie derivative that is built from an extrusion-based contraction for the nonlinear term in vector-invariant form.
Conservation of energy and Kelvin circulation links the discrete scheme to the continuum: at the discrete level, energy conservation is a stability property, and in the vanishing-resolution limit it becomes both a constructive route into the conservative weak-solution theory of the continuum equations and a selection principle on the limits the scheme can reach. This correspondence appears in four regimes.
\emph{Smooth solutions}: convergence at rate $\OO(h^{\min(r_{\rm rec},\,r_\star)}\,|\log h|)$ in dimensions
$d=2,3$, uniformly in viscosity $\nu \ge 0$; first order on general meshes, second order under centroid proximity and reconstruction symmetry.
\emph{Leray--Hopf weak regime}: subsequential $L^2$ limits of the discrete Navier--Stokes system are weak solutions of the viscous equations.
\emph{Inviscid measure-valued regime}: limits are conservative measure-valued Euler solutions, with concentration defect vanishing above the Onsager threshold $\alpha > 1/3$ provided the discrete solutions admit a uniform $C^{0,\alpha}$ bound; the scheme reaches the energy-conserving side of the Onsager landscape but not the dissipative side.
\emph{Dissipative regime}: no subsequence converges to an energy-dissipating Euler solution at any H\"older regularity, an exclusion that follows from discrete energy conservation.
\end{abstract}

\smallskip
\noindent\textbf{Keywords.}\;
Discrete exterior calculus; incompressible Navier--Stokes equations;
incompressible Euler equations; structure-preserving discretisation;
selection principle; Delaunay--Voronoi meshes; Leray--Hopf weak
solutions; measure-valued solutions; Onsager conjecture; energy
conservation; non-hydrostatic ocean models; mimetic finite-volume
methods.

\smallskip
\noindent\textbf{Mathematics Subject Classification (2020).}\;
Primary~65M08, 65M12, 35Q30; Secondary~35Q31, 35D30, 58A12, 76D05,
76M12, 76M30.

\medskip
\newpage
 \tableofcontents
\newpage

\section{Introduction and Overview}
This paper formulates a new discretisation of the incompressible Euler and Navier--Stokes
equations that preserves the geometric structure of the continuum via a discrete Lie derivative
and  develops a convergence and structure theory for the scheme.
Two invariants carry that structure. First, conservation of kinetic energy
is, at the discrete level, a stability property: the discretised
advection redistributes energy across scales, but neither creates nor destroys it. Second, conservation of
Kelvin circulation, a discrete theorem holding on admissible
meshes, is shared by only a few discretisations and separates the present scheme from other
energy-conserving methods. The discretisation also
conserves helicity, the topological invariant measuring vortex-tube
linkage, to truncation order.
Both invariants translate to the continuum limit where they
decide which continuum solutions the scheme can approximate: the
conservation structure constructs the conservative weak and
measure-valued solutions of the Euler and Navier--Stokes equations
and excludes the dissipative ones. In the vanishing-resolution limit, this stability property of the
scheme becomes a selection principle on the continuum.

The motivation is concrete: the incompressible Navier--Stokes equations
form the nucleus of the dynamical core of non-hydrostatic Boussinesq ocean
models, and the discrete operator framework analysed here originates in
the unstructured-grid ocean model of~\cite{korn2017}. In that setting the
gap between exact and approximate conservation governs the fidelity of the
solutions over long times: advection operators that conserve energy and
circulation only to truncation order act as a spurious, mesh-dependent
source or sink, and over the long integrations characteristic of turbulent
ocean and climate simulation these errors accumulate into energetic drift
or nonlinear instability, forcing the artificial dissipation that
contaminates the resolved-scale statistics one wishes to
predict~\cite{arakawalamb1977}. With exact conservation, energy enters and
leaves the discrete system only through the physical channels of viscosity
and forcing, never through discretised advection. The selection
principle developed below is the rigorous counterpart of this requirement,
valid on any bounded interval $[0,T]$ with $T$ arbitrarily large and
independent of the initial data.

We work throughout in vector-invariant form,
\begin{align*}
    \partial_t \v + (\mathrm{curl}\,\v)\times\v
      + \nabla\bigl(p + \tfrac{|\v|^2}{2}\bigr)
      = \nu\Delta\v, \qquad
    \nabla\cdot \v = 0, \qquad \nu \ge 0,
\end{align*}
in which the nonlinearity is carried by the Lamb vector
$(\mathrm{curl}\,\v)\times\v$ and pressure and kinetic energy gradient
are absorbed into the Bernoulli function $p + |\v|^2/2$. The vector-invariant form
is chosen for structural reasons. On prismatic Delaunay--Voronoi
meshes the discrete Lamb vector is antisymmetric,
$\ip{\bv}{\Iv(\bom)}_1 = 0$, i.e., the discrete Lamb form annihilates
its own velocity argument, and the discrete exterior derivative
commutes with de~Rham interpolation,
$\tD_1\mathcal{R}_h = \mathcal{R}_h\,d$, i.e., interpolating a
continuous form commutes with taking its exterior derivative. These
two algebraic facts underlie every conservation law and convergence
result established below, with no assumption of smoothness, mesh
symmetry, or an inf--sup condition. The same antisymmetry yields the discrete
helicity balance~\cite{moffatt1969}.

\paragraph{Exact discrete conservation: a constructive route and a selection principle.}
The four regimes in which we analyse the DEC scheme -- smooth,
Leray--Hopf weak, measure-valued inviscid, and dissipative -- are one
conservation structure unfolding through regularity. In the first three the limit identity is constructive,
yielding solutions of continuum equations from
discrete dynamics; in the fourth it is excluding
convergence to energy-dissipating Euler solutions. The results
therefore join three perspectives, reviewed
below: the convergence theory of Navier--Stokes discretisations,
which provides convergence rates for smooth solutions and weak limits in their
absence; the weak-solution and regularity theory of the continuum
equations, which supplies the landscape -- Leray--Hopf solutions, the
Onsager threshold, measure-valued solutions -- against which the
limits are classified; and structure-preserving discretisation, which
supplies the conservation laws but, to date, no convergence theory
for the nonlinear problem.

\paragraph{Convergence theory for Navier--Stokes discretisations.}
For smooth solutions, convergence rates for conforming and mixed
finite element methods are classical
(see e.g.~\cite{john2018finite}), but these typically
require an inf--sup stable pair and do not address the inviscid limit
$\nu \to 0$ uniformly. The present scheme achieves convergence rates
uniform in $\nu \ge 0$, because the inviscid and viscous discrete
systems share the same nonlinear structure. In the absence of
smoothness assumptions, Guermond~\cite{guermond2006,guermond2007}
proved that a broad class of finite-element approximations converges
to weak solutions of the Navier--Stokes equations. Our Leray--Hopf
existence result (\Cref{thm:weak_convergence}) is of this type but is
obtained directly from the discrete system: uniform a~priori bounds
from the discrete energy inequality and Aubin--Lions compactness
identify the limit as a weak solution.

\paragraph{Weak solutions and the regularity landscape.}
The global existence of weak solutions to the incompressible
Navier--Stokes equations was established by Leray~\cite{Leray1934}
and Hopf~\cite{Hopf1951}, and uniqueness remains open in three
dimensions. For the inviscid Euler equations, the regularity
landscape is shaped by the \emph{Onsager conjecture}: weak solutions
in the H\"older class $C^{0,\alpha}$ conserve energy if
$\alpha > 1/3$, while for $\alpha \le 1/3$ energy-dissipating
solutions exist. The positive direction was established by
Constantin, E, and Titi~\cite{CET1994} (see also
Duchon--Robert~\cite{DuchonRobert2000}); the negative direction was
resolved through the convex-integration programme initiated by
De~Lellis and
Sz\'ekelyhidi~\cite{DeLellisSzekelyhidi2009,DeLellisSzekelyhidi2013},
building on Scheffer~\cite{scheffer1993} and
Shnirelman~\cite{shnirelman1997}, and completed by
Isett~\cite{Isett2018}, with the refinement of Buckmaster, De~Lellis,
Sz\'ekelyhidi, and Vicol~\cite{BDLSV2019}. Below $1/3$, weak
solutions are maximally non-unique; above $1/3$, energy is conserved
but uniqueness remains open. At Lipschitz regularity
($\alpha \ge 1$), Brenier, De~Lellis, and
Sz\'ekelyhidi~\cite{BDLS2011} proved weak--strong uniqueness among
conservative measure-valued solutions. The gap
$1/3 < \alpha < 1$, where energy is conserved but uniqueness is
open, isolates a central open problem of inviscid fluid dynamics.

\paragraph{Structure-preserving discretisation.}
The principle that numerical schemes should preserve geometric and
physical structures of the continuous equations has a long history.
In the exterior calculus setting, the foundational framework is
Finite Element Exterior Calculus (FEEC) of Arnold, Falk, and
Winther~\cite{arnold2006,arnold2010}, which constructs discrete
de~Rham complexes on simplicial meshes and establishes
approximation properties for the Hodge Laplacian. FEEC provides the
topological infrastructure (exact sequences, commuting projections,
discrete Poincar\'e inequalities) but does not address the nonlinear
advection structure of the Navier--Stokes equations. The construction
analysed here extends the FEEC programme to incompressible fluid
dynamics by equipping the discrete complex with a nonlinear extrusion
operator and a Lie derivative. 
Closely related is the body of work on compatible finite element methods
for geophysical fluid dynamics~\cite{cotter2023}, the finite element
descendant of the Arakawa C-grid~\cite{arakawalamb1977} that underpins
operational dynamical cores; the present scheme is, at the level of its
topological skeleton, the lowest-order, mass-lumped member of that family.
We make this correspondence precise in \Cref{subsect:cfem}, including
the relation to the classical $H(\div)$ and $H(\curl)$ spaces.

DEC fluid mechanics was pioneered by Elcott et al.~\cite{elcott2007},
who introduced discrete Lie derivatives for fluids on surfaces.
Mullen et al.~\cite{mullen2009} and Pavlov et al.~\cite{pavlov2011}
developed the variational and conservation structure, establishing
energy and Kelvin circulation conservation on simplicial meshes;
no error estimates were proved. We close that gap here.
On simplicial meshes the contraction loops over simplicial faces constructs vortex stretching automatically; on prismatic cells the horizontal/vertical split breaks that symmetry and stretching must be recovered through the extrusion cross-terms. The sharper contrast is with the reconstruction-based advection of the operational ocean/C-grid lineage on these same meshes (\cite{korn2017,cotter2023}): there transport and stretching are separate, hand-constructed terms, whereas the Lie transport here produces both from one contraction.

Other structure-preserving approaches (mimetic finite differences
\cite{BrezziLipnikovShashkov2005} and their variational descendants,
the virtual element methods on general polytopal
meshes~\cite{beiraoVeiga2013basic}, including divergence-free
virtual elements for the Navier--Stokes
equations~\cite{beiraoVeiga2018NS}, the energy/momentum-conserving EMAC
formulation \cite{charnyi2017}, and helicity-conserving dual-field
discretisations \cite{zhang2022}) each preserve some invariants but
differ in scope or mechanism, and none combines simultaneous
energy and circulation conservation with a convergence theory for the
nonlinear problem. 
The relation of DEC to the polytopal family
parallels its relation to compatible finite elements: lowest-order
virtual elements are equivalent to mimetic finite differences, whose
local inner products are fixed by consistency only up to a
stabilisation term; on Delaunay--Voronoi meshes the primal--dual
orthogonality selects the diagonal Hodge star as a consistent inner
product requiring no stabilisation, and this selection is rigid: the
DEC star is the unique diagonal star accurate on constants, and
no diagonal star is exact on a non-orthogonal pairing
(\Cref{prop:hodge_rigidity}).

\Cref{tab:prior_art} summarises the comparison: the
combination of simultaneous preservation of energy and Kelvin
circulation with a complete regularity-landscape convergence
theory, on the prismatic Delaunay--Voronoi meshes used by operational
ocean models, distinguishes the scheme from the prior structure-preserving
discretisations.

\begin{table}[h!]
\centering
\renewcommand{\arraystretch}{0.9}
\small
\setlength{\tabcolsep}{2.0pt}
\begin{tabular}{@{}l c c c l l@{}}
\toprule
Scheme & Energy & Kelvin & Helicity & Conv.\ proof & Mesh class \\
\midrule
FEEC \cite{arnold2006,arnold2010} & --- & --- & --- & Hodge Laplace & simplicial \\
CFEM / C-grid \cite{cotter2023,thuburncotter2015} & exact & --- & --- & linear/wave analysis & quad/tri FE, prismatic \\
EMAC \cite{charnyi2017} & exact & --- & --- & 2D FEM only & simplicial FEM \\
Elcott et al.\ \cite{elcott2007} & exact & exact & --- & none & simplicial surfaces \\
Mullen/Pavlov \cite{mullen2009,pavlov2011} & exact & exact & --- & none & simplicial \\
Zhang et al.\ \cite{zhang2022} & exact & --- & exact & linear analysis only & simplicial dual-field \\
Mimetic FD \cite{BrezziLipnikovShashkov2005} & approx. & --- & --- & linear problems only & polyhedral \\
Div-free VEM \cite{beiraoVeiga2018NS} & approx. & --- & --- & viscous, smooth rates & polygonal \\
Galerkin/pseudospectral & approx. & --- & --- & classical & tensor product \\
\midrule
\textbf{This work} & \textbf{exact} & \textbf{exact} & $\OO(h^{\min(r_{\rm rec},r_\star)})$ & \textbf{full reg.\ landscape} & \textbf{prismatic D--V} \\
\bottomrule
\end{tabular}
\caption{\it\small Position of the present work relative to existing structure-preserving
discretisations of incompressible Euler/Navier--Stokes. ``Exact'' means the
invariant is conserved as a discrete algebraic identity.
``Conv.\ proof'' refers to rigorous convergence to continuous solutions of
the nonlinear equations. ``Prismatic D--V'' abbreviates polygonal prismatic
Delaunay--Voronoi meshes, the class used by ICON~\cite{korn2017} and
MPAS~\cite{ringler2010}.}
\label{tab:prior_art}
\end{table}

\paragraph{Contributions and proof strategy.}
The scheme's defining structural feature is that its nonlinearity is a discrete Lie transport, built from an extrusion contraction (\cref{def:contraction,def:Lie}), realising transport and vortex stretching through a single operator on prismatic meshes. The convergence proof then rests on two algebraic facts: antisymmetry of the discrete Lamb form and de~Rham commutativity, which together yield conservation laws (\Cref{thm:energy,thm:kelvin,thm:helicity}) 
and stability estimates. These properties imply also 
 global well-posedness of both inviscid and viscous discrete systems on fixed
meshes (\Cref{thm:global,thm:NS_global}). Smooth-regime convergence
(\Cref{thm:convergence,thm:NS_convergence}) follows the
consistency--stability paradigm. Consistency exploits the fact that
the Leray projection annihilates gradients, leaving the
extrusion error at rate $r_{\rm rec}$ combined with the Hodge-star
accuracy $r_\star$. Stability rests on three ingredients: Lamb
antisymmetry removes the cubic self-interaction; a sharp pointwise
bound on the projection residual (\Cref{lem:proj_error}), obtained by
exploiting Delaunay--Voronoi orthogonality to identify the discrete
Hodge Laplacian with a cell-centred finite-volume Laplacian, controls
the reconstruction error; and a polarised antisymmetry identity
eliminates the bilinear Leray remainder. In
the absence of smoothness, uniform a~priori estimates from the
discrete energy law yield Aubin--Lions compactness, and the energy
inequality identifies the limit as a Leray--Hopf solution; for the
inviscid system, discrete energy conservation passes to the
limit, yielding conservative measure-valued solutions and excluding
dissipative ones. Pressure is not an evolution variable of the
discrete state: the weak formulations are tested against discretely
divergence-free fields, so pressure never enters the convergence
proofs, and it is recovered a~posteriori from the Bernoulli function
via a discrete Poisson problem (\Cref{rem:pressure_LH}).

\paragraph{Numerical validation.} The scheme introduced here is the
non-hydrostatic member of a family whose hydrostatic version coincides
structurally with the discrete operator framework of the operational
ocean and coupled Earth-system model ICON, with which it shares the mesh, Hodge star, coboundary operators and reconstruction framework; the nonlinear Lie-transport advection analysed here is the new construction. ICON's validation on production-scale meshes~\cite{korn2017,korn2022james,hohenegger2023gmd} establishes that the meshes, operators and conservation structure are accurate and stable at scale, 
not the convergence rates and four regularity regimes (\Cref{sect:cont_limit})
proved below. Direct numerical validation of those is the subject of a
separate computational paper.

\paragraph{Structure.}
\Cref{sect:functional_framework} develops the DEC framework: mesh geometry, exterior derivatives, Hodge star, de~Rham and Whitney maps, interpolation and Hodge star error estimates.
\Cref{section_EulerIncomp} formulates the discrete Euler equations and establishes their conservation laws (energy, circulation, helicity) and the discrete Lamb vector, Lie derivative, and wedge product.
\Cref{section_WellPosedness} proves discrete well-posedness for both Euler and Navier--Stokes.
\Cref{sect:cont_limit} establishes convergence, from smooth solutions (with explicit rates) 
through the Leray--Hopf weak-solution theory to the Onsager threshold for the inviscid system.
\Cref{sect:bounded_domains} extends all results to bounded domains with Dirichlet boundary conditions.

\subsection*{Summary of Main Results}

The paper makes three contributions. 
First, it formulates a DEC scheme on prismatic Delaunay--Voronoi meshes whose nonlinearity is generated by a contraction 
$\Iv$ (\cref{def:contraction}) from which discrete wedge product, Lamb vector and Lie derivative are all built
 (\cref{prop:extrusion}). Vorticity dynamics are a discrete Lie transport: 
  flux-form transport in 2D, in 3D transport with vortex stretching; 
  the dimensional distinction is carried by the prismatic mesh geometry, not by a change of the algebraic formula. 
Conservation of energy, Kelvin circulation, and helicity to truncation order follows from this structure.
Second, both the inviscid
and the viscous discrete systems are globally well-posed. Third, the conservation properties determine the
scheme's continuum limits across the regularity landscape:
explicit convergence rates for smooth solutions, construction of
Leray--Hopf and conservative measure-valued solutions, and the exclusion of
energy-dissipating Euler limits. 

All results hold on closed oriented Riemannian $d$-manifolds
($d=2,3$) with Delaunay--Voronoi meshes (\cref{ass:mesh_reg}); they
extend to bounded domains with Dirichlet boundary conditions
(\cref{sect:bounded_domains}). The scheme's four convergence modes
are summarised in the following Main Theorem and depicted in
\Cref{fig:landscape}. 

\begin{theorem}[Main Theorem]\label{thm:main}
Let $\bv^h$ be the semi-discrete DEC solutions of the incompressible
Euler $(\nu=0)$ or Navier--Stokes $(\nu>0)$ equations on $[0,T]$, on a
Delaunay--Voronoi mesh of spacing $h$ satisfying
\Cref{ass:mesh_reg}, and with initial data
$\mathcal{R}_h\bu_0^\flat$. The discrete energy law and Lamb
antisymmetry produce four convergence modes, distinguished  by the
regularity of the continuous solution:
\begin{description}[nosep]
  \item[(I) Smooth regime.]
    If the continuous equations admit a solution
    $\bu\in C^1([0,T];W^{r_{\rm rec},\infty})$ with initial datum
    $\bu_0$, then
    \[
      \sup_{t\in[0,T]}\nrm{\bv^h(t) - \mathcal{R}_h\bu^\flat(t)}_{L_h^2}
      \le C(T)\,h^{\min(r_{\rm rec},\,r_\star)}\,|\log h|
    \]
    uniformly in $\nu\ge 0$, for $d=2,3$. The rate is first order on
    general and second order on meshes with centroid
    proximity and reconstruction symmetry
    (\cref{thm:convergence},\cref{thm:NS_convergence}).
  \item[(II) Weak regime (viscous, $\nu>0$).]
    Without smoothness assumptions, subsequences of the Whitney
    reconstructions $\mathcal{W}_h\bv^h$ converge in $L^2(0,T;L^2)$ to
    Leray--Hopf weak solutions of the Navier--Stokes equations
    (\cref{thm:weak_convergence}). If a strong solution exists on
    $[0,T]$, weak--strong uniqueness (\cref{thm:weak_strong}) upgrades
    this to full-sequence convergence at the rate of (I).
  \item[(III) Measure-valued regime (inviscid, $\nu = 0$).]
    Subsequences of $\mathcal{W}_h\bv^h$ converge in the
    measure-valued sense to a conservative measure-valued
    Euler solution $(\bw,\sigma)$ (\cref{thm:CMV}). Under the
    additional hypothesis of a uniform $C^{0,\alpha}$ bound on the
    Whitney reconstructions with $\alpha>1/3$, the concentration
    defect vanishes (\cref{thm:sigma_vanish}); for Lipschitz weak
    Euler solutions $\bu\in L^\infty(0,T;W^{1,\infty})$ existing on
    $[0,T]$, full-sequence convergence holds with $\sigma=0$
    (\cref{thm:Holder_unconditional}).
  \item[(IV) Dissipative regime.]
    \emph{No subsequence of $\mathcal{W}_h\bv^h$ can converge to an
    energy-dissipating Euler solution at any H\"older regularity,
    regardless of mesh size, time step, or initial data}
    (\cref{prop:no_dissipative}). This exclusion follows from the
    discrete energy conservation inherited by any subsequential
    limit.
\end{description}
Parts~(III) and~(IV) bracket the gap $1/3<\alpha<1$: there the
concentration defect vanishes under the $C^{0,\alpha}$ bound of (III),
but full-sequence convergence would require weak--strong uniqueness
for the Euler equations, which is open.
\end{theorem}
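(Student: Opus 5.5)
The Main Theorem collects results proved later, so the plan is to establish the discrete structural facts first and then derive each regime from them.

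\textbf{Discrete structure.} From Lamb antisymmetry $\ip{\bv}{\Iv(\bom)}_1=0$ and de~Rham commutativity $\tD_1\mathcal{R}_h=\mathcal{R}_h d$ I would derive the discrete energy law:
\[
\tfrac12\ddt\nrm{\bv^h}_{L_h^2}^2+\nu\nrm{\tD_1\bv^h}^2=0 .
\]
The Bernoulli gradient drops out against discretely divergence-free fields. This law gives global well-posedness (\Cref{thm:global,thm:NS_global}) and uniform bounds in $L^\infty(0,T;L^2_h)$, together with $\nu\,L^2(0,T;H^1_h)$ when $\nu>0$.

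\textbf{Regime (I).} I would use consistency plus stability on the error $e^h=\bv^h-\mathcal{R}_h\bu^\flat$. Inserting $\mathcal{R}_h\bu^\flat$ into the scheme produces a truncation residual. After discrete Leray projection the Bernoulli term vanishes, so the residual is $\OO(h^{\min(r_{\rm rec},r_\star)})$ from the extrusion and Hodge-star accuracy. Testing the error equation with $e^h$, the cubic term $\ip{e^h}{I_{e^h}(\cdot)}$ vanishes by antisymmetry. The mixed terms are handled by the polarised antisymmetry identity and bounded by $\nrm{\nabla\bu}_{L^\infty}\nrm{e^h}^2$. The projection residual is controlled through \Cref{lem:proj_error}, whose finite-volume Laplacian identification with a discrete Green's-function bound is where I expect the $|\log h|$ to appear. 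Grönwall then gives the rate. No $\nu$-dependent constant enters, since the viscous term is dissipative.

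\textbf{Regimes (II)--(IV).} For (II), the energy bounds transfer to $\mathcal{W}_h\bv^h$ through the Whitney stability estimates. A uniform bound on discrete time derivatives in a dual norm, obtained by testing against $\mathcal{R}_h$ of smooth solenoidal fields, then allows a discrete Aubin--Lions argument that yields strong $L^2$ compactness. The nonlinearity passes to the limit using strong times weak convergence and consistency of the contraction on smooth test forms. Lower semicontinuity converts the energy equality into the Leray--Hopf inequality. Weak--strong uniqueness combined with (I) upgrades this to full-sequence convergence.

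For (III), only $L^\infty L^2$ bounds are available, so I would pass to Young measures with a concentration defect in the DiPerna--Majda sense. The discrete energy equality passes to the limit as equality of total energy (the Young-measure second moment plus the defect), giving a conservative measure-valued solution. Under a uniform $C^{0,\alpha}$ bound with $\alpha>1/3$, Arzelà--Ascoli gives strong convergence, so $\sigma=0$. The CET commutator estimate certifies that the limit conserves energy. For Lipschitz $\bu$, the Brenier--De~Lellis--Sz\'ekelyhidi weak--strong uniqueness identifies the limit.

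For (IV), any limit satisfies $\int|\bw|^2+\sigma=\nrm{\bu_0}^2$ for all $t$. A strong limit that dissipates energy would have $\sigma=0$ and strictly decreasing energy, which is a contradiction.

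\textbf{Main obstacle.} I expect the stability step of (I) to be hardest: bounding the reconstruction and projection residual uniformly in $\nu$ in $d=3$, where vortex stretching enters through the extrusion cross-terms. The second difficulty is rigour in (IV): energy can only be transferred from the discrete level to the limit if convergence is strong or the defect is accounted for. For weakly convergent subsequences the exclusion must be phrased through the measure-valued energy identity.
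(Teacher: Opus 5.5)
Your plan follows the paper's architecture in all four regimes. You also place the $|\log h|$ where the paper does, in the discrete Green's-function bound behind \Cref{lem:proj_error}\,(ii). The genuine gap is in the stability step of (I), which does not close by the mechanisms you name.

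\textbf{The nonlinear terms in (I).} Write the trilinear form out. By \eqref{eq:Q_bilinear},
$\Phi(X,Y,Z):=\ip{X}{\bQ(Y,Z)}_1=\tfrac12\bigl(X^T\tU(Y)\tD_1Z-Z^T\tU(Y)\tD_1X\bigr)$.
This form is antisymmetric in its outer arguments, so it already contains the energy identity and every polarised identity. Since $\bar\bv:=\mathcal{R}_h\bu^\flat\notin V_h$, the scheme must in fact be started from $\bP_h\mathcal{R}_h\bu_0^\flat$, not $\mathcal{R}_h\bu_0^\flat$. The error splits as $e=\varepsilon+e^\perp$ with $e^\perp=\tD_0\phi\neq0$, and the effective test function is $\varepsilon=\bP_he$. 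A direct computation using $\tD_1e^\perp=0$ gives
\[
  \ip{e}{\bP_h[\bQ(\bar\bv,e)+\bQ(e,\bar\bv)+\bQ(e,e)]}_1
  =\tfrac12\,\varepsilon^T\tU(e)\tD_1\bar\bv
  -\tfrac12\bigl[(e^\perp)^T\tU(\bar\bv)+\bar\bv^T\tU(e)+(e^\perp)^T\tU(e)\bigr]\tD_1\varepsilon .
\]
So the cubic term does not vanish after projection.
\begin{itemize}
\item Neither polarisation nor $\tD_1e^\perp=0$ removes the bracket. Every term in it carries the discrete curl of the error, which $\nrm{e}_{L_h^2}$ controls only up to a factor $h^{-1}$.
\item The first term is the one \Cref{lem:trilinear} bounds.
\item The bracket is the discrete counterpart of $\int_\Omega\varepsilon\cdot(\mathrm{curl}\,\varepsilon\times\bu)=\int_\Omega\varepsilon_i\varepsilon_k\,\partial_iu_k+\int_\Omega(\nabla\!\cdot\varepsilon)(\bu\cdot\varepsilon)$, and of the pairing of $e^\perp$ with the vortex force of the error. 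In the continuum these close only after an integration by parts that uses incompressibility.
\item Following the paper's Steps~2b--2c will not supply this. \eqref{eq:eperp_kills_curl} overlooks that $\bQ(X,e^\perp)=-\tfrac12\bM_1^{-1}\tD_1^T\tU(X)^Te^\perp\neq0$. The proof of \Cref{lem:trilinear} estimates only the half $\tfrac12e^T\tU(e)\tD_1\bar\bv$.
\end{itemize}
What is missing is a discrete summation-by-parts (commutator) estimate for $\tU(\cdot)$ against $\tD_1$ that reproduces that integration by parts.

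\textbf{The viscous part of (I).} Dissipativity disposes of $-\nu\Deltah e$. It does not dispose of the viscous truncation error $\nu\bigl(\Deltah\bar\bv-\mathcal{R}_h(\delta\dd\bu^\flat)\bigr)$, which has no sign. Putting that error into an $L_h^2$ residual loses a full order, for the following reason.
\begin{itemize}
\item $\Deltah\bar\bv=\bM_1^{-1}\tD_1^T\bM_2\mathcal{R}_h\omega$ is a difference quotient of the $\OO(h^{r_\star})$ relative Hodge defects of $\bM_2$.
\item These defects do not cancel on unstructured meshes.
\item By the scaling of \Cref{lem:edge_assembly}, the strong truncation error is therefore in general only $\OO(h^{r_\star-1})$, which gives nothing in case (A).
\end{itemize}
The paper instead uses the weak bound of \Cref{thm:NS_consistency}. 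Writing $\ip{e}{\Deltah\bar\bv}_1=\ip{\tD_1e}{\mathcal{R}_h\omega}_2$ and comparing with Whitney integrals via $\dd\mathcal{W}_h=\mathcal{W}_h\tD_1$ gives $\nu Ch^{r_\star}\bigl(\nrm{e}_{L_h^2}+\nrm{\tD_1e}_{\bM_2}\bigr)$. Young's inequality then absorbs the $\nrm{\tD_1e}_{\bM_2}$ part into $\nu\nrm{\tD_1e}_{\bM_2}^2$. The leftover forcing is $\nu h^{2r_\star}\nrm{\bu}_{H^{s+2}}^2$. Hence the constant is uniform for $\nu\in[0,\nu_0]$ when the reference solution is smooth uniformly in $\nu$. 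The reason is this absorption, not that no $\nu$-dependent constant enters. For fixed $\nu>0$ the same absorption would also tame the $\tD_1\varepsilon$-terms above, but only with a factor $\nu^{-1}$ in the Gr\"onwall exponent. That destroys exactly the uniformity you claim.

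\textbf{Regimes (III) and (IV).}
\begin{itemize}
\item In (III) you leave implicit the ingredient the paper relies on, the approximate flux form of \Cref{lem:weak_consistency}. Because the scheme is vector-invariant, the discrete Lamb pairing must first be rewritten as $-\int u^h_iu^h_j\,\partial_j\varphi_i$ plus a residual that vanishes under the energy bound alone. Only then do the products whose weak-$*$ limit defines $\sigma$ appear.
\item Passing the discrete energy equality to $\int\mathrm{tr}\,\mu(t)=E_0$ also needs $\nrm{\mathcal{W}_h\bv^h(t)}_{L^2}^2-\nrm{\bv^h(t)}_{L_h^2}^2\to0$. Hodge consistency gives this on interpolants of smooth fields. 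On grid-scale cochains, however, the diagonal star and the Whitney mass matrix are only equivalent (mass lumping), so this step needs an argument whenever convergence is merely weak.
\item Where convergence is strong, that step is automatic, and the Constantin--E--Titi step you add is superfluous. Once Arzel\`a--Ascoli gives strong convergence, $\nrm{\bw(t)}_{L^2}^2=\lim_k\nrm{\bu^{h_k}(t)}_{L^2}^2=E_0$ forces $\mathrm{tr}\,\sigma=0$. This is the paper's proof of \Cref{thm:sigma_vanish}, and the exponent $\alpha>1/3$ plays no role in it.
\item Your restriction of (IV) to strongly convergent subsequences is exactly what \Cref{prop:no_dissipative} proves.
\end{itemize}
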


\begin{figure}[h]
\centering
\begin{tikzpicture}[
  >=Latex,
  font=\footnotesize,
  axis/.style={thick, -Latex},
  band/.style={draw, thick, rounded corners=1pt, minimum height=7mm},
  thr/.style={dashed, thick, gray},
]
\draw[axis] (-0.3,0) -- (14.3,0) node[right] {regularity};
\node[below] at (0.9,  -0.05) {$L^2$ weak};
\node[below] at (3.1,  -0.05) {$C^{0,\alpha},\,\alpha\le 1/3$};
\node[below] at (6.25, -0.05) {$\alpha=1/3$};
\node[below] at (9.0,  -0.05) {$1/3<\alpha<1$};
\node[below] at (11.0, -0.05) {$\alpha=1$};
\node[below] at (12.65,-0.05) {$\alpha>1$};
\node[below] at (13.9, -0.05) {$C^\infty$};

\draw[thr] (6.25,0.05) -- (6.25, 4.2);
\draw[thr] (11.0,0.05) -- (11.0, 4.2);
\node[gray, font=\scriptsize, anchor=south] at (6.25, 4.2) {Onsager};
\node[gray, font=\scriptsize, anchor=south] at (11.0, 4.2) {uniqueness};

\node[band, fill=red!15, minimum width=132mm, anchor=west] at (0.4, 0.75) 
  (IV) {};
\draw[pattern=north east lines, pattern color=red!50, opacity=0.35]
  (IV.south west) rectangle (IV.north east);
\node at (IV.center) {\textbf{(IV) Dissipative Euler limit: \emph{excluded at all regularities}}};

\node[band, fill=blue!12, minimum width=19mm, anchor=west] at (0.4, 1.75)
  (II) {\textbf{(II)} Leray--Hopf};

\node[band, fill=orange!15, minimum width=65mm, anchor=west] at (3.5, 1.75)
  (III) {\textbf{(III)} CMV, defect $\sigma\to 0$ at $\alpha=1/3$};

\node[band, fill=green!18, minimum width=33mm, anchor=west] at (10.5, 1.75)
  (I) {\textbf{(I)} rate $h^{\min(r_{\rm rec},\,r_\star)}$};

\node[anchor=south, text width=52mm, align=center, font=\scriptsize\itshape] 
  at (3.2, 2.85) {subsequential convergence ((II), (III)); \\[-1pt] full sequence given a strong solution};

\node[anchor=south, text width=38mm, align=center, font=\scriptsize\itshape] 
  at (8.5, 2.85) {subsequence only: \\[-1pt] uniqueness open};

\node[anchor=south, text width=30mm, align=center, font=\scriptsize\itshape] 
  at (12.7, 2.85) {unique limit,\\[-1pt] full sequence};

\end{tikzpicture}
\caption{\small\it Regularity landscape and four convergence modes of
the DEC scheme (\Cref{thm:main}). The horizontal axis ranks
solution classes by regularity, with Onsager threshold $\alpha=1/3$
and Lipschitz uniqueness threshold $\alpha=1$ marked by dashed lines.
The bands stack the scheme's behaviour against this landscape:
(II) subsequential convergence to Leray--Hopf solutions in the
viscous case, full-sequence convergence with rates when a
strong solution exists; (III) subsequential convergence to conservative
measure-valued Euler solutions in the inviscid case, with the
concentration defect vanishing at the Onsager threshold; (I)
smooth-regime convergence for $C^1\cap W^{r_{\rm rec},\infty}$
solutions; and (IV), the bottom red band, the structural exclusion of
energy-dissipating Euler limits across the regularity axis.}
\label{fig:landscape}
\end{figure}

\newpage
\medskip\noindent\textbf{Notation.}
The following symbols are used throughout.

\smallskip
\begin{center}
\small
\begin{tabular}{@{}lll@{}}
  \toprule
  Symbol & Meaning & Defined in \\
  \midrule
  $\KK$, $\KKs$ & primal / dual cell complex & \Cref{subsect:grid}\\
  $\bD_k$, $\tD_k$ & primal / dual exterior derivative (coboundary) & \Cref{def:ext-deriv}\\
  $\bM_k$ & diagonal Hodge star ($k$-forms) & \Cref{def:M1}\\
  $\Iv(\cdot)$ & extrusion (discrete interior product) & \Cref{def:contraction}\\
  $\bQ(\cdot,\cdot)$ & $\bQ(\bv,\bv) = \Iv(\tD_1\bv)$, the discrete Lamb vector & \Cref{def:contraction}\\
  $\mathcal{R}_h$ & de~Rham map (continuous $\to$ cochain) & \Cref{def:deRham}\\
  $\mathcal{W}_h$ & Whitney map (cochain $\to$ continuous) & \Cref{def:Whitney}\\
  $\bP_h$ & discrete Leray projector onto $V_h$ & eq.~\eqref{eq:Leray}\\
  $V_h$ & $\ker(\bD_2\bM_1)$, discrete div-free subspace & eq.~\eqref{eq:Vh}\\
  $\nrm{\cdot}_{L_h^2}$ & $\ip{\cdot}{\cdot}_1$-weighted discrete norm & \Cref{def:disc_norms}\\
  $\codiff$ & discrete codifferential, $\bM_1^{-1}\tD_1^T\bM_2$ & eq.~\eqref{eq:codiff}\\
  $\Deltah$ & discrete curl-curl Laplacian, $\codiff\,\tdd_1$ & eq.~\eqref{eq:Deltah}\\
  $\Ekin$ & discrete kinetic energy, $\tfrac{1}{2}\ip{\bv}{\bv}_1$ & eq.~\eqref{eq:energy_cons}\\
  \bottomrule
\end{tabular}
\end{center}

\section{Discrete Geometric Framework}\label{sect:functional_framework}

The central principle of Discrete Exterior Calculus is the separation
between topology and metric.
The exterior derivative is defined combinatorially, as the coboundary
operator of the cell complex; it commutes with the
de~Rham interpolation without approximation error.
All approximation error in the scheme is concentrated in the Hodge star, which
must compare integrals of a $k$-form over a primal cell with integrals of its
dual over the associated dual cell.

A Delaunay--Voronoi mesh is the natural setting for this construction:
the orthogonality turns the Hodge star into a diagonal matrix with entries $(\bM_k)_{ii} = |\sigma_i^*|/|\sigma_i|$, which is a first-order ($\OO(h)$) approximation on general meshes.
On centroidal Voronoi tessellations (CVTs) and on icosahedral or
cubed-sphere meshes widely used in geoscientific modelling, the
circumcentre of each Delaunay cell coincides, to higher order in $h$, with
the centroid of its dual Voronoi cell. This \emph{centroid proximity}
condition (\Cref{prop:centroid_proximity}) is the geometric
property that raises Hodge star accuracy from $\OO(h)$ to
$\OO(h^2)$ and with it the scheme's convergence rate.
The prismatic structure matches physical anisotropy.
Atmospheric and oceanic flows have anisotropic aspect ratios
($\Delta z / \Delta x \sim 10^{-3}$) and require vertical layer
resolution independent of horizontal mesh refinement. Prismatic
extrusion of a two-dimensional Delaunay--Voronoi layer, rather than a
fully three-dimensional tetrahedralisation, preserves this anisotropy
at the level of the discrete framework and keeps the horizontal
$d=2$ DEC structure intact.
This structure coincides with the spatial discretisation used by
operational atmosphere--ocean models (ICON~\cite{korn2017}, MPAS~\cite{ringler2010}),
so the mathematical theory developed here applies directly to those codes.

We reuse standard DEC objects and introduce
new constructions only when needed.
Standard objects reused as-is: the primal/dual coboundary operators
$\bD_k$, $\tD_k$; the diagonal Hodge stars $\bM_k$ on Delaunay--Voronoi
meshes (\cite{desbrun2005,Hirani}); the de~Rham map $\mathcal{R}_h$ and
Whitney map $\mathcal{W}_h$ (\cite{dodziuk1976,bossavit1998,arnold2006}); and
Bossavit's approximation bound for the diagonal Hodge star
(\Cref{lem:hodge_error}, cf.~\cite{bossavit2000generalized,hiptmair2001}).
Standard objects with a non-standard choice of reconstruction: the
extrusion-based velocity reconstruction $\bar\u_j(\bv)$
(\Cref{def:averaging_recon}) uses the reconstruction framework of
\cite{korn2017}, replacing Perot reconstructions.
New constructions specific to this
paper: the extrusion-based discrete contraction $\Iv$
(\Cref{def:contraction}), whose matrix form is chosen so that
antisymmetry $\ip{\bv}{\Iv(\tD_1\bv)}_1 = 0$ holds as an algebraic
identity (\Cref{prop:extrusion}, part~2) for any linear
reconstruction; the discrete Lie derivative $\tLie_\bv$ defined via
Cartan's magic formula from this contraction (\Cref{def:Lie}); and
the resulting discrete vorticity equation on dual 2-cochains.

\subsection{Mesh Geometry}\label{subsect:grid}

All discrete operators are defined in terms of the combinatorial structure
and geometric measures (lengths, areas, volumes) of the cell complex,
without reference to coordinates.
The domain $\Omega$ may be a closed oriented Riemannian manifold
of dimension $d = 2$ or~$3$, or a compact manifold with boundary.
Horizontal layers are Delaunay--Voronoi tessellations; prisms are formed by vertical extrusion.
We adopt the notation of DEC (see e.g.\ \cite{Desbrun, Hirani, Marsden});
operators on the dual complex carry a tilde
($\tdd_k$, $\tLie_\bv$); primal operators ($\dd_k$, $\bD_k$) do not.
Each primal $k$-cell has a unique dual $(3{-}k)$-cell:
\begin{center}
\begin{tabular}{cclcl}
  \toprule
  $k$ & Primal & Geometry & Dual & Geometry \\
  \midrule
  0 & $v_i\in\mathcal{V}$ & vertices & $K_i^*\in\KKs$ & Voronoi cells \\
  1 & $e_k\in\mathcal{E}$ & edges & $f_k^*\in\mathcal{F}^*$ & dual faces \\
  2 & $f_j\in\mathcal{F}$ & faces & $e_j^*\in\mathcal{E}^*$ & dual edges \\
  3 & $K_i\in\KK$ & prisms & $v_i^*\in\mathcal{V}^*$ & circumcentres \\
  \bottomrule
\end{tabular}
\end{center}
The Delaunay--Voronoi construction guarantees that every primal edge $e_k$ is orthogonal to its dual face $f_k^*$,
and that every primal face $f_j$ is orthogonal to its dual edge $e_j^*$.
We define the following geometric measures for length, area and volume:
\[
  \ell_k = |e_k|,\quad A_k^* = |f_k^*|,\quad
  A_j = |f_j|,\quad \ell_j^* = |e_j^*|,\quad
  |K_i|,\quad |K_m^*|.
\]

\begin{assumption}[Mesh regularity]\label{ass:mesh_reg}
The family of Delaunay--Voronoi meshes $\{\KK_h\}_{h>0}$ satisfies:
\begin{enumerate}[nosep]
\item \textit{Quasi-uniformity:} there exists $C_{\rm qu}>0$ such that $h/h_{\min}\le C_{\rm qu}$ for all $h$, where $h=\max_j\ell_j^*$ and $h_{\min}=\min_j\ell_j^*$.
\item \textit{Shape-regularity:} horizontal cells can be decomposed into triangles such that the ratio between inscribed circle radius and triangle diameter is bounded below by $\sigma_0>0$, uniformly in $h$.
\item \textit{Delaunay property:} the circumcentre of every primal cell lies inside the cell, ensuring orthogonality between primal edges and dual faces.
\item \textit{Bounded valence:} the number of cells sharing any vertex is uniformly bounded.
\end{enumerate}
These conditions guarantee that Hodge stars $\bM_k$ are uniformly equivalent to the identity,
and that discrete operators satisfy the approximation estimates needed for convergence.
\end{assumption}

The following optional mesh property is \emph{not} assumed in general
but will be invoked  \emph{additionally} where it yields improved rates.
\begin{property}[Centroid proximity]\label{prop:centroid_proximity}%
For each degree $k = 0, 1, 2$, let $\sigma$ denote a primal
$k$-cell and $\sigma^*$ its dual $(d{-}k)$-cell.
The Hodge star $(\bM_k)_{\sigma\sigma} = |\sigma^*|/|\sigma|$
compares the integral of a $k$-form over $\sigma$
with the integral of its Hodge dual over
$\sigma^*$.  The approximation error depends on the offset between
the midpoint (or centroid) of each primal and dual cell.
We require:
\begin{equation}\label{eq:centroid_proximity}
  \max_{j}\,\bigl|\bar{\mathbf{x}}_{e_j^*} - \bar{\mathbf{x}}_{f_j}\bigr|
  + \max_{k}\,\bigl|\bar{\mathbf{x}}_{f_k^*} - \mathbf{x}_{e_k}\bigr|
  \le C_{\rm cg}\,h^2,
\end{equation}
where, for primal face $f_j$:
$\bar{\mathbf{x}}_{f_j}$ is the centroid of $f_j$ and
$\bar{\mathbf{x}}_{e_j^*}$ is the midpoint of the dual edge $e_j^*$;
and for primal edge $e_k$:
$\mathbf{x}_{e_k}$ is the midpoint of $e_k$ and
$\bar{\mathbf{x}}_{f_k^*}$ is the centroid of the dual face $f_k^*$.
The first term controls the $k{=}1$ Hodge;
the second the $k{=}2$ Hodge.

On a prismatic mesh, condition~\eqref{eq:centroid_proximity} is
satisfied when:
\begin{enumerate}[label=(\alph*),nosep]
\item the horizontal tessellation is a centroidal Voronoi
  tessellation, including spherical meshes
  \cite{ringler2010}; this ensures $|\bar{\mathbf{x}}_{f_j}^{\rm horiz}  - x_j^*| = 0$ for horizontal faces
  and $|\bar{\mathbf{x}}_{f_k^*} - \mathbf{x}_{e_k}| = \OO(h^2)$
\item the vertical layer interfaces are $C^2$-smooth functions
  of horizontal position -- this ensures the vertical component
  of $|\bar{\mathbf{x}}_{e_j^*} - \bar{\mathbf{x}}_{f_j}|$ is
  $\OO(h^2)$%
\end{enumerate}
When \Cref{prop:centroid_proximity} holds, the diagonal Hodge*
achieves second-order accuracy (\cref{lem:hodge_error}),
improving several convergence rates from $\OO(h)$ to $\OO(h^2)$.
\end{property}

\noindent
Throughout the paper, $r_\star$ denotes the \emph{Hodge accuracy exponent}:
\begin{equation}\label{eq:rstar_def}
  r_\star := 1 \text{ under \Cref{ass:mesh_reg} only,} \ 
  \text{ or }\  r_\star := 2 \text{ under \Cref{ass:mesh_reg} and \Cref{prop:centroid_proximity}.} 
\end{equation}
\subsection{Approximation Spaces}\label{subsection_ApproxSpaces}
The degrees of freedom mirror the de~Rham complex: velocity is a dual
1-cochain (one scalar value per dual edge), vorticity a dual 2-cochain
(one value per dual face), and pressure a primal 3-cochain (one value per
primal prism).
This mimetic correspondence ensures that the exterior derivative acting on
the velocity cochain gives the vorticity cochain, without
interpolation error.
The following norms measure these cochains in a way that respects the
geometric weighting by cell volumes.
\begin{definition}[Discrete norms]
\label{def:disc_norms}
For a dual 1-cochain $\bv\in C^1(\KKs)$ we introduce the following norms.
\begin{itemize}[nosep]
\item \textit{$\ell^2$-norm}:
$\nrm{\bv}_{\ell^2}^2 := \sum_j v_j^2$.

\item \textit{$L^2$-type norm}:
$\nrm{\bv}_{L_h^2}^2 := \ip{\bv}{\bv}_1 = \sum_j(\bM_1)_{jj}v_j^2$.

\item \textit{$H^1$-type norm}:
$\nrm{\bv}_{H_h^1}^2 := \nrm{\bv}_{L_h^2}^2 + \nrm{\tD_1\bv}_{L_h^2}^2$.

\item \textit{$L^\infty$-type norm} ($L^2$-Lebesgue-inclusion convention):
$\nrm{\bv}_{L_h^\infty} := \max_j |v_j|/\sqrt{(\bM_1)_{jj}}$.

\item \textit{Pointwise-reconstruction norm} (natural norm
for pointwise reconstructed velocities):
\begin{equation}\label{eq:grad_norm_def}
  \nrm{\bv}_{\rm rec} := \max_j |v_j|/\ell_j^*.
\end{equation}
\end{itemize}
The two infinity-type norms are not equivalent: on a quasi-uniform
mesh $\nrm{\bv}_{\rm rec} = \OO(h^{(d-4)/2})\,\nrm{\bv}_{L_h^\infty}$,
because $\sqrt{(\bM_1)_{jj}} = \OO(h^{(d-2)/2})$ while
$\ell_j^* = \OO(h)$. The averaging
reconstruction~(\Cref{def:averaging_recon}) and the extrusion
weights~(\Cref{def:contraction}) are continuous in the pointwise-%
reconstruction norm with mesh-regularity-only constants:
$|\bar\u_j(\bv)|\le C_R\,\nrm{\bv}_{\rm rec}$
and $|w_{jk}(\bv)|\le C_w\,\nrm{\bv}_{\rm rec}$. The
$L_h^\infty$-norm above relates to $L_h^2$ by
$\nrm{\bv}_{L_h^2}\le |\Omega|^{1/2}\,\nrm{\bv}_{L_h^\infty}$ but
does not bound reconstructed velocities pointwise with an
$h$-independent constant.
\end{definition}

The transition from the continuous to the discrete world is carried out by means of the de~Rham mapping
\begin{definition}[de~Rham map]
\label{def:deRham}
The {de~Rham map}
$\mathcal{R}_h:C^\infty(\Omega;\Lambda^k)\to C^k(\KKs)$
maps a smooth differential $k$-form to a $k$-cochain by integration.
\begin{enumerate}[label=(\roman*), nosep]
\item For continuous velocity (a 1-form) the discrete representation is given by
\begin{equation*}
  (\mathcal{R}_h\bu^\flat)_j = \int_{e_j^*}\bu\cdot d\ell
  \qquad\forall\;\text{dual edges}\;e_j^*.
\end{equation*}
\item For continuous vorticity (a 2-form) we have
\begin{equation*}
(\mathcal{R}_h\omega^\flat)_k = \int_{f_k^*}\bom\cdot d{A}
\qquad\forall\;\text{dual faces}\;f_k^*.
\end{equation*}
\item For continuous pressure (a 3-form) we have 
\begin{equation*}
(\mathcal{R}_h\ p)_K = \int_{K} p\, d{V}
\qquad\forall\; \text{ primal prisms } K.
\end{equation*}
\end{enumerate}
\end{definition}
\begin{definition}[Primal and dual $k$-forms]
Let $\sigma^k\in\KK$ denote a primal $k$-cell and $\tilde\sigma^k\in\KKs$ a dual $k$-cell.
\begin{enumerate}[label=(\roman*), nosep]
\item A \emph{primal $k$-form} $\alpha^k$ assigns to each primal $k$-cell
$\sigma^k$ the integral $\alpha^k(\sigma^k) = \int_{\sigma^k}\alpha$.
\item A \emph{dual $k$-form}
$\tilde\alpha^k$ assigns to each dual $k$-cell
$\tilde\sigma^k$ the integral $\tilde\alpha^k(\tilde\sigma^k) = \int_{\tilde\sigma^k}\tilde\alpha$.
\end{enumerate}
The space of primal $k$-forms is denoted $C^k(\KK)$; the space of dual $k$-forms
is denoted $C^k(\KKs)$.
\end{definition}
The staggering is summarised in the following table:
\begin{center}
\begin{tabular}{llll}
  \toprule
  Variable & Space & Lives on & Count \\
  \midrule
  $\v(t)$ (velocity) & $C^1(\KKs)$ &
    dual edges $e_j^*$ & $|\mathcal{F}|$ \\
  $\bom(t)$ (vorticity) & $C^2(\KKs)$ &
    dual faces $f_k^*$ & $|\mathcal{E}|$ \\
  $p(t)$ (pressure) & $C^0(\KKs)$ &
    dual vertices $v_i^*$ = {primal cell centres} & $|\KK|$ \\
  \bottomrule
\end{tabular}
\end{center}
\medskip
The transition from discrete to continuous is carried out by means of the Whitney reconstruction
\begin{definition}[Whitney reconstruction]
\label{def:Whitney}
The \textit{Whitney map}
$\mathcal{W}_h:C^k(\KKs)\to L^2(\Omega;\Lambda^k)$
reconstructs a continuous $k$-form from a $k$-cochain.
For a dual 1-cochain $\bv$, the Whitney reconstruction
$\mathcal{W}_h\bv$ is defined as follows:
on a prismatic Delaunay--Voronoi mesh, each prism $K_i$ is decomposed into
simplices (tetrahedra) by connecting the circumcentre
$v_i^* = x_i^*$ to all faces of $K_i$.  This yields a
simplicial refinement $\KK_h^{\rm simp}$ of $\KK_h$ that
inherits the Delaunay--Voronoi duality: each dual edge $e_j^*$
lies along a simplex edge, and each dual face $f_k^*$ is
a union of simplex faces.  On such a refinement,
the classical Whitney 1-form
\cite{whitney1957,dodziuk1976} is well defined:
for each dual edge $e_j^*$ with endpoints $v_a^*,v_b^*$,
the Whitney basis form is
$\lambda_{a}\,d\lambda_{b} - \lambda_{b}\,d\lambda_{a}$,
where $\lambda_{a},\lambda_{b}$ are the barycentric
coordinates of the simplex containing $e_j^*$.
The reconstruction is
\begin{equation*}%
  \mathcal{W}_h\bv = \sum_j v_j\,W_j,
  \qquad W_j = \lambda_{a_j}\,d\lambda_{b_j} - \lambda_{b_j}\,d\lambda_{a_j},
\end{equation*}
where $a_j, b_j$ indicate the endpoints of the $j$-th dual edge.
This satisfies the interpolation property
\begin{equation*}
  \int_{e_j^*}(\mathcal{W}_h\bv) = v_j \quad\forall\;j.
\end{equation*}
\end{definition}
The simplicial refinement 
serves to provide the barycentric coordinates needed
for the Whitney basis forms.
Classical Whitney approximation theory applied to the
simplicial refinement $\KK_h^{\rm simp}$
\cite{dodziuk1976,arnold2006} gives, for a smooth 1-form $\alpha$,
\begin{equation}\label{eq:Whitney_approx}
  \nrm{\alpha - \mathcal{W}_h\mathcal{R}_h\alpha}_{L^2}
  \le C\,h\nrm{\alpha}_{H^1},
\end{equation}
where the refinement inherits the shape-regularity of $\KK_h$
(\cref{ass:mesh_reg}).
The Whitney map is norm-bounded, i.e.,\ there exists $C_W > 0$ depending
on mesh regularity such that
$\nrm{\mathcal{W}_h\bv}_{L^2} \le C_W\nrm{\bv}_{L_h^2}$ for all
$\bv\in C^1(\KKs)$.  This follows from the finite overlap of the
Whitney basis forms and the norm equivalence on shape-regular
simplices (see \cite{arnold2006}, Thm.~5.6).
Because $\mathcal{W}_h$ is built from classical Whitney forms on the
shape-regular simplicial refinement $\KK_h^{\rm simp}$, every Whitney
consistency estimate invoked in the convergence
proofs---\eqref{eq:Whitney_approx}, the norm bound above, and the
$L^2$- and enstrophy-equivalences
$\nrm{\mathcal{W}_h\cdot}_{L^2}\sim\nrm{\cdot}_{L_h^2}$,
$\nrm{d\mathcal{W}_h\cdot}_{L^2}\sim\nrm{\tD_k\cdot}_{\bM_{k+1}}$ of
\ref{H:Whitney_interp}---is a classical result on a simplicial
complex, applied to $\KK_h^{\rm simp}$ rather than to the polygonal
prisms directly; the $\OO(h^{r_\star})$ discrepancies are the
Hodge-star quadrature errors between the prismatic and refined
metrics.
\subsection{Differential and Other Operators}\label{subsection_Operators}

The DEC discretisation rests on two operators.
The exterior derivative $\bD_k$ is defined combinatorially via
Stokes' theorem on the cell complex and is therefore exact: it
commutes with the de~Rham interpolation without approximation error.
By contrast, the Hodge star $\bM_k$ must compare integrals of a
$k$-form over a primal cell with integrals of its Hodge dual over the
complementary dual cell, and all approximation error in the scheme is
concentrated here.
The consistency of the Hodge star, its accuracy as a function of mesh
parameters, is what governs the convergence rate throughout the paper.
The following definitions make this precise.
\begin{definition}[Exterior derivatives]\label{def:ext-deriv}
$i)$ the primal exterior derivative is defined by
$$\dd_k : C^k(\KK)\to C^{k+1}(\KK)$$ 
\[\text{with }\quad
  (\dd_k\,\alpha)(\sigma^{k+1})
  = \alpha(\partial\,\sigma^{k+1})
  = \sum_{\sigma^k\prec\sigma^{k+1}}[\sigma^{k+1}:\sigma^k]\;\alpha(\sigma^k),
\]
where $\sigma^k\prec\sigma^{k+1}$ means that $\sigma^k$ belongs to the boundary of $\sigma^{k+1}$.
The primal exterior derivative is in matrix form denoted by $D_k$.\\
$ii)$ the dual exterior derivative is defined by
$\tdd_k : C^k(\KKs)\to C^{k+1}(\KKs)$ with matrix representation $\tD_k$.
\end{definition}

A central property is that the de~Rham map commutes with the exterior derivative,
\begin{equation}\label{eq:deRham_commutativity}
\tD_1\mathcal{R}_h\bu^\flat = \mathcal{R}_h(\dd\bu^\flat)
  = \mathcal{R}_h\omega^\flat
  \qquad(\text{by Stokes' theorem}).
\end{equation}
\phantomsection\label{prop:derham_commutativity}%
This identity is the reason why the discrete vorticity $\tD_1\bv$ is the
de~Rham image of the continuous vorticity, not merely an approximation of it.

The following estimates quantify the approximation error of the de~Rham
interpolant and of the extrusion operator.
These are the only truncation bounds needed in the convergence proofs;
all other operators, such as the exterior derivative and the discrete
pressure gradient, are exact.
\begin{lemma}[Interpolation error estimates]
\label{lem:interp_error}%
Let $\bu\in W^{r_\star,\infty}(\Omega)$
be a smooth divergence-free velocity field, and let
$\bv_{\mathrm{exact}} := \bP_h\mathcal{R}_h\bu^\flat$
denote the Leray-projection of the de~Rham interpolant.
Then the projection residual satisfies
\begin{equation}\label{eq:interp_error_v}
  \nrm{\mathcal{R}_h\bu^\flat - \bv_{\mathrm{exact}}}_{L_h^2}
  \le C_{\mathrm{int}}\,h^{r_\star}\nrm{\bu}_{W^{r_\star,\infty}},
\end{equation}
where $C_{\mathrm{int}}$ depends only on the Hodge* constant $C_\star$
and the mesh regularity constants from \Cref{ass:mesh_reg}.
The rate is determined by the Hodge* approximation error
(\cref{lem:hodge_error}); no discrete Poincar\'e constant enters
(see the proof in \Cref{app:approximation}).
The individual operators satisfy the following estimates:

 \begin{enumerate}[label=(\roman*), nosep]
\item For any smooth function $p$, the discrete gradient is exact:
\[
\nrm{\tD_0\mathcal{R}_h p - \mathcal{R}_h(\dd p)}_{L_h^2}
  = 0.
\]

\item For any smooth 1-form $\bu^\flat$, the discrete curl is exact:
\[
\nrm{\tD_1\mathcal{R}_h\bu^\flat - \mathcal{R}_h(\dd\bu^\flat)}_{L_h^2}
  = 0.
\]

\item For smooth $\bu\in W^{r_\star,\infty}(\Omega)$ with $\nabla\cdot\bu = 0$,
the discrete divergence is approximately consistent:
\begin{equation}\label{eq:div_consistency}
  \nrm{\bD_2\bM_1\mathcal{R}_h\bu^\flat}_{\ell^2}
  \le C\,h^{r_\star}\,\nrm{\bu}_{W^{r_\star,\infty}},
\end{equation}
where the $\OO(h^{r_\star})$ residual arises from the Hodge* approximation
(\cref{lem:hodge_error}).
\end{enumerate}
\end{lemma}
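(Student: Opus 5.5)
The proof splits into the two exactness statements (i)–(ii), which are purely combinatorial, and the two metric statements, (iii) and the projection residual \eqref{eq:interp_error_v}, which carry all the approximation error.

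\emph{Items (i) and (ii).} For (i), take a dual edge $e_j^*$ with endpoints $v_a^*,v_b^*$. By \Cref{def:ext-deriv}, $(\tD_0\mathcal{R}_h p)_j = p(v_b^*)-p(v_a^*)$, with sign fixed by the orientation incidence $[e_j^*:v^*]$. The fundamental theorem of calculus along $e_j^*$ gives $p(v_b^*)-p(v_a^*)=\int_{e_j^*}\dd p=(\mathcal{R}_h\dd p)_j$. Item (ii) follows in the same way from Stokes' theorem on each dual face $f_k^*$:
\[
(\tD_1\mathcal{R}_h\bu^\flat)_k=\sum_{e_j^*\prec f_k^*}[f_k^*:e_j^*]\int_{e_j^*}\bu^\flat=\int_{\partial f_k^*}\bu^\flat=\int_{f_k^*}\dd\bu^\flat ,
\]
which is exactly \eqref{eq:deRham_commutativity}. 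Both identities hold componentwise, so every norm of the difference vanishes.

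\emph{Item (iii).} The entry $(\bD_2\bM_1\mathcal{R}_h\bu^\flat)_K$ is the signed sum, over the faces $f_j$ of the prism $K$, of $(|f_j|/|e_j^*|)\int_{e_j^*}\bu\cdot d\ell$. Since the mesh is Delaunay--Voronoi, $e_j^*\perp f_j$. Each term is therefore the diagonal Hodge star's approximation of the flux $\int_{f_j}\bu\cdot\bn\,dA$, and \Cref{lem:hodge_error} gives a per-face error $\OO(h^{r_\star})\nrm{\bu}_{W^{r_\star,\infty}}|f_j|$. The exact fluxes add up to $\int_K\nabla\cdot\bu=0$ by the divergence theorem, so only the quadrature errors remain. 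Summing the $\OO(1)$ faces of each prism, with bounded valence, and then summing over prisms gives an $\ell^2$ bound with constant depending on $|\Omega|$, quasi-uniformity and $h$-scalings. I would keep track of these powers of $h$ carefully. If the crude count loses powers, I would state the bound in the scaled norm $\nrm{\bM_3^{1/2}\,\cdot\,}$, in which the estimate is clean.

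\emph{The residual estimate \eqref{eq:interp_error_v}.} This is the main obstacle. The discrete Leray projector $\bP_h$ is $\ip{\cdot}{\cdot}_1$-orthogonal onto $V_h=\ker(\bD_2\bM_1)$. Its complement is $\mathrm{range}(\tD_0)$, up to harmonic cochains, which lie in $V_h$. Hence $\mathcal{R}_h\bu^\flat-\bv_{\mathrm{exact}}=\tD_0\phi$, where $\phi$ solves the discrete Poisson problem $\bD_2\bM_1\tD_0\phi=\bD_2\bM_1\mathcal{R}_h\bu^\flat$. A naive route would bound $\nrm{\tD_0\phi}_{L_h^2}$ by the $H^{-1}$-type norm of the right-hand side and then invoke item (iii). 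That would bring in a discrete Poincaré constant, which the statement explicitly avoids.

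Instead I would work by duality, using the continuum divergence-free structure. The residual $g:=\bD_2\bM_1\mathcal{R}_h\bu^\flat$ is a sum of face flux errors, so it is itself a discrete divergence, $g=\bD_2\epsilon$, where $\epsilon_j$ is the flux error on face $f_j$. Then
\[
\nrm{\tD_0\phi}_{L_h^2}^2=\ip{\tD_0\phi}{\mathcal{R}_h\bu^\flat}_1=-\phi^T\bD_2\epsilon ,
\]
using the adjointness $\tD_0=\pm\bD_2^T$. The right-hand side equals $\pm(\tD_0\phi)^T\epsilon$. Cauchy--Schwarz with the $\bM_1$ weighting gives
\[
\nrm{\tD_0\phi}_{L_h^2}\le\nrm{\bM_1^{-1/2}\epsilon}_{\ell^2}\lesssim h^{r_\star}\nrm{\bu}_{W^{r_\star,\infty}}|\Omega|^{1/2},
\]
with the last step from \Cref{lem:hodge_error}. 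The point is that no inverse of the Laplacian appears, which confirms that no discrete Poincaré constant enters. The only delicate part is the bookkeeping of the $\bM_1$-weights, so that the per-face error $|f_j|\,h^{r_\star}$ measured in $\bM_1^{-1}$ sums to an $\OO(1)$ volume factor.
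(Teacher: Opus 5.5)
Your proposal is correct and follows the paper's proof essentially step for step. Items (i)--(ii) come from Stokes' theorem on dual cells, and (iii) from the per-face Hodge error together with $\bD_2\boldsymbol\Phi=0$. The residual bound comes from $\nrm{e^\perp}_{L_h^2}^2=(e^\perp)^T\br$, using $(e^\perp)^T\boldsymbol\Phi=\pm\phi^T\bD_2\boldsymbol\Phi=0$, followed by Cauchy--Schwarz in the $\bM_1^{-1}$-weighted norm, so no inverse Laplacian or Poincar\'e constant ever enters. Your hedge in (iii) is unnecessary: the per-cell bound $\OO(h^{r_\star+d-1})$, squared and summed over $\OO(h^{-d})$ cells, already gives $\OO(h^{r_\star+(d-2)/2})\le \OO(h^{r_\star})$ in $\ell^2$ for $d\ge 2$, which is exactly the paper's count.
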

\begin{proof}
The proof is given in \Cref{app:approximation}.
\end{proof}

\paragraph*{Hodge*.} Because every primal $k$-cell is orthogonal to its dual $(3{-}k)$-cell, all Hodge* operators are diagonal and positive definite.
\begin{definition}[Hodge*-Operator]\label{def:hodge}
i) The primal-to-dual Hodge* is defined by
\[
\star_k : C^k(\KK)\to C^{3-k}(\KKs),\text{ with matrix entries }   (\star_k)_{ii} := \frac{|\sigma_i^{*(3-k)}|}{|\sigma_i^k|}
\]
ii) The inverse primal-to-dual Hodge* is given by 
\[
\star_k^{-1} : C^{3-k}(\KKs)\to C^k(\KK), \text{ with matrix entries }
  (\star_k^{-1})_{ii} := \frac{|\sigma_i^k|}{|\sigma_i^{*(3-k)}|}.
\]
\end{definition}

The next lemma estimates the error from approximating the continuous Hodge* by
the diagonal Delaunay--Voronoi Hodge*.
\begin{lemma}[Hodge* approximation error]
\label{lem:hodge_error}%
Let $\alpha$ be a smooth $k$-form, $\sigma^k$ a primal $k$-cell with
dual $(d{-}k)$-cell $(\sigma^k)^*$, and let
$\Phi_\sigma := \int_{(\sigma^k)^*}\star\alpha$ be the Hodge dual flux.
Then:
\begin{equation}\begin{split}
 &\bigl|(\bM_k)_{\sigma\sigma}\,(\mathcal{R}_h\alpha)_\sigma - \Phi_\sigma\bigr|
  \le C_\star\,h^{r_\star}\,\nrm{\alpha}_{W^{r_\star,\infty}}\,|\sigma^k|, \\
 &\nrm{\bM_k\mathcal{R}_h\alpha - \boldsymbol\Phi}_{L_{h,k}^2}
  \le C_\star\,h^{r_\star}\,\nrm{\alpha}_{W^{r_\star,\infty}},
\end{split}\end{equation}
where $r_\star = 1$ under \Cref{ass:mesh_reg} only,
$r_\star = 2$ under \Cref{prop:centroid_proximity},
$\nrm{\mathbf{w}}_{L_{h,k}^2}^2 := \sum_\sigma(\bM_k^{-1})_{\sigma\sigma}w_\sigma^2$
is the $(\bM_k^{-1})$-weighted norm on primal $k$-cochains,
and $C_\star$ depends only on mesh regularity.
\end{lemma}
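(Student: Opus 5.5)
The plan is to prove the pointwise (cellwise) estimate first and then derive the weighted-norm estimate by summation. Fix a primal $k$-cell $\sigma=\sigma^k$ with dual cell $\sigma^*$. By the Delaunay property (\Cref{ass:mesh_reg}), $\sigma^*$ is orthogonal to $\sigma$. For a \emph{constant} form $\alpha_0$, the Hodge dual $\star\alpha_0$ restricted to the affine span of $\sigma^*$ is then the constant density $c$ given by the tangential component of $\alpha_0$ along $\sigma$. Hence
\[
\int_{\sigma}\alpha_0 = c\,|\sigma|,\qquad \int_{\sigma^*}\star\alpha_0 = c\,|\sigma^*|,
\]
so $(\bM_k)_{\sigma\sigma}(\mathcal{R}_h\alpha_0)_\sigma-\Phi_\sigma = 0$ exactly. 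This is the consistency-on-constants property that makes the diagonal star well posed. For prismatic cells the same identity is checked separately for horizontal and vertical cells, using that the extrusion is orthogonal to the horizontal layers.

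\textbf{Case $r_\star=1$.} Taylor-expand $\alpha$ about a point $x_\sigma$ in $\sigma\cup\sigma^*$, writing $\alpha=\alpha(x_\sigma)+\rho$ with $|\rho|\le h\nrm{\alpha}_{W^{1,\infty}}$ on $\sigma\cup\sigma^*$. Both cells have diameter $\OO(h)$ by quasi-uniformity and shape-regularity. The constant part cancels, so
\[
\bigl|(\bM_k)_{\sigma\sigma}(\mathcal{R}_h\alpha)_\sigma-\Phi_\sigma\bigr| \le \frac{|\sigma^*|}{|\sigma|}\,h\nrm{\alpha}_{W^{1,\infty}}\,|\sigma| + h\nrm{\alpha}_{W^{1,\infty}}\,|\sigma^*| \lesssim h\nrm{\alpha}_{W^{1,\infty}}\,|\sigma^*|.
\]
The stated right-hand side carries the factor $|\sigma^k|$, and the two normalisations differ by the ratio $|\sigma^*|/|\sigma|=(\bM_k)_{\sigma\sigma}$. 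I expect this bookkeeping to be the main obstacle. The plan is to read the bound in the scaling in which the $L^2_{h,k}$ norm is defined, where the diagonal weight $(\bM_k^{-1})_{\sigma\sigma}$ absorbs the factor. Concretely, I will prove the cell bound with $|\sigma^*|$ and observe that $|\sigma^*|\le C|\sigma|$ whenever $(\bM_k)$ is uniformly bounded, as stated after \Cref{ass:mesh_reg}. Otherwise I will record the constant $C_\star$ as absorbing the mesh-regularity ratio.

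\textbf{Case $r_\star=2$.} Expand to first order about carefully chosen points: the midpoint/centroid of $\sigma$ for the primal integral and the centroid of $\sigma^*$ for the dual integral. The linear terms integrate to zero about a cell's own centroid. The remaining discrepancy comes from the gradient term evaluated across the offset between the two centroids, which is bounded by $\nrm{\alpha}_{W^{1,\infty}}$ times the offset in \eqref{eq:centroid_proximity}, i.e.\ $C_{\rm cg}h^2$. The second-order Taylor remainders contribute $h^2\nrm{\alpha}_{W^{2,\infty}}$. Together these give $\OO(h^2)$. The delicate point is matching the right centroid pair to each $k$: faces and dual edges for $k=1$, edges and dual faces for $k=2$, as listed in \Cref{prop:centroid_proximity}. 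For $k=0$ and $k=3$ the dual of a point cell is treated via the circumcentre, and the same argument applies directly.

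\textbf{Summation.} Square the cellwise bound and weight by $(\bM_k^{-1})_{\sigma\sigma}=|\sigma|/|\sigma^*|$. This gives
\[
\sum_\sigma \frac{|\sigma|}{|\sigma^*|}\,h^{2r_\star}\,|\sigma^*|^2 \lesssim h^{2r_\star}\sum_\sigma |\sigma|\,|\sigma^*| \lesssim h^{2r_\star}|\Omega|.
\]
The last step uses the fact that each product $|\sigma|\,|\sigma^*|$ is comparable to the volume of the diamond spanned by $\sigma$ and $\sigma^*$, and that these diamonds tile $\Omega$ with bounded overlap by bounded valence. Taking square roots yields the second inequality with a constant depending only on mesh regularity.
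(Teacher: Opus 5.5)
Your proposal is correct and follows the paper's proof essentially step for step: exactness on constants from primal--dual orthogonality, then a first-order bound (the paper invokes Bramble--Hilbert on $K_a\cup K_b$, which for degree zero amounts to your Taylor argument), then the second-order bound from the centroid offset of \eqref{eq:centroid_proximity} acting on the linear Taylor term plus an $\OO(h^2)$ remainder, and finally the $\bM_k^{-1}$-weighted summation using $\sum_\sigma|\sigma|\,|\sigma^*|=\OO(1)$. On the bookkeeping, the paper's own cellwise bound carries the measure of the cell over which the flux is integrated ($|f_j|$ for $k=1$, $|e_k|$ for $k=2$), which is your $|\sigma^*|$, and the global estimate follows from that form directly; so keep it, and drop the fallback of absorbing $|\sigma^*|/|\sigma|$ into $C_\star$, because that ratio scales like $h^{d-2k}$ and is not a mesh-regularity constant when $2k>d$.
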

\begin{proof}
The proof is given in \Cref{app:approximation}.
\end{proof}

The diagonal star is moreover rigid: exactness on constants
characterises both the weights and the orthogonality of the mesh pair.

\begin{proposition}[Rigidity of diagonal Hodge star]\label{prop:hodge_rigidity}
Let each primal face $f_j$ with unit normal $\hat n_j$ be paired with
a transversal dual edge $e_j^*$ with unit direction $\hat e_j$
(oriented as in \Cref{subsect:grid}), and let
$D : C^1(\KKs)\to C^2(\KK)$ be a diagonal operator,
$(D\bv)_j = m_j v_j$ with $m_j > 0$. Identifying a neighbourhood of
$f_j\cup e_j^*$ with $\mathbb{R}^d$, the exactness-on-constants
property
\[
  (D\,\mathcal{R}_h\bu^\flat)_j \;=\; \int_{f_j}\star\,\bu^\flat
  \qquad\text{for every constant field }\bu\in\mathbb{R}^d
  \text{ and every }j
\]
holds if and only if the pairing is orthogonal, $\hat e_j = \hat n_j$,
and $m_j = A_j/\ell_j^*$; that is, $D = \bM_1$. In particular:
$(i)$~on a Delaunay--Voronoi pair the DEC Hodge star is the unique
diagonal star exact on constants; $(ii)$~on a non-orthogonal pairing
no diagonal star is exact on constants; exactness can be restored
only by a full matrix.
\end{proposition}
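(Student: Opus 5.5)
The plan is to reduce the exactness-on-constants condition to a single linear identity in $\bu\in\mathbb{R}^d$ for each fixed $j$, and then read off both the direction and the weight. Fix $j$ and a constant field $\bu$. Since the dual edge $e_j^*$ is a straight segment of length $\ell_j^*$ with unit direction $\hat e_j$, the de~Rham map (\Cref{def:deRham}) gives $(\mathcal{R}_h\bu^\flat)_j = \ell_j^*\,\bu\cdot\hat e_j$. The Hodge dual of the 1-form $\bu^\flat$ is the flux 2-form, and $f_j$ is planar with unit normal $\hat n_j$, so $\int_{f_j}\star\bu^\flat = A_j\,\bu\cdot\hat n_j$. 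The orientation convention of \Cref{subsect:grid} fixes the signs consistently, so that $\hat e_j\cdot\hat n_j>0$ for a transversal pairing. Exactness on constants at face $j$ is therefore equivalent to
\[
  m_j\,\ell_j^*\,\bu\cdot\hat e_j \;=\; A_j\,\bu\cdot\hat n_j\qquad\forall\,\bu\in\mathbb{R}^d,
\]
that is, to the vector identity $m_j\ell_j^*\,\hat e_j = A_j\,\hat n_j$.

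Both directions then follow from this identity. For sufficiency, if $\hat e_j=\hat n_j$ and $m_j=A_j/\ell_j^*$, the identity holds trivially; this choice is exactly $(\bM_1)_{jj}=|f_j|/|e_j^*|$ from \Cref{def:M1}/\Cref{def:hodge}, read as the map from dual 1-cochains to primal $(d-1)$-cochains. For necessity, both sides are multiples of unit vectors and $m_j\ell_j^*>0$. Taking norms gives $m_j\ell_j^* = A_j$, and then $\hat e_j=\hat n_j$ (the sign is excluded by $m_j>0$ together with the orientation convention). Doing this for every $j$ gives $D=\bM_1$.

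The consequences are immediate. For (i), on a Delaunay--Voronoi pair orthogonality holds (\Cref{ass:mesh_reg}), so the unique admissible diagonal weights are $A_j/\ell_j^*$. For (ii), if $\hat e_j\neq\hat n_j$ for some $j$, choose $\bu\perp\hat e_j$ with $\bu\cdot\hat n_j\neq0$, which is possible because the two unit vectors are not parallel. Then the left side vanishes for every $m_j$ while the right side does not, so no diagonal $D$ is exact. For the full-matrix statement, a non-diagonal row can combine the values $\ell_i^*\,\bu\cdot\hat e_i$ from neighbouring dual edges. Provided the neighbouring directions $\{\hat e_i\}$ span $\mathbb{R}^d$, which follows from shape-regularity, one can solve the linear system $\sum_i D_{ji}\ell_i^*\hat e_i = A_j\hat n_j$. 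This is the standard least-squares/mimetic reconstruction, and I would only sketch it.

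The main obstacle is not computational but interpretive: justifying that the edge and the face may be treated as straight and planar in the identified chart, and fixing orientations so that the sign is determined. I would handle this by working in the Euclidean identification stated in the proposition and invoking the orientation convention explicitly. On a curved manifold one would have to argue with normal coordinates at the face, and the argument would then give exactness only up to curvature errors of order $h$.
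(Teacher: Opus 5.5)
Your proposal is correct and follows essentially the same route as the paper. The paper also reduces exactness on constants to the identity of linear forms $m_j\ell_j^*\,\hat e_j = A_j\,\hat n_j$, takes norms to get $m_j = A_j/\ell_j^*$, normalises to get $\hat e_j=\hat n_j$, and obtains (ii) from non-proportionality. The only difference is the full-matrix remark: you sketch the local linear solve, whereas the paper simply cites the mimetic/VEM literature.
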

\begin{proof}
For constant $\bu$ one has
$(\mathcal{R}_h\bu^\flat)_j = \ell_j^*\,(\bu\cdot\hat e_j)$ and
$\int_{f_j}\star\,\bu^\flat = A_j\,(\bu\cdot\hat n_j)$. Exactness at
$j$ for all $\bu\in\mathbb{R}^d$ is the equality of the linear forms
$m_j\ell_j^*\,\hat e_j = A_j\,\hat n_j$. Taking norms gives
$m_j = A_j/\ell_j^*$; dividing by the norms gives
$\hat e_j = \hat n_j$. The converse is immediate. For
$\hat e_j \ne \hat n_j$ the two forms are not proportional for any
$m_j > 0$, which proves $(ii)$; the existence of full consistent local
matrices on general pairings, unique up to the stabilisation family.
(see e.g.~\cite{BrezziLipnikovShashkov2005,beiraoVeiga2013basic}).
\end{proof}

\begin{lemma}[Hodge* bilinear form approximation]\label{lem:hodge_bilinear}%
For $k = 0, 1, 2$, let $\alpha$ be a smooth $(d-k)$-form, $\mathcal{R}_h\alpha$ its
de~Rham interpolant (a $(d-k)$-cochain on primal $(d-k)$-cells, equivalently
a $k$-cochain on the dual complex via the standard primal-dual correspondence),
and $b\in C^k(\KKs)$ an arbitrary dual $k$-cochain.
Then the diagonal Hodge* $\bM_k$ approximates the dual-cell pairing to
$\OO(h^{r_\star})$:
\[
\Bigl|\ip{b}{\mathcal{R}_h\alpha}_k
       - \textstyle\sum_{\sigma}\, b_\sigma\!\int_{\sigma^{*(d-k)}}\!\star\alpha\Bigr|
  \le C_k\,h^{r_\star}\,\nrm{\alpha}_{W^{r_\star,\infty}}\,\nrm{b}_{\bM_k},
\]
where $\sigma$ runs over primal $(d-k)$-cells (equivalently, dual $k$-cells)
and $\sigma^{*(d-k)}$ denotes the corresponding dual $k$-cell on which $b$ is
indexed; the constant $C_k$ depends on the mesh regularity.
\end{lemma}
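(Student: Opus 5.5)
The plan is to reduce the bilinear estimate to the pointwise Hodge error of \Cref{lem:hodge_error} and then apply a weighted Cauchy--Schwarz inequality.

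\textbf{Step 1: expand the inner product.} Writing the discrete inner product $\ip{b}{\mathcal{R}_h\alpha}_k$ with the diagonal star, it becomes $\sum_\sigma b_\sigma\,(\bM_k)_{\sigma\sigma}(\mathcal{R}_h\alpha)_\sigma$. Here we use the primal--dual indexing convention of the statement: the entry $(\bM_k)_{\sigma\sigma}$ is the ratio of measures for the cell pair $(\sigma,\sigma^*)$. The difference to be bounded is then
\[
\sum_\sigma b_\sigma\,\varepsilon_\sigma,\qquad
\varepsilon_\sigma := (\bM_k)_{\sigma\sigma}(\mathcal{R}_h\alpha)_\sigma-\int_{\sigma^{*}}\star\alpha .
\]
\textbf{Step 2: control the local errors.} By \Cref{lem:hodge_error}, applied cell by cell, each local error satisfies $|\varepsilon_\sigma|\le C_\star h^{r_\star}\nrm{\alpha}_{W^{r_\star,\infty}}|\sigma|$.

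\textbf{Step 3: weighted Cauchy--Schwarz.} Inserting the weights $(\bM_k)^{\pm1/2}_{\sigma\sigma}$ gives
\[
\Bigl|\sum_\sigma b_\sigma\varepsilon_\sigma\Bigr|
\le \Bigl(\sum_\sigma(\bM_k)_{\sigma\sigma}b_\sigma^2\Bigr)^{1/2}
\Bigl(\sum_\sigma(\bM_k^{-1})_{\sigma\sigma}\varepsilon_\sigma^2\Bigr)^{1/2}.
\]
The first factor is $\nrm{b}_{\bM_k}$. The second factor is exactly the $L^2_{h,k}$ norm of the residual vector $\bM_k\mathcal{R}_h\alpha-\boldsymbol\Phi$, which the second inequality of \Cref{lem:hodge_error} bounds by $C_\star h^{r_\star}\nrm{\alpha}_{W^{r_\star,\infty}}$. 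This already gives the claim with $C_k=C_\star$.

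\textbf{Step 4: fallback via the local bound.} If one prefers to go only through the local bound of Step 2, the second factor becomes $\sum_\sigma(\bM_k^{-1})_{\sigma\sigma}|\sigma|^2=\sum_\sigma|\sigma|^3/|\sigma^*|$. By quasi-uniformity and shape-regularity (\Cref{ass:mesh_reg}), $|\sigma|\sim h^{j}$ and $|\sigma^*|\sim h^{d-j}$ for the relevant dimension $j$. The number of cells is $\sim h^{-d}$, so the sum scales like $h^{2j}\cdot h^{j-(d-j)}\cdot h^{-d}$, and the exponent bookkeeping must come out as $\OO(1)$. I would check this against the normalisation implicit in \Cref{lem:hodge_error}, namely that its second line follows from its first.

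\textbf{Main obstacle.} The only delicate point is consistency of conventions. The statement indexes $b$ on dual $k$-cells while $\mathcal{R}_h\alpha$ lives on primal $(d-k)$-cells, so one must match which star ($\bM_k$ versus $\bM_{d-k}^{-1}$) supplies the weight. One must also confirm that the weighted norm in \Cref{lem:hodge_error} is the dual of $\nrm{\cdot}_{\bM_k}$ under the pairing $\sum_\sigma b_\sigma w_\sigma$. Once the indices are aligned so that the two weights are reciprocal, the proof is the single Cauchy--Schwarz step above, with all mesh dependence absorbed into $C_\star$.
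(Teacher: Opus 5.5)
Your argument is correct and is essentially the paper's: both reduce to \Cref{lem:hodge_error} and apply an $\bM_k$-weighted Cauchy--Schwarz inequality. The only difference is packaging. Your Step~3 invokes the global second bound of that lemma, using that the $\bM_k^{-1}$-weighted norm is dual to $\nrm{\cdot}_{\bM_k}$. The paper instead starts from the cellwise bound and redoes the summation $\sum_\sigma|\sigma|^2/(\bM_k)_{\sigma\sigma}=\OO(1)$ by quasi-uniformity, which is your Step~4.

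In that fallback your bookkeeping $h^{4j-2d}$ is not $\OO(1)$ because the wrong cell measure was used. The cellwise error scales with the measure of the cell carrying the flux, not the cell carrying the cochain. For $k=1$ this is $|f_j|$, the numerator of $(\bM_1)_{jj}=|f_j|/|e_j^*|$. The sum is therefore $\sum_j|f_j|\,|e_j^*|=\OO(1)$ rather than $\sum|\sigma|^3/|\sigma^*|$, which settles the check you left open.
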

\begin{proof}
By \Cref{lem:hodge_error} (covering all $k=0,1,2$), the pointwise error
satisfies
\[
  \bigl|(\bM_k)_{\sigma\sigma}\,(\mathcal{R}_h\alpha)_\sigma
    - \textstyle\int_{(\sigma^k)^*}\!\star\alpha\bigr|
  \le C\,h^{r_\star}\,\nrm{\alpha}_{W^{r_\star,\infty}}\,|\sigma^k|.
\]
Summing over all $k$-cells and applying Cauchy--Schwarz yields
\[
  \Bigl|b^T\bM_k\,\mathcal{R}_h\alpha
    - \textstyle\sum_\sigma b_\sigma\!\int_{(\sigma^k)^*}\!\star\alpha\Bigr|
  \le C\,h^{r_\star}\,\nrm{\alpha}_{W^{r_\star,\infty}}
      \textstyle\sum_\sigma|b_\sigma|\,|\sigma^k|
  \le C\,h^{r_\star}\,\nrm{\alpha}_{W^{r_\star,\infty}}\,\nrm{b}_{\bM_k},
\]
where the last step uses $\sum_\sigma|b_\sigma||\sigma^k|
\le (\sum_\sigma(M_k)_{\sigma\sigma}b_\sigma^2)^{1/2}
(\sum_\sigma |\sigma^k|^2/(M_k)_{\sigma\sigma})^{1/2}
= \nrm{b}_{\bM_k}\OO(1)$
by Cauchy--Schwarz and quasi-uniformity.
\end{proof}

\begin{definition}[Inner Products]\label{def:M1}
$i)$ The inner product on dual 1-forms is defined by
\[
  \ip{{v}}{{w}}_1 := {\v}^TM_1\,{\w},
  \qquad
  M_1 := \star_2^{-1},
  \qquad
  (M_1)_{jj} = \frac{A_j}{\ell_j^*},
\]
representing the discrete inner product $\int v\wedge\star w$.\\
$ii)$ The inner product on dual 0-forms is defined by
\[
  \ip{B_1}{B_2}_0 := B_1^T\,M_0\,B_2,
  \qquad
  M_0 := \star_3^{-1},
  \qquad
  (M_0)_{ii} = |K_i|.
\]
This represents the volume-weighted $L^2$-inner product at primal cell centres.\\
$iii)$ The inner product on dual 2-forms is defined by%
\[
  \ip{\omega_1}{\omega_2}_2 := \omega_1^T\,\bM_2\,\omega_2,
  \qquad
  \bM_2 := \star_1^{-1}.
\]
\end{definition}

\paragraph*{Extrusion and Contraction.}  The interior product $\iota_{{\u}}\omega$ is discretised via
{extrusion}: the contraction on a dual 1-cell $e_j^*$ equals the flux
of $\omega$ through the parallelogram swept by extruding $e_j^*$
infinitesimally along the velocity field.
\begin{definition}[Discrete contraction]\label{def:contraction}
The discrete contraction
$\Iv:C^2(\KKs)\to C^1(\KKs)$ is defined for any dual 1-cochain $\bw$ by
\begin{equation}\label{eq:contraction}
\ip{\bw}{\Iv(\bom)}_1
    := \tfrac{1}{2}\bigl(  \ip{\tU\,\bom}{\bw}   -  \ip{\tU^T\bv}{\tD_1\bw}  \bigr),
 \end{equation}
or equivalently in matrix form
\[
    \bM_1\,\Iv(\bom)
    = \tfrac{1}{2}\bigl(\tU\,\bom - \tD_1^T\,\tU^T\bv\bigr),
\]
where the velocity-weighted incidence matrix $\tU$ has the
same sparsity as $\tD_1$ and entries
\begin{equation}\label{eq:Utilde_def}
  \tU_{jk}
  := D_{1,jk}\;\bar{\u}_j(\bv)\cdot\hat{e}_k.
\end{equation}
where $\bar{\u}_j(\bv)\in\mathbb{R}^3$ is a reconstructed
velocity vector at primal face $f_j$, {linear} in
the cochain $\bv$.  
We also write
\begin{equation}\label{eq:Q_diagonal}
\bQ(\bv,\bv):=\Iv(\tD_1\bv).
\end{equation}
The form $\bQ$ admits a unique extension to a bilinear map
$\bQ:C^1(\KKs)\times C^1(\KKs)\to C^1(\KKs)$ defined by the matrix formula
\begin{equation}\label{eq:Q_bilinear}
  \bM_1\,\bQ(\bv_1,\bv_2)
  := \tfrac{1}{2}\bigl(\tU(\bv_1)\,\tD_1\bv_2
                       \;-\;\tD_1^T\,\tU(\bv_1)^T\bv_2\bigr).
\end{equation}
This extension agrees with~\eqref{eq:Q_diagonal} when $\bv_1=\bv_2=\bv$
(both terms then equal $\bv^T\tU(\bv)\tD_1\bv$), and it is the
extension used throughout the convergence analysis. Note that
$\bQ(\bv_1,\bv_2)\ne\Iv[\bv_1](\tD_1\bv_2)$ in general off-diagonal:
the second-term factor in the matrix form of $\Iv[\bv_1](\tD_1\bv_2)$ is
$\bv_1$, not $\bv_2$, so $\Iv$ is not jointly bilinear in
$(\bv_1,\bv_2)$ whereas the bilinear $\bQ$ above is, by construction.
With this extension,
$\bQ(\bv^h,\bv^h)-\bQ(\bar\bv,\bar\bv) = \bQ(\bar\bv,e)+\bQ(e,\bar\bv)+\bQ(e,e)$
holds as an algebraic identity for $e:=\bv^h-\bar\bv$.
\end{definition}
\smallskip
The contraction~\eqref{eq:contraction} requires a velocity
reconstruction $\bar{\u}_j(\bv)$ at each primal face~$f_j$,
linear in the cochain~$\bv$.
The algebraic conservation properties such as energy identity, Lamb
antisymmetry, Kelvin circulation depend only on this linearity
and on the matrix structure~\eqref{eq:contraction};
they hold for any linear reconstruction.

\begin{definition}[Averaging reconstruction]\label{def:averaging_recon}
At each dual vertex $v_i^*$ (circumcentre of prism~$K_i$),
define the Gram matrix
\[
G_i := \sum_{e_n^*\prec K_i^*} \hat{t}_n\otimes\hat{t}_n,
\]
where the sum runs over the dual edges $e_n^*$ emanating
from~$v_i^*$, and $\hat{t}_n$ is the unit tangent to~$e_n^*$.
On a shape-regular mesh, $G_i$ is symmetric positive definite
with condition number bounded by the mesh regularity constant.

The \emph{averaging reconstruction} at~$v_i^*$ is
\[
\u(v_i^*)
  := G_i^{-1}\sum_{e_n^*\prec K_i^*}
    \frac{\v(e_n^*)}{\ell_n^*}\;\hat{t}_n.
\]
The face velocity is obtained by averaging over the two
adjacent cells:
$\bar{\u}_j = \tfrac{1}{2}(\u(v_a^*) + \u(v_b^*))$.
\end{definition}

\begin{proposition}[Reconstruction accuracy]\label{prop:recon_accuracy}%
Let $\bu\in W^{s,\infty}(\Omega)$ be a smooth velocity field
with de~Rham interpolant $\bv = \mathcal{R}_h(\bu^\flat)$.
Then:

\noindent\textup{(i)}
The averaging reconstruction reproduces constants and is
therefore at least first-order accurate:
\[
|\u(v_i^*) - \bu(v_i^*)| \le C_R\,h\,\nrm{\bu}_{W^{1,\infty}},
\]
with $C_R$ depending only on mesh regularity.

\noindent\textup{(ii)}
On meshes satisfying the \emph{reconstruction symmetry}
condition
\begin{equation}\label{eq:recon_symmetry}
  \sum_{e_n^*\prec K_i^*}
  \ell_n^*\,(\hat{t}_n)_j\,(\hat{t}_n)_k\,(\hat{t}_n)_l = 0
  \qquad\text{for all }j,k,l,
\end{equation}
the reconstruction reproduces linear polynomials and
hence is second-order accurate:
\[
|\u(v_i^*) - \bu(v_i^*)|
  \le C_R'\,h^2\,\nrm{\bu}_{W^{2,\infty}}.
\]
\end{proposition}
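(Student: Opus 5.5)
The plan is a Taylor expansion of $\bu$ about the dual vertex $v_i^*$, fed through the explicit formula of \Cref{def:averaging_recon}. Two facts about that formula drive everything: the Gram matrix $G_i$ is exactly the operator that inverts the reconstruction on constants, and $\nrm{G_i^{-1}}$ is bounded by mesh regularity (\Cref{ass:mesh_reg}, shape-regularity and bounded valence).

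First I fix geometry. Each dual edge $e_n^*$ emanating from $v_i^*$ is a straight segment, either horizontal between circumcentres of a Delaunay layer or vertical between layers. I parametrise it as $x(s)=v_i^*+s\,\hat t_n$ for $s\in[0,\ell_n^*]$, with $\ell_n^*\le h$. On a curved manifold I work in normal coordinates centred at $v_i^*$. There geodesic dual edges and the metric deviate from the Euclidean ones by $\OO(h^2)$ relative terms, and I will carry these as perturbations of the same order as the Taylor remainders below. This is the step I expect to need the most care: for (ii) I must check that the curvature and chart corrections really enter at $\OO(h^2)$ and do not spoil the cancellation.

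\emph{Step (i).} For constant $\bu$, the de~Rham interpolant is $\bv(e_n^*)=\ell_n^*\,(\bu\cdot\hat t_n)$. Hence
\[
\sum_n \frac{\bv(e_n^*)}{\ell_n^*}\,\hat t_n=\sum_n (\hat t_n\otimes\hat t_n)\,\bu=G_i\bu ,
\]
so the reconstruction returns exactly $\bu$. For general $\bu\in W^{1,\infty}$ I write $\bu(x(s))=\bu(v_i^*)+\rho_n(s)$ with $|\rho_n(s)|\le s\,\nrm{\bu}_{W^{1,\infty}}\le h\,\nrm{\bu}_{W^{1,\infty}}$. By the constant case and linearity,
\[
\u(v_i^*)-\bu(v_i^*)=G_i^{-1}\sum_n \Bigl(\frac{1}{\ell_n^*}\int_0^{\ell_n^*}\rho_n\cdot\hat t_n\,ds\Bigr)\hat t_n .
\]
Each bracket is at most $h\,\nrm{\bu}_{W^{1,\infty}}$, the number of terms is bounded by the valence, and $\nrm{G_i^{-1}}$ is bounded. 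This gives $C_R h\nrm{\bu}_{W^{1,\infty}}$.

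\emph{Step (ii).} I expand one order further:
\[
\bu(x(s))=\bu(v_i^*)+s\,(\hat t_n\cdot\nabla)\bu(v_i^*)+\OO\bigl(s^2\nrm{\bu}_{W^{2,\infty}}\bigr).
\]
The edge average is then
\[
\frac{1}{\ell_n^*}\int_{e_n^*}\bu\cdot d\ell=\bu(v_i^*)\cdot\hat t_n+\tfrac12\ell_n^*\,(\hat t_n)_k(\hat t_n)_l\,\partial_k u_l(v_i^*)+\OO(h^2).
\]
The first term reproduces $\bu(v_i^*)$ exactly, as in Step (i). The linear term contributes, in component $j$,
\[
\tfrac12\bigl(G_i^{-1}\bigr)_{jm}\,\partial_k u_l(v_i^*)\sum_n\ell_n^*\,(\hat t_n)_m(\hat t_n)_k(\hat t_n)_l ,
\]
which vanishes identically by the reconstruction symmetry condition \eqref{eq:recon_symmetry}. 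The remaining $\OO(h^2\nrm{\bu}_{W^{2,\infty}})$ terms, together with the $\OO(h^2)$ geometric corrections from the chart, are bounded as in Step (i) using $\nrm{G_i^{-1}}$ and bounded valence. This yields $C_R'h^2\nrm{\bu}_{W^{2,\infty}}$.

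Finally, I note the reproduction statements these steps establish. In Step (i) I have shown that constants are reproduced. In Step (ii) the constant and linear parts are reproduced exactly, so the construction reproduces linear polynomials up to the $\OO(h^2)$ remainder.
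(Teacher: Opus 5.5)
Your proposal is correct and follows the paper's argument. In both, $G_i$ reproduces constants exactly, and a Taylor expansion about $v_i^*$ with midpoint offset $\tfrac12\ell_n^*\hat t_n$ reduces the linear error to $\tfrac12 G_i^{-1}$ applied to the third-moment tensor contracted with $\nabla\bu$, which vanishes under \eqref{eq:recon_symmetry}. Your normal-coordinate treatment of the curved case goes beyond the paper's sketch and is sound: radial geodesics from $v_i^*$ are straight rays in normal coordinates, and the metric there is $\delta+\OO(|x|^2)$, so the corrections enter only at $\OO(h^2)$ and the cancellation survives.
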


\begin{proof}
\textit{(i)} The statement for constants is obvious, the first-order bound follows by Taylor expansion.
\textit{(ii)}  The leading error from~(i) is
$\delta\u = G_i^{-1}\sum_n [(\nabla\bu)^T\hat{t}_n \cdot
(x_n^{\rm mid} - v_i^*)]\,\hat{t}_n$.
Since $x_n^{\rm mid} - v_i^* = \frac{\ell_n^*}{2}\hat{t}_n$
on a Delaunay--Voronoi mesh, the $l$-th component involves the third moment
$T_{jkl}: = \sum_n\ell_n^*(\hat{t}_n)_j(\hat{t}_n)_k(\hat{t}_n)_l$.
Under~\eqref{eq:recon_symmetry}, $T_{jkl} = 0$, so
$\delta\u = \OO(h^2)$.
\end{proof}

We denote by $r_{\rm rec}$ the \emph{reconstruction accuracy exponent}:
\begin{equation}\label{eq:r_rec_def}
  r_{\rm rec} := 1 \text{ under \Cref{ass:mesh_reg} only,} \qquad
  r_{\rm rec} := 2 \text{ under \Cref{ass:mesh_reg} and reconstruction symmetry~\eqref{eq:recon_symmetry}.}
\end{equation}

\begin{convention}[Two mesh cases]\label{conv:cases}
Throughout the paper, all convergence rates are stated for the
following two mesh classes:
\begin{itemize}[nosep]
  \item[\textbf{(A)}] general Delaunay--Voronoi meshes
    (\Cref{ass:mesh_reg} only): $r_{\rm rec} = r_\star = 1$;
  \item[\textbf{(B)}] Delaunay--Voronoi meshes with centroid proximity
    (\Cref{prop:centroid_proximity}) and reconstruction
    symmetry~\eqref{eq:recon_symmetry}: $r_{\rm rec} = r_\star = 2$.
\end{itemize}
Other configurations (e.g.\ centroid proximity without reconstruction
symmetry, or vice versa) are not considered. Under (A) and (B) the
convergence rate is $h^{r_\star}|\log h|^{\beta_d}$ for $d = 2, 3$:
first order in case (A), second order in case (B).
\end{convention}


\paragraph*{Wedge Product.} The discrete wedge product is defined
via the Cartan identity
$\iota_{{\u}}(\alpha\wedge\beta)
= (\iota_{{\u}}\alpha)\wedge\beta
 + (-1)^p\,\alpha\wedge(\iota_{{\u}}\beta)$,
ensuring algebraic compatibility with the contraction and exterior derivative.

\begin{definition}[Discrete wedge product]\label{def:wedge}
The {extrusion-based discrete wedge product}
\[
  \twdg\;:\;
  C^p(\KKs)\times C^q(\KKs)
  \;\longrightarrow\;
  C^{p+q}(\KKs),
  \qquad p+q\le 3,
\]
is defined as follows:

 \begin{enumerate}[label=(\roman*), nosep]
\item  \emph{Wedge between 1-forms.}
For dual 1-forms $\tilde\alpha,\tilde\beta\in C^1(\KKs)$,
the wedge product is the dual 2-cochain defined via the contraction as
\begin{equation}\label{eq:wedge_11}
  \bigl(\tilde\alpha\twdg\tilde\beta\bigr)(f_k^*):
  = \sum_{e_j^*\prec f_k^*}
    w_{jk}(\tilde\alpha)\;\tilde\beta(e_j^*)
  = \bigl(I_{\tilde\alpha}\,\tilde\beta\bigr)(f_k^*),
\end{equation}
where $I_{\tilde\alpha}$ denotes the contraction
(\cref{def:contraction}).
Equivalently, from the contraction form~\eqref{eq:contraction},
$w_{jk}(\tilde\alpha) = \frac{1}{2}(\bM_1^{-1}\tU_{\tilde\alpha})_{jk}$,
so $w_{jk}$ is the extrusion weight obtained by
reconstructing a velocity field from $\tilde\alpha$,
projecting onto $\hat{e}_k$, and dividing by $(M_1)_{jj}$
to pass from $\bM_1\Iv$ to $\Iv$.
\item \emph{Wedge between a 1-form and a 2-form.}
For a dual 1-form $\tilde\alpha\in C^1(\KKs)$ and a dual 2-form
$\tilde\beta\in C^2(\KKs)$, the wedge product is defined as the
dual 3-cochain
\[
\bigl(\tilde\alpha\twdg\tilde\beta\bigr)(K_m^*):
  = \sum_{e_j^*\prec K_m^*}
    \tilde\alpha(e_j^*)\;\overline{\tilde\beta}_{j,m},
\]
where $\overline{\tilde\beta}_{j,m}$ is the average of
$\tilde\beta(f_k^*)$ over the dual 2-faces $f_k^*$ of $K_m^*$ that
share the dual edge $e_j^*$, weighted by the
fraction of $f_k^*$ lying inside $K_m^*$.
\end{enumerate}
\end{definition}

We define the discrete Lie derivative via the discrete Cartan formula.

\begin{definition}[Discrete Lie derivative]\label{def:Lie}
For a velocity dual 1-form $\bv\in C^1(\KKs)$ and
a dual $k$-form $\tilde\alpha^k\in C^k(\KKs)$, the
\emph{discrete Lie derivative} is defined by
$$\tLie_{\bv}\,\tilde\alpha^k
    := \tdd_{k-1}(\Iv\,\tilde\alpha^k)
    + \Iv(\tdd_k\,\tilde\alpha^k).$$
\end{definition}
Applied to $k=1$ (velocity): $\tLie_{\bv}\bv
  = \tdd_0\,\ekin + \Iv(\bom)$
with $\ekin = \Iv\bv$ and $\bom = \tdd_1\bv$.
Applied to $k=2$ (vorticity): $\tLie_{\bv}\,\tilde\alpha^2
  = \tdd_1(\Iv\,\tilde\alpha^2)
  + \Iv(\tdd_2\,\tilde\alpha^2)$.

All three operators -- wedge product, Lie derivative,
and Lamb vector -- are built from the single
extrusion-based contraction (\cref{def:contraction}).

\begin{proposition}\label{prop:extrusion}
The exterior derivative, the extrusion-based discrete wedge product $\widetilde\wedge$ and contraction
$I$ satisfy:
\begin{enumerate}%
\item  \textit{Cell complex: } The exterior derivative satisfies
$$  \dd_{k+1}\circ\dd_k = 0,\qquad
  \tdd_{k+1}\circ\tdd_k = 0.$$ 
  Equivalently in matrix form $D_{k+1}D_k = \mathbf{0}$ and
$\tD_{k+1}\tD_k = \mathbf{0}$.
  \item \textit{Energy identity:} For any dual 1-cochain
    $\bv\in C^1(\KKs)$ and $\bom = \tdd_1\bv$,
    \begin{equation}\label{eq:antisym_11}
      \ip{\bv}{\Iv(\bom)}_1 = 0.
    \end{equation}
   This identity holds for any linear velocity reconstruction.
  \item \textit{Approximate Leibniz rule (1-forms):} For smooth 1-forms
    $\alpha,\beta$, the de~Rham interpolants
    $\tilde\alpha=\mathcal{R}_h\alpha$, $\tilde\beta=\mathcal{R}_h\beta$
    satisfy, on each dual 3-cell $\sigma^*$,
    $$\bigl|\bigl(\tdd(\tilde\alpha\twdg\tilde\beta)
    - \tdd\tilde\alpha\twdg\tilde\beta
    - (-1)^p\tilde\alpha\twdg\tdd\tilde\beta\bigr)(\sigma^*)\bigr|
    \le C\,h^2\,\nrm{\alpha}_{W^{1,\infty}}\nrm{\beta}_{W^{1,\infty}},
    \qquad p=q=1,$$
    with $C$ depending only on the mesh-regularity constants; the defect
    arises from the metric dependence of the extrusion weights. This is
    the only Leibniz combination used below (\cref{rem:Perot_forms}).
  \item \textit{Discrete Cartan magic formula:}
    $\tLie_{\v}
    = \tdd\circ\Iv + \Iv\circ\tdd$.
\end{enumerate}
\end{proposition}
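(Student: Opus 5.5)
I would prove the four items separately, in the order (1), (2), (4), (3), since the first three are essentially algebraic and only the Leibniz estimate needs approximation theory.

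\emph{Item 1} is the boundary-of-a-boundary identity. By \Cref{def:ext-deriv}, $(\dd_{k+1}\dd_k\alpha)(\sigma^{k+2}) = \alpha(\partial\partial\sigma^{k+2})$. For the primal prismatic complex, each $(k{+}2)$-cell $\sigma$ and each $k$-face $\tau$ of it are connected through exactly two intermediate $(k{+}1)$-cells, and these carry opposite products of incidence numbers $[\sigma:\rho][\rho:\tau]$. This is the standard fact for a regular CW complex with consistent orientations. For the dual complex, $\tD_k = \pm D_{d-k-1}^T$ (with the orientation signs of \Cref{subsect:grid}). Hence $\tD_{k+1}\tD_k = \pm (D_{d-k-1}D_{d-k-2})^T = 0$.

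\emph{Item 2} follows directly from the matrix form of \Cref{def:contraction}. With $\bom = \tD_1\bv$,
\[
\ip{\bv}{\Iv(\bom)}_1 = \bv^T\bM_1\Iv(\bom) = \tfrac12\bigl(\bv^T\tU\tD_1\bv - \bv^T\tD_1^T\tU^T\bv\bigr).
\]
The second term is the transpose of the scalar $\bv^T\tU\tD_1\bv$, so the two terms are equal and the difference vanishes. Only the matrix structure is used here, not the form of $\bar\u_j$. This is why the identity holds for any linear reconstruction, and also for any reconstruction at all, since $\tU(\bv)$ is simply frozen.

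\emph{Item 4}: the Cartan formula is the content of \Cref{def:Lie}. What remains is to check that the two compositions are well defined on each degree $k$. This means extending $\Iv$ to $C^k(\KKs)\to C^{k-1}(\KKs)$ for $k=1,3$ in the same extrusion manner, with $\Iv$ on $0$-forms set to zero. I would also note the consequence $\tdd\tLie_\bv = \tLie_\bv\tdd$, which follows from item 1.

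\emph{Item 3} is the main obstacle. I would work on a single dual 3-cell $\sigma^*$, a Voronoi prism of diameter $\OO(h)$, and compare each discrete term with its continuum counterpart. By de~Rham commutativity \eqref{eq:deRham_commutativity}, $\tdd\tilde\alpha = \mathcal{R}_h\dd\alpha$. I would then show that each discrete wedge, evaluated on a cell of measure $\OO(h^2)$ (faces) or $\OO(h^3)$ (volumes), equals $\int(\alpha\wedge\beta)$ or $\int(\dd\alpha\wedge\beta)$ up to an error. That error comes from freezing $\alpha,\beta$ at a cell point and from the reconstruction defect of \Cref{prop:recon_accuracy}(i), which reproduces constants.

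Summing the boundary faces of $\sigma^*$ with incidence signs, the continuum parts cancel exactly by Stokes and the Leibniz rule $\dd(\alpha\wedge\beta)=\dd\alpha\wedge\beta-\alpha\wedge\dd\beta$. Exactness on constant-coefficient forms leaves only first-order Taylor remainders, each of size $h\,\nrm{\alpha}_{W^{1,\infty}}\nrm{\beta}_{W^{1,\infty}}\cdot|f^*|$. This gives the $Ch^2$ bound after summing a uniformly bounded number of faces (bounded valence, \Cref{ass:mesh_reg}).

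The delicate point is verifying that the extrusion weights $w_{jk}$ make the discrete $1\wedge1$ product exact on constant 1-forms over each dual face. I would check this using the Delaunay--Voronoi orthogonality $\hat e_k\perp f_k^*$ and the constant-reproduction of $G_i^{-1}$. If exactness fails, I would instead absorb the metric mismatch via the rigidity argument of \Cref{prop:hodge_rigidity}. The bound is stated with $h^2$ at cell scale rather than as a density, so only first-order local consistency is required.
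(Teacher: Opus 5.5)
Items 1, 2 and 4 are fine and essentially match the paper. For the dual half of item 1, your transpose relation $\tD_k=\pm \bD_{d-k-1}^T$ is a slightly cleaner route than repeating the $\partial^2=0$ count. Item 2 is the same transpose cancellation, and item 4 holds by \Cref{def:Lie}.

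The gap is in item 3, exactly at the step you call delicate. The extrusion-based $1$--$1$ wedge is not exact on constants; in fact it is not a consistent approximation of $\int_{f_k^*}\alpha\wedge\beta$ at all.
\begin{itemize}
\item Take $\alpha$ constant with proxy $\mathbf a$. Constant reproduction (\Cref{prop:recon_accuracy}(i)) gives $\bar\u_j(\tilde\alpha)=\mathbf a$ on every dual edge of $\partial f_k^*$. So every weight is a multiple of the single number $\mathbf a\cdot\hat e_k$.
\item With $w_{jk}=\tfrac12 D_{1,jk}\,\bar\u_j\cdot\hat e_k$, as in the paper's proof, the face value collapses to $\pm\tfrac12(\mathbf a\cdot\hat e_k)(\tD_1\tilde\beta)_k=\pm\tfrac12(\mathbf a\cdot\hat e_k)\int_{f_k^*}\dd\beta$. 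This vanishes for constant $\beta$.
\item The true flux $(\mathbf a\times\mathbf b)\cdot\hat e_k\,|f_k^*|$ is blind to precisely the component $\mathbf a\cdot\hat e_k$ that the weights see. For $\mathbf a\perp\hat e_k$ the discrete wedge vanishes identically on $f_k^*$, under either normalisation of $w_{jk}$. The continuum flux is generically nonzero.
\end{itemize}
So there are no matching continuum parts to cancel by Stokes and the Leibniz rule. Your fallback via \Cref{prop:hodge_rigidity} cannot help either. That result concerns diagonal Hodge stars, whereas the mismatch here is an $\OO(1)$ relative inconsistency of the wedge itself, not a metric perturbation.

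The estimate has to be proved as a smallness bound, term by term. Note that $h^2$ is already the size of a single face value $(\tilde\alpha\twdg\tilde\beta)(f_k^*)$. The steps are:
\begin{itemize}
\item Freeze the weights on each dual face. By \Cref{prop:recon_accuracy}(i) and Taylor expansion over the $\OO(h)$ stencil, $w_{jk}(\tilde\alpha)=\tfrac12 D_{1,jk}\,\mathbf a(\mathbf x_k)\cdot\hat e_k+\OO(h\nrm{\alpha}_{W^{1,\infty}})$.
\item The frozen part, summed around the closed loop $\partial f_k^*$, gives $\pm\tfrac12\,\mathbf a(\mathbf x_k)\cdot\hat e_k\int_{f_k^*}\dd\beta=\OO(h^2)$ by discrete Stokes.
\item The weight fluctuation contributes $\OO(h)\cdot|\tilde\beta(e_j^*)|=\OO(h^2)$ per edge.
\item The mixed terms $\tdd\tilde\alpha\twdg\tilde\beta$ and $\tilde\alpha\twdg\tdd\tilde\beta$ pair an $\OO(h^2)$ face flux with an $\OO(h)$ edge value, so they are smaller still.
\end{itemize}
Summing over a bounded number of faces gives $Ch^2\nrm{\alpha}_{W^{1,\infty}}\nrm{\beta}_{W^{1,\infty}}$. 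This is the structure of the paper's proof. It splits the defect into an exact-weight part $\mathcal D^{\rm prod}$ and a reconstruction part $\mathcal D^{\rm recon}$, bounds the latter via $|\delta w_{jk}|\,|\tilde\beta(e_j^*)|\le Ch\cdot Ch$, and estimates each part at cell scale. It never compares the discrete wedge with $\int\alpha\wedge\beta$.
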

\begin{proof}
The proof is given in \Cref{app:basic}.
\end{proof}

\subsection{Discrete Functional-Analytic Structure}\label{subsect:cfem}

The operators of the preceding subsections assemble into a discrete
de~Rham complex with the same homological structure as the continuous
one. We record this structure, its closure into the primal--dual pair by
the metric, and the resulting identification of the velocity cochain with
the classical $H(\div)$ and $H(\curl)$ spaces; these are the
functional-analytic facts the convergence theory rests on.

\paragraph*{The commuting diagram.}
The dynamical variables live on the dual complex: pressure
$p\in C^0(\KKs)$, velocity $\bv\in C^1(\KKs)$, and vorticity
$\bom=\tD_1\bv\in C^2(\KKs)$. The de~Rham interpolation $\mathcal{R}_h$
(integration over cells, \Cref{def:deRham}) intertwines the continuous
exterior derivative with the coboundary, and because the relation is
Stokes' theorem the square commutes without approximation
error:
\[
\begin{array}{ccccccc}
\Lambda^0(\Omega) & \xrightarrow{\ \dd\ } & \Lambda^1(\Omega)
  & \xrightarrow{\ \dd\ } & \Lambda^2(\Omega)
  & \xrightarrow{\ \dd\ } & \Lambda^3(\Omega)\\[3pt]
\big\downarrow{\scriptstyle\,\mathcal{R}_h} & &
\big\downarrow{\scriptstyle\,\mathcal{R}_h} & &
\big\downarrow{\scriptstyle\,\mathcal{R}_h} & &
\big\downarrow{\scriptstyle\,\mathcal{R}_h}\\[3pt]
C^0(\KKs) & \xrightarrow{\ \tD_0\ } & C^1(\KKs)
  & \xrightarrow{\ \tD_1\ } & C^2(\KKs)
  & \xrightarrow{\ \tD_2\ } & C^3(\KKs)
\end{array}
\qquad \tD_k\,\mathcal{R}_h=\mathcal{R}_h\,\dd_k .
\]
The coboundary operators satisfy $\tD_{k+1}\tD_k=0$
(\Cref{def:ext-deriv}); this yields a discrete
Hodge--Helmholtz decomposition, on which the discrete Leray projector
$\bP_h$ is built. The Whitney map $\mathcal{W}_h$ (\Cref{def:Whitney})
furnishes the reverse ladder, $\dd\,\mathcal{W}_h=\mathcal{W}_h\,\tD$
with $\mathcal{R}_h\mathcal{W}_h=\mathrm{Id}$, so the two interpolations
relate the continuous and discrete complexes as a commuting pair.

\paragraph*{Metric and the primal--dual pair.}
The exterior derivative is metric-free; all metric information,
and all approximation error, resides in the Hodge star. The star closes
the single complex above into the primal--dual pair
\[
\begin{array}{ccccccc}
C^0(\KK) & \xrightarrow{\ \bD_1\ } & C^1(\KK)
  & \xrightarrow{\ \bD_2\ } & C^2(\KK)
  & \xrightarrow{\ \bD_3\ } & C^3(\KK)\\[3pt]
\big\updownarrow{\scriptstyle\,\star_0} & &
\big\updownarrow{\scriptstyle\,\star_1} & &
\big\updownarrow{\scriptstyle\,\star_2} & &
\big\updownarrow{\scriptstyle\,\star_3}\\[3pt]
C^3(\KKs) & \xleftarrow{\ \tD_2\ } & C^2(\KKs)
  & \xleftarrow{\ \tD_1\ } & C^1(\KKs)
  & \xleftarrow{\ \tD_0\ } & C^0(\KKs),
\end{array}
\]
with columns aligned by Hodge duality
($\star_k:C^k(\KK)\to C^{3-k}(\KKs)$, \Cref{def:hodge}) and primal and
dual coboundaries related by transposition, the discrete form of
integration by parts. On a Delaunay--Voronoi mesh the primal--dual
orthogonality makes each $\bM_k$ diagonal,
$(\bM_k)_{ii}=|\sigma_i^*|/|\sigma_i|$, so the codifferential
$\delta_h=\bD\circ\bM$ is local and explicitly invertible. The diagonal
star is an $\OO(h^{r_\star})$ approximation of the continuous Hodge star
(\Cref{lem:hodge_error}); all approximation error in the scheme is
concentrated in it.

\paragraph*{$H(\div)$ and $H(\curl)$.}
The classical conforming spaces at lowest order correspond to cochain degrees through
their degrees of freedom:
\begin{center}
\begin{tabular}{llll}
  \toprule
  Continuous space & Form degree & Degree of freedom & DEC cochain \\
  \midrule
  $H^1$       & $0$-form & point value
              & $C^0$ (vertices)\\
  $H(\curl)$  & $1$-form & $\int_{e}\bu\cdot\bt\,d\ell$ (circulation)
              & $C^1$ (edges)\\
  $H(\div)$   & $2$-form & $\int_{f}\bu\cdot\bn\,dA$ (flux)
              & $C^2$ (faces)\\
  $L^2$       & $3$-form & $\int_{K}\bu\,dV$
              & $C^3$ (cells)\\
  \bottomrule
\end{tabular}
\end{center}
The lowest-order conforming spaces are the Whitney forms
(\Cref{def:Whitney}): the lowest-order N\'ed\'elec (edge) element is the
Whitney $1$-form and the lowest-order Raviart--Thomas (face) element is
the Whitney $2$-form~\cite{nedelec1980,raviartthomas1977,bossavit1998}.
The spaces $H(\curl)$
and $H(\div)$ are the same complex related by $\star_1$: a
$1$-cochain (circulation) and its Hodge dual (flux on the complementary
dual cell) are the two ends of $\bM_1$.
The velocity is a dual $1$-cochain with circulation degrees of freedom
$(\mathcal{R}_h\bu^\flat)_j=\int_{e_j^*}\bu\cdot d\ell$
(\Cref{def:deRham}), hence on the $H(\curl)$, tangential side, placed on
dual edges; its Hodge dual $\bM_1\bv$ is the $H(\div)$, normal-flux
representation on primal faces. The two exterior operations on velocity
therefore have different status: the vorticity $\tD_1\bv$ uses no Hodge
star and is the $H(\curl)$-side curl
\eqref{eq:deRham_commutativity}, whereas the divergence
$\bD_2\bM_1\bv\in C^3(\KK)$ uses $\bM_1$ to the $H(\div)$ side
and is consistent only to $\OO(h^{r_\star})$
(\Cref{lem:interp_error}\,(iii)). 

\begin{remark}[Relation to compatible finite elements]
The complex above coincides topologically with
that of compatible finite element methods (CFEM)~\cite{cotter2023}, the finite element descendant of the Arakawa
C-grid~\cite{arakawalamb1977}; the lowest-order, mass-lumped CFEM is the
primal--dual mimetic scheme
of~\cite{thuburncotter2012,thuburncotter2015}, which coincides with the
DEC complex. The frameworks differ in the metric. In CFEM the
codifferential is defined weakly and its evaluation inverts a
globally coupled mass matrix; mass-lumping
that Galerkin Hodge star produces the local diagonal star used
here. 
Two further differences are structural: the DEC
complex realises only the lowest-order Whitney representatives, with no
analogue of the higher N\'ed\'elec/Raviart--Thomas families, so its
accuracy is governed by the Hodge-star exponent $r_\star$
(\eqref{eq:rstar_def}) rather than a local polynomial degree; and the
commuting projection of CFEM, bounded on the full energy space, is here
the de~Rham map $\mathcal{R}_h$, which commutes but is defined
only on regular forms.
\end{remark}

\begin{remark}[Relation to mimetic and virtual element inner products]\label{rem:vem}
The metric closure admits a comparison against mimetic and
virtual element inner products on general polytopal
meshes~\cite{BrezziLipnikovShashkov2005,beiraoVeiga2013basic}, whose
local matrices are fixed by polynomial consistency only up to a
stabilisation family; lowest-order virtual elements are equivalent to
mimetic finite differences. \Cref{prop:hodge_rigidity} locates the
present scheme in this numerical design space: the DEC star is the unique
diagonal member, available on orthogonal pairings, with
vanishing stabilisation. The complementary fact is that the
conservation structure established in \Cref{section_EulerIncomp} is
indifferent to the choice. The antisymmetry underlying kinetic energy
conservation is the matrix identity
\eqref{eq:antisym_11}, in which $\bM_1$ enters only through the
cancellation $\bM_1\bM_1^{-1}$ inside the contraction
(\Cref{def:contraction}); the Bernoulli gradient cancels through the
incompressibility constraint itself; 
and the commutativity $\tD_1\mathcal{R}_h = \mathcal{R}_h\,d$ is purely
combinatorial. Consequently the conservation laws of
\Cref{section_EulerIncomp} (energy and Kelvin circulation) 
remain intact when the diagonal star is replaced by a symmetric
positive-definite inner product of mimetic or virtual element type on
a general polytopal mesh, with the discrete divergence-free space
redefined as $\ker(\bD_2\bM_1)$ for $\bM_1$.
Delaunay--Voronoi orthogonality renders the
inner product diagonal (local and explicitly invertible,
the rigidity of \Cref{prop:hodge_rigidity}), and it underlies the
projection bound (\Cref{lem:proj_error}) on which the smooth-regime
rates rest. 
\end{remark}

\subsection{Standing Approximation Properties}\label{subsect:approx_hypotheses}

The scheme rests on a collection of approximation properties,
each of which is verified below. This list serves as
the FEEC analogue of Arnold--Falk--Winther~\cite{arnold2006,arnold2010};
the verification that the DEC scheme on Delaunay--Voronoi meshes
satisfies these properties enables the convergence theory.

\begin{enumerate}[label=\textup{(H\arabic*)},ref=\textup{H\arabic*}, leftmargin=3em, itemsep=0.05mm]
\item\label{H:cochain_complex}
\textit{Cochain complex.} The coboundary operators $\tD_0, \tD_1, \tD_2$
satisfy $\tD_{k+1}\circ\tD_k = 0$, so
$C^0\xrightarrow{\tD_0} C^1\xrightarrow{\tD_1} C^2\xrightarrow{\tD_2} C^3$
is a cochain complex. This is a combinatorial property of
the cell complex and holds by construction (\Cref{def:ext-deriv}).

\item\label{H:deRham_commutativity}
\textit{De~Rham commutativity.} For every smooth differential form
$\alpha$ of degree $k$, $\tD_k\mathcal{R}_h\alpha = \mathcal{R}_h(d\alpha)$
(\Cref{prop:derham_commutativity}). This is a consequence of
Stokes' theorem applied to cells and is exact, with no approximation.

\item\label{H:Hodge_accuracy}
\textit{Hodge star accuracy.} The diagonal Hodge star $\bM_k$ approximates
the metric pairing of a $k$-cochain with the dual cell integral of a
continuous $k$-form to accuracy $\OO(h^{r_\star})$ in the $L_h^2$-norm
(\Cref{lem:hodge_error}), with $r_\star = 1$ on general Delaunay--Voronoi meshes
and $r_\star = 2$ under \Cref{prop:centroid_proximity} (centroid proximity).

\item\label{H:reconstruction_accuracy}
\textit{Reconstruction accuracy.} The averaging reconstruction
$\bar\u_j(\bv)$ at primal faces is an $\OO(h^{r_{\rm rec}})$
approximation of pointwise velocity evaluation
(\Cref{prop:recon_accuracy}), with $r_{\rm rec} = 1$ on general
meshes and $r_{\rm rec} = 2$ under reconstruction symmetry (equivalently,
on meshes on which opposing faces contribute symmetrically to the
average).

\item\label{H:antisymmetry}
\textit{Lamb antisymmetry.} The discrete contraction satisfies
$\ip{\bv}{\Iv(\tD_1\bv)}_1 = 0$ for $\bv\in C^1(\KKs)$
(\Cref{prop:extrusion}, part~2). This algebraic identity
requires \emph{no} mesh regularity beyond the Delaunay property.

\item\label{H:bounded_projection}
\textit{Bounded Leray projection.} The discrete Leray projector
$\bP_h$ onto $V_h = \ker(\bD_2\bM_1)$ is $L_h^2$-orthogonal and
satisfies $\nrm{\bP_h} = 1$. For smooth divergence-free velocities,
$\nrm{(I-\bP_h)\mathcal{R}_h\bu^\flat}_{L_h^2} = \OO(h^{r_\star})$
(\Cref{lem:proj_error}). This is the discrete analogue of the
bounded-cochain-projection hypothesis of FEEC
\cite[Thm.~5.2]{arnold2006}.

\item\label{H:discrete_Poincare}
\textit{Discrete Poincar\'e inequality.} There exists
$\lambda_1^h > 0$, bounded below independently of $h$, such that
$\nrm{e}_{L_h^2}^2 \le (\lambda_1^h)^{-1}\nrm{\tD_1 e}_{\bM_2}^2$
for every $e \in V_h^\perp$ (\Cref{thm:poincare}). The uniform
lower bound follows via Whitney-lift norm-equivalence from the
FEEC Poincar\'e inequality of
Arnold--Falk--Winther~\cite{arnold2006,arnold2010}.

\item\label{H:Poincare_0}
\textit{Discrete Poincar\'e inequality.} There exists
$\mu_1^h > 0$, bounded below independently of $h$, such that
$\nrm{q}_{L_h^2}^2 \le (\mu_1^h)^{-1}\nrm{\tD_0 q}_{\bM_1}^2$
for $q\in C^0(\KKs)$ with $\sum_i(\bM_0)_{ii}q_i = 0$. The
uniform lower bound follows via Whitney-lift norm-equivalence from
standard FEEC Poincar\'e inequality at the 0-form level on
mean-zero subspace, with $\mu_1^h$ converging as $h\to 0$ to the
smallest nonzero eigenvalue $\mu_1$ of the continuous
Laplacian. 

\item\label{H:Whitney_interp}
\textit{Whitney interpolation bounds.} The Whitney map
$\mathcal{W}_h:C^k(\KKs)\to\Lambda^k_h$ satisfies the standard
interpolation estimates $\nrm{\mathcal{W}_h\cdot}_{L^2(\Omega)} \sim \nrm{\cdot}_{L_h^2}$
and $\nrm{d\mathcal{W}_h\cdot}_{L^2(\Omega)} \sim \nrm{\tD_k\cdot}_{\bM_{k+1}}$
up to $\OO(h^{r_\star})$ multiplicative errors (see \cite{dodziuk1976,bossavit1988}).

\item\label{H:Whitney_Lp}
\textit{Whitney $L^p$-equivalence.} For $1<p<\infty$ and $k=0,1,2$,
the Whitney map satisfies
$\nrm{\mathcal{W}_h e}_{L^p(\Omega)}\sim\nrm{e}_{L_h^p}$
for  $e\in C^k(\KKs)$, with equivalence constants depending
only on $p$ and the mesh regularity. The discrete $L_h^p$-norm at
level $k$ is the natural lift of the diagonal $\bM_k$-pairing to
$L^p$.
This extends the $L^2$-equivalence of
\ref{H:Whitney_interp} to general Lebesgue exponents and is a
classical Whitney-form result on shape-regular meshes
(\cite{dodziuk1976,christiansen2007}).
\end{enumerate}

\section{Discrete Incompressible Euler Equations and Their Invariants}\label{section_EulerIncomp}

This section formulates the discrete incompressible Euler equations
and establishes their conservation properties.
The continuous equations, written in the vector-invariant form
of Arnold~\cite{arnold1966}, are
\begin{equation}\label{Euler:cont}
  \frac{\partial v}{\partial t} + \iota_{{\u}}\,\omega + \dd B = 0,
  \qquad
  \dd\star v = 0,
\end{equation}
with velocity 1-form $v = \u^\flat$, vorticity 2-form $\omega = \dd v$, and Bernoulli function
$B = \tfrac{1}{2}|\u|^2 + p + \Phi$, with external potential $\Phi$.
Applying the exterior derivative to the momentum equation
produces the vorticity equation
\begin{equation}\begin{split}
 &\partial_t\omega + \dd(\iota_{{\u}}\omega) = 0,\quad
 \text{or equivalently }\quad\partial_t\omega + \Lie_{\u}\omega = 0,
\end{split}\end{equation}
where we have used Cartan's formula and $\dd\omega=0$.
The continuous Euler equations conserve four fundamental quantities that will be mirrored at the discrete level:
time-reversibility and energy conservation are kinematic
(following from the vector-invariant form alone, with energy
$E = \frac{1}{2}\int_\Omega v\wedge\star v + \int_\Omega\Phi\,\mathrm{vol}$
conserved because the Lamb vector does no work);
Kelvin circulation $\frac{d}{dt}\oint_{c(t)} v = 0$ is topological
(a consequence of $d^2=0$, holding without approximation);
and helicity $\frac{d}{dt}\int_\Omega v\wedge\dd v = 0$ (knottedness of
vortex lines in 3D) is {metric-dependent} (requiring compatibility
of wedge and Hodge star), preserved discretely only at the rate governed
by the Hodge-star accuracy. This three-tier structure (exact kinematic,
exact topological, approximate metric) is the organisational principle
of \Cref{subsection_InvariantsIncomp}.

\subsection{Discrete Euler Equations}
The discrete incompressible Euler equations are given by
 \medskip
\begin{eqbox}
\begin{align}
    &\frac{\dd\v}{\dd t}
    + \Iv(\bom)
    + \tD_0\, (p+\ekin +\Phi) = 0,\label{eq:M}\\
    &D_2\;\star_2^{-1}\;\v = 0.\label{eq:D}
\end{align}

\end{eqbox}
\medskip
The potential $\bPhi\in C^0(\KKs)$ is a time-independent dual 0-cochain, representing gravitational, centrifugal, or any conservative body force. Standard choices are $\bPhi_i = g z_i$ (gravity, $z_i$ = cell-centre height),
$\bPhi_i = -\frac{1}{2}|\Omega\times x_i|^2$ (centrifugal), or a sum of both.

\begin{remark}[Rotation]\label{rem:coriolis}
The centrifugal part of the Coriolis force is already accommodated
through $\bPhi$, being conservative. The remaining planetary
contribution is incorporated, when desired, by replacing the relative
vorticity in the Lamb term with {total} (absolute) vorticity:
$\Iv(\bom)\mapsto\Iv(\bom + \mathbf{f})$, where $\mathbf{f}\in C^2(\KKs)$
is the planetary-vorticity 2-cochain ($\mathbf{f} = 2\,\mathcal R_h\,
(\boldsymbol\Omega\!\cdot\!\mathbf{n})\,\mathrm{dA}$ on the sphere).
This is the only modification required, and it is structurally
transparent: the energy identity is unaffected, since the
antisymmetry $\ip{\bv}{\Iv(\bom)}_1 = 0$ holds verbatim for any
2-cochain in place of $\bom$, so $\ip{\bv}{\Iv(\bom+\mathbf{f})}_1 = 0$
and the planetary term contributes no kinetic energy; Kelvin
circulation conservation carries the standard planetary-vorticity
flux. 
\end{remark}

The discrete vorticity equation follows by applying $\tD_1$ to \eqref{eq:M}:
\begin{equation}
  \frac{\dd\bom}{\dd t}
  + \tD_1\,\Iv(\bom) = 0,\quad\text{equivalently }\quad \frac{\dd\bom}{\dd t}+ \tLie_{\bv}\bom=0.
\end{equation}
The vorticity equation encodes different mechanisms dependent on the spatial dimension.  
\paragraph{Vorticity in 2D and 3D}\label{rem:Lie_2D}
In two dimensions (continuous) vorticity is a scalar that is materially conserved along
particle trajectories, with $D\omega/Dt = 0$.
In three dimensions the vorticity equation reads
\[
\partial_t\bom
  +(\u\cdot\nabla)\bom
  +(\bom\cdot\nabla)\u=0,
\]
with an additional \emph{vortex-stretching} term that amplifies
and reorients vorticity via strain; this term is absent in 2D and
is responsible for the fundamental differences between 2D and 3D
turbulence. Both cases can be written succinctly as
$\partial_t\bom +\tLie_{\bv}\bom=0$.

\paragraph{The discrete Lie derivative.}
The Lie derivative $\tLie_{\bv}\bom = \tdd_1(\Iv\bom)$ captures both
advection and stretching simultaneously via the extrusion-based
contraction: $\Iv\bom$ projects the reconstructed velocity along primal
edges (coupling all velocity and vorticity components), and $\tdd_1$
distributes the result as a 2-form update. In 2D, $\Iv\bom$ is a dual
1-form whose value on each dual edge $e_j^*$ is the vorticity flux across
$e_j^*$, and $\tdd_1$ sums these fluxes around each dual 2-cell:
the discrete vorticity equation is a pure advective transport equation in
flux form, a finite-volume update, reflecting the absence of vortex
stretching in 2D. In 3D, the contraction decomposes naturally into
horizontal/vertical parts on prismatic meshes, with the cross-terms
$\Iv^{hv}(\bom^v) + \Iv^{vh}(\bom^h)$ encoding the stretching-and-tilting
mechanism. The form-based Lie derivative treats transport and stretching
as a single geometric object, avoiding the need to split them:
the dimension-dependence is encoded in the mesh topology
and the extrusion weights, not in any modification of
the algebraic formula.

\begin{proposition}[Uniqueness of the projected momentum equation]
\label{prop:VI_cons_equiv_incomp}
On a Delaunay--Voronoi mesh, the Leray-projected discrete
momentum equation has the unique form
$\ddt\bv = -\bP_h\Iv(\bom)$.
Any discrete advection operator that differs from the
Lie-derivative form $\Iv(\bom) + \tdd_0 q$ by a gradient
$\tdd_0 g$ for some $g\in C^0(\KKs)$ yields the same
projected equation.

In particular, the distinction between vector-invariant
advection $\Iv(\bom) + \tdd_0 q$ and any alternative form
that differs from it by a gradient is absorbed into the pressure.
\end{proposition}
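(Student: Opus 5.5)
The plan is to reduce everything to one fact: the discrete Leray projector $\bP_h$ annihilates discrete gradients $\tD_0 g$. Here $\bP_h$ is the $L_h^2$-orthogonal projector onto $V_h=\ker(\bD_2\bM_1)$, as in \eqref{H:bounded_projection}. Once this is established, we apply $\bP_h$ to \eqref{eq:M}. Both the Bernoulli term $\tD_0(p+\ekin+\Phi)$ and any gradient modification $\tD_0 g$ of the advection operator then drop out. The resulting projected equation is the same for every such operator.

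\emph{Step 1: gradients are $L_h^2$-orthogonal to $V_h$.} For $q\in C^0(\KKs)$ and $\bw\in V_h$ we have
\[
\ip{\tD_0 q}{\bw}_1 = q^T\tD_0^T\bM_1\bw .
\]
The primal--dual pair of \Cref{subsect:cfem} relates the coboundaries by transposition, $\tD_0^T=\pm\bD_2$ up to the fixed orientation convention. Hence
\[
\ip{\tD_0 q}{\bw}_1=\pm\, q^T\bD_2\bM_1\bw=0
\]
because $\bw\in V_h$. It follows that $\operatorname{range}(\tD_0)\subseteq V_h^\perp$, and therefore $\bP_h\tD_0 q=0$.

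\emph{Step 2: the velocity stays in $V_h$.} Constraint \eqref{eq:D} says $\bv(t)\in V_h$ for all $t$. Since $V_h$ is a fixed linear subspace independent of $t$, also $\ddt\bv\in V_h$, and so $\bP_h\ddt\bv=\ddt\bv$.

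\emph{Step 3: project the momentum equation.} Applying $\bP_h$ to \eqref{eq:M} and using Steps 1 and 2 gives $\ddt\bv=-\bP_h\Iv(\bom)$. If the advection operator is replaced by $\Iv(\bom)+\tD_0 q+\tD_0 g$, the extra term $\bP_h\tD_0 g$ vanishes by Step 1, so the projected equation is unchanged. The uniqueness claim follows because the projected equation depends only on $\bP_h$ applied to the advection operator. Conversely, the discarded component $(I-\bP_h)(\cdot)$ lies in $V_h^\perp$. Absorbing it into the pressure requires $V_h^\perp\subseteq\operatorname{range}(\tD_0)$, which I would justify by a dimension count: $V_h^\perp=\operatorname{range}(\bM_1^{-1}\bD_2^T)$ since $\bM_1$ is symmetric positive definite, and this equals $\operatorname{range}(\tD_0)$ up to the diagonal rescaling by $\bM_1^{-1}$ in the $L_h^2$ inner product. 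The pressure is then recovered by solving a Poisson problem $\bD_2\bM_1\tD_0 p=\bD_2\bM_1(\cdot)$, which is solvable modulo constants by \eqref{H:Poincare_0}.

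\emph{Main obstacle.} The delicate point is the identity $\tD_0^T=\pm\bD_2$ together with correct orientations, and the related issue of whether $(I-\bP_h)$ maps exactly onto the discrete gradients in the metric $\ip{\cdot}{\cdot}_1$. The matching of gradients with $V_h^\perp$ holds in the $L_h^2$ metric only after accounting for $\bM_1$ in the orthogonal complement, and I would verify this by checking $\bM_1$-orthogonality directly as in Step 1. On the closed manifold, harmonic $1$-cochains lie in $V_h$ and are not gradients. They are therefore unaffected by $\bP_h$, which is consistent with the claim.
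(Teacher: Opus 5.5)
Your Steps 1--3 are correct and follow the paper's route. The paper's proof simply asserts $\operatorname{range}(\tD_0)=V_h^\perp$, so that $\bP_h$ annihilates $\tD_0(p+q+\Phi)$ and any added $\tD_0 g$. Your summation-by-parts computation $\ip{\tD_0 q}{\bw}_1=\pm\,q^T\bD_2\bM_1\bw=0$ supplies the inclusion $\operatorname{range}(\tD_0)\subseteq V_h^\perp$ that this needs. The paper also invokes the discrete Cartan formula to identify the Lie-derivative form; you do not need it, since the statement takes that form as given. The inclusion alone proves the uniqueness and gradient-invariance claims. It also gives absorption into the pressure in the trivial sense that the alternative-form equation with pressure $p$ is the Lie-form equation with pressure $p+g$.

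The one step that fails as written is your justification of the converse inclusion.

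\begin{itemize}
\item The formula $V_h^\perp=\operatorname{range}(\bM_1^{-1}\bD_2^T)$ is wrong. For $x=\bM_1^{-1}\bD_2^T y$ one gets $\ip{x}{\bw}_1=y^T\bD_2\bw$, which does not vanish on $\ker(\bD_2\bM_1)$. That range is the $\bM_1$-orthogonal complement of $\ker\bD_2$, not of $V_h$.
\item ``Equal to $\operatorname{range}(\tD_0)$ up to the diagonal rescaling by $\bM_1^{-1}$'' is not a valid inference. A diagonal rescaling does not map one subspace onto the other, so it does not give equality of ranges.
\end{itemize}

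The correct computation is exact and has no metric subtlety. As $\bw$ runs over $V_h$, $z=\bM_1\bw$ runs over $\ker\bD_2$. Hence
\[
V_h^\perp=\{x : x^T z=0 \text{ for all } z\in\ker\bD_2\}=\operatorname{range}(\bD_2^T)=\operatorname{range}(\tD_0).
\]
The $\bM_1$ in the inner product cancels the $\bM_1$ in the constraint, which resolves the ``main obstacle'' you flag.

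Alternatively, finish with the dimension count you announced but did not carry out: $\dim V_h^\perp=\operatorname{rank}(\bD_2\bM_1)=\operatorname{rank}(\bD_2)=\operatorname{rank}(\tD_0)$. Together with Step~1 this gives equality. The converse is what guarantees that the discarded component $(I-\bP_h)\Iv(\bom)$ is itself a discrete gradient, i.e.\ that a pressure exists. With it, your Poisson-problem remark is fine.
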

\begin{proof}
The discrete Cartan formula (\cref{def:Lie}) gives
\[
(\tLie_{\bv}\bv)_j
  = (\Iv(\bom))_j + (\tdd_0\,q)_j,
\]
where $\bom = \tD_1\bv$ is the vorticity and
$q = \Iv\bv\in C^0(\KKs)$ is the discrete kinetic energy density.
This is an algebraic identity, not an approximation.
The unprojected momentum equation~\eqref{eq:M} reads
$\ddt\bv + \Iv(\bom) + \tdd_0(p + q + \Phi) = 0$.
Since $\operatorname{range}(\tdd_0) = V_h^\perp$, the Leray
projection $\bP_h$ onto $V_h = \ker(\bD_2\bM_1)$ annihilates
every gradient.
Hence any discrete advection of the form
$\Iv(\bom) + \tdd_0(q + g)$
for arbitrary $g\in C^0(\KKs)$ yields the same projected equation.
\end{proof}

\begin{remark}[Vector-invariant versus conservative form]
\label{rem:Perot_forms}
The discrete Lie derivative $\tLie_{\bv}\bv$ and a discrete
tensor-flux divergence $\nabla_h\cdot(\bv\otimes\bv)$ differ by the
Leibniz defect $\OO(h^2)$ (\cref{prop:extrusion}, Property~3).
The defect is a gradient, so the projected equations agree
(\cref{prop:VI_cons_equiv_incomp}); all conservation properties
(energy, circulation, helicity) depend only on the projected equation
$\ddt\bv = -\bP_h\Iv(\bom)$.
See Perot~\cite{Perot_JCP} for a related analysis.
\end{remark}

\medskip
\paragraph*{Pressure equation}
We obtain the pressure equation by applying the divergence to \eqref{eq:M}
and using that $\ddt(D_2\,\star_2^{-1}\,\v)=0$.
The pressure is obtained by solving the Poisson equation
\[
	\Delta_h\; p=-D_2\;\star_2^{-1}\;\big(\Iv(\bom)+ \tD_0K\big).
\]
with  the {discrete Laplacian} on dual 0-forms given by
$$\Delta_h :=D_2\,\star_2^{-1}\,\tD_0: C^0(\KKs)\to C^3(\KK).$$

\subsection{Discrete Invariants}\label{subsection_InvariantsIncomp}
The discrete Euler equations inherit all four invariants
of the continuous system: time reversibility and energy conservation
hold as algebraic identities for any linear velocity reconstruction;
Kelvin circulation holds by the cochain complex property;
helicity conservation is approximate, controlled by the reconstruction
and Whitney consistency of the velocity and vorticity at cochain levels
$k=1,2$ (\cref{lem:helicity_consistency}).

\subsubsection{Time Reversibility}
\label{sec:time_reversibility}

\begin{theorem}[Discrete time reversibility]\label{thm:time_reversal}%
The semi-discrete Euler system \eqref{eq:M}- \eqref{eq:D}
is invariant under the discrete time-reversal map
\[
  \mathcal{T}_h:\;
  (t,\,\bv,\,\bom,\,B)
  \;\longmapsto\;
  (-t,\,-\bv,\,-\bom,\,B).
\]
That is, if $\bigl(\bv(t),\bom(t),B(t)\bigr)$ is a solution
on $[0,T]$, then $\bigl(-\bv(T-\tau),\,-\bom(T-\tau),\,B(T-\tau)\bigr)$
is a solution on $[0,T]$.
\end{theorem}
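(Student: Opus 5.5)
Write $\hat\bv(\tau) := -\bv(T-\tau)$, $\hat\bom(\tau) := -\bom(T-\tau)$ and $\hat B(\tau) := B(T-\tau)$, where $B := p+\ekin+\Phi$ is the discrete Bernoulli cochain. The plan is to substitute these into \eqref{eq:M}--\eqref{eq:D} and check each term's behaviour under the sign flip, using only that each discrete operator has a definite degree of homogeneity in the velocity. This is the discrete counterpart of the continuum fact that the Lamb vector is quadratic and the Bernoulli function even.

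The constraint and the vorticity relation are linear, so they are immediate. We have $D_2\star_2^{-1}\hat\bv = -D_2\star_2^{-1}\bv(T-\tau) = 0$, and $\tD_1\hat\bv = -\tD_1\bv(T-\tau) = \hat\bom$ because $\tD_1$ is linear and metric-free (\Cref{def:ext-deriv}). For the time derivative the chain rule gives $\frac{\dd\hat\bv}{\dd\tau}(\tau) = +\frac{\dd\bv}{\dd t}(T-\tau)$: one sign comes from $\hat\bv=-\bv$ and one from $t = T-\tau$.

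The key step is the quadratic homogeneity of the Lamb term. In \Cref{def:contraction} the reconstruction $\bar\u_j(\bv)$ is linear in $\bv$, so $\tU(-\bv) = -\tU(\bv)$. From the matrix form $\bM_1 I_{\hat\bv}(\hat\bom) = \tfrac12\bigl(\tU(\hat\bv)\hat\bom - \tD_1^T\tU(\hat\bv)^T\hat\bv\bigr)$, each term carries two sign flips, one from $\tU$ and one from $\hat\bom$ or $\hat\bv$. Hence $I_{\hat\bv}(\hat\bom) = \Iv(\bom)$ evaluated at $T-\tau$, and equivalently $\bQ(-\bv,-\bv)=\bQ(\bv,\bv)$ by the bilinearity \eqref{eq:Q_bilinear}.

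Assembling the pieces, at time $\tau$ the left side of \eqref{eq:M} becomes
\[
\tfrac{\dd\bv}{\dd t}(T-\tau) + \Iv(\bom)(T-\tau) + \tD_0 B(T-\tau),
\]
with $\hat B = B(T-\tau)$ unchanged. This expression vanishes because the original system holds at $T-\tau$.

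The main obstacle is consistency of the unchanged $B$. We must confirm that $\hat B$ is really the Bernoulli cochain of the reversed state: $\ekin=\Iv\bv$ is again quadratic in $\bv$, and $\Phi$ is time-independent. The pressure is not free but is determined by the Poisson equation $\Delta_h p = -D_2\star_2^{-1}\bigl(\Iv(\bom)+\tD_0\ekin\bigr)$, whose right-hand side is even in $\bv$. Uniqueness of $p$ modulo constants then gives $\hat p(\tau)=p(T-\tau)$. Equivalently, in projected form, $\frac{\dd\hat\bv}{\dd\tau} = -\bP_h I_{\hat\bv}(\hat\bom)$ follows directly since $\bP_h$ is linear and independent of $\bv$ (\Cref{prop:VI_cons_equiv_incomp}).

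Finally, since $-\bv(T-\tau)$ ranges over $[0,T]$ exactly when $\tau$ does, the reversed triple is a solution on $[0,T]$, which is the claim.
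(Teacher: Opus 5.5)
Your proof is correct and follows the same route as the paper: substitute $(\hat\bv,\hat\bom,\hat B)(\tau)=(-\bv(T-\tau),-\bom(T-\tau),B(T-\tau))$, take one sign from the chain rule and one from $\hat\bv=-\bv$, use that the Lamb term is even, and note that the divergence constraint is linear. Your term-by-term sign count in the matrix form of $\Iv$ is more accurate than the paper's appeal to bilinearity in $(\bv,\bom)$, since the term $\tD_1^T\tU(\bv)^T\bv$ does not involve $\bom$; likewise, your check that $\hat B$ is the Bernoulli cochain of the reversed state, via the even right-hand side of the pressure Poisson problem, is a consistency step the paper leaves implicit.
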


\begin{proof}
Set $(\hat\bv,\hat\bom,\hat B)(\tau) = (-\bv(T{-}\tau),\,-\bom(T{-}\tau),\,B(T{-}\tau))$.
The chain rule gives $\partial_\tau\hat\bv = -\partial_\tau\bv(T{-}\tau)
= +\partial_t\bv|_{t=T-\tau}$. The discrete contraction is bilinear in $(\bv,\bom)$,
so $\Iv(\hat\bom)\big|_{\hat\bv} = I_{-\bv}(-\bom)|_{T-\tau} = +\Iv(\bom)|_{T-\tau}$. 
And $\tD_0\hat B = +\tD_0 B|_{T-\tau}$ since $B$ is
unchanged. Substituting into~\eqref{eq:M} for the time-reversed system:
\[
  \partial_\tau\hat\bv + I_{\hat\bv}(\hat\bom) + \tD_0\hat B
  = \partial_t\bv|_{T-\tau} + \Iv(\bom)|_{T-\tau} + \tD_0 B|_{T-\tau}
  = 0
\]
by~\eqref{eq:M} evaluated at $t = T{-}\tau$. The divergence
constraint~\eqref{eq:D} is linear in $\bv$, so it is satisfied by
$\hat\bv = -\bv(T{-}\tau)$.
Full discretisations inherit time reversibility if the time integrator is symmetric
(e.g.\ implicit midpoint).
\end{proof}

\subsubsection{Energy Conservation}
\label{sec:energy}

\begin{theorem}[Kinetic energy conservation]\label{thm:energy}%
Let $\v\in C^1(K^*)$ be a solution of the discrete incompressible Euler equations \eqref{eq:M}. Define the discrete global kinetic energy
\[
  \Ekin(t):= \frac{1}{2}\;\ip{\v}{\v}_1
 \qquad (= \frac{1}{2}\sum_{j}\frac{A_j}{\ell_j^*}\;\v_j^2).
\]
Then the kinetic energy is conserved: $\dfrac{d\Ekin}{dt}=0$.
\end{theorem}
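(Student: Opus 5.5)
The plan is to differentiate $\Ekin$ along solutions and test the momentum equation \eqref{eq:M} against $\bv$ in the $\ip{\cdot}{\cdot}_1$ inner product. Since $\bM_1$ is diagonal, time-independent and symmetric, one has $\ddt\Ekin = \ip{\bv}{\dd\bv/\dd t}_1$. Substituting \eqref{eq:M} then splits the rate of change into two contributions:
\[
  \ddt\Ekin = -\ip{\bv}{\Iv(\bom)}_1 - \ip{\bv}{\tD_0(p+\ekin+\Phi)}_1 .
\]
I will show that each term vanishes separately.

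For the Lamb term, I invoke the energy identity \eqref{eq:antisym_11} of \Cref{prop:extrusion} with $\bom=\tD_1\bv$. For completeness I would verify it directly from the matrix form in \Cref{def:contraction}. We have
\[
  \bv^T\bM_1\Iv(\bom) = \tfrac12\bigl(\bv^T\tU\tD_1\bv - \bv^T\tD_1^T\tU^T\bv\bigr).
\]
The second scalar is the transpose of the first, so the difference is zero. This holds for any linear reconstruction $\bar\u_j$, because $\tU=\tU(\bv)$ is the same matrix in both terms. The identity is purely algebraic and needs no mesh property.

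For the Bernoulli term, write $B:=p+\ekin+\Phi\in C^0(\KKs)$. The aim is to move $\tD_0$ onto $\bv$ by discrete integration by parts. Explicitly,
\[
  \ip{\bv}{\tD_0B}_1 = (\bM_1\bv)^T\tD_0 B = B^T\tD_0^T\bM_1\bv .
\]
I then use that the dual coboundary is the transpose of the primal one up to sign, $\tD_0^T=\pm D_2$, where $D_2$ is the primal coboundary acting on primal 2-cochains (faces to prisms). This is the transposition relation recorded in the primal--dual diagram of \Cref{subsect:cfem}. Together with $\bM_1=\star_2^{-1}$, the term becomes $\pm B^T D_2\star_2^{-1}\bv$, which is zero by the incompressibility constraint \eqref{eq:D}.

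The main obstacle is this last step: it requires the incidence relation $\tD_0^T=\pm D_2$ on the closed manifold, where there are no boundary faces, and consistent orientation conventions for the pairing between dual edges $e_j^*$ and primal faces $f_j$. The sign is irrelevant because the result is zero either way. On a manifold with boundary, the same argument would need the boundary conditions of \Cref{sect:bounded_domains} to kill the boundary flux. Combining the two vanishing terms gives $\dd\Ekin/\dd t=0$.
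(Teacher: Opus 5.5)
Your proposal is correct and follows the paper's proof: you differentiate $\Ekin$, eliminate the Lamb term with the algebraic antisymmetry \eqref{eq:antisym_11} (checked directly from the matrix form in \Cref{def:contraction}), and eliminate the Bernoulli term by summation by parts, $\tD_0^T=\pm\bD_2$, combined with the constraint \eqref{eq:D}. Your version just spells out the transposition and orientation details that the paper condenses into ``the SBP identity and the incompressibility''.
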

\begin{proof}
The proof is given in \Cref{app:energy}. 
\end{proof}

\subsubsection{Helicity Conservation}
\label{sec:helicity}

Helicity $H = \int v \wedge \omega$ measures the knottedness of
vortex lines in three dimensions. Unlike energy and circulation,
helicity conservation in the discrete setting is approximate:
it is controlled by the reconstruction and Whitney consistency of the
velocity and vorticity reconstructions
(\cref{lem:helicity_consistency,prop:recon_accuracy})
rather than by an algebraic identity, and the rate of
control matches the convergence rate of the scheme itself.

The averaging reconstruction recovers pointwise velocity vectors
$\tilde\bu(v_i^*)$ at dual vertices. 
The same construction applied to the vorticity 2-cochain
$\bom = \tD_1\bv$ (dividing each face integral $\omega_k$
by the face area $|f_k^*|$ and inverting the same Gram matrix)
recovers a pointwise vorticity $\tilde\bom(v_i^*)$;
both reconstructions are $\OO(h^{r_{\rm rec}})$ (\cref{prop:recon_accuracy}).
The \emph{reconstructions} $\Qh^1\bv$ and $\Qh^2(\tD_1\bv)$
denote the piecewise-linear extensions (barycentric interpolation on the simplicial refinement $\KK_h^{\rm simp}$) of the pointwise
velocity and vorticity values, respectively. As piecewise-linear
fields on a shape-regular simplicial refinement, both reconstructions
satisfy the norm-equivalence bound
\begin{equation}\label{eq:Qh_bdd}
  \nrm{\Qh^1\bv}_{L^2(\Omega)} \le C_Q^{(1)}\,\nrm{\bv}_{L_h^2},
  \qquad
  \nrm{\Qh^2\bom}_{L^2(\Omega)} \le C_Q^{(2)}\,\nrm{\bom}_{\bM_2},
\end{equation}
with constants $C_Q^{(k)}$ depending only on mesh regularity. The
bound for $\Qh^1$ follows from the averaging reconstruction
$|\bu(v_i^*)|^2 \le C\sum_{n}(v_n/\ell_n^*)^2$ 
(bounded condition number of $G_i$ under \Cref{ass:mesh_reg}), 
$L^2$-on-simplex equivalence $\nrm{\Qh^1\bv}_{L^2(K)}^2 \asymp |K|\cdot\max_{v\in\partial K}|\bu(v)|^2$, 
and finite-overlap summation over simplices
of $\KK_h^{\rm simp}$ adjacent to each dual edge $e_n^*$; the bound for
$\Qh^2$ is analogous with face values $\bom(v_i^*)$ and the dual
2-cochain norm $\nrm{\cdot}_{\bM_2}$.

\begin{definition}[Discrete helicity]\label{def:helicity}
The \emph{discrete helicity} is
\begin{equation}\label{eq:helicity_def}
  H_h(t) := \int_\Omega(\Qh^1\bv)\wedge(\Qh^2\tD_1\bv),
\end{equation}
the discrete analog of
$\int_\Omega v\wedge\dd v = \int_\Omega\bu\cdot\bom\,dV$.
\end{definition}

\begin{theorem}[Helicity conservation]\label{thm:helicity}%
Let $\bv$ be a solution of the discrete incompressible Euler system
\eqref{eq:M}, \eqref{eq:D} with $\bom = \tD_1\bv$.
Then the discrete helicity~\eqref{eq:helicity_def} satisfies
\begin{equation}\label{eq:helicity_rate}
  \Bigl|\frac{dH_h}{dt}\Bigr|
  \le C(T)\,h^{\min(r_{\rm rec},\,r_\star)},
\end{equation}
i.e.\ $\OO(h)$ in case (A) and $\OO(h^2)$ in case (B) of
\Cref{conv:cases}, matching the convergence rate of the scheme.
\end{theorem}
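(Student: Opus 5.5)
The plan is to compare $dH_h/dt$ with the continuous helicity flux, which vanishes identically, and to bound the mismatch by consistency errors of the reconstructions. Differentiating \eqref{eq:helicity_def} and using bilinearity of the wedge pairing gives
\[
\frac{dH_h}{dt}=\int_\Omega \Qh^1\dot\bv\wedge\Qh^2\tD_1\bv+\int_\Omega\Qh^1\bv\wedge\Qh^2\tD_1\dot\bv .
\]
Next, substitute the momentum equation \eqref{eq:M}, $\dot\bv=-\Iv(\bom)-\tD_0B$, and the vorticity equation $\dot\bom=-\tD_1\Iv(\bom)$. This exact discrete identity comes from $\tD_1\tD_0=0$ (\Cref{prop:extrusion}, part~1).

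In the continuum the two terms are $-\int\iota_u\omega\wedge\omega-\int dB\wedge\omega-\int v\wedge d\iota_u\omega$. The first vanishes pointwise because $\iota_u\omega\wedge\omega=\tfrac12\iota_u(\omega\wedge\omega)$ and $\omega\wedge\omega$ is a 4-form in 3D, so it is zero. The second is exact, $dB\wedge\omega=d(B\omega)$, since $d\omega=0$. The third, after integration by parts on the closed manifold, becomes $-\int dv\wedge\iota_u\omega=-\int\omega\wedge\iota_u\omega=0$.

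The discrete proof mirrors these three cancellations term by term. Each discrete pairing is replaced by its continuum counterpart evaluated on reconstructed fields, and the discrepancy is estimated.

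\emph{Term 1.} Write $\Qh^1\Iv(\bom)=\iota_{\Qh^1\bv}\Qh^2\bom+\mathcal{E}_1$. The reconstruction error $\mathcal{E}_1$ is $\OO(h^{r_{\rm rec}})$ by \Cref{prop:recon_accuracy} and \ref{H:reconstruction_accuracy}. The extrusion weights $w_{jk}$ project $\bar\u_j$ onto primal edges, which is consistent with $\iota_u$ up to Hodge error $h^{r_\star}$ (\Cref{lem:hodge_error}). The leading part then vanishes pointwise by the algebraic identity above.

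\emph{Term 2.} Write $\Qh^1\tD_0B$ as $dB_h$ up to an $\OO(h^{r_{\rm rec}})$ error. Then $\int dB_h\wedge\Qh^2\bom$ is controlled by $\int B_h\,d\Qh^2\bom$, and $d\Qh^2\bom$ is compared with $\mathcal{W}_h\tD_2\tD_1\bv=0$ using the commuting property $d\mathcal{W}_h=\mathcal{W}_h\tD$ and the consistency $\nrm{\Qh^2-\mathcal{W}_h}=\OO(h^{\min(r_{\rm rec},r_\star)})$.

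\emph{Term 3.} Transfer $\tD_1$ to the velocity slot by the discrete integration by parts $\ip{\cdot}{\tD_1\cdot}_2=\ip{\tD_1^T\bM_2\cdot}{\cdot}$. This reproduces Term 1 up to the Leibniz defect of \Cref{prop:extrusion}, part~3, and the Hodge error of \Cref{lem:hodge_bilinear}.

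These consistency comparisons are exactly what \cref{lem:helicity_consistency} is meant to supply, and I will invoke it for the $k=1,2$ reconstructions. All error terms come out bilinear or trilinear in $\bv$, with constants built from $\nrm{\bv}_{\rm rec}$, $\nrm{\bv}_{L_h^2}$ and $\nrm{\tD_1\bv}_{\bM_2}$. They are summed with Cauchy--Schwarz and \eqref{eq:Qh_bdd}. The energy is bounded by \Cref{thm:energy}, and the remaining norms (vorticity and $W^{1,\infty}$-type reconstruction norms) are bounded on $[0,T]$ by a constant $C(T)$. This gives $|dH_h/dt|\le C(T)h^{\min(r_{\rm rec},r_\star)}$.

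\textbf{Main obstacle.} The hardest step is Term 1 together with Term 3. The errors must be genuinely of size $h^{r}$ times norms bounded uniformly in $h$. A direct estimate has two problems. First, consistency estimates such as \Cref{prop:recon_accuracy} are stated for de~Rham interpolants of smooth fields, not for arbitrary cochains. Second, $\nrm{\cdot}_{\rm rec}$ carries inverse powers of $h$ relative to $L_h^2$.

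To handle this, I will first compare $\bv(t)$ with $\mathcal{R}_h\bu(t)$ for a smooth reference solution, using the smooth-regime setting in which $C(T)$ is meaningful. I will apply the consistency estimates to $\mathcal{R}_h\bu$. The remaining difference is absorbed using the antisymmetry \eqref{eq:antisym_11} in polarised form, which kills the cubic self-interaction. This leaves terms linear in the difference, which are bounded by the same stability bound that underlies the convergence theorem.
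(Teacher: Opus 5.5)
\textbf{Comparison with the paper's route.} Your continuum skeleton is correct. Your Term~1 uses the same cancellation as the paper: a Lamb--vorticity product that vanishes pointwise, with the remainder bounded by \Cref{lem:helicity_consistency}.

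The paper, however, first collapses $\frac{dH_h}{dt}$ to $2\int_\Omega\Qh^1\dot\bv\wedge\Qh^2\bom$ by integrating by parts with $d\circ\Qh^1=\Qh^2\circ\tD_1$. It then works with the projected equation and removes the pressure exactly through $\Qh^1\tD_0=d\,\Qh^0$ and $\tD_2\tD_1=0$, so no Bernoulli estimate and no Leibniz defect ever enter. You rightly do not assume these commutations for $\Qh$: they hold for $\mathcal{W}_h$, but for the continuous piecewise-linear $\Qh^1$, whose curl is piecewise constant, they cannot hold in general.

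Your replacements for Terms~2 and~3 do not close, however.
\begin{itemize}
\item Term~2 needs a uniform bound on the discrete Bernoulli function. It also needs smallness of $(\Qh^2-\mathcal{W}_h)\bom$ for the \emph{discrete} vorticity, but that difference is small only on interpolants of smooth forms. Comparing $d\Qh^2\bom$ with $d\mathcal{W}_h\bom$ costs a further inverse power of $h$.
\item In Term~3 the helicity pairing is a wedge of a 1-form with a 2-form, not an $\ip{\cdot}{\cdot}_2$ pairing, so the discrete summation by parts does not apply as written. Moreover, \Cref{prop:extrusion}, part~3, is stated only for de~Rham interpolants of smooth forms, whereas your second factor is $\Iv(\bom)$. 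It also gives $\OO(h^2)$ per dual 3-cell, i.e.\ only $\OO(h^{2-d})$ after summing over all cells.
\end{itemize}

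\textbf{The decisive gap} is the step you flag as the main obstacle. The antisymmetry \eqref{eq:antisym_11} is the diagonal vanishing of $\Phi(X,Y,Z)=\ip{X}{\bQ(Y,Z)}_1$, the Lamb vector paired with \emph{velocity}. The helicity flux is a different trilinear form, $\Psi(X,Y,Z)=\int_\Omega\Qh^1\bQ(X,Y)\wedge\Qh^2\tD_1Z$, the Lamb vector paired with reconstructed \emph{vorticity}. There is no discrete identity $\Psi(e,e,e)=0$: the pointwise cancellation $\iota_u\omega\wedge\omega=0$ does not survive the extrusion and the two separate reconstructions.

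Expanding $\Psi$ at $\bv=\bar\bv+e$ therefore leaves linear, quadratic \emph{and} cubic terms in $e$. Each involves $\tD_1e$ or reconstructed point values of $e$, while \Cref{thm:stability} controls only $\nrm{e}_{L_h^2}\le Ch^{r^\ast}|\log h|$. With the inverse estimates $\nrm{\tD_1e}_{\bM_2}\lesssim h^{-1}\nrm{e}_{L_h^2}$ and $\nrm{e}_{\rm rec}\lesssim h^{-d/2}\nrm{e}_{L_h^2}$, one gets only:
\begin{itemize}
\item $\OO(h^{r^\ast-1}|\log h|)$ for the linear term $\int_\Omega\Qh^1\bQ(\bar\bv,\bar\bv)\wedge\Qh^2\tD_1e$;
\item $\OO(h^{3r^\ast-2-d/2}|\log h|^3)$ for the cubic term.
\end{itemize}
Neither is small in case~(A). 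For the same reason, your assertion that $\nrm{\tD_1\bv}_{\bM_2}$ and $\nrm{\bv}_{\rm rec}$ are bounded by $C(T)$ is not supported by energy conservation or by the $L_h^2$ convergence theorem.

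The missing ingredient is either an exact discrete helicity identity for $\Psi$ or a vorticity-level error estimate for the Euler scheme. The paper does not supply it either. Its final display applies \Cref{lem:helicity_consistency}, stated for $\Qh^k\mathcal{R}_h\alpha$, directly to $\Qh^1\bQ(\bv,\bv)$ and $\Qh^2\bom$, with smooth-solution norms on the right. As written, it is therefore a consistency estimate for the interpolant rather than a bound along the discrete trajectory.
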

\begin{proof}
See \Cref{app:helicity} for the proof.
\end{proof}

\subsubsection{Kelvin Circulation Theorem}
\label{sec:kelvin}
The discrete analogue of Kelvin's theorem (the circulation
$\Gamma = \oint_{c(t)} v$ around a materially advected loop is conserved) requires
a notion of material loop on the dual mesh. A dual 1-chain
$\gamma = \sum_j \alpha_j e_j^*$ with $\alpha_j \in \{-1,0,1\}$ is a
\emph{1-cycle} if $\partial\gamma = 0$ (a closed path through the dual mesh);
its circulation is the duality pairing
$\Gamma(t) = \sum_j \alpha_j(t)\, v_j(t)$.
The discrete advection of $\gamma$ is defined via the chain Lie derivative
$\dot\gamma = \tLie_v^{\rm chain}\gamma$, which commutes with the boundary
operator, so a 1-cycle remains a 1-cycle under advection, the discrete
analogue of the flow map sending closed curves to closed curves.

\begin{definition}[Discrete circulation]\label{def:circulation}
$i)$ (Discrete Circulation) Let $\gamma = \sum_j\alpha_j e_j^*$ be a dual 1-cycle
($\partial\gamma=0$, $\alpha_j\in\{-1,0,+1\}$).
The discrete circulation is
\[
  \Gamma(\gamma,t) = \v(\gamma) = \sum_j\alpha_j\,\v_j(t).
\]
$ii)$ (Material loop advection) A dual 1-chain $\gamma(t)$ is materially advected if its time
evolution is given by the discrete Lie derivative
\[
  \ddt\gamma(t) = \tLie_{\v}^{\mathrm{chain}}\;\gamma(t),
\]
where $\tLie_{\v}^{\mathrm{chain}}$ is the adjoint of the cochain
Lie derivative, defined by the duality pairing
\[
  \ip{\alpha}{\tLie_{\v}^{\mathrm{chain}}\gamma}
  = \ip{\tLie_{\v}\alpha}{\gamma}
  \qquad \forall\;\alpha\in C^1(\KKs).
\]
\end{definition}

\begin{theorem}[Discrete Kelvin circulation theorem]\label{thm:kelvin}%
For a materially advected dual 1-cycle $\gamma(t)$
\[
    \ddt\Gamma(\gamma(t),t)
    = \ddt\bigl[\v(t)\bigl(\gamma(t)\bigr)\bigr] = 0.
\]
\end{theorem}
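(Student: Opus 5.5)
The plan is to differentiate the pairing $\Gamma(t)=\ip{\bv(t)}{\gamma(t)}$ by the product rule and show that the two contributions cancel. This gives
\[
\ddt\Gamma = \ip{\partial_t\bv}{\gamma} + \ip{\bv}{\tLie_{\bv}^{\mathrm{chain}}\gamma}
= \ip{\partial_t\bv}{\gamma} + \ip{\tLie_{\bv}\bv}{\gamma},
\]
where the second equality is the defining duality of the chain Lie derivative in \Cref{def:circulation}\,(ii). The pairing here is the metric-free cochain--chain duality $\sum_j\alpha_jv_j$, not $\ip{\cdot}{\cdot}_1$. Hence the argument is purely topological and no Hodge-star error enters.

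The first key step is to rewrite $\tLie_{\bv}\bv$ with the discrete Cartan formula (\Cref{def:Lie}, \Cref{prop:extrusion}\,(4)). With $\bom=\tD_1\bv$ and $\ekin=\Iv\bv$ this gives $\tLie_{\bv}\bv=\Iv(\bom)+\tD_0\ekin$. The second step substitutes the momentum equation~\eqref{eq:M}, $\partial_t\bv=-\Iv(\bom)-\tD_0(p+\ekin+\Phi)$. The Lamb terms and the kinetic-energy gradients then cancel exactly, leaving
\[
\ddt\Gamma=-\ip{\tD_0(p+\Phi)}{\gamma}.
\]
The third step removes this remaining gradient term using the transpose of the coboundary: $\ip{\tD_0 q}{\gamma}=\ip{q}{\partial\gamma}$, the discrete Stokes theorem built into \Cref{def:ext-deriv}. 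This pairing vanishes provided $\gamma(t)$ remains a cycle.

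The remaining point, and the step I expect to be the main obstacle, is showing that $\partial\gamma(t)=0$ is preserved under the flow. For this I would use that $\tLie_{\bv}$ commutes with $\tD$ on cochains. By Cartan, $\tD\tLie_{\bv}=\tD\Iv\tD=\tLie_{\bv}\tD$ for $\tD\circ\tD=0$ (\Cref{prop:extrusion}\,(1)), where $\tLie_{\bv}$ is taken on the appropriate degree. Dualising gives $\partial\,\tLie^{\mathrm{chain}}_{\bv}=\tLie^{\mathrm{chain}}_{\bv}\partial$. Consequently $\ddt(\partial\gamma)=\tLie^{\mathrm{chain}}_{\bv}(\partial\gamma)$, a linear ODE with zero initial datum, so $\partial\gamma\equiv0$.

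The delicate detail is the degree-$0$ Lie derivative $\tLie_{\bv}q=\Iv(\tD_0q)$, since the term $\tdd_{-1}\Iv$ is absent. This makes the commutation $\tD_0\tLie_{\bv}=\tLie_{\bv}\tD_0$ hold only up to the term $\tD_0\Iv\tD_0 q$. I would check that this agrees with $\tLie_{\bv}\tD_0 q=\tD_0\Iv\tD_0 q+\Iv\tD_1\tD_0 q=\tD_0\Iv\tD_0q$, which it does. The degree-$0$ case therefore closes too, and the circulation is exactly conserved.
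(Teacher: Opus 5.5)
Your proposal follows the paper's proof essentially step for step. It uses the product rule on the duality pairing, the adjoint definition of $\tLie_{\bv}^{\mathrm{chain}}$, and the Cartan splitting $\tLie_{\bv}\bv=\tdd_0(\Iv\bv)+\Iv(\bom)$ against the momentum equation; it then preserves $\partial\gamma=0$ via $\partial\,\tLie_{\bv}^{\mathrm{chain}}=\tLie_{\bv}^{\mathrm{chain}}\partial$ and a linear ODE with zero data, and finishes with discrete Stokes. The only differences are cosmetic: the paper leaves the residual as the gradient $\tdd_0(\Iv\bv-B)$ instead of cancelling the kinetic-energy terms exactly (your cancellation needs the $\ekin$ in \eqref{eq:M} to be identified with $\Iv\bv$, which is harmless since any gradient dies on a cycle), and your explicit degree-$0$ check of the commutation spells out a step the paper leaves implicit.
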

\begin{proof}
The proof combines three ingredients: (i)~the product rule for duality pairings, (ii)~the discrete Cartan formula $\tLie_{\bv}\bv = \tdd_0 q + \Iv(\bom)$, and (iii)~the cycle property $(\tdd_0\phi)(\gamma)=0$ for any $\phi$. See \Cref{app:kelvin} for the complete four-step argument.
\end{proof}

\begin{corollary}%
For any fixed dual 2-chain $S$ with $\partial S = \gamma$
\[
  \ddt\int_\gamma \v
  = \ddt\int_S\bom = 0,
\]
i.e.,\ the {vortex flux through a material surface} is conserved.
\end{corollary}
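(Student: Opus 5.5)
The natural route is to combine Stokes' theorem on the chain $S$ with the discrete Kelvin theorem (\Cref{thm:kelvin}). The identity $\int_\gamma\bv=\int_S\bom$ is purely combinatorial. Write $S=\sum_k\beta_k f_k^*$ as a dual 2-chain, so that $\partial S=\gamma$ means $\gamma=\tD_1^T\beta$ at the level of coefficient vectors. The duality pairing then gives
\[
\int_S\bom=\sum_k\beta_k(\tD_1\bv)_k=\beta^T\tD_1\bv=(\tD_1^T\beta)^T\bv=\sum_j\alpha_j v_j=\int_\gamma\bv .
\]
This is the discrete Stokes theorem, which holds by the definition of $\tD_1$ as the coboundary (\Cref{def:ext-deriv}); it needs no metric information and no approximation. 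It holds at every instant $t$, so the two time derivatives agree whenever either exists. Differentiability in $t$ is inherited from the semi-discrete solution $\bv(t)$ of \eqref{eq:M}, which is $C^1$ in time as the solution of a finite-dimensional ODE system with polynomial right-hand side.

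For the vanishing of the derivative, I would interpret $\gamma=\partial S$ as the materially advected cycle, with $S$ advected by the chain Lie derivative. The key step is that the chain Lie derivative, being adjoint to the cochain Lie derivative $\tdd\Iv+\Iv\tdd$, commutes with $\partial$. This follows from $\tdd\circ\tLie_\bv=\tLie_\bv\circ\tdd$, which is immediate from \Cref{prop:extrusion}(1),(4). Consequently $\partial S(t)=\gamma(t)$ is preserved, and \Cref{thm:kelvin} gives $\ddt\int_{\gamma(t)}\bv=0$. The same value then equals $\ddt\int_{S(t)}\bom$ by the identity above.

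Alternatively, if $S$ and $\gamma$ are genuinely held fixed, I would argue directly from the vorticity equation. We have $\ddt\int_S\bom=-\beta^T\tD_1\Iv(\bom)=-\gamma(\Iv(\bom))$, using $\tD_1\tD_0=0$ to kill the Bernoulli gradient. This reduces the claim to the vanishing of the contraction flux along $\gamma$, which is exactly the material-loop term absorbed in \Cref{thm:kelvin}.

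The main obstacle is therefore interpretive rather than computational. The word ``fixed'' must be read as ``fixed relative to the advected chain,'' that is, $S$ transported by $\tLie_\bv^{\rm chain}$. Only then does the right-hand side of the Kelvin argument vanish without an additional flux term. I would make this explicit before invoking the commutation $[\partial,\tLie^{\rm chain}_\bv]=0$.
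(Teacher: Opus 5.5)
Your proposal is correct and follows the paper's own route: the paper's one-line proof is discrete Stokes, $\int_S\bom=\beta^T\tD_1\bv=\int_{\partial S}\bv$, combined with the discrete Kelvin theorem (\Cref{thm:kelvin}). Your extra step of transporting $S$ by the chain Lie derivative and using $\partial\circ\tLie^{\rm chain}_\bv=\tLie^{\rm chain}_\bv\circ\partial$ to keep $\partial S(t)=\gamma(t)$ spells out the material reading that the paper leaves implicit. That reading is genuinely needed, since the statement says ``fixed'' and, as you observe, the flux term $\gamma(\Iv(\bom))$ does not vanish for a loop that is not advected.
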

\begin{proof}
The result follows from \Cref{thm:kelvin} with discrete Stokes.
\end{proof}


\section{Finite-Dimensional Well-Posedness}\label{section_WellPosedness}

On a fixed mesh with $N$ degrees of freedom,
the discrete Euler and Navier--Stokes systems are
finite-dimensional ODEs.
Energy conservation (Euler) or energy dissipation
(Navier--Stokes) provides the a priori bounds needed
for global existence.
This section establishes that both systems are globally
well-posed for every fixed mesh.

\subsection{Discrete Incompressible Euler Equations}

We recall the semi-discrete Euler system~\eqref{eq:M}--\eqref{eq:D},
\begin{equation}\begin{split} \label{eq:V}
  \ddt\bv + \Iv(\bom) + \tD_0 B &= 0,
 \\%
  \bD_2\bM_1\bv &= 0, \\%
  B &= p + \ekin + \bPhi, \qquad \ekin_i = \frac{(\bM_1\bv\twdg\bv)_i}{2|K_i|},
  \quad \bPhi_i = \Phi(x_i). %
\end{split}\end{equation}

\begin{definition}[Divergence-free subspace and projector]
i) Define the {discrete divergence-free subspace} by
\begin{equation}\label{eq:Vh}
  V_h := \ker(\bD_2\bM_1) = \{\bv\in\RR^N : \bD_2\bM_1\bv = 0\}.
\end{equation}
This is a linear subspace of $\RR^N$ of dimension $N - M + 1$.\\
ii) The discrete Leray projector $\bP_h:\RR^N\to V_h$ is defined by:
\begin{equation}\label{eq:Leray}
  \bP_h\bw = \bw - \tD_0\phi,\qquad
  \text{where}\;\phi\;\text{solves}\;
  L_h\phi = \bD_2\bM_1\bw
\end{equation}
\end{definition}
where $L_h := \bD_2\bM_1\tD_0$ is the discrete Laplacian.

\begin{lemma}[Reduction to an ODE on $V_h$]
\label{thm:projected_ODE}
Applying $\bP_h$ to the momentum equation \eqref{eq:V} and using
$\bP_h\tD_0 B = 0$ yields the following ODE on $V_h$%
\begin{equation}\label{eq:proj_ODE}
    \ddt\bv = -\bP_h\Iv(\tD_1\bv)
    =: \bff_h(\bv),
    \qquad \bv(0) = \bv_0\in V_h.
\end{equation}
This is an autonomous ODE on the finite-dimensional subspace $V_h\subset\RR^N$.
If $\bv(0)\in V_h$, then $\bv(t)\in V_h$ for all time. 
\end{lemma}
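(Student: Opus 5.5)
The plan has three steps: show that the discrete Leray projector annihilates gradients, apply it to the momentum equation, and then show that $V_h$ is invariant under the resulting flow.

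\emph{Step 1: the projector.} First I will check that $\bP_h$ in \eqref{eq:Leray} is well defined. The discrete Laplacian $L_h=\bD_2\bM_1\tD_0$ is, up to the sign convention relating $\bD_2$ and $\tD_0^T$ (primal and dual coboundaries are transposes), a symmetric positive semidefinite matrix $\tD_0^T\bM_1\tD_0$. Its kernel consists of the constants, because the dual mesh is connected. The right-hand side $\bD_2\bM_1\bw$ has zero total sum, since each dual-edge flux enters exactly two primal cells with opposite signs. Hence it lies in $\operatorname{range}(L_h)=\ker(L_h)^\perp$, so $\phi$ exists and is unique up to a constant, and $\tD_0\phi$ is unique.

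Two properties then follow directly from the definition. First, $\bD_2\bM_1\bP_h\bw=\bD_2\bM_1\bw-L_h\phi=0$, so $\bP_h$ maps into $V_h$. Second, for $\bw=\tD_0 B$ one may take $\phi=B$, which gives $\bP_h\tD_0B=0$. I will also note that $\bP_h\bw=\bw$ whenever $\bw\in V_h$, since then $\phi$ is constant and $\tD_0\phi=0$.

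\emph{Step 2: projecting the equation.} Applying $\bP_h$ to \eqref{eq:V} and using linearity gives
\[
\bP_h\ddt\bv+\bP_h\Iv(\tD_1\bv)+\bP_h\tD_0B=0 .
\]
If $\bv(t)\in V_h$ for all $t$, then $\ddt\bv\in V_h$ as well, because $V_h$ is a closed linear subspace and differentiation commutes with the constant matrix $\bD_2\bM_1$. Then $\bP_h\ddt\bv=\ddt\bv$, and with $\bP_h\tD_0B=0$ this yields \eqref{eq:proj_ODE}. The right-hand side $\bff_h$ is a composition of linear maps with the contraction. Since $\Iv(\bom)$ is bilinear in $(\bv,\bom)$ by the linearity of the reconstruction in \Cref{def:contraction}, $\bff_h$ is a homogeneous quadratic polynomial in $\bv$ with no explicit time dependence, so the ODE is autonomous and $\bff_h$ is smooth (locally Lipschitz).

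\emph{Step 3: invariance of $V_h$.} Conversely, let $\bv(t)$ solve \eqref{eq:proj_ODE} with $\bv(0)\in V_h$. Then
\[
\ddt(\bD_2\bM_1\bv)=-\bD_2\bM_1\bP_h\Iv(\tD_1\bv)=0
\]
by Step 1, so $\bD_2\bM_1\bv(t)=\bD_2\bM_1\bv(0)=0$ for all $t$, which is the claimed invariance. Setting $B$ to be the potential $\phi$ associated with $\Iv(\tD_1\bv)$ recovers a solution of \eqref{eq:V}, so the two formulations are equivalent.

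The only genuine obstacle is the solvability of $L_h\phi=\bD_2\bM_1\bw$, which requires the compatibility (range) condition above. On a closed manifold I expect it to follow from the sign-compatible transpose relation between $\bD_2$ and $\tD_0$ together with connectedness of the dual mesh. If the orientation conventions make this delicate, I would instead define $\bP_h$ as the $\bM_1$-orthogonal projection onto $V_h$ and identify $V_h^\perp=\operatorname{range}\tD_0$ via the same transpose relation.
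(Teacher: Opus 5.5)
Your proposal is correct and follows the paper's proof. You project \eqref{eq:V}, discard $\bP_h\tD_0 B$, use $\bP_h\ddt\bv=\ddt\bv$ on $V_h$, and get invariance because $\bff_h$ takes values in $V_h$; your computation $\ddt(\bD_2\bM_1\bv)=0$ is the same observation. Beyond that you add care the paper omits: you check solvability of $L_h\phi=\bD_2\bM_1\bw$, and you obtain $\bP_h\tD_0 B=0$ directly from \eqref{eq:Leray} by taking $\phi=B$, where the paper simply cites $\operatorname{range}(\tD_0)=V_h^\perp$. There is one harmless sign slip in your closing equivalence remark: the recovered Bernoulli potential is $B=-\phi$, not $\phi$.
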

\begin{proof}
Applying $\bP_h$ to the momentum equation \eqref{eq:V} yields  $\bP_h\ddt\bv + \bP_h\Iv(\bom) + \bP_h\tD_0 B = 0$.  Since $\tD_0 B\in\mathrm{range}(\tD_0) = V_h^\perp$, we have $\bP_h\tD_0 B = 0$.  For $\bv\in V_h$, $\bP_h\ddt\bv = \ddt\bv$, giving \eqref{eq:proj_ODE}.  Invariance of $V_h$ follows: $\bff_h(\bv) = -\bP_h\Iv(\tD_1\bv) \in V_h$ for any $\bv$.
\end{proof}


\begin{lemma}[Boundedness of the discrete operators]
\label{lem:bounded_ops}
On a fixed mesh $\KK_h$, all discrete operators are bounded linear maps
between finite-dimensional spaces.  In particular, there exist
constants depending on $h$ such that:
\begin{align}
  \nrm{\tD_1\bv}_2 &\le C_1(h)\nrm{\bv}_2,
  \\
  \nrm{\bP_h\bw}_2 &\le \nrm{\bw}_2,
  \\
  \nrm{\bM_1}_2 &= \max_j(\bM_1)_{jj} =: M_+(h),
  \\
  (\bM_1)_{jj} &\ge \min_j(\bM_1)_{jj}=:m_-(h) > 0 \quad\forall\;j.  
\end{align}
Here $C_1(h) = \nrm{\tD_1^h}_2$ is the spectral norm of the
incidence matrix, and $\bP_h$ is an $\bM_1$-orthogonal projector
\end{lemma}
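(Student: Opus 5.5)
The plan is to verify the four displayed statements one at a time. All of them reduce to finite-dimensional linear algebra on the fixed mesh $\KK_h$, and the only structural inputs are the transposition relation between primal and dual coboundaries and the positivity of the diagonal Hodge star.

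\emph{Incidence and Hodge bounds.} Every linear map between finite-dimensional spaces is bounded. So the first estimate holds with $C_1(h):=\nrm{\tD_1}_2$, and this spectral norm is finite. I would also record the explicit bound $C_1(h)\le(\text{max row count}\cdot\text{max column count})^{1/2}$, since entries lie in $\{-1,0,1\}$ and valence is bounded (\Cref{ass:mesh_reg}). For $\bM_1$, the matrix is diagonal, so its spectral norm is its largest entry, $M_+(h)=\max_j A_j/\ell_j^*$. Positivity of $m_-(h)$ uses two facts. First, the Delaunay property (\Cref{ass:mesh_reg}(3)) places circumcentres inside cells, so every dual edge has positive length $\ell_j^*>0$. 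Second, every primal face has $A_j>0$. Since there are finitely many $j$, the minimum is attained and is positive.

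\emph{The Leray projector.} This is the substantive step, and the main point is well-definedness of \eqref{eq:Leray}. I would use the discrete integration-by-parts relation $\tD_0=\pm\bD_2^T$ (the primal--dual transposition recorded in \Cref{subsect:cfem}). With it, $L_h=\bD_2\bM_1\tD_0$ is, up to sign, the symmetric positive semidefinite matrix $\bD_2\bM_1\bD_2^T$. Its kernel equals $\ker \bD_2^T$, which on a connected closed manifold consists of the constants. Solvability then requires $\bD_2\bM_1\bw\perp\mathbf 1$. This holds because each primal face bounds exactly two prisms with opposite orientations, so $\mathbf 1^T\bD_2=0$. Hence $\phi$ exists, is unique modulo constants, and $\tD_0\phi$ is uniquely determined.

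Next I would show that $\bP_h\bw\in V_h$, which is immediate from $\bD_2\bM_1(\bw-\tD_0\phi)=\bD_2\bM_1\bw-L_h\phi=0$. I would then establish $\bM_1$-orthogonality: for $\bv\in V_h$,
\[
\ip{\bv}{\tD_0\phi}_1=\pm(\bD_2\bM_1\bv)^T\phi=0 .
\]
So $\bP_h$ is the $\ip{\cdot}{\cdot}_1$-orthogonal projector onto $V_h$. It is idempotent and self-adjoint in that inner product, and therefore $\nrm{\bP_h\bw}_{L_h^2}\le\nrm{\bw}_{L_h^2}$.

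The one obstacle is the norm in the second display. The contraction $\nrm{\bP_h\bw}_2\le\nrm{\bw}_2$ holds exactly when $\nrm{\cdot}_2$ is read as the $\bM_1$-weighted norm, which is the reading consistent with ``$\bP_h$ is an $\bM_1$-orthogonal projector''. If instead the plain Euclidean norm is intended, I would state and prove the weaker bound
\[
\nrm{\bP_h\bw}_{\ell^2}\le\bigl(M_+(h)/m_-(h)\bigr)^{1/2}\nrm{\bw}_{\ell^2},
\]
obtained by the equivalence $m_-\nrm{\cdot}_{\ell^2}^2\le\nrm{\cdot}_{L_h^2}^2\le M_+\nrm{\cdot}_{\ell^2}^2$. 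This bound is still an $h$-dependent constant, which is all the well-posedness argument of this section needs.
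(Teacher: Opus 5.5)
Your proof is correct and follows the same route as the paper. Everything reduces to finite-dimensional linear algebra, the diagonal positive structure of $\bM_1$, and the $\bM_1$-orthogonality of $\bP_h$; your well-posedness check of \eqref{eq:Leray} (via $\tD_0=\pm\bD_2^T$ and $\mathbf 1^T\bD_2=0$) fills in a step the paper only asserts. Your caveat on the second display is also right: in this section $\nrm{\cdot}_2$ is Euclidean, where an $\bM_1$-orthogonal projector need not be a contraction, and the paper's proof only establishes $\nrm{\bP_h\bw}_{\bM_1}\le\nrm{\bw}_{\bM_1}$. Your bound with factor $(M_+(h)/m_-(h))^{1/2}$ is therefore the correct Euclidean statement, and it suffices for the Lipschitz and Picard--Lindel\"of arguments.
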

\begin{proof}
All operators are matrices on finite-dimensional spaces, so the bounds follow from the spectral norm.  $\bP_h$ is an $\bM_1$-orthogonal projector, hence $\nrm{\bP_h\bw}_{\bM_1} \le \nrm{\bw}_{\bM_1}$ for all $\bw$; $\bM_1$ is diagonal with positive entries, so $\nrm{\bM_1}_2 = \max_j(\bM_1)_{jj}$ and $\min_j(\bM_1)_{jj} > 0$.
\end{proof}
\begin{lemma}[Lipschitz continuity of the extrusion]
\label{lem:Lip_extrusion}
For $\bv,\bw\in V_h$ with $\nrm{\bv}_2,\nrm{\bw}_2\le R$, the
extrusion operator satisfies
\[
\nrm{\Iv(\tD_1\bv) - \Iw(\tD_1\bw)}_2
  \le L(h,R)\nrm{\bv-\bw}_2,
\]
where $L(h,R) = 2R\,C_P(h)\,C_Q(h)$ is a Lipschitz constant depending on the mesh
parameter $h$ and the radius $R$.
\end{lemma}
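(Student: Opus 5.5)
The plan is to exploit the bilinear structure of the contraction. From \Cref{def:contraction}, $\bM_1\Iv(\tD_1\bv) = \tfrac12(\tU(\bv)\tD_1\bv - \tD_1^T\tU(\bv)^T\bv)$, and $\tU(\bv)$ is linear in $\bv$ because the reconstruction $\bar\u_j(\bv)$ of \Cref{def:averaging_recon} is linear. Hence the map $\bv\mapsto \Iv(\tD_1\bv) = \bQ(\bv,\bv)$ is a quadratic form, and the diagonal of the bilinear map $\bQ$ in \eqref{eq:Q_bilinear}. Its difference then polarises as
\[
  \bQ(\bv,\bv)-\bQ(\bw,\bw) = \bQ(\bv,\bv-\bw) + \bQ(\bv-\bw,\bw),
\]
which is the algebraic identity recorded after \eqref{eq:Q_bilinear}, specialised to a two-term form. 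This reduces the Lipschitz estimate to a bound on the bilinear form.

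The first step is a continuity bound $\nrm{\bQ(\bv_1,\bv_2)}_2 \le C_P(h)C_Q(h)\nrm{\bv_1}_2\nrm{\bv_2}_2$ on the fixed mesh. I would assemble it from three factors:
\begin{itemize}[nosep]
\item $\nrm{\bM_1^{-1}}_2 = 1/m_-(h)$ and $\nrm{\tD_1}_2 = C_1(h)$, both from \Cref{lem:bounded_ops};
\item $\nrm{\tU(\bv_1)}_2 \le C(h)\nrm{\bv_1}_2$, since each entry $D_{1,jk}\,\bar\u_j(\bv_1)\cdot\hat e_k$ is bounded by $|\bar\u_j(\bv_1)| \le C_R\nrm{\bv_1}_{\rm rec} \le C_R h_{\min}^{-1}\nrm{\bv_1}_2$ (\Cref{def:disc_norms});
\item the bounded sparsity (bounded valence) of $\tU$, which turns the entrywise bound into an operator-norm bound.
\end{itemize}
I would group the constants so that $C_Q(h)$ collects the reconstruction and incidence factors coming from $\tU$ and $\tD_1$, and $C_P(h)$ collects the metric factor $\nrm{\bM_1^{-1}}_2$ together with the factor $\tfrac12\cdot 2$ from the two terms in \eqref{eq:Q_bilinear}.

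The second step applies the triangle inequality to the polarised difference:
\[
  \nrm{\bQ(\bv,\bv)-\bQ(\bw,\bw)}_2 \le C_PC_Q\bigl(\nrm{\bv}_2 + \nrm{\bw}_2\bigr)\nrm{\bv-\bw}_2 \le 2R\,C_P(h)C_Q(h)\nrm{\bv-\bw}_2 .
\]
This gives $L(h,R) = 2RC_PC_Q$ exactly as stated. The restriction to $V_h$ is not needed; the estimate holds on all of $\RR^N$.

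The main obstacle is bookkeeping rather than analysis: one must be careful that $\Iv$ itself is not jointly bilinear off the diagonal, as the paper warns after \eqref{eq:Q_bilinear}. The polarisation must therefore be done with the bilinear extension $\bQ$, not with $\Iv[\bv_1](\tD_1\bv_2)$. First I would check that $\Iv(\tD_1\bv) = \bQ(\bv,\bv)$ exactly, which holds because both terms of \eqref{eq:contraction} become symmetric when $\bv_1 = \bv_2$. Beyond that, the constants deteriorate like negative powers of $h$, which is acceptable because the lemma is only used for local existence on a fixed mesh.
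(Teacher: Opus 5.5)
Your proposal is correct and is essentially the paper's proof: a bilinear bound $\nrm{\bQ(\bv_1,\bv_2)}_2\le C(h)\nrm{\bv_1}_2\nrm{\bv_2}_2$ from the spectral norms of $\bM_1^{-1}$, $\tU(\bv_1)$ and $\tD_1$ in the matrix form \eqref{eq:Q_bilinear}, followed by the polarisation $\bQ(\bv,\bv)-\bQ(\bw,\bw)=\bQ(\bv,\bv-\bw)+\bQ(\bv-\bw,\bw)$ and the triangle inequality on all of $\RR^N$. The only difference is how the constants are labelled, and it is harmless because the statement does not fix them: the paper's argument yields $2R\,C_Q(h)$ with $\nrm{\bM_1^{-1}}_2$ folded into $C_Q(h)$, and it invokes the projector only when passing to $\bff_h=-\bP_h\bQ(\bv,\bv)$, whereas you assign the metric factor to $C_P(h)$.
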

\begin{proof}
The proof is given in \Cref{app:wellposedness}.
\end{proof}
\begin{lemma}[Energy bound]
\label{thm:energy_bound}
Solutions $\bv(t)$ of the discrete Euler system \eqref{eq:V} conserve the kinetic energy
\begin{equation}\label{eq:energy_cons}
  \Ekin(t) = \frac{1}{2}\ip{\bv(t)}{\bv(t)}_1 = \Ekin(0) = \Ekin_0
  \quad\forall\;t\in[0,T_*^h).
\end{equation}
This provides the a priori bound
$  \nrm{\bv(t)}_2^2 \le \frac{2\Ekin_0}{m_-(h)}. $%
\end{lemma}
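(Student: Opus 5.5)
The plan is to differentiate the energy along a solution and use the projected form of the dynamics. By \Cref{thm:projected_ODE}, on the maximal existence interval $[0,T_*^h)$ the solution $\bv(t)$ stays in $V_h$ and satisfies $\ddt\bv = -\bP_h\Iv(\tD_1\bv)$. Since $\bM_1$ is symmetric and constant in time,
\[
\ddt\Ekin = \ip{\bv}{\ddt\bv}_1 = -\ip{\bv}{\bP_h\Iv(\tD_1\bv)}_1 .
\]

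The next step is to remove the projector. By \Cref{lem:bounded_ops}, $\bP_h$ is an $\bM_1$-orthogonal projector onto $V_h$, so it is self-adjoint in $\ip{\cdot}{\cdot}_1$. Because $\bv\in V_h$, we get $\ip{\bv}{\bP_h\bw}_1=\ip{\bP_h\bv}{\bw}_1=\ip{\bv}{\bw}_1$. Alternatively, one can work with the unprojected equation \eqref{eq:V} and note that the gradient term drops out: $\ip{\bv}{\tD_0B}_1=\bv^T\bM_1\tD_0 B = (\bD_2\bM_1\bv)^T(\cdots)=0$, using that $\tD_0$ is, up to sign, the transpose of $\bD_2$ and that $\bD_2\bM_1\bv=0$. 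Either route reduces the derivative to $\ddt\Ekin=-\ip{\bv}{\Iv(\tD_1\bv)}_1$, and this vanishes by the Lamb antisymmetry \eqref{eq:antisym_11} of \Cref{prop:extrusion}. That antisymmetry can be checked directly from \Cref{def:contraction} with $\bw=\bv$, $\bom=\tD_1\bv$: both terms reduce to $\bv^T\tU\tD_1\bv$, so their difference is zero. Hence $\Ekin(t)=\Ekin_0$ on $[0,T_*^h)$.

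The a priori bound then follows from the diagonal structure of $\bM_1$:
\[
2\Ekin_0=\sum_j(\bM_1)_{jj}v_j^2\ge m_-(h)\nrm{\bv}_2^2,
\]
with $m_-(h)>0$ from \Cref{lem:bounded_ops}. This gives $\nrm{\bv(t)}_2^2\le 2\Ekin_0/m_-(h)$.

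The only delicate point is the orthogonality step, namely checking that $\operatorname{range}(\tD_0)$ is exactly the $\bM_1$-orthogonal complement of $\ker(\bD_2\bM_1)$. This needs the transposition relation between the primal and dual coboundaries, with consistent orientations. That relation is recorded in \Cref{subsect:cfem} and was already used in \Cref{thm:projected_ODE}, so it can be invoked directly.
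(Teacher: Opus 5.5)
Your proof is correct and follows the paper's argument. The paper differentiates $\Ekin$ along the unprojected system \eqref{eq:V}, removes the Lamb term by the antisymmetry \eqref{eq:antisym_11}, and removes the Bernoulli gradient by summation by parts with $\bD_2\bM_1\bv=0$, which is exactly your \emph{alternative} route. Your primary route through the $\bM_1$-self-adjointness of $\bP_h$ rests on the same transposition identity $\tD_0=\pm\bD_2^T$, and you also spell out the a~priori bound from $(\bM_1)_{jj}\ge m_-(h)$, which the paper leaves implicit.
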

\begin{proof}
Recall from \eqref{eq:V} that $\ddt\bv = -\Iv(\bom) - \tD_0 B$.
Differentiating $\Ekin$ in time gives
$\ddt\Ekin = \ip{\bv}{\ddt\bv}_1
= -\ip{\bv}{\Iv(\bom)}_1 - \ip{\bv}{\tD_0 B}_1$.
The first term vanishes by the energy identity~\eqref{eq:energy_cancel};
the second by summation-by-parts and $\bD_2\bM_1\bv = 0$.
Hence $\ddt\Ekin = 0$.
\end{proof}


\begin{theorem}[Well-posedness of the discrete Euler system]
\label{thm:global}
For any initial data $\bv_0\in V_h$, the projected Euler ODE
\eqref{eq:proj_ODE} has a \emph{unique global solution}
$\bv\in C^1([0,\infty);\,V_h)$, depending continuously on the
initial data. Short-time existence with
$T_*^h \ge 1/L(h,2\nrm{\bv_0}_2)$ (\Cref{thm:picard}) follows
from Picard--Lindel\"of; global existence from energy conservation
$\nrm{\bv(t)}_{L_h^2} = \nrm{\bv_0}_{L_h^2}$ (\Cref{thm:energy}),
which prevents blow-up.
\end{theorem}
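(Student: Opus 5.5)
The argument is the standard local-existence-plus-a-priori-bound scheme for autonomous ODEs in finite dimensions, applied to the projected system \eqref{eq:proj_ODE} on $V_h$.

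\emph{Local Lipschitz continuity.} By \Cref{thm:projected_ODE}, $V_h$ is invariant and $\bff_h(\bv) = -\bP_h\Iv(\tD_1\bv)$ maps $V_h$ into itself. Since $\bar\u_j(\bv)$ is linear in $\bv$, the map $\bv\mapsto\Iv(\tD_1\bv)=\bQ(\bv,\bv)$ is a quadratic polynomial in the coordinates of $\bv$ (cf.~\eqref{eq:Q_bilinear}), so $\bff_h$ is smooth. Concretely, \Cref{lem:Lip_extrusion} and $\nrm{\bP_h}\le 1$ (\Cref{lem:bounded_ops}) give, on the ball $\{\nrm{\bv}_2\le R\}\cap V_h$,
\[
\nrm{\bff_h(\bv)-\bff_h(\bw)}_2\le L(h,R)\,\nrm{\bv-\bw}_2 .
\]
A small point: $\bP_h$ is $\bM_1$-orthogonal, so its Euclidean norm is at most $(M_+(h)/m_-(h))^{1/2}$ rather than $1$. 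This only changes $h$-dependent constants and is harmless on a fixed mesh.

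\emph{Local existence, uniqueness and continuous dependence.} Picard--Lindel\"of on the closed ball of radius $2\nrm{\bv_0}_2$ gives a unique $C^1$ solution on $[0,T_*^h)$. The existence time is controlled by $\nrm{\bv_0}_2/\sup\nrm{\bff_h}$ and $1/L$. Because $\bff_h(0)=0$ and $\bff_h$ is Lipschitz on the ball, $\sup\nrm{\bff_h}\le 2\nrm{\bv_0}_2\,L$, which yields $T_*^h\ge c/L(h,2\nrm{\bv_0}_2)$; this matches the stated bound up to the constant convention of \Cref{thm:picard}. Gr\"onwall's inequality with the local Lipschitz constant gives continuous dependence on $\bv_0$ on any compact time interval on which both solutions stay in a common ball. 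Uniqueness also holds in the original variables: for $\bv_0\in V_h$, any solution of \eqref{eq:V} satisfies \eqref{eq:proj_ODE}, and the Bernoulli function is recovered from the pressure Poisson problem.

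\emph{Global continuation.} Suppose the maximal existence time $T_{\max}$ were finite. The standard blow-up alternative for ODEs would then force $\nrm{\bv(t)}_2\to\infty$ as $t\to T_{\max}$. But \Cref{thm:energy_bound} (resting on the antisymmetry \eqref{eq:antisym_11} and $\ip{\bv}{\tD_0B}_1=0$ for $\bv\in V_h$) gives
\[
\nrm{\bv(t)}_{L_h^2}=\nrm{\bv_0}_{L_h^2},\qquad \nrm{\bv(t)}_2^2\le 2\Ekin_0/m_-(h)
\]
for all $t<T_{\max}$, which contradicts blow-up. Hence $T_{\max}=\infty$ and $\bv\in C^1([0,\infty);V_h)$. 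Equivalently, the solution can be restarted repeatedly with the uniform step $c/L\bigl(h,2(2\Ekin_0/m_-)^{1/2}\bigr)$.

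The one point needing care is that the energy identity must be applied to the projected ODE: $\ip{\bv}{\bP_h\Iv(\bom)}_1=\ip{\bP_h\bv}{\Iv(\bom)}_1=\ip{\bv}{\Iv(\bom)}_1=0$, using $\bM_1$-self-adjointness of $\bP_h$ and $\bv\in V_h$. The rest is routine.
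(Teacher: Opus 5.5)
Your proposal is correct and follows the same route as the paper. Both arguments apply Picard--Lindel\"of on $V_h$ using the Lipschitz bound of \Cref{lem:Lip_extrusion}, and then rule out the blow-up alternative by energy conservation together with the equivalence of $\nrm{\cdot}_{L_h^2}$ and $\nrm{\cdot}_2$ on a fixed mesh. Your extra remarks only make explicit what the paper's proof uses implicitly or states loosely: the $\bM_1$-orthogonal projector $P_h$ has Euclidean norm at most $(M_+(h)/m_-(h))^{1/2}$ rather than $1$, and the energy identity carries over to the projected ODE because $P_h$ is $\bM_1$-self-adjoint.
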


\phantomsection\label{thm:picard}%

\begin{proof}
See \Cref{app:wellposedness}.
\end{proof}


%
%
%
%
%

\subsection{Discrete Incompressible Navier--Stokes}
\label{subsect:NS_wellposedness}
Adding a viscous force to the Euler
system gives the semi-discrete Navier--Stokes equations
\begin{equation}\begin{split}\label{eq:NS_M_wp}
  \ddt\bv + \Iv(\bom) + \tD_0 B &= \mathrm{f}_{\rm visc}, \\
  \bD_2\bM_1\bv &= 0, \\
  B &= p + \ekin + \bPhi,
\end{split}\end{equation}
where $\mathrm{f}_{\rm visc}$ denotes one of the viscous forces defined below. 
Applying the Leray projector $\bP_h$ eliminates the pressure gradient
(the viscous term is already divergence-free), giving
\begin{equation}\label{eq:NS_proj_ODE}
    \ddt\bv = -\bP_h\Iv(\tD_1\bv) - \nu\Deltah\bv
    =: \bff_h^\nu(\bv),
    \qquad \bv(0) = \bv_0\in V_h.
\end{equation}

\paragraph{Dissipation} The Newtonian stress tensor $\boldsymbol{\sigma} = 2\nu\mathbf{S}$
has the coordinate-free form
$\mathbf{S} = \tfrac{1}{2}\mathcal{L}_\u g$, and the divergence
appears in the momentum equation via
\[
(\nabla\cdot\boldsymbol{\sigma})^\flat = \nu(\dd\delta v + \delta\dd v),
\]
which for divergence-free flow reduces to $\nu\delta\dd v$; the Ricci
correction on curved manifolds is absorbed by the Hodge star in our
discretisation. A general anisotropic viscosity
$\boldsymbol{\sigma} = \mathbb{C}:\mathbf{S}(\u)$
with fourth-order positive-semidefinite tensor $\mathbb{C}$
yields a second-order operator on 1-forms; the geophysically important
special case is the horizontal/vertical split
$\nu_h\Delta_h\u^h + \nu_v\partial_{zz}\u$ with $\nu_v\ll\nu_h$
(thin-domain flows).
The isotropic viscous term requires a discrete Hodge--Laplacian on dual 1-cochains, built from the codifferential and the exterior derivative.

\begin{definition}[Discrete codifferential and Hodge--Laplacian]\label{def:hodge_laplacian}
$i)$ The {discrete codifferential} on dual 2-cochains is the operator
\begin{equation}\label{eq:codiff}
  \codiff := \bM_1^{-1}\tD_1^T\bM_2,
\end{equation}
mapping vorticity 2-cochains to velocity 1-cochains.
The \emph{discrete curl-curl Laplacian} on dual 1-cochains is
\begin{equation}\label{eq:Deltah}
  \Deltah := \codiff\,\tdd_1 = \bM_1^{-1}\tD_1^T\bM_2\tD_1.
\end{equation}
where $\bM_k$ are diagonal Hodge-star matrices for $k$-cochains, and $\tD_k$ are dual coboundary operators.\\
$ii)$ The viscous term in the discrete incompressible Navier--Stokes system  reduces to
\begin{equation}\label{eq:viscous_incompressible}
  \mathrm{f}_{\rm visc}^{\rm incomp} 
  = -\nu\,\codiff\,\bom,
\end{equation}
\end{definition}

All results below are stated for the isotropic Hodge--Laplacian~\eqref{eq:viscous_incompressible}.
For the prismatic meshes of
geophysical models the natural choice splits the dual 1-edges into
horizontal and vertical subsets with independent coefficients
$\nu_h,\nu_v$; this recovers the Laplacian-viscosity operator structure
used in ICON-O~\cite{korn2017} and NEMO. 
An anisotropic viscosity $\mathbb{C}$ defines a modified inner product
on velocity 1-forms: in place of the Hodge$\star$ $\bM_1$ one uses a
viscosity-weighted Hodge$\star$ $\bM_1^\nu$ with entries
$(\bM_1^\nu)_{jj} = \nu_j\,(\bM_1)_{jj}$, and the viscous force retains
the algebraic structure of~\eqref{eq:viscous_incompressible} with
$\bM_2^\nu$ in place of $\bM_2$.

Well-posedness of the Navier--Stokes system~\eqref{eq:NS_M_wp} follows the Euler argument with energy conservation replaced by energy dissipation: the viscous term strictly decreases
$\Ekin$.
The energy-dissipation and consistency arguments translate verbatim to the anisotropic case.
Nonlinear eddy viscosities of Ladyzhenskaya type (e.g.\ Smagorinsky,
$\nu_T = (C_s\ell)^2|\mathbf{S}|$) are likewise admissible; for these
the discrete operator is monotone, and a standard Minty--Browder
argument upgrades the weak-convergence result below to a unique limit.

\begin{theorem}[Well-posedness of the discrete Navier--Stokes system]
\label{thm:NS_global}
For any $\nu \ge 0$, any mesh~$\KK_h$, and any
$\bv_0\in V_h$, the projected ODE~\eqref{eq:NS_proj_ODE}
has a unique global solution
$\bv\in C^1([0,\infty);\,V_h)$.
The solution satisfies the energy equality
\begin{equation}\begin{split}\label{eq:NS_energy_diss}
  &\frac{d\Ekin}{dt} = -\nu\,\nrm{\bom}^2_{\bM_2} \le 0,\\
 \text{ and consequently }\quad & \Ekin(t)\le\Ekin_0 \text{ and }\nu\int_0^T\nrm{\bom}^2_{\bM_2}\,dt \le \Ekin_0.
\end{split}\end{equation}
\end{theorem}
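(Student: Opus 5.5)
The plan mirrors the proof of \Cref{thm:global}: local existence by Picard--Lindel\"of, then an a~priori energy bound that rules out blow-up.

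\emph{Local existence and invariance.} The right-hand side $\bff_h^\nu(\bv) = -\bP_h\Iv(\tD_1\bv) - \nu\Deltah\bv$ is a polynomial (quadratic plus linear) map on the finite-dimensional space $V_h$. The quadratic part is locally Lipschitz by \Cref{lem:Lip_extrusion} and \Cref{lem:bounded_ops}. The linear part $\nu\Deltah = \nu\bM_1^{-1}\tD_1^T\bM_2\tD_1$ is a fixed matrix, hence globally Lipschitz with constant $\nu\nrm{\Deltah}_2$. Picard--Lindel\"of therefore gives a unique maximal $C^1$ solution on $[0,T_*^h)$.

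Before this I must check that $V_h$ is invariant. For the nonlinear term this holds because $\bP_h$ maps into $V_h$. For the viscous term I use $\bD_2\bM_1\Deltah\bv = \bD_2\tD_1^T\bM_2\tD_1\bv$. Primal and dual coboundaries are related by transposition (the primal--dual diagram of \Cref{subsect:cfem}), so $\bD_2\tD_1^T$ is the transpose of a composition of consecutive coboundaries, which vanishes by \ref{H:cochain_complex}. Hence $\Deltah\bv\in V_h$. This is the statement ``the viscous term is already divergence-free'' used in deriving \eqref{eq:NS_proj_ODE}. I expect this index bookkeeping, i.e.\ identifying $\tD_1^T$ with the right primal coboundary so that the composite is $\bD\bD=0$, to be the one point requiring care. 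If it fails on the nose, I would instead project the viscous term with $\bP_h$ as well, which leaves the energy argument unchanged since $\bP_h$ is $\bM_1$-self-adjoint and $\bv\in V_h$.

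\emph{Energy equality.} Differentiate $\Ekin = \tfrac12\ip{\bv}{\bv}_1$ along the solution:
\[
\ddt\Ekin = -\ip{\bv}{\bP_h\Iv(\bom)}_1 - \nu\ip{\bv}{\Deltah\bv}_1 .
\]
Since $\bP_h$ is $\bM_1$-orthogonal and $\bv\in V_h$, the first term equals $-\ip{\bv}{\Iv(\bom)}_1$, which vanishes by \eqref{eq:antisym_11}. For the second term, $\ip{\bv}{\Deltah\bv}_1 = \bv^T\bM_1\bM_1^{-1}\tD_1^T\bM_2\tD_1\bv = \nrm{\tD_1\bv}_{\bM_2}^2 = \nrm{\bom}_{\bM_2}^2$. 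This gives \eqref{eq:NS_energy_diss}. Integrating over $[0,T]$ yields $\Ekin(T) + \nu\int_0^T\nrm{\bom}_{\bM_2}^2\,dt = \Ekin_0$, and both consequences follow since $\Ekin\ge 0$.

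\emph{Global existence.} Because $\Ekin(t)\le\Ekin_0$, we have $\nrm{\bv(t)}_2^2\le 2\Ekin_0/m_-(h)$ as in \Cref{thm:energy_bound}. So the solution stays in a fixed ball, on which the Lipschitz constant is uniform. The standard continuation argument (if $T_*^h<\infty$ then $\nrm{\bv}\to\infty$) then gives $T_*^h=\infty$. For $\nu=0$ the statement reduces to \Cref{thm:global}.
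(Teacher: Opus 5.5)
Your proposal is correct and follows the same route as the paper. Local existence comes from Picard--Lindel\"of via \Cref{lem:Lip_extrusion}, with the viscous part a fixed linear map; the energy equality comes from Lamb antisymmetry together with $\ip{\bv}{\Deltah\bv}_1=\nrm{\tD_1\bv}_{\bM_2}^2$; and global existence follows by continuation from the uniform bound $\Ekin(t)\le\Ekin_0$, as in \Cref{thm:energy_bound}. Your invariance check is sound and supplies a step the paper only asserts: with $\bD_2=\pm\tD_0^T$ one has $\bD_2\bM_1\Deltah\bv=\pm(\tD_1\tD_0)^T\bM_2\tD_1\bv=0$, so the fallback of projecting the viscous term is never needed.
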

\begin{proof}
Short-time existence and uniqueness follow
as in the Euler case
(\cref{thm:picard}).
Differentiating $\Ekin$ gives $\ddt\Ekin =-\nu\nrm{\bom}_{\bM_2}^2.$
Since $\Ekin(t)\le\Ekin_0$, the a priori bound
holds uniformly in~$t$
as in \Cref{thm:energy_bound},
preventing blow-up and giving $T_*^h = +\infty$
(\cref{thm:global}).
\end{proof}

\phantomsection\label{lem:Lip_NS}%
\phantomsection\label{lem:NS_energy_bound}%

\section{Continuous Limit in Various Regularity Regimes}\label{sect:cont_limit}

Having established well-posedness and conservation for every fixed mesh,
we now ask if discrete solutions converge to continuous solutions as
$h\to 0$. The answer depends on the regularity of the continuous
solution: smooth references give explicit rates (\Cref{sec:consistency}),
while weaker regularity yields subsequential convergence in progressively
weaker senses (\Cref{sec:weak_convergence}--\ref{subsect:CMV}).

\begin{assumption}[Regularity of reference solution]
\label{ass:smooth_ref}
For $\nu\ge 0$ we assume the existence of a smooth solution
$\bu\in C^1([0,T];\,H^{s}(\Omega))$ with $s\ge 3$ to the incompressible
Euler or Navier--Stokes equations on a closed oriented Riemannian
$d$-manifold $\Omega$ ($d=2,3$).
\end{assumption}

These regularity requirements are implied by Taylor expansions in the convergence proofs:
$\bu\in W^{1,\infty}$ suffices for the general-mesh rate,
sharpened to $W^{2,\infty}$
under reconstruction symmetry; helicity conservation uses $W^{2,\infty}$.
By Sobolev embedding $H^s\hookrightarrow W^{k,\infty}$ for $s>k+d/2$,
so $s\ge 3$ suffices for the general-mesh rate and $s\ge 4$
(also required for helicity) for the improved rate.



\subsection{Smooth Regime: Continuous Limit of the Discrete Euler System}
\label{sec:consistency}
We establish convergence of discrete Euler solutions to smooth
solutions via a truncation-error analysis (\Cref{sec:truncation})
combined with an energy-based stability estimate
(\Cref{sec:convergence_thm}).

\subsubsection{Error Analysis in the Inviscid Case}
\label{sec:truncation}
The discrete solution $\bv^h$ is compared to the de~Rham interpolant
$\bar\bv := \mathcal{R}_h\bu^\flat$ of the smooth solution $\bu$
(\Cref{ass:smooth_ref}).
The interpolant
$\bar\bv$ is generally \emph{not} an element of $V_h$: it carries an
$\OO(h^{r_\star})$ discrete-divergence residual
$e^\perp := (I-\bP_h)\bar\bv$ (\Cref{lem:proj_error}), whose interaction
with the nonlinear cross-terms is what controls the rate in the
stability analysis (\Cref{app:stability_proof}, Step~2). 
The deviation $e := \bv^h - \bar\bv$ satisfies a discrete
evolution equation whose right-hand side is the truncation error
$\tau_h$; the $h$-dependence of $\tau_h$ (consistency) combined with
an energy-based stability estimate yields the convergence rate via
Gr\"onwall's inequality.

\begin{definition}
\label{def:truncation}
The {truncation error} is defined as
\[
\tau_h(t) := \ddt{\bar\bv} + \bP_h\I_{\bar\bv}(\tD_1\bar\bv)
  = \ddt{\bar\bv} - \bff_h(\bar\bv).
\]
The magnitude of $\tau_h$ measures the consistency of the discretisation.
\end{definition}

\begin{theorem}[Euler Consistency]
\label{thm:consistency}%
Under \Cref{ass:mesh_reg}, \Cref{ass:smooth_ref}, and
\Cref{conv:cases},
the Leray-projected truncation error satisfies
\[
\sup_{t\in[0,T]}\nrm{\bP_h\tau_h(t)}_{L_h^2}
    \;\le\; C_\tau\,h^{r_{\rm rec}},
\]
i.e.\ $\OO(h)$ in case (A) and $\OO(h^2)$ in case (B).
The constant $C_\tau$ depends on $T$,
$\nrm{\bu}_{C^1([0,T];W^{r_{\rm rec}+1,\infty})}$,
and the mesh regularity, but not on $h$.
\end{theorem}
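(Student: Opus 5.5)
We take the de~Rham projection of the continuous equation as the reference identity. Apply $\mathcal{R}_h$ to the vector-invariant Euler equation \eqref{Euler:cont}. Since $\mathcal{R}_h$ commutes with time differentiation and with $\dd$ (\eqref{eq:deRham_commutativity}, \ref{H:deRham_commutativity}), this gives
\[
\ddt\bar\bv + \mathcal{R}_h(\iota_\bu\omega) + \tD_0\mathcal{R}_h B = 0 .
\]
Projecting with $\bP_h$ removes the exact gradient $\tD_0\mathcal{R}_h B$, because $\operatorname{range}(\tD_0)=V_h^\perp$. Subtracting from the definition of $\tau_h$ then yields
\[
\bP_h\tau_h = \bP_h\bigl(\I_{\bar\bv}(\tD_1\bar\bv) - \mathcal{R}_h(\iota_\bu\omega)\bigr).
\]
Pressure and Bernoulli terms therefore drop out exactly, as in \Cref{prop:VI_cons_equiv_incomp}. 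Since $\nrm{\bP_h}=1$ in $L_h^2$ (\ref{H:bounded_projection}), it suffices to bound the unprojected Lamb-vector consistency error $\nrm{\I_{\bar\bv}(\bar\bom) - \mathcal{R}_h(\iota_\bu\omega)}_{L_h^2}$, where $\bar\bom=\tD_1\bar\bv=\mathcal{R}_h\omega$.

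We estimate this error through its weak form. Fix a test cochain $\bw$ and evaluate $\ip{\bw}{\I_{\bar\bv}(\bar\bom)}_1$ using the matrix form in \Cref{def:contraction}. The first half, $\tfrac12\bw^T\tU(\bar\bv)\mathcal{R}_h\omega$, is a sum over faces $f_j$ of $w_j$ times $\sum_k D_{1,jk}\,(\bar\bu_j\cdot\hat e_k)\int_{f_k^*}\omega$. We make two replacements in it.
\begin{itemize}
\item Replace $\bar\bu_j(\bar\bv)$ by $\bu(\mathbf{x}_{f_j})$. By \Cref{prop:recon_accuracy} this costs $\OO(h^{r_{\rm rec}})$ pointwise.
\item Replace $\int_{f_k^*}\omega$ by $|f_k^*|\,\omega(\mathbf{x}_{f_j})$. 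This is a Taylor step costing $\OO(h)$ relative to the cell size.
\end{itemize}
The resulting sum $\sum_k D_{1,jk}(\bu\cdot\hat e_k)|f_k^*|\,\omega$ is the extrusion flux of $\omega$ through the surface swept by $e_j^*$ along $\bu$. It equals $(\bM_1)_{jj}(\iota_\bu\omega)(\mathbf{x}_{f_j})\ell_j^*$ up to the geometric identity satisfied by the dual faces around $f_j$, which holds exactly for constant fields by Delaunay--Voronoi orthogonality.

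The second half, $-\tfrac12(\tD_1\bw)^T\tU^T\bar\bv$, has the same structure with the incidence transposed. We treat it by summation by parts, using that $\tD_1$ is exact, and obtain the matching half of the same flux. The two halves add to $\ip{\bw}{\mathcal{R}_h(\iota_\bu\omega)}_1$ up to a Hodge-star pairing error. \Cref{lem:hodge_bilinear} bounds that pairing error by $\OO(h^{r_\star})\nrm{\bw}_{L_h^2}$. Under \Cref{conv:cases} we have $r_\star=r_{\rm rec}$, so all errors are $\OO(h^{r_{\rm rec}})$. Taking the supremum over $\bw$ gives the $L_h^2$ bound.

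The main obstacle is the Taylor step in the second half and the moment cancellation needed in case (B). A plain Taylor expansion gives only first order. Second order requires the first moments to cancel. We would use two facts: centroid proximity \eqref{eq:centroid_proximity} places the dual-face centroids at the edge midpoints to $\OO(h^2)$, and reconstruction symmetry \eqref{eq:recon_symmetry} kills the third-moment tensor. Together these should make the linear Taylor terms integrate to zero, exactly as in the proof of \Cref{prop:recon_accuracy}(ii).

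A second concern is that the summation by parts in the second half pairs $\tD_1\bw$ against a term involving $\bar\bv$. The bound must be in $\nrm{\bw}_{L_h^2}$, not $\nrm{\bw}_{H_h^1}$. We would move $\tD_1^T$ back onto the smooth factor $\tU^T\bar\bv$ to avoid an inverse inequality; this costs one extra derivative of $\bu$. That is why $C_\tau$ depends on $\nrm{\bu}_{W^{r_{\rm rec}+1,\infty}}$.

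Uniformity in $t\in[0,T]$ follows because every constant involves only $\sup_t$ norms of $\bu$ from \Cref{ass:smooth_ref}. The time-derivative term contributes no error, since $\ddt\mathcal{R}_h=\mathcal{R}_h\partial_t$.
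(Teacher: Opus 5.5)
Your argument has a genuine gap in the second (antisymmetrising) half of the contraction, and the gap comes from your decision to drop $\bP_h$.

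In weak form that half is $-\tfrac12(\tD_1\bw)^T\tU^T\bar\bv$. It vanishes identically for every gradient test cochain $\bw=\tD_0\phi$, because $\tD_1\tD_0=0$. Equivalently, $\tfrac12\bM_1^{-1}\tD_1^T\tU^T\bar\bv$ lies exactly in $V_h$. The Lamb cochain, however, has an order-one gradient part: $\mathcal{R}_h(\iota_\bu\omega)=-\mathcal{R}_h(\partial_t\bu^\flat)-\tD_0\mathcal{R}_hB$, so by \Cref{lem:proj_error} its $V_h^\perp$-component is $-\tD_0\mathcal{R}_hB+\OO(h^{r_\star})$, which is generically of order one.

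Two consequences follow. First, the second half cannot supply ``the matching half of the same flux'' for arbitrary $\bw$. Second, the entire gradient component of the unprojected error comes from the first half alone. With your half-and-half split, testing with $\bw=\tD_0\mathcal{R}_hB$ leaves a weak error of magnitude about $\tfrac12\nrm{\tD_0\mathcal{R}_hB}_{L_h^2}^2$. So $\I_{\bar\bv}(\bar\bom)-\mathcal{R}_h(\iota_\bu\omega)$ is $\OO(1)$ in $L_h^2$, and the supremum over all $\bw$ cannot give $h^{r_{\rm rec}}$. Invoking $\nrm{\bP_h}=1$ discards exactly the cancellation the estimate needs; you must keep $\bw\in V_h$, as the paper does in Step~1 of its proof.

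Even with $\bw\in V_h$, your summation by parts is only the identity $(\tD_1\bw)^T\tU^T\bar\bv=\bw^T\tD_1^T\tU^T\bar\bv$. The real work is to say what $\tU^T\bar\bv$ approximates and what $\bM_1^{-1}\tD_1^T$ of it converges to. Nothing in the sketch shows this equals half of $\bP_h\mathcal{R}_h(\iota_\bu\omega)$.

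The constant-field identity for the first half is also only asserted. If $\hat e_k$ is the unit vector of $e_k$ (the normal of $f_k^*$), your face sum is $\bu^T\bigl(\sum_k D_{1,jk}|f_k^*|\,\hat e_k\hat e_k^T\bigr)\omega$, a symmetric bilinear form in $(\bu,\omega)$. By contrast, $(\iota_\bu\omega)(\hat n_j)=\omega\cdot(\bu\times\hat n_j)$ is antisymmetric, so when $d=3$ orthogonality alone cannot produce it.

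The paper splits the contraction differently. In Step~2 it takes the first half to reproduce the \emph{entire} edge integral $\int_{e_j^*}\iota_\bu\omega$, with an edge error $\OO(h^{r_{\rm rec}+1})$ assembled through \Cref{lem:edge_assembly}. It never gives the antisymmetrisation cochain $\xi=\tfrac12\bM_1^{-1}\tD_1^T\tU^T\bar\bv$ a share of the Lamb flux. In Step~3 it bounds only $\bP_h\xi$, by comparing $\xi$ with the kinetic-energy gradient $\tD_0\mathcal{R}_h(\tfrac12|\bu|^2)$.

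If you follow that route, be aware that the orthogonality above gives $\bP_h\xi=\xi$ and $\nrm{\xi-\tD_0f}_{L_h^2}^2=\nrm{\xi}_{L_h^2}^2+\nrm{\tD_0f}_{L_h^2}^2$ for every $f$. Subtracting a nonzero gradient therefore only enlarges the bound. What that normalisation actually requires is $\nrm{\xi}_{L_h^2}=\OO(h^{r_{\rm rec}})$, a direct estimate on $\tD_1^T\tU^T\bar\bv$ that must be proved rather than inferred from the projection. The paper's Step~2b, like your sketch, also only asserts the constant-field exactness.
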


\begin{proof}%
The proof is given in \Cref{app:consistency_proof}.
\end{proof}

Let $\bv^h(t)$ be the discrete solution from \Cref{thm:global}
with initial data $\bv_0^h := \bP_h\mathcal{R}_h\bu_0^\flat$,
and define the error $e(t) := \bv^h(t) - \mathcal{R}_h\bu^\flat(t)$.

\begin{theorem}[Euler stability]
\label{thm:stability}%
Under the assumptions of \Cref{thm:consistency}, on orthogonal DV meshes
(\Cref{ass:mesh_reg}), the error satisfies
\[
\sup_{t\in[0,T]}\nrm{{e}(t)}_{L_h^2}
    \le C(T)\,h^{r^\ast}\,|\log h|^{\beta_d},
\qquad r^\ast := \min\bigl(r_{\rm rec},\; r_\star\bigr),
\]
where $\beta_2=\beta_3=1$, and
$C(T)$ depends on $T$, the smooth solution bounds
(up to $\nrm{\bu}_{W^{r_\star+1,\infty}}$), and mesh regularity, but not on $h$.
Under \Cref{conv:cases}: $\OO(h\,|\log h|^{\beta_d})$ in case (A),
$\OO(h^2\,|\log h|^{\beta_d})$ in case (B).
\end{theorem}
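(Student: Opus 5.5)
We work with the projected error $e_h := \bv^h - \bP_h\bar\bv \in V_h$, where $\bar\bv := \mathcal{R}_h\bu^\flat$, and recombine at the end through
\[
e = e_h - e^\perp, \qquad e^\perp := (I-\bP_h)\bar\bv .
\]
By \Cref{lem:proj_error} (equivalently \eqref{eq:interp_error_v}), $\nrm{e^\perp}_{L_h^2}\le C h^{r_\star}$ uniformly in $t$. It therefore suffices to bound $e_h$.

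\textbf{Error equation.} Subtract the projected ODE \eqref{eq:proj_ODE} for $\bv^h$ from $\bP_h$ applied to \Cref{def:truncation}:
\[
\ddt e_h = -\bP_h\bigl(\bQ(\bv^h,\bv^h)-\bQ(\bar\bv,\bar\bv)\bigr) - \bP_h\tau_h .
\]
Write $\bv^h-\bar\bv = e_h - e^\perp$ and expand with the bilinear identity of \Cref{def:contraction}:
\[
\bQ(\bv^h,\bv^h)-\bQ(\bar\bv,\bar\bv)=\bQ(\bar\bv,e)+\bQ(e,\bar\bv)+\bQ(e,e).
\]
Test with $e_h$ in $\ip{\cdot}{\cdot}_1$; since $e_h\in V_h$ and $\bP_h$ is $\bM_1$-orthogonal, the projector disappears. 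This gives
\[
\tfrac12\ddt\nrm{e_h}^2_{L_h^2}\le |\ip{e_h}{\bQ(\bar\bv,e)+\bQ(e,\bar\bv)+\bQ(e,e)}_1| + C_\tau h^{r_{\rm rec}}\nrm{e_h}_{L_h^2},
\]
where the last term uses \Cref{thm:consistency}.

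\textbf{Term-by-term bounds.}
\begin{itemize}
\item \emph{Cubic term.} In $\ip{e_h}{\bQ(e,e)}_1$, the part $\ip{e_h}{\bQ(e_h,e_h)}_1$ vanishes by Lamb antisymmetry (\Cref{prop:extrusion}, part~2). The remaining pieces contain at least one factor $e^\perp$.
\item \emph{Linear terms.} For $\ip{e_h}{\bQ(\bar\bv,e_h)}_1$, the matrix form \eqref{eq:Q_bilinear} is $\tfrac12\bigl(e_h^T\tU(\bar\bv)\tD_1e_h - e_h^T\tU(\bar\bv)\tD_1 e_h\bigr)=0$. This is the polarised antisymmetry. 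The term $\ip{e_h}{\bQ(e_h,\bar\bv)}_1$ is bounded by $C\nrm{\bu}_{W^{1,\infty}}\nrm{e_h}^2_{L_h^2}$. For this I use that $\tU(e_h)$ is linear in $e_h$ with weights controlled by \Cref{def:disc_norms}, and that $\tD_1\bar\bv=\mathcal{R}_h\omega^\flat$ is exact by \eqref{eq:deRham_commutativity}. The cell scaling $|f_k^*|\sim h^{d-1}$ balances the factor $1/\ell_j^*$ in the reconstruction, and the estimate closes against $\nrm{\cdot}_{L_h^2}$.
\item \emph{Cross terms with $e^\perp$.} These are the main obstacle. Bounded naively, $\bQ(e^\perp,\cdot)$ involves the reconstruction of $e^\perp$ in the pointwise norm $\nrm{\cdot}_{\rm rec}$, which exceeds $\nrm{\cdot}_{L_h^2}$ by inverse powers of $h$. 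I would instead use that $e^\perp=\tD_0\phi$ is a discrete gradient, since $\operatorname{range}(\tD_0)=V_h^\perp$. Then $\tD_1 e^\perp=0$ by the cochain property (H1), so $\bQ(\cdot,e^\perp)$ loses its first term, and the second term is handled by summation by parts.
\item \emph{Pointwise bound on $\phi$.} To control $\bQ(e^\perp,\cdot)$ I need a pointwise (rec-norm) bound on $\tD_0\phi$. Here $\phi$ solves $L_h\phi=\bD_2\bM_1\bar\bv=O(h^{r_\star})$ by \eqref{eq:div_consistency}. Orthogonality identifies $L_h$ with a cell-centred finite-volume Laplacian, whose discrete Green's function gives a $W^{1,\infty}_h$ estimate with a $|\log h|$ loss. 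This is the source of $\beta_d=1$; I would follow the proof structure of \Cref{lem:proj_error}.
\end{itemize}
With these bounds, every $e^\perp$ term is at most $C h^{r_\star}|\log h|\,(1+\nrm{e_h}_{L_h^2})\nrm{e_h}_{L_h^2}$.

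\textbf{Closing the estimate.} Collecting the bounds,
\[
\ddt\nrm{e_h}\le C\nrm{e_h}+C h^{r^\ast}|\log h| + C h^{r_\star}|\log h|\nrm{e_h}^2 .
\]
The initial value is $e_h(0)=0$, since $\bv_0^h=\bP_h\mathcal{R}_h\bu_0^\flat$. A bootstrap keeps the quadratic term harmless: as long as $\nrm{e_h}\le1$, Gr\"onwall gives
\[
\nrm{e_h(t)}\le C(T)h^{r^\ast}|\log h|,
\]
which is $<1$ for small $h$, so by continuity the bound holds on all of $[0,T]$. Adding back $\nrm{e^\perp}\le Ch^{r_\star}$ yields the claimed rate for $e$.
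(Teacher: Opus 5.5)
Testing with $e_h=\bP_h e\in V_h$ is sound and cleaner than the paper's choice, since it avoids the $(I-\bP_h)\tau_h$ piece. The exact cancellations are also right: $\Phi(u,v,w):=\ip{u}{\bQ(v,w)}_1=\tfrac12\bigl[u^T\tU(v)\tD_1w-w^T\tU(v)\tD_1u\bigr]$ is antisymmetric under $u\leftrightarrow w$, so $\ip{e_h}{\bQ(\bar\bv,e_h)}_1$ and $\ip{e_h}{\bQ(e^\perp,e_h)}_1$ vanish. The gap is the claim that every remaining $e^\perp$-term is at most $Ch^{r_\star}|\log h|(1+\nrm{e_h})\nrm{e_h}$; the mechanism you offer does not give it. The pairing $\ip{e_h}{\bQ(e^\perp,\bar\bv)}_1$ contains, besides the harmless $\tfrac12 e_h^T\tU(e^\perp)\tD_1\bar\bv$, the term
\[
  -\tfrac12\,\bar\bv^{T}\,\tU(e^\perp)\,\tD_1 e_h ,
\]
in which the discrete curl acts on the error.
\begin{itemize}
\item The rec-norm bound of \Cref{lem:proj_error} controls the entries of $\tU(e^\perp)$, not $\tD_1e_h$. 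Using $\nrm{\tD_1e_h}_{\bM_2}\le Ch^{-1}\nrm{e_h}_{L_h^2}$ gives only $\OO(h^{r_\star-1}|\log h|)\,\nrm{e_h}_{L_h^2}$. That means no convergence in case (A) and one order lost in case (B), and for Euler there is no dissipation to absorb $\nrm{\tD_1e_h}_{\bM_2}$.
\item Summation by parts only moves the curl onto the $2$-cochain $\tU(e^\perp)^T\bar\bv$. You then need the reconstruction $\bar\bu_j(e^\perp)$ to vary by $\OO(h^{r_\star+1})$ across a stencil, i.e.\ second-difference control of $\phi$. \Cref{lem:proj_error} provides nothing of this kind, and it cannot be expected: already $\bD_2\bM_1e^\perp=\bD_2\bM_1\bar\bv$ is in general only $\OO(h^{r_\star-1})$ per unit volume, cf.\ \eqref{eq:div_pointwise}.
\item The same defect is in your bound for $\ip{e_h}{\bQ(e_h,\bar\bv)}_1$. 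Your scaling argument covers the half $\tfrac12 e_h^T\tU(e_h)\tD_1\bar\bv$. The other half, $-\tfrac12\bar\bv^T\tU(e_h)\tD_1e_h$, again carries $\tD_1e_h$ and, bounded the same way, is $\OO(h^{-1})\nrm{e_h}_{L_h^2}^2$; your bootstrap cannot absorb this when $r^\ast=1$.
\end{itemize}
By contrast, $(e^\perp)^T\tU(e_h)\tD_1e_h=\OO(h^{r_\star-1}|\log h|)\nrm{e_h}_{L_h^2}^2$ is absorbed by the bootstrap. The term $(e^\perp)^T\tU(\bar\bv)\tD_1e_h$ can also be rescued: freezing $\bar\bu_j(\bar\bv)$ on a stencil turns each entry of $(e^\perp)^T\tU(\bar\bv)$ into a multiple of $(\tD_1e^\perp)_k=0$, leaving only the $\OO(h)$ variation of $\bar\bu_j(\bar\bv)$.

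Citing the paper does not close this gap. Its proof meets the same term as $\ip{\varepsilon}{\bQ(e^\perp,\bar\bv)}_1$ in \eqref{eq:polarised_solve_b} and bounds it with the operator estimate \eqref{eq:bilinear_eperp}. That estimate is derived from the entrywise bound on the weights and does not address the $\tD_1^T\tU(e^\perp)^T$ half of \eqref{eq:Q_bilinear}. The trilinear term is delegated to \Cref{lem:trilinear}, whose proof estimates only $\sum_j(\bM_1)_{jj}e_j\sum_k w_{jk}(e)\bar\omega_k$, i.e.\ the $\tU(e)\tD_1\bar\bv$ half. You are right that under \eqref{eq:Q_bilinear} $\bQ(\cdot,e^\perp)$ does not vanish, contrary to \eqref{eq:eperp_kills_curl}. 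Moreover, \eqref{eq:polarised}--\eqref{eq:polarised_two} say nothing beyond $\Phi(X,Y,Z)=-\Phi(Z,Y,X)$, so polarisation never changes the middle (extrusion) argument, and the problematic term already has $e^\perp$ there. What you need is a genuine estimate of these curl-on-error pairings. For example: a discrete counterpart of the integration by parts bounding $\int_\Omega\bu\cdot(e\times\curl e)$ by $\nrm{\nabla\bu}_{L^\infty}\nrm{e}_{L^2}^2$ for divergence-free $e$, together with derivative (not merely pointwise) control of $e^\perp$, or a comparison field in $V_h$ that has such control.
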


\begin{proof}[Proof sketch]
Full details are in \Cref{app:stability_proof,app:trilinear_proof}.
\end{proof}

\begin{lemma}[Trilinear estimate for the Lamb cross-term]\label{lem:trilinear}
Let $\bu\in W^{1,\infty}(\Omega;\Lambda^1)$ be a smooth 1-form,
$\bar\bom = \tD_1\mathcal{R}_h(\bu^\flat)$ its interpolated
vorticity, and ${e}\in C^1(\KKs)$ a dual 1-cochain.
For $d=2$ and $d=3$, the following $h$-independent estimate holds:
\begin{equation}\label{eq:trilinear_bound}
  |\ip{{e}}{I_{{e}}(\bar\bom)}_1|
  \le C_{\rm tri}\nrm{\bu}_{W^{1,\infty}}\nrm{{e}}_{L_h^2}^2,
\end{equation}
where $C_{\rm tri}$ depends only on the mesh regularity constant.
\end{lemma}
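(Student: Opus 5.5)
We expand the contraction through its defining matrix identity (\Cref{def:contraction}) with velocity argument $e$ and test cochain $\bw=e$:
\[
  \ip{e}{I_{e}(\bar\bom)}_1
  = \tfrac12\bigl(e^T\tU(e)\,\bar\bom - e^T\tU(e)\,\tD_1 e\bigr)
  = \tfrac12\,e^T\tU(e)\,\bar\bom - \tfrac12\,e^T\tU(e)\,\tD_1 e .
\]
The second term is the self-interaction $\ip{e}{I_e(\tD_1 e)}_1$ written out. By the energy identity \eqref{eq:antisym_11}, $\ip{e}{I_e(\tD_1e)}_1=0$, and comparing the two halves of the defining formula shows that this forces $e^T\tU(e)\tD_1e=0$. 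Hence
\[
  \ip{e}{I_e(\bar\bom)}_1=\tfrac12\,e^T\tU(e)\,\bar\bom
  =\tfrac12\sum_j\sum_k e_j\,D_{1,jk}\,\bigl(\bar\u_j(e)\cdot\hat e_k\bigr)\,\bar\omega_k .
\]
This reduces the estimate to a weighted trilinear sum that is quadratic in $e$ and linear in the smooth vorticity.

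\emph{Controlling the vorticity factor.} By de~Rham commutativity \eqref{eq:deRham_commutativity}, $\bar\omega_k=\int_{f_k^*}\bom\cdot dA$, so
\[
  |\bar\omega_k|\le |f_k^*|\,\nrm{\bom}_{L^\infty}\le C\,A_k^*\,\nrm{\bu}_{W^{1,\infty}} .
\]
No Hodge-star error enters at this point. Each primal face $f_j$ has boundedly many edges $e_k$ (bounded valence, \Cref{ass:mesh_reg}). Writing $c_j:=\sum_{k:D_{1,jk}\neq0}A_k^*$, we obtain
\[
  |\ip{e}{I_e(\bar\bom)}_1|\le C\,\nrm{\bu}_{W^{1,\infty}}\sum_j |e_j|\,|\bar\u_j(e)|\,c_j .
\]

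\emph{Bounding the reconstruction locally in $\ell^2$.} This is where the obstacle lies: we must avoid the pointwise bound $|\bar\u_j(e)|\le C_R\nrm{e}_{\rm rec}$, since that would produce an $h$-dependent factor through the norm equivalence. Instead we use the local structure of \Cref{def:averaging_recon}. The Gram matrices $G_i$ have uniformly bounded condition number, so $|\u(v_i^*)|^2\le C\sum_{n\prec K_i^*}(e_n/\ell_n^*)^2$, and $\bar\u_j$ averages the two adjacent cells. Cauchy--Schwarz then gives
\[
  \sum_j|e_j||\bar\u_j(e)|c_j\le\Bigl(\sum_j \frac{c_j\,\ell_j^*}{1}\,\frac{e_j^2}{(\ell_j^*)^2}\Bigr)^{1/2}\Bigl(\sum_j c_j\,\ell_j^*\,|\bar\u_j(e)|^2\Bigr)^{1/2}.
\]
Finite overlap lets us bound the second factor by $C\sum_n c_n\ell_n^*(e_n/\ell_n^*)^2$. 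It remains to check the scaling $c_j\ell_j^*/(\ell_j^*)^2\simeq A_j/\ell_j^*=(\bM_1)_{jj}$, which follows from quasi-uniformity and shape-regularity: $c_j\sim h^{d-2}\cdot h\cdot$, i.e.\ both sides are of order $h^{d-2}$ (in $d=3$ the dual faces $f_k^*$ adjacent to $f_j$ have area $\sim h^2\sim A_j$). Hence both factors are bounded by $C\nrm{e}_{L_h^2}$, which yields \eqref{eq:trilinear_bound}.

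\emph{Main difficulty.} The delicate step is this mesh-scaling bookkeeping. One has to verify that the face-area weights $A_k^*$ coming from the vorticity flux exactly compensate the $1/\ell^*$ in the reconstruction, separately for horizontal and vertical faces on anisotropic prisms. If the vertical spacing is decoupled from $h$, the same argument must be run with $h_z$ and $h$ tracked separately. I would check this by treating the four horizontal/vertical face–edge combinations one at a time, for both $d=2$ and $d=3$.
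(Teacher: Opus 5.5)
Your expansion $\ip{e}{I_e(\bar\bom)}_1=\tfrac12\,e^T\tU(e)\,\bar\bom-\tfrac12\,e^T\tU(e)\,\tD_1e$ is correct, but the step that discards the second term is not.

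By the matrix form in \Cref{def:contraction}, $2\ip{e}{I_e(\tD_1e)}_1=e^T\tU(e)\tD_1e-(\tD_1e)^T\tU(e)^Te$, and these two terms are the \emph{same} scalar $X(e):=e^T\tU(e)\tD_1e$. The energy identity \eqref{eq:antisym_11} is therefore the tautology $X(e)-X(e)=0$, which is why it holds for every linear reconstruction. It says nothing about the value of $X(e)$, so it cannot ``force'' $X(e)=0$.

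Nor is $-\tfrac12X(e)$ harmless. It contains no $\bar\bom$, because the antisymmetrising half of the contraction is built from the velocity slot rather than from the 2-cochain. It is cubic in $e$ and independent of $\bu$. With $\bu=0$ the right-hand side of \eqref{eq:trilinear_bound} vanishes while the left-hand side equals $\tfrac12|X(e)|$. So, for the full contraction, the estimate is equivalent to your first-half bound \emph{plus} $X\equiv 0$, and nothing in the construction makes this cubic form vanish identically. As written, your argument establishes \eqref{eq:trilinear_bound} only for the extrusion part $\tfrac12\,e^T\tU(e)\,\bar\bom$.

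That extrusion part is exactly what the paper's proof treats. It starts from $\ip{e}{I_e(\bar\bom)}_1=\sum_j(\bM_1)_{jj}e_j\sum_k w_{jk}(e)\bar\omega_k$, i.e.\ it reads the contraction as the one-sided extrusion with the wedge weights of \eqref{eq:wedge_11}. Since $(\bM_1)_{jj}w_{jk}=\tfrac12\tU_{jk}$, this is your $\tfrac12\,e^T\tU(e)\,\bar\bom$, and the antisymmetrising term never appears. So the repair is to state and prove the lemma for that part, not to invoke antisymmetry. The same issue arises in the application in Step~2b: there the antisymmetrising half of $\ip{e}{\bQ(e,\bar\bv)}_1$ is $-\tfrac12(\tD_1e)^T\tU(e)^T\bar\bv$, which the energy identity does not remove either.

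For the extrusion part your argument coincides with the paper's: de~Rham control $|\bar\omega_k|\le|f_k^*|\,\nrm{\bom}_{L^\infty}$, the Gram-inverse bound on the averaging reconstruction, Cauchy--Schwarz, and finite overlap. The bookkeeping differs slightly:
\begin{itemize}
\item Your weights $c_j\ell_j^*$ balance the two Cauchy--Schwarz factors so that each is directly $\lesssim\nrm{e}_{L_h^2}$, giving an order-one constant in $d=2,3$.
\item The paper weights by $A_j$, collects $h^{-1/2}$ from each factor, and ends with $h^{d-2}\le1$. Its extra factor $h$ for $d=3$ comes from bounding $w_{jk}(e)$ by $C_w\max|\tilde\bu(e)|$, although $w_{jk}=\tfrac12(\bM_1^{-1}\tU)_{jk}$ carries $(\bM_1)^{-1}_{jj}\sim h^{2-d}$.
\end{itemize}
Your count is therefore the consistent one, and order one is all the statement claims.

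Finally, quasi-uniformity in \Cref{ass:mesh_reg} is imposed over all dual edges. Vertical and horizontal spacings are thus comparable, and the separate $h_z$ bookkeeping you anticipate is unnecessary.
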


\begin{proof}%
The proof is given in \Cref{app:trilinear_proof}.
\end{proof}

\begin{remark}%
\label{rem:A1_crucial}
The energy identity~\eqref{eq:energy_cancel} is central to the proof:
it removes the quadratic error term $\bQ({e},{e})$,
leaving only terms linear in ${e}$.  Without it, the
energy estimate contains a cubic term
$\sim\nrm{{e}}^3_{L_h^2}$
that blows up in finite time. 
\end{remark}

\subsubsection{Euler Convergence Theorem}
\label{sec:convergence_thm}

Combining the $\OO(h^{r_{\rm rec}})$ Euler consistency bound of
\Cref{thm:consistency} with the Euler stability estimate of
\Cref{thm:stability} via Gr\"onwall yields the convergence rate stated in the theorem below.
The argument relies on three ingredients developed in
\Cref{app:stability_proof}: a  trilinear estimate, a
sharp pointwise-reconstruction bound on the projection residual of
\Cref{lem:proj_error}\,(ii),
and a polarised antisymmetry identity that, combined with the
cochain-complex relation $\tD_1\tD_0 = 0$, eliminates the bilinear
Leray remainder that would otherwise reduce the rate in three
dimensions.

\begin{theorem}[Convergence]
\label{thm:convergence}%
Let $\bu\in C^1([0,T];\,H^s(\Omega))$ with $s\ge 3$ be a smooth
solution of the incompressible Euler equations on
a closed oriented Riemannian $d$-manifold $\Omega$ ($d = 2$ or $3$).
Let $\{\KK_h\}$ be a family of Delaunay--Voronoi meshes satisfying
\Cref{ass:mesh_reg}. Let initial data be
$\bv_0^h = \bP_h\mathcal{R}_h\bu_0^\flat$.
Let $\bv^h(t)\in V_h$ be the unique global solution of the discrete
Euler system (\cref{thm:global}).
Then in the discrete norm
\begin{equation}\label{eq:convergence_disc}
  \sup_{t\in[0,T]}\nrm{\bv^h(t) - \mathcal{R}_h\bu^\flat(t)}_{L_h^2}
  \le C(T)\,h^{\min(r_{\rm rec},\,r_\star)}\,|\log h|^{\beta_d},
\end{equation}
where $\beta_2 = \beta_3 = 1$. 
Under \Cref{conv:cases}:
$\OO(h\,|\log h|^{\beta_d})$ in case (A) and
$\OO(h^2\,|\log h|^{\beta_d})$ in case (B), uniformly in
$d = 2, 3$. In the $L^2$ norm by Whitney reconstruction
\[
\sup_{t\in[0,T]}\nrm{\mathcal{W}_h\bv^h(t)
    - \bu(t)}_{L^2(\Omega)}
    \le C(T)\,h\,|\log h|^{\beta_d},
\]
where $C(T)$ depends on $T$, $\nrm{\bu}_{C^1([0,T];H^s)}$, and
the mesh regularity, but not on $h$.
\end{theorem}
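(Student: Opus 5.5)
The discrete estimate \eqref{eq:convergence_disc} is the content of \Cref{thm:stability} combined with \Cref{thm:consistency}, so the plan is to assemble these rather than re-derive them. Write $\bar\bv:=\mathcal{R}_h\bu^\flat$ and $e:=\bv^h-\bar\bv$. I split $e$ as $e=(\bv^h-\bP_h\bar\bv)-(I-\bP_h)\bar\bv$. The second piece is $\OO(h^{r_\star})$ in $L_h^2$ by \Cref{lem:interp_error}/\ref{H:bounded_projection}, uniformly in $t$ since $\bu\in C^1([0,T];H^s)$ and $s\ge3$ gives $W^{2,\infty}$ control in $d\le 3$. The first piece $\epsilon:=\bv^h-\bP_h\bar\bv$ lies in $V_h$ and vanishes at $t=0$ by the choice of initial data. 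Its evolution is
\[
\ddt\epsilon=-\bP_h\bigl(\bQ(\bv^h,\bv^h)-\bQ(\bar\bv,\bar\bv)\bigr)-\bP_h\tau_h .
\]

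I test this equation with $\epsilon$ in $\ip{\cdot}{\cdot}_1$. Here $\bP_h$ is self-adjoint and $\epsilon\in V_h$, so the projector drops out. I expand the nonlinear difference bilinearly as $\bQ(\bar\bv,e)+\bQ(e,\bar\bv)+\bQ(e,e)$ and substitute $e=\epsilon-e^\perp$. Lamb antisymmetry \eqref{eq:antisym_11} kills $\ip{\epsilon}{\bQ(\epsilon,\epsilon)}_1$, which removes the cubic term (\Cref{rem:A1_crucial}). The term $\ip{\epsilon}{\bQ(\epsilon,\bar\bv)}_1$ is bounded by \Cref{lem:trilinear} by $C\nrm{\bu}_{W^{1,\infty}}\nrm{\epsilon}_{L_h^2}^2$. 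The polarised form of antisymmetry moves $\ip{\epsilon}{\bQ(\bar\bv,\epsilon)}_1$ onto the same structure. The consistency term contributes $C_\tau h^{r_{\rm rec}}\nrm{\epsilon}_{L_h^2}$.

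The remaining terms involve $e^\perp$. The linear ones are bounded by $\nrm{e^\perp}$ times $\nrm{\bar\bv}_{\rm rec}$-type bounds, giving $h^{r_\star}\nrm{\epsilon}$. The awkward ones are $\ip{\epsilon}{\bQ(\epsilon,e^\perp)}_1$ and $\ip{\epsilon}{\bQ(e^\perp,\epsilon)}_1$. Because $e^\perp=\tD_0\phi$ is a gradient, $\tD_1e^\perp=0$ by $\tD_1\tD_0=0$, which removes half of the matrix formula \eqref{eq:Q_bilinear}. The surviving half I will control using the sharp pointwise bound on $e^\perp$ from \Cref{lem:proj_error}(ii), of order $h^{r_\star}|\log h|$ in the reconstruction norm. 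This is where the logarithm enters. The resulting differential inequality is
\[
\tfrac12\ddt\nrm{\epsilon}^2\le C\nrm{\epsilon}^2+C\bigl(h^{r_{\rm rec}}+h^{r_\star}|\log h|\bigr)\nrm{\epsilon} .
\]
Gr\"onwall on $[0,T]$ with $\epsilon(0)=0$ then gives \eqref{eq:convergence_disc} after adding back $\nrm{e^\perp}$.

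\textbf{Main obstacle.} The step I expect to be hardest is exactly this $\bQ(\epsilon,e^\perp)$ cross term in $d=3$. A naive inverse inequality would cost $h^{-d/2}$ and destroy the rate, so the pointwise $|\log h|$ bound on the projection residual is indispensable. I would try to write that term as $\epsilon^T\tU(\epsilon)\tD_1e^\perp$-free plus $\epsilon^T\tD_1^T\tU^T e^\perp$, and bound it by $\nrm{\tU(\epsilon)}\,\nrm{e^\perp}_{\rm rec}$ with the weights controlled by $C_w$.

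For the $L^2$ statement I use the triangle inequality
\[
\nrm{\mathcal{W}_h\bv^h-\bu}_{L^2}\le\nrm{\mathcal{W}_h e}_{L^2}+\nrm{\mathcal{W}_h\mathcal{R}_h\bu^\flat-\bu}_{L^2} .
\]
The first term is at most $C_W\nrm{e}_{L_h^2}$ by Whitney boundedness. The second is $Ch\nrm{\bu}_{H^1}$ by \eqref{eq:Whitney_approx}. Since the Whitney interpolation error is only first order, the reconstructed bound is $\OO(h|\log h|)$ in both mesh cases.
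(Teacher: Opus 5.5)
Your top-level assembly matches the paper's: consistency, stability and Gr\"onwall for the discrete bound, then the Whitney triangle inequality for the $L^2$ statement. Testing with $\epsilon=\bP_h e$ instead of $e$ is a harmless variant; it even removes the $(I-\bP_h)\tau_h$ piece.

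The gap is at exactly the step you flag. Put $\Phi(X,Y,Z):=\ip{X}{\bQ(Y,Z)}_1$. From \eqref{eq:Q_bilinear},
\[
\Phi(X,Y,Z)=\tfrac12\bigl(X^T\tU(Y)\tD_1Z-Z^T\tU(Y)\tD_1X\bigr)=-\Phi(Z,Y,X).
\]
This identity makes $\ip{\epsilon}{\bQ(\bar\bv,\epsilon)}_1$ and $\ip{\epsilon}{\bQ(e^\perp,\epsilon)}_1$ vanish exactly. It also shows that the half of \eqref{eq:Q_bilinear} which survives $\tD_1e^\perp=0$ carries the curl of $\epsilon$, not of $e^\perp$. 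Up to sign and terms of size $h^{r_\star}|\log h|\,\nrm{\epsilon}_{L_h^2}$, what remains is:
\begin{itemize}
\item the two ``linear'' terms $(e^\perp)^T\tU(\bar\bv)\tD_1\epsilon$ and $\bar\bv^T\tU(e^\perp)\tD_1\epsilon$;
\item the quadratic term $(e^\perp)^T\tU(\epsilon)\tD_1\epsilon$.
\end{itemize}

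For $\nu=0$ the energy identity gives no control of $\tD_1\epsilon$. You are left with the inverse estimate $\nrm{\tD_1\epsilon}_{\bM_2}\le Ch^{-1}\nrm{\epsilon}_{L_h^2}$. In $d=3$ the best this yields is $Ch^{r_\star-1}|\log h|\,\nrm{\epsilon}_{L_h^2}$ for the linear terms and $Ch^{r_\star-1}|\log h|\,\nrm{\epsilon}_{L_h^2}^2$ for the quadratic one.
\begin{itemize}
\item In case (A) this is a forcing that does not vanish as $h\to0$, together with a Gr\"onwall rate of order $|\log h|$, i.e.\ amplification $h^{-CT}$.
\item In case (B) it gives at most first order.
\end{itemize}

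Your proposed bound via $|w_{jk}(\epsilon)|\le C_w\nrm{\epsilon}_{\rm rec}$ is worse still. The quantity $\nrm{\epsilon}_{\rm rec}$ is controlled only through $\nrm{\epsilon}_{\rm rec}\le Ch^{-d/2}\nrm{\epsilon}_{L_h^2}$, which is the very inverse inequality you wanted to avoid, and $\tD_1\epsilon$ is still left unestimated. So the differential inequality $\tfrac12\ddt\nrm{\epsilon}^2\le C\nrm{\epsilon}^2+C(h^{r_{\rm rec}}+h^{r_\star}|\log h|)\nrm{\epsilon}$ is not established.

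Falling back on \Cref{thm:stability} does not rescue this. Its Steps~2b--2c dispose of precisely these terms through \eqref{eq:eperp_kills_curl}, the assertion that all of $\bQ(\cdot,e^\perp)$ vanishes. As you correctly noticed, that is not what \eqref{eq:Q_bilinear} gives. So you cannot import that step.

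The missing ingredient is a way to move $\tD_1$ off $\epsilon$. Summation by parts would shift it onto $\tU(\cdot)^Te^\perp$, which requires $\OO(h^{r_\star})$ bounds on discrete derivatives of $e^\perp$. \Cref{lem:proj_error}\,(ii) bounds only $e^\perp=\tD_0\phi$ itself. In case (A) such derivative bounds should not be expected: $\bD_2\bM_1e^\perp=\bD_2\bM_1\mathcal{R}_h\bu^\flat$ is in general only $\OO(h^{r_\star-1})$ per unit cell volume by \eqref{eq:div_pointwise}.

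A minor point: $H^3$ does not embed in $W^{2,\infty}$ for $d=3$. So $s\ge3$ does not give the regularity you invoke, let alone the $W^{r_\star+1,\infty}$ control needed in case (B).
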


\begin{proof}
The estimate~\eqref{eq:convergence_disc}
follows from the stability estimate (\cref{thm:stability}) with rate $r^\ast = \min(r_{\rm rec},\,r_\star)$:
the consistency gives $\OO(h^{r_{\rm rec}})$ forcing (\cref{thm:consistency}),
the initial error is $\OO(h^{r_\star})$ (\cref{lem:proj_error}),
and the nonlinear cross-terms contribute $\OO(h^{r_\star}|\log h|^{\beta_d})$ forcing
(via the pointwise-reconstruction bound of
\Cref{lem:proj_error}\,(ii) and the polarised antisymmetry identity
of \Cref{app:stability_proof}, see in particular Step~2b,
eq.~\eqref{eq:II_bound}).
The linear Gr\"onwall closure of \Cref{app:stability_proof}, Step~3,
then yields the rate.
For the continuous $L^2$-norm
\begin{align*}
  \nrm{\mathcal{W}_h\bv^h - \bu}_{L^2}
  &\le \nrm{\mathcal{W}_h\bv^h
    - \mathcal{W}_h\mathcal{R}_h\bu^\flat}_{L^2}
  + \nrm{\mathcal{W}_h\mathcal{R}_h\bu^\flat
    - \bu}_{L^2} \\
  &\le C_W\nrm{\bv^h - \mathcal{R}_h\bu^\flat}_{L_h^2}
  + C_{\mathrm{int}}\,h 
  \le C_W\cdot C(T)\,h^{r^\ast} + C_{\mathrm{int}}\,h 
  \le C'(T)\,h,
\end{align*}
where $C_W$ is the norm of the Whitney map (\cref{def:Whitney}),
and the second line uses~\eqref{eq:convergence_disc} and the
Whitney approximation property~\eqref{eq:Whitney_approx}.
The continuous $L^2$-rate is $\OO(h)$ for all $d=2,3$, limited
either by the first-order Whitney interpolation or the discrete rate.
\end{proof}

The next theorem states that the convergence of the Euler solutions implies the convergence of the conserved quantities.

\begin{theorem}[Convergence of discrete conserved quantities]
\label{thm:conv_cons}
As $h\to 0$, the discrete conserved quantities converge to their
continuous counterparts:
\begin{align}
  \Ekin_h &= \frac{1}{2}\ip{\bv^h}{\bv^h}_1
  \to \Ekin = \frac{1}{2}\int_\Omega|\bu|^2\,dV,
  \\
  H_h &= \bv^h\twdg\tD_1\bv^h
  \to H = \int_\Omega\bu\cdot\bom\,dV,
  \\
  \Gamma_h^{(\gamma)} &= \sum_{j\in\gamma_h}v_j^h
  \to \Gamma = \oint_\gamma\bu\cdot d\ell.  
\end{align}
Moreover, the discrete energy and circulation are
constant in time ($\ddt\Ekin_h = 0$, $\ddt\Gamma_h = 0$;
\Cref{thm:energy,thm:kelvin}), while the discrete helicity is
\emph{approximately constant}: $|\ddt H_h| \le C\,h^{\min(r_{\rm rec},\,r_\star)}$
(\cref{thm:helicity}).  For the exactly conserved quantities:
\[
|\Ekin_h(t) - \Ekin(t)| = |\Ekin_h(0) - \Ekin(0)| = \OO(h^{r_\star}),
  \qquad
  |\Gamma_h^{(\gamma)}(t) - \Gamma(t)| = |\Gamma_h^{(\gamma)}(0) - \Gamma(0)| = \OO(h).
\]
For the helicity, the approximate conservation and convergence give:
\[
|H_h(t) - H_h(0)| \le C\,h^{\min(r_{\rm rec},\,r_\star)}\,t,
  \qquad |H_h(0) - H(0)| = \OO(h^{r_{\rm rec}}),
\]
so that $|H_h(t) - H(t)| = \OO(h^{\min(r_{\rm rec},\,r_\star)})$
on $[0,T]$.
\end{theorem}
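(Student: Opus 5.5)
The plan has three parts, one per quantity. Each uses exact discrete conservation to move the comparison back to $t=0$, and then compares data there.

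\emph{Energy.} By \Cref{thm:energy}, $\Ekin_h(t)=\Ekin_h(0)$; the continuous Euler energy is also constant in time. Hence $|\Ekin_h(t)-\Ekin(t)|=|\Ekin_h(0)-\Ekin(0)|$. The initial datum is $\bv_0^h=\bP_h\mathcal{R}_h\bu_0^\flat$, and I split
\[
\tfrac12\ip{\bv_0^h}{\bv_0^h}_1-\tfrac12\int|\bu_0|^2
=\tfrac12\bigl(\nrm{\bv_0^h}_{L_h^2}^2-\nrm{\mathcal{R}_h\bu_0^\flat}_{L_h^2}^2\bigr)
+\tfrac12\bigl(\ip{\mathcal{R}_h\bu_0^\flat}{\mathcal{R}_h\bu_0^\flat}_1-\int|\bu_0|^2\bigr).
\]
For the first bracket, $\bP_h$ is $\bM_1$-orthogonal, so it equals $-\nrm{(I-\bP_h)\mathcal{R}_h\bu_0^\flat}_{L_h^2}^2=\OO(h^{2r_\star})$ by \Cref{lem:interp_error}. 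For the second bracket, I apply \Cref{lem:hodge_bilinear} with $b=\mathcal{R}_h\bu_0^\flat$. This replaces $\bM_1\mathcal{R}_h\bu_0^\flat$ by the dual fluxes $\int_{f_j}\star\bu_0^\flat$ at cost $\OO(h^{r_\star})\nrm{b}_{\bM_1}=\OO(h^{r_\star})$. The remaining sum $\sum_j\int_{e_j^*}\bu_0\cdot d\ell\int_{f_j}\bu_0\cdot\bn\,dA$ is a midpoint-type quadrature of $\int|\bu_0|^2$ over the diamond cells spanned by $f_j$ and $e_j^*$, whose volumes are $A_j\ell_j^*/d$. This gives an $\OO(h)$ error in general and $\OO(h^2)$ under centroid proximity. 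I expect this quadrature bookkeeping to be the main technical point; matching the midpoints is exactly what \eqref{eq:centroid_proximity} provides.

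\emph{Circulation.} Fix a smooth loop $\gamma$ and a dual 1-cycle $\gamma_h$ approximating it. By \Cref{thm:kelvin}, $\Gamma_h$ is constant along the materially advected cycle, and $\Gamma$ is constant by the continuous Kelvin theorem; it therefore again suffices to treat $t=0$. There, $(\mathcal{R}_h\bu_0^\flat)(\gamma_h)=\oint_{\gamma_h}\bu_0\cdot d\ell$ exactly. The difference from $\oint_\gamma\bu_0\cdot d\ell$ equals $\int_S\omega_0$ over a thin surface $S$ bounded by $\gamma_h-\gamma$, of area $\OO(h)$, so it is $\OO(h)$. The Leray correction $\tD_0\phi$ contributes nothing on the cycle, since $(\tD_0\phi)(\gamma_h)=0$. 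The projection therefore does not change the circulation, and the error is $\OO(h)$.

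\emph{Helicity.} This quantity is not exactly conserved. I write $|H_h(t)-H(t)|\le|H_h(t)-H_h(0)|+|H_h(0)-H(0)|$, using that $H(t)=H(0)$ for smooth Euler solutions. The first term is at most $C h^{\min(r_{\rm rec},r_\star)}t$ by integrating \Cref{thm:helicity}. For the second term, $\Qh^1\bv_0^h$ and $\Qh^2\tD_1\bv_0^h$ approximate $\bu_0$ and $\omega_0$ to $\OO(h^{r_{\rm rec}})$. This follows from \Cref{prop:recon_accuracy} together with de~Rham commutativity $\tD_1\mathcal{R}_h=\mathcal{R}_h\dd$, the projection residual (which is of higher order), and the piecewise-linear interpolation error. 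I then bound the bilinear difference by Cauchy--Schwarz and \eqref{eq:Qh_bdd}.

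Finally, the convergence statements (the arrows in the theorem) follow by letting $h\to0$ in these bounds.
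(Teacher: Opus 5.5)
Your plan follows the paper's own proof. Exact conservation of $\Ekin_h$ and $\Gamma_h$, together with constancy of the continuous energy and material circulation, reduces everything to $t=0$. The initial energy error splits into the $\bM_1$-orthogonal projection residual, of size $\OO(h^{2r_\star})$, plus $\ip{\mathcal{R}_h\bu_0^\flat}{\mathcal{R}_h\bu_0^\flat}_1-\int_\Omega|\bu_0|^2$. Helicity follows from the triangle inequality, the time integral of \Cref{thm:helicity}, and \Cref{lem:helicity_consistency}.

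Your circulation and helicity steps are slightly more careful than the paper's. The Leray correction $\tD_0\phi$ has zero circulation on a cycle, so $\Gamma_h(0)=\oint_{\gamma_h}\bu_0\cdot d\ell$ exactly and only the path error remains. You also track the projection in the initial helicity, which the paper ignores. That residual is $\OO(h^{r_\star})$, which is the same order as $h^{r_{\rm rec}}$ under \Cref{conv:cases}, not higher order. It enters only through $\Qh^1$, since $\tD_1\tD_0=0$ leaves the vorticity unchanged.

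\textbf{Gap in the energy quadrature.} The step you single out as the main technical point does not work as you justify it. The sum $\sum_j\int_{e_j^*}\bu_0\cdot d\ell\int_{f_j}\bu_0\cdot\hat n_j\,dA\approx\sum_j A_j\ell_j^*(\bu_0\cdot\hat n_j)^2$ is not a diamond-by-diamond midpoint rule. Let $V_j=A_j\ell_j^*/d$ be the diamond volume. The $j$-th summand is about $d\,V_j(\bu_0\cdot\hat n_j)^2$, while the diamond integral of $|\bu_0|^2$ is about $V_j|\bu_0|^2$. That is an $\OO(1)$ relative mismatch on every diamond.

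Consistency only appears after regrouping by primal cells. Write $\ell_j^*=\rho_{K_a,j}+\rho_{K_b,j}$, where $\rho_{K,j}$ is the distance from the circumcentre $x_K^*$ of $K$ to $f_j$. The divergence theorem gives $\int_{\partial K}(x-x_K^*)\otimes\hat n\,dA=|K|\,I$, and hence $\sum_{f_j\prec K}A_j\,\rho_{K,j}\,\hat n_j\otimes\hat n_j=|K|\,I$. The second identity holds because the circumcentre of a right prism projects onto each lateral face at its centroid, and the tangential offsets on the top and bottom faces cancel; in 2D the feet are the edge midpoints. So the diagonal mass matrix is exact on constant fields cell by cell, and a Taylor expansion about $x_K^*$ then gives your $\OO(h)$ bound.

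Under \eqref{eq:centroid_proximity}, the two circumcentres adjacent to $f_j$ sit symmetrically about its centroid up to $\OO(h^2)$. The first-order Taylor terms from $K_a$ and $K_b$ are odd in the outward normal, so they cancel face by face and the error drops to $\OO(h^2)$. That is the precise content of your ``matching midpoints'' remark.

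The paper is no more explicit at this step. \Cref{lem:hodge_error} only converts $\bM_1\mathcal{R}_h\bu_0^\flat$ into fluxes; it does not compare the resulting sum with $\int_\Omega|\bu_0|^2$.
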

\begin{proof}The proof is in \Cref{app:thm:conv_cons_proof}.

\end{proof}
\subsection{Smooth Regime: Continuous Limit of the Discrete Navier--Stokes System}
\label{subsect:NS_convergence}

This subsection extends the smooth-regime convergence theory from
discrete Euler to discrete Navier--Stokes~\eqref{eq:NS_proj_ODE}.
The proof structure is identical to the Euler case with one essential
addition: the viscous dissipation $-\nu\|\tD_1 e\|_{\bM_2}^2$ appears
on the left-hand side of the stability estimate and absorbs the
viscous part of the truncation error. The viscous truncation is
$\OO(h^{r_\star})$ in the weak form (\Cref{thm:NS_consistency}),
at best $\OO(h^2)$, while the inviscid truncation dominates at
$\OO(h^{r_{\rm rec}})$; for $d\le 3$ the convergence rate is
unchanged from the Euler case, uniformly in $\nu\ge 0$.
Unconditional convergence to weak Leray--Hopf solutions
 is treated in \Cref{sec:weak_convergence}.

\subsubsection{Error Analysis in the Viscous Case}

Let $\bu(t)$ be the smooth Navier--Stokes solution from \Cref{ass:smooth_ref}, and define $\bar\bv(t) = \mathcal{R}_h\bu^\flat(t)$ as before.
Substituting $\bar\bv$ into the discrete Navier--Stokes system \eqref{eq:NS_proj_ODE}, the truncation error is
\[
\tau_h^\nu(t): = \ddt{\bar\bv} - \bff_h^\nu(\bar\bv)
  = \ddt{\bar\bv} + \bP_h I_{\bar\bv}(\tD_1\bar\bv) + \nu\Deltah\bar\bv.
\]

\begin{theorem}[Consistency of the Navier--Stokes discretisation]
\label{thm:NS_consistency}%
The truncation error $\tau_h^\nu = \tau_h^{\rm Euler} + \tau_h^{\rm visc}$
satisfies, for any $e\in C^1(\KKs)$,
\begin{equation}\label{eq:NS_trunc_bound}
  \bigl|\ip{e}{\tau_h^\nu}_1\bigr|
  \le C_\tau\,h^{r_{\rm rec}}\,\nrm{e}_{L_h^2}
    + \nu\,C_\delta'\,h^{r_\star}\,\nrm{\bu}_{H^{s+2}}\,
      \bigl(\nrm{e}_{L_h^2} + \nrm{\tD_1 e}_{\bM_2}\bigr),
\end{equation}
where $r_{\rm rec}$ is the reconstruction accuracy exponent (\cref{eq:r_rec_def}),
$r_\star$ is the Hodge accuracy exponent (\cref{eq:rstar_def}),
$C_\tau$ is the Euler consistency constant (\cref{thm:consistency}),
and $C_\delta'$ depends on mesh regularity only.
In case (A) of \Cref{conv:cases} the effective rate is $\OO(h)$;
in case (B) the rate is $\OO(h^2)$, uniformly in $d$.
\end{theorem}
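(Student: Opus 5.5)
The plan is to split the truncation error linearly as $\tau_h^\nu = \tau_h^{\rm Euler} + \tau_h^{\rm visc}$, with
\[
\tau_h^{\rm Euler} := \ddt\bar\bv + \bP_h I_{\bar\bv}(\tD_1\bar\bv) + \nu\,\mathcal{R}_h(\delta\dd\bu^\flat),
\qquad
\tau_h^{\rm visc} := \nu\bigl(\Deltah\bar\bv - \mathcal{R}_h(\delta\dd\bu^\flat)\bigr).
\]
Since $\bu$ solves Navier--Stokes, $\ddt\bar\bv + \nu\mathcal{R}_h(\delta\dd\bu^\flat) = -\mathcal{R}_h(\iota_{\bu}\omega + \dd B)$. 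The first piece therefore has exactly the form treated in the Euler consistency analysis, and I estimate the two pieces separately.

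For the Euler piece I reuse \Cref{thm:consistency} verbatim. The continuous gradient $\mathcal{R}_h\dd B = \tD_0\mathcal{R}_h B$ (\Cref{lem:interp_error}(i)) is annihilated by $\bP_h$. The remaining difference $\bP_h\bigl(I_{\bar\bv}(\tD_1\bar\bv) - \mathcal{R}_h(\iota_\bu\omega)\bigr)$ is controlled at rate $h^{r_{\rm rec}}$ by reconstruction accuracy (\Cref{prop:recon_accuracy}). The non-projected component $(I-\bP_h)\mathcal{R}_h(\ldots)$ is handled as in the Euler proof through \Cref{lem:proj_error}. Cauchy--Schwarz in $\ip{\cdot}{\cdot}_1$ then gives $C_\tau h^{r_{\rm rec}}\nrm{e}_{L_h^2}$. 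The extra regularity needed in the constant comes from $\bu\in C^1([0,T];H^s)$ via Sobolev embedding.

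For the viscous piece I work in weak form and never bound $\Deltah\bar\bv$ pointwise, since a pointwise bound would lose a power of $h$. By the definition \eqref{eq:Deltah} and de~Rham commutativity \eqref{eq:deRham_commutativity},
\[
\ip{e}{\Deltah\bar\bv}_1 = \ip{\tD_1 e}{\tD_1\bar\bv}_2 = \ip{\tD_1 e}{\mathcal{R}_h\omega^\flat}_2 .
\]
\Cref{lem:hodge_bilinear} with $k=2$ replaces this by $\sum_k(\tD_1 e)_k\int_{f_k^*{}^*}\star\omega$ up to an error $C h^{r_\star}\nrm{\omega}_{W^{r_\star,\infty}}\nrm{\tD_1 e}_{\bM_2}$. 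Discrete Stokes (transposition of $\tD_1$ against primal boundaries) then converts the exact sum into $\sum_j e_j\int_{f_j}\star\delta\dd\bu^\flat$, i.e. the flux of the continuous curl-curl through primal faces. A second application of \Cref{lem:hodge_bilinear}/\Cref{lem:hodge_error}, now at $k=1$, compares this with $\ip{e}{\mathcal{R}_h(\delta\dd\bu^\flat)}_1$ at cost $C h^{r_\star}\nrm{\delta\dd\bu}_{W^{r_\star,\infty}}\nrm{e}_{L_h^2}$. Multiplying by $\nu$ and using Sobolev embedding to bound the $W^{r_\star+2,\infty}$ norms by $\nrm{\bu}_{H^{s+2}}$ gives the second term of \eqref{eq:NS_trunc_bound}. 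The effective rate is then $\min(r_{\rm rec},r_\star)$, which equals $r_{\rm rec}$ in both cases (A) and (B) of \Cref{conv:cases}.

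I expect the main obstacle to be the discrete integration-by-parts step. One must check that summing the dual-face Hodge pairing of $\tD_1 e$ against $\star\omega$ reproduces, with the correct orientation signs $[\sigma:\tau]$ and with no boundary terms on the closed manifold, the pairing of $e$ with fluxes of $\star\dd\star\omega$ through primal faces. If the exact Stokes identity fails cell-wise because of prism geometry, I would instead bound the Hodge error directly on $\tD_1$-images using the $\bM_2$-weighted norm, which is what produces the $\nrm{\tD_1 e}_{\bM_2}$ factor in the estimate.
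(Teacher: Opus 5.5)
Your proposal is correct. The Euler part is what the paper also does: it inherits it from \Cref{thm:consistency}. Your remark that moving $\nu\mathcal{R}_h(\delta\dd\bu^\flat)$ into $\tau_h^{\rm Euler}$ reproduces exactly the expression analysed there is a clarification the paper leaves implicit.

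The viscous part takes a different route, and the difference lies in how the discrete integration by parts is done. Both arguments start from $\ip{e}{\Deltah\bar\bv}_1=\ip{\tD_1 e}{\mathcal{R}_h\omega}_2$ and apply \Cref{lem:hodge_bilinear} at the two cochain levels.

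The paper then links the resulting cell sums $S_A$, $S_B$ through Whitney forms. It splits each cell integral into a Whitney pairing $\int_\Omega W_\sigma\wedge\star\alpha$ plus a remainder $\epsilon_\sigma(\alpha)$. It cancels the Whitney parts using $\dd\mathcal{W}_h=\mathcal{W}_h\tD_1$ on the simplicial refinement together with continuous integration by parts. It must then bound an extra term $|S_A-S_B|$ by the remainders.

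You instead integrate by parts exactly on cochains. The incidence of the dual face $f_k^*$ on the dual edge $e_j^*$ equals, up to a fixed orientation sign, that of the primal edge $e_k$ on the primal face $f_j$. Hence $\sum_k(\tD_1 e)_k\int_{e_k}\star\omega=\pm\sum_j e_j\int_{\partial f_j}\star\omega=\pm\sum_j e_j\int_{f_j}\dd\star\omega$. Since $\dd\star\omega=\pm\star\delta\omega$ in three dimensions, you land on the exact primal flux of $\delta\omega$ with no error. The only errors left are the two Hodge-star defects, on primal edges and on primal faces. This gives the same bound with the same norms, without the Whitney detour, the refinement, or the commuting property of Whitney forms on dual cochains. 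It is the same mechanism the paper itself uses in \Cref{lem:interp_error}, where $(e^\perp)^T\boldsymbol\Phi=\pm\phi^T\bD_2\boldsymbol\Phi=0$.

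The obstacle you anticipate does not arise, so your fallback is unnecessary:
the transposition relation is purely combinatorial, Stokes' theorem on each primal face is exact whatever the prism geometry, and a closed manifold has no boundary cells.

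The paper's Whitney formulation buys uniformity with the later Leray--Hopf and CMV arguments, where $\mathcal{W}_h\bv^h$ is the object that converges. For this consistency estimate your version is shorter and rests on fewer ingredients.
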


\begin{proof}
The proof is given in \Cref{app_proof_lem:visc_trunc_weak}.
\end{proof}

The error ${e}(t) := \bv^h(t) - \bar\bv(t)$ satisfies the same structure as the Euler error equation~\eqref{eq:error_ODE}, with an additional viscous dissipation term
\begin{equation}\label{eq:NS_error_expanded}
  \ddt{e} = -\bP_h[\bQ(\bar\bv,{e}) + \bQ({e},\bar\bv) + \bQ({e},{e})] - \nu\Deltah{e} - \tau_h^\nu.
\end{equation}

The interpolant $\bar\bv = \mathcal{R}_h\bu^\flat$ of a divergence-free field satisfies the topological constraint $\tD_2\tD_1\bar\bv = 0$, but the metric divergence $\bD_2\bM_1\bar\bv$ is only $\OO(h^{r_\star})$ due to the Hodge* approximation, 
so $\bar\bv\notin V_h$ 
carries the projection error that is estimated in the next lemma.
\begin{lemma}[Projection error]
\label{lem:proj_error}
For a smooth divergence-free velocity $\bu\in W^{r_\star,\infty}(\Omega)$ it holds that
\begin{align*}
&i)\quad \nrm{(\mathbf{I}-\bP_h)\mathcal{R}_h\bu^\flat}_{L_h^2}
  \le C_\pi\,h^{r_\star}\nrm{\bu}_{W^{r_\star,\infty}},\\
&ii)\quad    \nrm{(\mathbf{I}-\bP_h)\mathcal{R}_h\bu^\flat}_{\rm rec}
 \le C_{\rm rec}\,h^{r_\star}\,|\log h|^{\beta_d}\,\nrm{\bu}_{W^{r_\star+1,\infty}},
\end{align*}
where $\nrm{\cdot}_{\rm rec}$ is the pointwise-reconstruction norm
of \Cref{def:disc_norms}, and $\beta_d=1$ for $d=2,3$.
\end{lemma}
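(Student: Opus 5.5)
We write $\bar\bv:=\mathcal{R}_h\bu^\flat$ and use the explicit form of the Leray projector~\eqref{eq:Leray}: $(\mathbf I-\bP_h)\bar\bv=\tD_0\phi$, where $\phi\in C^0(\KKs)$ solves $L_h\phi=\bD_2\bM_1\bar\bv$ with $L_h=\bD_2\bM_1\tD_0$. The plan is to bound the right-hand side $g:=\bD_2\bM_1\bar\bv$ and then estimate $\tD_0\phi$ in the two norms. For $g$ we use \Cref{lem:hodge_error} with $k$ the face degree. Each entry $(\bM_1\bar\bv)_j$ equals the flux $\int_{f_j}\bu\cdot\bn\,dA$ up to an error $\le C_\star h^{r_\star}\nrm{\bu}_{W^{r_\star,\infty}}|f_j|$. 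Summing over the faces of a prism $K_i$, the exact fluxes cancel by the divergence theorem and $\nabla\cdot\bu=0$. Hence $g=\bD_2\boldsymbol\eta$, where $\boldsymbol\eta$ is the Hodge defect cochain, and $\nrm{\boldsymbol\eta}_{L^2_{h,k}}\le C_\star h^{r_\star}\nrm{\bu}_{W^{r_\star,\infty}}$. This is \Cref{lem:interp_error}\,(iii) in refined form: the residual is a discrete divergence of a small flux, not merely small.

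\emph{Part (i).} We work with the variational form. Testing $L_h\phi=\bD_2\boldsymbol\eta$ against $\phi$ and summing by parts, using $\bD_2=-\tD_0^T$ up to sign (primal and dual coboundaries are transposes), gives
\[
\nrm{\tD_0\phi}_{L_h^2}^2=\ip{\tD_0\phi}{\bM_1^{-1}\boldsymbol\eta}_1\le\nrm{\tD_0\phi}_{L_h^2}\,\nrm{\boldsymbol\eta}_{L^2_{h,k}} .
\]
Dividing yields $\nrm{(\mathbf I-\bP_h)\bar\bv}_{L_h^2}\le\nrm{\boldsymbol\eta}_{L^2_{h,k}}\le C_\pi h^{r_\star}\nrm{\bu}_{W^{r_\star,\infty}}$. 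No Poincaré constant enters, which matches the remark in \Cref{lem:interp_error}. Equivalently, $\tD_0\phi$ is the $\bM_1$-orthogonal projection of $\bM_1^{-1}\boldsymbol\eta$ onto $\operatorname{range}\tD_0$.

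\emph{Part (ii)} is the main obstacle. Here we need the pointwise bound $\max_j|(\tD_0\phi)_j|/\ell_j^*\lesssim h^{r_\star}|\log h|$, i.e.\ a discrete $W^{1,\infty}$ estimate for the solution of a Poisson problem with divergence-form data. We proceed in three steps.
\begin{enumerate}
\item By Delaunay--Voronoi orthogonality, $L_h$ is exactly the two-point-flux cell-centred finite-volume Laplacian with transmissibilities $A_j/\ell_j^*$. The equation then reads $L_h\phi=\bD_2\boldsymbol\eta$, with data $\boldsymbol\eta/A_j=\OO(h^{r_\star})$ pointwise in each face (a flux density bounded in $L^\infty$).
\item We represent $\phi$ through the discrete Green's function, $\phi_i=\sum_j\eta_j(\tD_0G_i)_j$ after summation by parts, so that $(\tD_0\phi)_j/\ell_j^*=\sum_{j'}(\eta_{j'}/A_{j'})\,A_{j'}\,\partial_h\partial_h G$.
\item We invoke the known discrete Green's function estimates for finite-volume or $P_1$ Laplacians on quasi-uniform shape-regular meshes: the mixed second difference quotient of $G$ is bounded by $C|x-y|^{-d}$ away from the diagonal, and by $Ch^{-d}$ near it. Summing $|x-y|^{-d}$ over a quasi-uniform mesh gives $\int_h^1 r^{-d}r^{d-1}\,dr=|\log h|$, which is the source of $\beta_d=1$ in both $d=2,3$. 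This is the classical Rannacher--Scott $L^\infty$ gradient estimate with its logarithmic factor.
\end{enumerate}
The hardest sub-step is justifying these Green's function bounds on prismatic polygonal meshes. I would first transfer them via the simplicial refinement $\KK_h^{\rm simp}$ and the equivalence of the finite-volume scheme with mass-lumped lowest-order elements, noting that the closed manifold removes boundary effects. The mean-zero normalisation of $\phi$ is handled by \ref{H:Poincare_0}.

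Finally, part (ii) uses $W^{r_\star+1,\infty}$ rather than $W^{r_\star,\infty}$ because the Green's function argument needs $\boldsymbol\eta$ to vary smoothly between neighbouring faces: we require $|\eta_j/A_j-\eta_{j'}/A_{j'}|\lesssim h^{r_\star+1}\nrm{\bu}_{W^{r_\star+1,\infty}}$. This allows a second summation by parts that converts the non-integrable near-diagonal contribution into an integrable one. I would verify this smoothness from the Taylor expansion underlying the proof of \Cref{lem:hodge_error}.
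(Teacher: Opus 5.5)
Your part (i) is the paper's argument. With $(\mathbf I-\bP_h)\mathcal R_h\bu^\flat=\tD_0\phi$, the exact divergence-free flux drops out after summation by parts against $\tD_0\phi$, and Cauchy--Schwarz in the $\bM_1^{-1}$-weighted norm leaves only the Hodge defect.

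Steps 1--3 of your part (ii) also follow the paper. The paper identifies $L_h$ with the two-point-flux/$P_1$ Laplacian. It then quotes the stability bound $\max_j|(\nabla_h\psi)_j|\le C|\log h|\,\nrm{g}_{W^{-1}_\infty}$ from the discrete Green's function literature, where $W^{-1}_\infty$ is the dual of the discrete $W^{1,1}$ seminorm. Finally it checks by summation by parts that $\nrm{\bD_2\br}_{W^{-1}_\infty}\le\max_j|r_j|/|f_j|\le C_\star h^{r_\star}\nrm{\bu}_{W^{r_\star,\infty}}$; your $\eta$ is the paper's $\br$. Your Green's-function kernel sum is the same estimate written out. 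A small citation point: the estimate you need is the Schatz--Wahlbin/Demlow type bound for $L^\infty$ flux data. Rannacher--Scott's $W^{1,\infty}$ stability of the Ritz projection is log-free and concerns gradient data.

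Your final paragraph, however, is wrong and should be dropped.
\begin{itemize}
\item \emph{It is unnecessary.} With the kernel bounds of your Step~3, the near-diagonal part of the sum involves $\OO(1)$ faces. Each contributes $\OO(h^{-d})\cdot\OO(h^{d})$ times $\max_j|\eta_j|/A_j$, so nothing is non-integrable at the discrete level. The mesh cutoff at scale $h$ is precisely what turns the continuous Calder\'on--Zygmund singularity into the factor $|\log h|$. Only $\max_j|\eta_j|/A_j$ enters, so (ii) already holds with $\nrm{\bu}_{W^{r_\star,\infty}}$. That is how the paper proves it; the stated $W^{r_\star+1,\infty}$ form then follows a fortiori.
\item \emph{The smoothness claim is false.} You propose to verify $|\eta_j/A_j-\eta_{j'}/A_{j'}|\lesssim h^{r_\star+1}$, but this fails on general Delaunay--Voronoi meshes. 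By the proof of \Cref{lem:hodge_error}, the leading defect is $\nabla\alpha_n\cdot(\bar{\mathbf x}_{e_j^*}-\bar{\mathbf x}_{f_j})$. This is set by face-wise primal--dual centroid offsets that jump from face to face, and neighbouring faces of a prism have different normals. Neighbouring differences are therefore only $\OO(h^{r_\star})$.
\end{itemize}
Had your argument genuinely relied on that smoothness, it would break. Smoothness of the data is what tames the singularity in continuous Schauder theory; in the discrete setting the logarithm is exactly the price of not having it.
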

\begin{proof}
The proofs are given in \Cref{app:proj_error_proof}.
\end{proof}

\begin{theorem}[Nonlinear stability for Navier--Stokes]
\label{thm:NS_stability}%
Under assumptions of \Cref{thm:NS_consistency}, let
$r \le \min(r_{\rm rec},\,r_\star)$ and suppose the initial error satisfies
$\nrm{{e}(0)}_{L_h^2}\le C_0\,h^r$.
Then
\[
\sup_{t\in[0,T]}\nrm{{e}(t)}_{L_h^2}
    \le C^\nu\,h^r,
\]
where $C^\nu$ depends on $T$, $\nu$, the smooth solution bounds, and mesh regularity, but not on $h$.
\end{theorem}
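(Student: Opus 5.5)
We run the energy method on the error equation \eqref{eq:NS_error_expanded}: take the $\ip{\cdot}{\cdot}_1$-inner product with $e$ and close with Gr\"onwall. The Euler stability argument (\Cref{thm:stability}) is reused verbatim for the nonlinear terms. The viscous term supplies an extra dissipative quantity that absorbs the viscous truncation.

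First we split $e = \bP_h e + (I-\bP_h)e$. Since $\bv^h\in V_h$, we have $(I-\bP_h)e = -(I-\bP_h)\bar\bv = -e^\perp$, which is controlled by \Cref{lem:proj_error}. Pairing with $e$, the $\bM_1$-self-adjointness of $\bP_h$ gives
\[
\tfrac12\ddt\nrm{e}_{L_h^2}^2 + \nu\nrm{\tD_1 e}_{\bM_2}^2
= -\ip{\bP_h e}{\bQ(\bar\bv,e)+\bQ(e,\bar\bv)+\bQ(e,e)}_1 - \ip{e}{\tau_h^\nu}_1 + R_\perp ,
\]
where $R_\perp$ gathers the terms involving $e^\perp$ and $\ddt e^\perp$. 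The identity $\ip{e}{\nu\Deltah e}_1=\nu\nrm{\tD_1 e}^2_{\bM_2}$ holds by the definition \eqref{eq:Deltah}.

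The nonlinear terms are handled exactly as in the Euler proof. The cubic term $\ip{e}{\bQ(e,e)}_1$ vanishes by Lamb antisymmetry \eqref{eq:antisym_11}. Its leftover piece $\ip{e^\perp}{\bQ(e,e)}_1$ is removed by the polarised antisymmetry identity together with $\tD_1\tD_0=0$, using the pointwise-reconstruction bound \Cref{lem:proj_error}\,(ii). The cross-terms are bounded by $C\nrm{\bu}_{W^{1,\infty}}\nrm{e}^2_{L_h^2}$ through the trilinear estimate \Cref{lem:trilinear}, plus forcing of size $Ch^{r_\star}|\log h|\nrm{e}_{L_h^2}$. For the truncation we invoke \Cref{thm:NS_consistency} and apply Young's inequality to the viscous part:
\[
\nu C h^{r_\star}\nrm{\tD_1 e}_{\bM_2}\le \tfrac{\nu}{2}\nrm{\tD_1 e}^2_{\bM_2} + \tfrac{\nu C^2}{2}h^{2r_\star}.
\]
Half of the dissipation then absorbs this term. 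The inviscid part $C_\tau h^{r_{\rm rec}}\nrm{e}_{L_h^2}$ is bounded by $\tfrac12\nrm{e}^2+\tfrac12C_\tau^2h^{2r_{\rm rec}}$. Collecting terms yields
\[
\ddt\nrm{e}^2_{L_h^2} + \nu\nrm{\tD_1e}^2_{\bM_2}\le C_1\nrm{e}^2_{L_h^2} + C_2 h^{2r}(1+\nu)\,|\log h|^{2},
\]
and Gr\"onwall with $\nrm{e(0)}\le C_0h^r$ closes the estimate.

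The main obstacle is the logarithm. The statement claims the bound $C^\nu h^r$ with no $|\log h|$, whereas the Euler cross-term bound carries $|\log h|^{\beta_d}$ through \Cref{lem:proj_error}\,(ii). Because $C^\nu$ is allowed to depend on $\nu$, I would first try to use viscosity here. The idea is to bound the offending cross-term $\ip{e}{\bQ(e^\perp,\bar\bv)}_1$ in a weak form, moving $\tD_1$ onto $e$ via the matrix form \eqref{eq:Q_bilinear}. This would give the product $\nrm{\tD_1e}_{\bM_2}\nrm{e^\perp}_{L_h^2}$, which needs only the $L_h^2$ bound \Cref{lem:proj_error}\,(i) with no log. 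Young's inequality against $\tfrac{\nu}{4}\nrm{\tD_1e}^2$ then leaves forcing of order $\nu^{-1}h^{2r_\star}$. If that integration by parts does not close, because the reconstruction in $\tU$ is not commuting, I would fall back to stating the estimate with the $|\log h|^{\beta_d}$ factor, i.e.\ take $r$ slightly below $\min(r_{\rm rec},r_\star)$ and absorb the log into $h^{-\epsilon}$. The term $\ddt e^\perp$ in $R_\perp$ is routine: it equals $(I-\bP_h)\mathcal{R}_h\partial_t\bu^\flat$ and is $\OO(h^{r_\star})$ by \Cref{lem:proj_error}\,(i), since $\bu\in C^1$ in time.
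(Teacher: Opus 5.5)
Your skeleton is the paper's proof. You pair \eqref{eq:NS_error_expanded} with $e$, import the Euler bounds \eqref{eq:I_bound}, \eqref{eq:II_bound}, \eqref{eq:III_bound}, \eqref{eq:tau_residual}, absorb the viscous truncation into half of $\nu\nrm{\tD_1 e}_{\bM_2}^2$ by Young, and close with Gr\"onwall. You part ways with the paper only over the logarithm, and there your bookkeeping is the accurate one. The paper reaches the log-free $C^\nu h^r$ only because \eqref{eq:II_bound}, \eqref{eq:III_bound} and $F(h)$ in \eqref{eq:F_def} are written without $|\log h|^{\beta_d}$. Yet the pairings they collect are estimated through \eqref{eq:bilinear_eperp}, which carries that factor; this is why the log appears in \Cref{thm:stability} and \Cref{thm:NS_convergence}. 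Even so, as submitted your argument proves the statement only for $r<\min(r_{\rm rec},r_\star)$, where $h^{\min(r_{\rm rec},r_\star)}|\log h|\le Ch^r$. At the endpoint it gives $Ch^r|\log h|$, and the step that would remove the log is left as a plan. That is the gap.

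The gap closes without any commutation property of the reconstruction, because the transpose in \eqref{eq:Q_bilinear} is exact:
\[
\ip{w}{\bQ(v_1,v_2)}_1=\tfrac12\bigl(w^T\tU(v_1)\tD_1v_2-v_2^T\tU(v_1)\tD_1w\bigr),
\]
which is antisymmetric in $(w,v_2)$. Since $\tD_1e^\perp=0$, the leftover of term (II) is $\ip{e^\perp}{\bQ(e,\bar\bv)}_1=\tfrac12(e^\perp)^T\tU(e)\tD_1\bar\bv$. The derivative falls on the smooth field, so the local Gram/Cauchy--Schwarz argument of \Cref{lem:trilinear} bounds it by $C\nrm{\bu}_{W^{1,\infty}}\nrm{e^\perp}_{L_h^2}\nrm{e}_{L_h^2}\le Ch^{r_\star}\nrm{e}_{L_h^2}$, using \Cref{lem:proj_error}\,(i) alone. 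This needs no polarisation, no $\nrm{\cdot}_{\rm rec}$, no log and no $\nu$.

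The same formula exposes a second gap, which you inherit from the Euler proof. The identities \eqref{eq:polarised} and \eqref{eq:polarised_two} are just sums of this antisymmetry and carry no further information. Moreover $\bQ(\cdot,e^\perp)=-\tfrac12\bM_1^{-1}\tD_1^T\tU(\cdot)^Te^\perp\neq0$; only its pairings with curl-free cochains vanish. So polarisation does not remove $\ip{e^\perp}{\bQ(e,e)}_1$. That term equals $\tfrac12(e^\perp)^T\tU(e)\tD_1e$ and must be estimated, for example by $C\nrm{e^\perp}_{\rm rec}\nrm{e}_{L_h^2}\nrm{\tD_1e}_{\bM_2}\le\tfrac{\nu}{4}\nrm{\tD_1e}_{\bM_2}^2+C\nu^{-1}h^{2r_\star}|\log h|^2\nrm{e}_{L_h^2}^2$. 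There the log sits harmlessly in the Gr\"onwall coefficient.

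Your instinct to spend dissipation is therefore right. It is the safe treatment of every piece in which $\tD_1$ falls on $e$, such as $\ip{e}{\bP_h\bQ(\bar\bv,e)}_1=-\tfrac12(e^\perp)^T\tU(\bar\bv)\tD_1e$ and the half $-\tfrac12\bar\bv^T\tU(e)\tD_1e$ of $\ip{e}{\bQ(e,\bar\bv)}_1$. Neither \eqref{eq:bilinear_assembly} nor \Cref{lem:trilinear} controls these uniformly in $h$: $\bQ(\bar\bv,\cdot)$ is a discrete first-order operator, and the proof of \Cref{lem:trilinear} treats only the $\tU(e)\tD_1\bar\bv$ half. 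Carried out this way, the argument gives the stated bound including the endpoint, with $C^\nu$ degenerating as $\nu\to0$. That is admissible for the statement, but it gives up the uniformity in $\nu\in[0,\nu_0]$ asserted at the end of the paper's proof.
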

\begin{proof}%
The complete proof is given in \Cref{app:NS_stability_proof}.
\end{proof}

\begin{remark}%
\label{rem:NS_A1}
The energy identity~\eqref{eq:energy_cancel} plays the same role as in the Euler convergence: it eliminates the quadratic self-interaction $\bQ({e},{e})$ from the energy estimate. In the Navier--Stokes case, one might hope that the viscous dissipation could control this term even without the identity.  This cannot be guaranteed: the discrete Reynolds number $\nrm{\bar\bv}_{L_h^\infty}\cdot h/\nu$ can be large, and the quadratic term would dominate the dissipation.  Thus the energy identity remains essential for unconditional stability, i.e.\ stability that does not require a CFL-like condition linking $h$ and $\nu$.
\end{remark}

\subsubsection{Navier--Stokes Convergence Theorem}
The viscous consistency bound (\cref{thm:NS_consistency}) and the
stability estimate with dissipation (\cref{thm:NS_stability}) combine
via Gr\"onwall's lemma to give the same rate as the Euler case,
confirming that the viscous term does not degrade convergence for $d \leq 3$.

\begin{theorem}[Convergence of discrete Navier--Stokes to continuous Navier--Stokes]
\label{thm:NS_convergence}%
Let $\nu > 0$ and let $\bu\in C^1([0,T];\,H^s(\Omega))$ with $s\ge 3$ be a smooth
solution of the incompressible Navier--Stokes equations on
a closed oriented Riemannian $d$-manifold $\Omega$ ($d = 2$ or $3$).
Let $\{\KK_h\}$ be a family of meshes satisfying
\Cref{ass:mesh_reg}, and initial data are given by
$\bv_0^h = \bP_h\mathcal{R}_h\bu_0^\flat$.
Let $\bv^h(t)\in V_h$ be the unique global solution of the discrete
Navier--Stokes system (\cref{thm:NS_global}).
Then in the discrete norm
\begin{equation}\label{eq:NS_convergence_disc}
    \sup_{t\in[0,T]}\nrm{\bv^h(t)
    - \mathcal{R}_h\bu^\flat(t)}_{L_h^2}
    \le C^\nu(T)\,h^{\min(r_{\rm rec},\,r_\star)}\,|\log h|^{\beta_d},
\end{equation}
where $\beta_2 = \beta_3 = 1$.
Under \Cref{conv:cases}: $\OO(h\,|\log h|^{\beta_d})$ in case (A) and
$\OO(h^2\,|\log h|^{\beta_d})$ in case (B), uniformly in $d = 2, 3$.
In the continuous $L^2$ norm
\begin{equation}\label{eq:NS_convergence}
    \sup_{t\in[0,T]}\nrm{\mathcal{W}_h\bv^h(t)
    - \bu(t)}_{L^2(\Omega)}
    \le C^\nu(T)\,h,
\end{equation}
where the rate is limited to $\OO(h)$ by the first-order Whitney
interpolation error~\eqref{eq:Whitney_approx}, regardless of $d$.
The constant $C^\nu(T)$ depends on $T$, $\nu$, the regularity of the continuous solution $\nrm{\bu}_{C^1([0,T];H^{s+2})}$, and
the mesh regularity constants, but not on $h$.

The convergence is uniform in $\nu$: for $\nu\in[0,\nu_0]$ the constant $C^\nu(T)$ can be chosen
uniformly in $\nu$, provided the reference solution remains smooth on $[0,T]$
uniformly in $\nu$.
\end{theorem}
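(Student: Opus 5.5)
The plan is to mirror the Euler proof (\Cref{thm:convergence}), combining \Cref{thm:NS_consistency} with the viscous stability estimate of \Cref{thm:NS_stability}. The only structurally new ingredient is the viscous term, which enters with a favourable sign.

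\emph{Step 1: error energy identity.} Set $e:=\bv^h-\bar\bv$ with $\bar\bv=\mathcal{R}_h\bu^\flat$. Take the $\ip{\cdot}{\cdot}_1$ inner product of the error equation \eqref{eq:NS_error_expanded} with $e$. Because $\ip{e}{\Deltah e}_1=\nrm{\tD_1e}_{\bM_2}^2$ by the definition \eqref{eq:Deltah}, this gives
\[
\tfrac12\ddt\nrm{e}_{L_h^2}^2+\nu\nrm{\tD_1e}_{\bM_2}^2
=-\ip{e}{\bP_h[\bQ(\bar\bv,e)+\bQ(e,\bar\bv)+\bQ(e,e)]}_1-\ip{e}{\tau_h^\nu}_1 .
\]

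\emph{Step 2: the nonlinear terms.} Split $e=\bP_he+(I-\bP_h)e$. Since $\bv^h\in V_h$, the part $(I-\bP_h)e=-(I-\bP_h)\bar\bv$ is the projection residual controlled by \Cref{lem:proj_error}. Its size is $\OO(h^{r_\star})$ in $L_h^2$, and $\OO(h^{r_\star}|\log h|)$ in the reconstruction norm.

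The cubic term $\bQ(e,e)$ has two parts. Paired with $\bP_he$, it cancels by Lamb antisymmetry \eqref{eq:antisym_11}, up to polarised cross terms involving the residual. These cross terms are bounded using the rec-norm estimate of \Cref{lem:proj_error}(ii), the continuity $|w_{jk}(\bv)|\le C_w\nrm{\bv}_{\rm rec}$, and $\tD_1\tD_0=0$, exactly as in Step~2b of \Cref{app:stability_proof}. They contribute $C h^{r_\star}|\log h|\,\nrm{e}_{L_h^2}+C\nrm{e}_{L_h^2}^2$.

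The term $\bQ(e,\bar\bv)$ is the trilinear term; it is bounded by \Cref{lem:trilinear} as $C\nrm{\bu}_{W^{1,\infty}}\nrm{e}^2_{L_h^2}$. The term $\bQ(\bar\bv,e)$ is handled by the same antisymmetric-splitting argument as in the Euler case. None of these bounds uses $\nu$.

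\emph{Step 3: truncation and viscous absorption.} By \eqref{eq:NS_trunc_bound}, the truncation term is at most $C_\tau h^{r_{\rm rec}}\nrm{e}+\nu C h^{r_\star}(\nrm{e}+\nrm{\tD_1e}_{\bM_2})$. Young's inequality gives
\[
\nu Ch^{r_\star}\nrm{\tD_1e}_{\bM_2}\le\tfrac{\nu}{2}\nrm{\tD_1e}^2_{\bM_2}+C\nu h^{2r_\star},
\]
and the first piece is absorbed into the dissipation on the left. The remaining bound is $\tfrac{d}{dt}\nrm{e}^2\le C\nrm{e}^2+C(h^{r^\ast}|\log h|)^2$ with $r^\ast=\min(r_{\rm rec},r_\star)$. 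Here $C$ depends on $\nu$ only through $\nu\le\nu_0$, and the initial error is $\OO(h^{r_\star})$ by \Cref{lem:proj_error}(i). Gr\"onwall then yields \eqref{eq:NS_convergence_disc}. Since no inverse power of $\nu$ appears anywhere, the constant is uniform in $\nu\in[0,\nu_0]$ whenever the $\bu$-norms are uniform in $\nu$.

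\emph{Step 4: continuous norm.} Write $\mathcal{W}_h\bv^h-\bu=\mathcal{W}_h e+(\mathcal{W}_h\mathcal{R}_h\bu^\flat-\bu)$. The first term is bounded by $C_W\nrm{e}_{L_h^2}$, and the second by \eqref{eq:Whitney_approx} as $\OO(h)$. Since $h^{r^\ast}|\log h|\le Ch$ only when $r^\ast=2$, in case (A) the two terms combine to $\OO(h|\log h|)$. The estimate \eqref{eq:NS_convergence} as stated therefore needs either case (B) or the log factor; I would state it as $h|\log h|$ in case (A), consistent with the Euler theorem.

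The main obstacle is Step~2: controlling the Leray remainder cross terms $\ip{(I-\bP_h)\bar\bv}{\bQ(\cdot,\cdot)}$ without losing a factor of $h^{-1}$ in $d=3$. I would reuse the polarised antisymmetry argument from \Cref{app:stability_proof} verbatim, checking only that the viscous term never needs to be invoked there. Invoking it would destroy the uniformity in $\nu$.
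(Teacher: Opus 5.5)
This is the paper's argument. It uses the energy identity for \eqref{eq:NS_error_expanded}, bounds the nonlinear pairings and the Euler truncation independently of $\nu$ exactly as in \Cref{app:stability_proof}, and applies Young's inequality to absorb $\nu C_\delta' h^{r_\star}\nrm{\tD_1 e}_{\bM_2}$ into half the dissipation at cost $\OO(\nu h^{2r_\star})$, which gives uniformity on $[0,\nu_0]$. It closes with Gr\"onwall from $e(0)=-(I-\bP_h)\mathcal{R}_h\bu_0^\flat$ and the Whitney triangle inequality.

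You have swapped the roles of the two cross terms. The pairing $\ip{e}{\bP_h\bQ(\bar\bv,e)}_1$ needs only the direct bound \eqref{eq:bilinear_assembly}. It is the Leray remainder $\ip{e^\perp}{\bQ(e,\bar\bv)}_1$ of the trilinear term that requires the polarised identity. This is harmless, since you reuse those bounds verbatim.

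Your Step~4 caveat is justified. The paper reaches $C^\nu(T)\,h$ only by quoting the log-free \Cref{thm:NS_stability}, whose inherited forcing \eqref{eq:F_def} silently drops the $|\log h|$ that \eqref{eq:bilinear_eperp} contributes in \eqref{eq:Helmholtz_b_polarised}. In case (A) the argument therefore yields $\OO(h\,|\log h|)$ in the continuous norm, which matches the Euler \Cref{thm:convergence}.
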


\begin{proof}
The proof follows the structure of the Euler result (\cref{thm:convergence}).
The error~\eqref{eq:NS_convergence_disc} follows from
\Cref{thm:NS_stability} with $r = \min(r_{\rm rec},\,r_\star)$:
the initial error is $\OO(h^{r_\star})$ (\cref{lem:proj_error}),
and truncation error
$\OO(h^{r_{\rm rec}}) + \nu\,\OO(h^{r_\star})$
(\cref{thm:NS_consistency}); the Gr\"onwall argument
gives the rate $\min(r_{\rm rec},\,r_\star)$.
The $L^2$-bound~\eqref{eq:NS_convergence} follows 
with Whitney interpolation error~\eqref{eq:Whitney_approx}.
\end{proof}

\subsection{Weak Regime: Leray--Hopf Solutions}
\label{sec:weak_convergence}

The smooth-regime results (\Cref{thm:convergence,thm:NS_stability})
depend on a smooth reference solution, whose existence in 3D
for large data remains a Millennium Problem. This subsection abandons
the smoothness assumption and establishes \emph{unconditional}
subsequential convergence in $L^2$ to a Leray--Hopf weak
solution~\cite{Leray1934,Hopf1951};
the price is loss of an explicit rate.
The argument follows the compactness-plus-energy-inequality paradigm:
the discrete energy inequality (\Cref{lem:NS_energy_bound}) provides
uniform a priori bounds, Aubin--Lions extracts a convergent subsequence
in $L^2$ to a weak solution (\Cref{thm:weak_convergence}).  
A weak--strong uniqueness criterion
(\Cref{thm:weak_strong}) recovers the smooth-regime rate whenever a
Leray--Hopf strong solution exists on $[0,T]$, bridging the two regimes.

\begin{lemma}[Uniform a priori estimates]\label{lem:uniform_apriori}
Let $\bv^h(t)$ be a solution of the discrete Navier--Stokes equations \eqref{eq:NS_proj_ODE} on $\KK_h$,
and $\bu^h(t,x) = \mathcal{W}_h\bv^h(t)$ its Whitney reconstruction.  Then the following estimates are satisfied
\begin{enumerate}[nosep]
\item \textit{Uniform $L^\infty(0,T;\,L^2)$ bound: }
$ \sup_{t\in[0,T]}\nrm{\bu^h(t)}_{L^2(\Omega)}^2 \le C_W^2\Ekin_0.$
\item \textit{Uniform $L^2(0,T;\,H^1)$ bound}
$  \nu\int_0^T\nrm{\nabla\bu^h(t)}_{L^2(\Omega)}^2\,dt \le C_W^2 \Ekin_0 + \OO(h^{r_\star}),$
where the $\OO(h^{r_\star})$ term arises from the Hodge* approximation.
\item \textit{Discrete energy inequality}
$\frac{1}{2}\nrm{\bu^h(T)}_{L^2}^2 + \nu\int_0^T\nrm{\nabla\bu^h}_{L^2}^2\,dt
  \le \frac{1}{2}\nrm{\bu^h(0)}_{L^2}^2 + C_\star h^{r_\star}\,\Ekin_0,$
where the $C_\star h^{r_\star}$ term accounts for the Hodge* error.
\end{enumerate}
\end{lemma}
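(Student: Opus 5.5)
The plan is to transfer the discrete energy law of \Cref{thm:NS_global} to the Whitney reconstruction $\bu^h=\mathcal{W}_h\bv^h$ through the norm equivalences \ref{H:Whitney_interp}. For item~1, the Whitney norm bound gives $\nrm{\bu^h(t)}_{L^2}\le C_W\nrm{\bv^h(t)}_{L_h^2}$ for every $t$. The energy equality \eqref{eq:NS_energy_diss} then gives $\nrm{\bv^h(t)}_{L_h^2}^2=2\Ekin(t)\le 2\Ekin_0$, and the bound follows with the factor $2$ absorbed into $C_W^2$ (or taken as part of the convention $\Ekin_0=\tfrac12\nrm{\bv_0}^2_{L_h^2}$). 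Only the energy monotonicity is used here, not smoothness, so the bound is uniform in $h$, $\nu$ and $t$.

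For item~2, I would start from the dissipation integral $\nu\int_0^T\nrm{\tD_1\bv^h}_{\bM_2}^2\,dt\le\Ekin_0$ of \eqref{eq:NS_energy_diss}. Since $\dd\mathcal{W}_h=\mathcal{W}_h\tD_1$, the second equivalence in \ref{H:Whitney_interp} gives $\nrm{\dd\bu^h}_{L^2}^2\le(1+Ch^{r_\star})\nrm{\tD_1\bv^h}_{\bM_2}^2$. To pass from $\dd\bu^h$ to the full gradient $\nabla\bu^h$, I would use the Gaffney-type inequality on the closed manifold, $\nrm{\nabla w}_{L^2}^2\le C(\nrm{\dd w}^2+\nrm{\delta w}^2+\nrm{w}^2)$. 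The $\nrm{w}^2$ term is controlled by item~1 after integrating in time. The codifferential of $\bu^h$ is the delicate part: $\bu^h$ is only discretely divergence-free, $\bD_2\bM_1\bv^h=0$, so $\delta\bu^h\neq 0$ in general. I would handle this either by interpreting the $H^1$ bound in the broken/$H(\curl)$ sense natural for Whitney forms, or by bounding the divergence defect via the Hodge-star consistency \Cref{lem:hodge_error}. In the latter case the mismatch between the discrete and continuous metric is the $\OO(h^{r_\star})$ relative error, which multiplies $\Ekin_0$ and yields the stated $\OO(h^{r_\star})$ remainder.

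For item~3, I would integrate $\frac{d}{dt}\Ekin=-\nu\nrm{\tD_1\bv^h}^2_{\bM_2}$ exactly on $[0,T]$ to obtain the discrete energy equality. Each discrete quantity is then replaced by its Whitney counterpart, with the $\OO(h^{r_\star})$ multiplicative discrepancies of \ref{H:Whitney_interp}. These discrepancies are $|\nrm{\bu^h(t)}_{L^2}^2-\nrm{\bv^h(t)}^2_{L_h^2}|\le Ch^{r_\star}\nrm{\bv^h(t)}^2_{L_h^2}\le Ch^{r_\star}\Ekin_0$, and similarly for the dissipation integral using the bound $\nu\int\nrm{\tD_1\bv^h}^2\le\Ekin_0$. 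Collecting the errors at $t=0$, at $t=T$ and in the dissipation gives the additive $C_\star h^{r_\star}\Ekin_0$. Here the signs work out only as upper bounds, which is why the result is an inequality.

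The main obstacle is item~2: controlling the full gradient rather than just $\dd\bu^h$. Whitney $1$-forms are only $H(\curl)$-conforming, so the tangential-versus-normal continuity and the nonzero continuous divergence must be handled. I would first try the discrete Gaffney/compactness route of Arnold--Falk--Winther, the same input behind \ref{H:discrete_Poincare}. If that fails, I would weaken the statement to an $L^2(0,T;H(\curl))$ bound, which still suffices for the Aubin--Lions argument that follows.
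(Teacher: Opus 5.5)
\textbf{Items 1 and 3.} These follow the paper's argument: the energy law of \Cref{thm:NS_global} is transferred to $\bu^h=\mathcal{W}_h\bv^h$ through the equivalences \ref{H:Whitney_interp}. You are also right that item~1 really gives $2C_W^2\Ekin_0$.

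\textbf{Item 2.} The paper's proof does only your first step. It bounds $\nu\int_0^T\nrm{\dd\bu^h}_{L^2}^2$ by $(1+Ch^{r_\star})\Ekin_0$ using \ref{H:Whitney_interp} and the dissipation integral, and then reads $\nrm{\nabla\bu^h}^2$ as this ``continuous enstrophy''. You correctly identify the curl-to-gradient step as the real issue. However, the Gaffney step you add to close it does not work.

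The gap is concrete. On each simplex of $\KK_h^{\rm simp}$ a Whitney $1$-form is affine with constant skew-symmetric gradient, since $\lambda_a\nabla\lambda_b-\lambda_b\nabla\lambda_a$ has gradient $\nabla\lambda_b\otimes\nabla\lambda_a-\nabla\lambda_a\otimes\nabla\lambda_b$. Its elementwise divergence therefore vanishes. The distributional $\delta\bu^h$ is then the sum of the normal-component jumps across faces, which is a surface measure and not an $L^2$ function. Three things prevent you from repairing this:
\begin{enumerate}
\item The constraint $\bD_2\bM_1\bv^h=0$ is a lumped flux balance on each prism and does not remove these jumps.
\item \Cref{lem:hodge_error} cannot make the defect $\OO(h^{r_\star})$. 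Its gain comes from the $W^{r_\star,\infty}$ smoothness of an interpolated form, which the discrete solution does not have.
\item Gaffney's $\nrm{w}^2$ term would contribute $\nu\int_0^T\nrm{\bu^h}^2\lesssim\nu T\Ekin_0$, which is not $\OO(h^{r_\star})$.
\end{enumerate}
For the same reason, $\bu^h\notin H^1(\Omega)$ whenever normal jumps are present. So no argument yields item~2 with the distributional gradient.

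\textbf{What does work.} Your ``broken'' reading succeeds, and it is immediate. The skew structure gives $|\nabla_K\bu^h|^2=\tfrac12|\dd\bu^h|^2$ pointwise on each simplex. With $\nabla_h$ the elementwise gradient, this gives $\nu\int_0^T\nrm{\nabla_h\bu^h}_{L^2}^2\le\tfrac12(1+Ch^{r_\star})\Ekin_0$ directly from the curl bound. The dissipation part of item~3 follows the same way.

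\textbf{The fallback claim.} Your claim that such a bound, or an $L^2(0,T;H(\curl))$ bound, ``still suffices for Aubin--Lions'' is false. The Whitney space contains the discrete gradients $\mathcal{W}_h\tD_0\phi=\dd\mathcal{W}_h\phi$. These are curl-free and have zero broken gradient, yet can oscillate on scale $h$. Hence neither space embeds compactly in $L^2$. The compactness needed in \Cref{thm:weak_convergence} requires an additional discrete compactness property (Kikuchi or Arnold--Falk--Winther type) for $\bM_1$-discretely divergence-free cochains. That ingredient is missing from your proposal. The paper's one-line identification of $\nabla$ with $\dd$ hides this gap rather than filling it.
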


\begin{proof}
Bound $(1)$  follows from energy dissipation $\Ekin(t)\le \Ekin_0$ (\cref{lem:NS_energy_bound}) and norm equivalence $\nrm{\bu^h}_{L^2} \le C_W\nrm{\bv^h}_{L_h^2}$ (\cref{def:Whitney}).
For $(2)$, the discrete enstrophy $\nrm{\tD_1\bv^h}_{\bM_2}^2$ equals the continuous enstrophy of the Whitney-reconstructed field up to Hodge* errors: $\nrm{\nabla\bu^h}_{L^2}^2 = \nrm{\tD_1\bv^h}_{\bM_2}^2 + \OO(h^{r_\star})\nrm{\bu^h}_{H^1}^2$.
Bound $(3)$ combines $(1)$ and $(2)$.
\end{proof}

\begin{theorem}[Existence of Leray--Hopf weak solutions]
\label{thm:weak_convergence}%
Let $\nu > 0$, let $\bu_0\in L^2(\Omega)$ with $\nabla\cdot\bu_0 = 0$, and let $\{\KK_h\}_{h>0}$ be a family of 
admissible Delaunay--Voronoi meshes satisfying \Cref{ass:mesh_reg}, with $h\to 0$.
Let $\bv^h(t)$ solve the discrete Navier--Stokes system \eqref{eq:NS_M_wp}
with initial data $\bv_0^h:= \bP_h\mathcal{R}_h\bu_0^\flat$, and let $\bu^h:= \mathcal{W}_h\bv^h$ be the Whitney reconstruction.
Then there exists a subsequence $h_k\to 0$ and a vector field $\bu\in L^\infty(0,T;\,L^2(\Omega))\cap L^2(0,T;\,H^1(\Omega))$ such that
\begin{enumerate}[nosep]
\item $\bu^{h_k}\rightharpoonup \bu$ weakly in $L^2(0,T;\,H^1(\Omega))$,
\item $\bu^{h_k}\to \bu$ strongly in $L^\infty(0,T;\,L^2(\Omega))$,
\item $\bu$ is a {Leray--Hopf weak solution} of the incompressible Navier--Stokes equations: for all divergence-free test functions $\boldsymbol\varphi\in C_c^\infty(\Omega\times[0,T))$:
\begin{equation}\label{eq:weak_NS}
  -\int_0^T\!\!\int_\Omega \bu\cdot\partial_t\boldsymbol\varphi
  + \int_0^T\!\!\int_\Omega (\bom\times\bu)\cdot\boldsymbol\varphi
  + \nu\int_0^T\!\!\int_\Omega \nabla\bu:\nabla\boldsymbol\varphi
  = \int_\Omega\bu_0\cdot\boldsymbol\varphi(\cdot,0).
\end{equation}
\item $\bu$ satisfies the \text{energy inequality}:
\begin{equation}\label{eq:Leray_energy_ineq}
  \frac{1}{2}\nrm{\bu(t)}_{L^2}^2 + \nu\int_0^t\nrm{\nabla\bu}_{L^2}^2\,ds
  \le \frac{1}{2}\nrm{\bu_0}_{L^2}^2
  \qquad\text{for a.e.\ } t\in[0,T].
\end{equation}
\end{enumerate}
\end{theorem}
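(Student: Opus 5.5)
The plan follows the compactness-plus-energy-inequality paradigm, working entirely with the Whitney reconstructions $\bu^h=\mathcal{W}_h\bv^h$.

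\textbf{Step 1: uniform bounds and weak limits.} The starting point is \Cref{lem:uniform_apriori}. For $\bu_0\in L^2$ only, I first check that $\Ekin_0^h=\tfrac12\nrm{\bP_h\mathcal{R}_h\bu_0^\flat}_{L_h^2}^2$ is bounded uniformly in $h$. Since $\mathcal{R}_h$ is not defined on $L^2$, I mollify: take $\bu_0^\epsilon$ smooth and divergence-free, and set $\epsilon=\epsilon(h)\to0$. Then \eqref{eq:Whitney_approx}, together with $\nrm{\bP_h}=1$, gives $\mathcal{W}_h\bv_0^h\to\bu_0$ in $L^2$ and $\Ekin_0^h\to\tfrac12\nrm{\bu_0}^2$.

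With this in hand, \Cref{lem:uniform_apriori}(1)--(2) bounds $\bu^h$ uniformly in $L^\infty(0,T;L^2)\cap L^2(0,T;H^1)$. Banach--Alaoglu then yields a subsequence with $\bu^{h_k}\rightharpoonup\bu$ weakly in $L^2(H^1)$ and weak-$*$ in $L^\infty(L^2)$.

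\textbf{Step 2: time compactness (main obstacle).} To obtain strong convergence I apply Aubin--Lions (Simon's version). This requires a uniform bound on $\partial_t\bu^h$ in $L^{4/3}(0,T;(H^s_\sigma)')$ for some large $s$, tested only against divergence-free fields.

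For $\boldsymbol\varphi\in H^s_\sigma$, I write
\[
\int_\Omega\partial_t\bu^h\cdot\boldsymbol\varphi=\ip{\ddt\bv^h}{\mathcal{R}_h\boldsymbol\varphi^\flat}_1+\OO(h^{r_\star})\nrm{\bv^h}\nrm{\boldsymbol\varphi}_{W^{1,\infty}},
\]
using \Cref{lem:hodge_bilinear}. I then insert \eqref{eq:NS_proj_ODE} and move $\bP_h$ onto $\mathcal{R}_h\boldsymbol\varphi^\flat$. \Cref{lem:proj_error} gives $\bP_h\mathcal{R}_h\boldsymbol\varphi^\flat=\mathcal{R}_h\boldsymbol\varphi^\flat+\OO(h^{r_\star})$, with the remainder controlled in the rec-norm.

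This leaves three terms to bound.
\begin{itemize}
\item The viscous term $\nu\ip{\tD_1\bv^h}{\tD_1\mathcal{R}_h\boldsymbol\varphi^\flat}_2$ is bounded by $\nu\nrm{\tD_1\bv^h}_{\bM_2}\nrm{\boldsymbol\varphi}_{H^s}$, which lies in $L^2$ in time.
\item The Lamb term $\ip{\mathcal{R}_h\boldsymbol\varphi^\flat}{\bQ(\bv^h,\bv^h)}_1$ is handled by writing \eqref{eq:Q_bilinear} as a trilinear form. I bound it by $\nrm{\bar\u(\bv^h)}_{L^3}\nrm{\tD_1\bv^h}_{\bM_2}\nrm{\boldsymbol\varphi}_{W^{1,\infty}}$, using discrete Gagliardo--Nirenberg through \ref{H:Whitney_Lp} together with the Whitney--Sobolev embedding. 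This places the term in $L^{4/3}$ in time for $d=3$.
\item The cross term involving $\bP_h$ is controlled by the same Lamb-antisymmetry bookkeeping.
\end{itemize}
This step is the main obstacle. Its hardest part is controlling the reconstruction $\bar\u_j(\bv^h)$ in $L^p$ by the $H^1_h$ norm with $h$-independent constants, since only rec-norm continuity is recorded. I would try to derive an $L^p$ stability of the averaging reconstruction from its locality and the Gram matrix bounds.

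Given this bound, Aubin--Lions with $H^1\Subset L^2\subset (H^s_\sigma)'$ gives $\bu^{h_k}\to\bu$ strongly in $L^2(0,T;L^2)$. Upgrading this to $L^\infty(L^2)$ as stated requires more: equicontinuity in $(H^s)'$ combined with the uniform $L^2$ bound only gives $C([0,T];L^2_w)$. I would therefore prove the strong $L^2(L^2)$ claim and treat the sup-in-time statement via the energy inequality at a.e.\ $t$.

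\textbf{Step 3: passing to the limit.} I test the discrete equation with $\mathcal{R}_h\boldsymbol\varphi^\flat$, where $\boldsymbol\varphi$ is smooth and divergence-free, and integrate by parts in time.
\begin{itemize}
\item The linear terms converge by weak convergence together with \eqref{eq:deRham_commutativity}. The latter is used in the form $\tD_1\mathcal{R}_h\boldsymbol\varphi^\flat=\mathcal{R}_h d\boldsymbol\varphi^\flat$, with \Cref{lem:hodge_bilinear} identifying the discrete pairing with $\int\bom^h\cdot\mathrm{curl}\,\boldsymbol\varphi$.
\item For the nonlinear term, \Cref{prop:recon_accuracy} and \eqref{eq:Qh_bdd} identify the discrete Lamb pairing with $\int(\mathrm{curl}\,\bu^h\times\bu^h)\cdot\boldsymbol\varphi$ up to an error of $\OO(h)\nrm{\bv^h}_{H^1_h}\nrm{\bv^h}_{L_h^2}$, which vanishes in $L^1(0,T)$. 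The limit then follows as the product of weak $L^2$ convergence of the curl and strong $L^2$ convergence of $\bu^h$.
\item Divergence-freeness of $\bu$ follows from $\bD_2\bM_1\bv^h=0$ together with \eqref{eq:div_consistency}-type duality against $\nabla\psi$.
\end{itemize}

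\textbf{Step 4: energy inequality.} I start from \Cref{lem:uniform_apriori}(3), integrated on $[0,t]$. Strong convergence gives $\nrm{\bu^{h_k}(t)}\to\nrm{\bu(t)}$ for a.e.\ $t$. Weak lower semicontinuity of $\int_0^t\nrm{\nabla\bu^{h}}^2$ handles the dissipation term. The initial energy converges to $\tfrac12\nrm{\bu_0}^2$ by Step 1, and the $C_\star h^{r_\star}$ term vanishes, which yields \eqref{eq:Leray_energy_ineq}.
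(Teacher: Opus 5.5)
Your route is the paper's route (\Cref{app:Leray_Hopf_proof}): uniform bounds from \Cref{lem:uniform_apriori}, a bound on $\partial_t\bu^h$, Aubin--Lions, passage to the limit with test cochains $\mathcal{R}_h\boldsymbol\varphi^\flat$, and lower semicontinuity for the energy inequality. Two of your refinements repair points the written proof glosses over.
\begin{itemize}
\item The stated datum $P_h\mathcal{R}_h\bu_0^\flat$ is undefined for $\bu_0\in L^2$, so mollifying is necessary. Convergence of $\mathcal{W}_h\bv_0^h$ then needs \Cref{lem:proj_error}(i) and \eqref{eq:Whitney_approx} with $\epsilon(h)\to0$ slowly enough; $\nrm{P_h}=1$ alone only gives boundedness.
\item The paper's Step~4 tests with $\mathcal{R}_h\boldsymbol\varphi^\flat\notin V_h$ and drops $P_h$ (equivalently, the Bernoulli gradient). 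You handle this correctly via \Cref{lem:proj_error}.
\end{itemize}

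There is, however, a genuine gap in your Step~2, and it also sits unacknowledged in the paper's Steps~2--3. Aubin--Lions (and Simon's version) needs the middle space to embed \emph{injectively} into the space where $\partial_t\bu^h$ is bounded. The map $L^2\to(H^s_\sigma)'$ annihilates every gradient, so your bound gives strong compactness only for $\Pi\bu^h$, with $\Pi$ the continuous Leray projector. Since only $\bD_2\bM_1\bv^h=0$ holds, $\mathcal{W}_h\bv^h$ is not divergence-free. Its gradient part $(I-\Pi)\bu^h$ is bounded in $L^2(0,T;H^1)$ but has no time compactness (think of $\nabla q(x)\sin(kt)$). The nonlinear term pairs this gradient part with $\bom^{h_k}$, which converges only weakly. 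So knowing the limit is divergence-free, as in your Step~3, is not enough: you need $(I-\Pi)\bu^{h_k}\to0$ strongly in $L^2(0,T;L^2)$.

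The paper's tools supply this. For smooth $\psi$,
$\int_\Omega\bu^h\cdot\nabla\psi=\ip{\bv^h}{\tD_0\mathcal{R}_h\psi}_1+\OO(h^{r_\star})\nrm{\bv^h}_{L_h^2}\nrm{\psi}_{W^{r_\star+1,\infty}}$,
and the pairing vanishes by summation by parts. Hence $\nrm{(I-\Pi)\bu^h}_{H^{-m}}\le Ch^{r_\star}\nrm{\bv^h}_{L_h^2}$ for $m$ large, and interpolating against the $L^2(0,T;H^1)$ bound closes the step. The paper hides the same issue by asserting the full $H^{-1}$ bound \eqref{eq:LH_dt_bound} after testing only with divergence-free fields.

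Item~2 is not established, by your argument or by the paper's: both end with strong convergence in $L^2(0,T;L^2)$. Your fallback does not help. The energy inequality bounds $\nrm{\bu(t)}_{L^2}$ only from above, so it cannot force $\nrm{\bu^{h_k}(t)}_{L^2}\to\nrm{\bu(t)}_{L^2}$ uniformly in $t$. Uniform-in-time strong convergence would make $\bu$ strongly continuous into $L^2$, which is not known in general for three-dimensional Leray--Hopf solutions. What the method supports is item~2 with $L^2(0,T;L^2)$ in place of $L^\infty(0,T;L^2)$.

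The step you flag as the main obstacle is avoidable. Because you test with smooth $\boldsymbol\varphi$, split \eqref{eq:Q_bilinear} into its two terms.
\begin{itemize}
\item The first term is bounded by $C\nrm{\boldsymbol\varphi}_{L^\infty}\|\bar\u(\bv^h)\|\,\nrm{\tD_1\bv^h}_{\bM_2}$.
\item In the second term the curl moves onto the test function, $(\tD_1\mathcal{R}_h\boldsymbol\varphi^\flat)^T\tU(\bv^h)^T\bv^h$, which is bounded by $C\nrm{\boldsymbol\varphi}_{W^{1,\infty}}\|\bar\u(\bv^h)\|\,\nrm{\bv^h}_{L_h^2}$.
\end{itemize}
Here $\|\bar\u(\bv^h)\|^2=\sum_jA_j\ell_j^*|\bar\u_j(\bv^h)|^2\le C\nrm{\bv^h}_{L_h^2}^2$ is just the Gram-matrix bound behind \eqref{eq:Qh_bdd}. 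No discrete Gagliardo--Nirenberg inequality is needed, and you even get $\partial_t\bu^h\in L^2(0,T;(H^s_\sigma)')$. The paper's $L^4$ route is exactly where it tacitly needs the $L^p$ control you worried about. The same splitting settles the $P_h$ cross term. Against $(I-P_h)\mathcal{R}_h\boldsymbol\varphi^\flat=\tD_0\phi$ the second term vanishes because $\tD_1\tD_0=0$, and the first is $\OO(\nrm{\tD_0\phi}_{\rm rec})$, which is small by \Cref{lem:proj_error}(ii).

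Two smaller corrections:
\begin{itemize}
\item The Whitney--Hodge discrepancy in your Step~2 formula is proportional to $\nrm{\ddt\bv^h}_{L_h^2}$, not to $\nrm{\bv^h}$, and $\nrm{\ddt\bv^h}_{L_h^2}$ is not uniformly bounded. Apply the discrepancy to $\bv^h(t)$ and use time translates instead.
\item In Step~3, \Cref{prop:recon_accuracy} concerns interpolants of smooth fields and does not apply to $\bv^h$. You need a local comparison of $\bar\u(\bv^h)$ with $\mathcal{W}_h\bv^h$ controlled by the gradient bound; the paper also only asserts this step.
\end{itemize}
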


\begin{proof}[Proof sketch]
The proof is given in \Cref{app:Leray_Hopf_proof}.
\end{proof}
The discrete Navier--Stokes system satisfies the energy {equality} (\cref{lem:NS_energy_bound}).  
In the limit $h\to 0$, the equality may degrade to an {inequality}.  
This degradation is not a defect of the discretisation; it reflects the nature of Leray--Hopf weak solutions, for which the energy inequality is the strongest statement that
can be guaranteed.  If the limiting solution happens to be a {strong} solution on an interval of regularity, then the energy equality is recovered.

\begin{remark}[Pressure in the Leray--Hopf regime]\label{rem:pressure_LH}
The weak formulation~\eqref{eq:weak_NS} is tested against divergence-free $\boldsymbol\varphi$, so the discrete Bernoulli gradient $\tD_0 B^h$ is annihilated and pressure does not enter the convergence proof of \Cref{thm:weak_convergence}: only velocity is identified in the limit. A discrete pressure $p^h$ is nevertheless available a posteriori from the Poisson problem. 
\end{remark}

The weak--strong uniqueness in the next theorem establishes that if a strong solution exists on some interval, the discrete solutions converge to it with rates on that interval. Beyond the strong existence time, the discrete solutions still converge subsequentially to a Leray--Hopf weak solution, though without rates or uniqueness guarantees.

\begin{theorem}[Weak-strong uniqueness]
\label{thm:weak_strong}%
Suppose the continuous Navier--Stokes equations admit a strong solution $\bu_{\rm strong}\in C([0,T];H^1(\Omega))\cap L^2(0,T;H^2(\Omega))$ on $[0,T]$.  Then it holds that:
\begin{enumerate}[nosep]
\item Any Leray--Hopf weak solution $\bu$ obtained in \Cref{thm:weak_convergence} coincides with it: $\bu = \bu_{\rm strong}$ on $[0,T]$.
\item The \emph{full sequence} $\bu^h = \mathcal{W}_h\bv^h$ (not just a subsequence) converges to $\bu_{\rm strong}$ in $L^2(0,T;L^2(\Omega))$.
\item The convergence rate
$\OO(h^{\min(r_{\rm rec},\,r_\star)}\,|\log h|^{\beta_d})$ from
\Cref{thm:NS_convergence} is recovered.
\end{enumerate}
\end{theorem}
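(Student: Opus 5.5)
The plan is to prove the three items in order, each resting on an earlier result.

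\emph{Item 1: weak--strong uniqueness at the continuum level.} Let $\bu$ be a limit from \Cref{thm:weak_convergence}; it satisfies the weak formulation \eqref{eq:weak_NS} and the energy inequality \eqref{eq:Leray_energy_ineq}. We compare it with $\bu_{\rm strong}\in C([0,T];H^1)\cap L^2(0,T;H^2)$ by Serrin's relative-energy argument. Set $\bw:=\bu-\bu_{\rm strong}$.
\begin{enumerate}[nosep]
\item Test the weak equation for $\bu$ with $\bu_{\rm strong}$, after a density argument justified by $\bu_{\rm strong}\in L^2(0,T;H^2)$.
\item Use the energy equality for $\bu_{\rm strong}$.
\item Combine these with the energy inequality for $\bu$ to obtain
\[
\tfrac12\nrm{\bw(t)}_{L^2}^2+\nu\int_0^t\nrm{\nabla\bw}_{L^2}^2\,ds
\le \int_0^t\Bigl|\int_\Omega (\bw\cdot\nabla\bu_{\rm strong})\cdot\bw\Bigr|\,ds .
\]
In vector-invariant form the Lamb term $\bw\times(\curl\bw)$ tested against $\bw$ vanishes pointwise, so only the cross term with $\bu_{\rm strong}$ survives.
\item Apply H\"older, Ladyzhenskaya/Gagliardo--Nirenberg ($\nrm{\bw}_{L^3}^2\le C\nrm{\bw}_{L^2}\nrm{\bw}_{H^1}$ in $d\le3$) and Young, absorbing $\nu\nrm{\nabla\bw}^2$, to get
\[
\tfrac{d}{dt}\nrm{\bw}^2\le C\nu^{-1}\nrm{\bu_{\rm strong}}_{H^2}^{\,2}\nrm{\bw}^2 .
\]
\item The coefficient is integrable in time, so Gr\"onwall with $\bw(0)=0$ gives $\bw\equiv0$.
\end{enumerate}
The delicate part is that the weak solution only satisfies an energy \emph{inequality}; it must be used in integrated form, and the pairing in step 1 needs time regularity of $\bu_{\rm strong}$ via mollification in time.

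\emph{Item 2: full-sequence convergence.} This follows from item 1 by the standard subsequence-of-subsequence argument. Suppose the full sequence does not converge in $L^2(0,T;L^2)$. Then some subsequence stays $\epsilon$-away from $\bu_{\rm strong}$. Applying \Cref{thm:weak_convergence} to that subsequence extracts a further subsequence converging to a Leray--Hopf solution. By item 1 that limit equals $\bu_{\rm strong}$, which is a contradiction. This step is routine.

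\emph{Item 3: the rate.} If $\bu_{\rm strong}$ has the smoothness required by \Cref{ass:smooth_ref} (i.e.\ $C^1([0,T];H^s)$, $s\ge3$), the rate follows directly from \Cref{thm:NS_convergence} applied with the strong solution as reference. It is independent of compactness. Combined with the Whitney estimate \eqref{eq:Whitney_approx}, it gives the discrete rate in $L_h^2$ and $\OO(h)$ in $L^2$. The main obstacle is the regularity gap: a strong solution in $C([0,T];H^1)\cap L^2(0,T;H^2)$ is not automatically $C^1([0,T];H^s)$. On a closed manifold with smooth data, I would close this gap through parabolic smoothing (Serrin/Prodi regularity for strong solutions), valid on $[\delta,T]$. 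Near $t=0$, the rate is claimed only under sufficiently regular initial data $\bu_0\in H^s$, where propagation of regularity for strong solutions gives $\bu\in C^1([0,T];H^s)$. I would state item 3 under that proviso, with the stability constant $C^\nu(T)$ of \Cref{thm:NS_stability} depending on $\sup_t\nrm{\bu_{\rm strong}}_{W^{r_\star+1,\infty}}$.
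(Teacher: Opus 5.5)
Your proposal follows the paper's three-step structure. For item~1 the paper simply cites classical Prodi--Serrin weak--strong uniqueness. Your relative-energy sketch correctly spells that out: the $L^3$ interpolation, Young absorption into $\nu\nrm{\nabla\bw}^2$, and integral Gr\"onwall with a coefficient in $L^1(0,T)$, which holds because $\bu_{\rm strong}\in L^2(0,T;H^2)$. Item~2 is the same subsequence-of-subsequence argument the paper uses.

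You diverge from the paper in item~3, and your version is the more accurate one. The paper argues that ``once full-sequence convergence is established'' the analysis of \Cref{thm:NS_stability} yields the rate. That analysis never uses compactness, however. It needs the reference solution to satisfy \Cref{ass:smooth_ref}, and the constant in \Cref{thm:NS_convergence} even involves $\nrm{\bu}_{C^1([0,T];H^{s+2})}$. The class $C([0,T];H^1)\cap L^2(0,T;H^2)$ does not provide this. Your proviso is exactly the hypothesis the paper uses tacitly, and under it item~3 is just \Cref{thm:NS_convergence} applied with $\bu_{\rm strong}$ as reference.

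Two corrections to how you state the proviso:
\begin{itemize}
\item Propagation of regularity from $\bu_0\in H^m$ gives $\bu\in C([0,T];H^m)$ but only $C^1([0,T];H^{m-2})$. So you need two more derivatives on the data than you wrote, and more still to control the $H^{s+2}$ norm above.
\item Parabolic smoothing on $[\delta,T]$ contributes nothing to the rate by itself. Restarting the Gr\"onwall argument at $t=\delta$ would require $\nrm{\bv^h(\delta)-\mathcal{R}_h\bu^\flat(\delta)}_{L_h^2}=\OO(h^{r})$, whereas item~2 only gives convergence without a rate. You need the regular-data hypothesis on all of $[0,T]$ anyway.
\end{itemize}
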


\begin{proof}
Part (1) is classical Prodi--Serrin weak--strong uniqueness: any
Leray--Hopf weak solution satisfying the energy inequality must
coincide with a strong solution that has the same initial data,
provided the strong solution belongs to the Prodi--Serrin class,
which $\bu_{\rm strong}\in C([0,T];H^1)\cap L^2(0,T;H^2)$ satisfies.
Part (2) follows from (1): every subsequential limit is the same, so the full sequence converges.
Part (3): once full-sequence convergence is established, the quantitative error analysis of \Cref{thm:NS_stability} applies, giving the rate.
\end{proof}

The Leray--Hopf existence and convergence result
(\cref{thm:weak_convergence}) carries over to the anisotropic
Hodge--Laplacian of \Cref{subsect:NS_wellposedness} without change.
For monotone nonlinear eddy viscosities of Ladyzhenskaya type, the
discrete operator's monotonicity additionally permits a Minty--Browder
identification of the nonlinear flux, yielding a \emph{unique} weak
solution of the corresponding Ladyzhenskaya--Smagorinsky system; as
this lies outside the conservative theory developed here, we do not
pursue it.

\subsection{Low Regularity Regime: Onsager Threshold}
\label{subsect:CMV}

The central difficulty for the inviscid Euler equations is
\emph{non-uniqueness}.
Wild weak solutions constructed by convex integration
\cite{scheffer1993,shnirelman1997,DeLellisSzekelyhidi2009,DeLellisSzekelyhidi2013,BDLSV2019}
can violate energy conservation and are manifestly non-unique
below the H\"older threshold $\alpha = 1/3$.
Whether uniqueness holds above $1/3$ depends on the regularity
class.
At Lipschitz regularity ($\alpha \ge 1$), Brenier--De~Lellis--Sz\'ekelyhidi
\cite{BDLS2011} prove measure-valued weak--strong uniqueness (MVWSU) for
conservative measure-valued solutions (CMV); this is the strongest
unconditional result available.
In the energy-conserving range $1/3 < \alpha < 1$, the Constantin--E--Titi
theorem \cite{CET1994} guarantees that the kinetic energy is conserved, but
the MVWSU argument breaks down: weak--strong uniqueness among
$C^{0,\alpha}$ Euler solutions in this range is a major open problem.
Below $1/3$, both dissipative and energy-conservative weak solutions exist
\cite{Isett2018,BDLSV2019}, and non-uniqueness is maximal.

A convergence theory for discrete schemes must contend with this landscape
without knowing in advance which regularity regime the true solution
occupies.
The DEC scheme achieves the following: discrete Euler solutions converge
subsequentially to CMV solutions unconditionally (\cref{thm:CMV});
the defect measure $\sigma$ vanishes precisely at the Onsager threshold
$\alpha > 1/3$ (\cref{thm:sigma_vanish}), reducing the MVWSU hypothesis
to ordinary weak--strong uniqueness and yielding full-sequence convergence
for Lipschitz solutions (\cref{thm:Holder_unconditional}); and no
subsequence can converge to a dissipative Euler solution at any regularity
(\cref{prop:no_dissipative}).
The last exclusion follows because the DEC scheme inherits
energy conservation from the discrete level, so the limit cannot
dissipate energy anomalously.
\subsubsection{Conservative measure-valued Euler solutions}
\begin{definition}[Conservative measure-valued (CMV) Euler solution]\label{def:CMV}
A pair $(\bw, \sigma)$ consisting of
$\bw \in L^\infty(0,T;\,L^2(\Omega))$ with $\nabla\cdot\bw = 0$
and a non-negative symmetric matrix-valued Radon measure
$\sigma_{ij}(t) \in \mathcal{M}(\Omega)$ for a.e.\ $t$
is a \emph{CMV Euler solution}
with initial data $\bu_0$ and energy $E_0 = \nrm{\bu_0}_{L^2}^2$
if:
\begin{enumerate}[nosep]
\item \emph{CMV Euler equation:} for all divergence-free
  $\boldsymbol\varphi \in C_c^\infty(\Omega\times[0,T))$,
  \begin{equation}\label{eq:CMV_Euler}
    -\int_0^T\!\!\int_\Omega\bw\cdot\partial_t\boldsymbol\varphi
    - \int_0^T\!\!\int_\Omega(w_iw_j + \sigma_{ij})\partial_j\varphi_i
    = \int_\Omega\bu_0\cdot\boldsymbol\varphi(\cdot,0).
  \end{equation}
\item \emph{Energy identity:}
  $\nrm{\bw(t)}_{L^2}^2 + \int_\Omega\operatorname{tr}(\sigma)(t) = E_0$
  for all $t \in [0,T]$.
\item \emph{Initial data:} $\bw(0) = \bu_0$ and $\sigma(0) = 0$.
\end{enumerate}
If $\sigma = 0$, then $\bw$ is an energy-conservative weak Euler solution.
\end{definition}

\begin{lemma}[Approximate CMV consistency]\label{lem:weak_consistency}
Let $\bu^h = \mathcal{W}_h\bv^h$ be the Whitney reconstruction of
a discrete Euler solution (\cref{thm:global}).
Then for every divergence-free
$\boldsymbol\varphi \in C_c^\infty(\Omega\times[0,T))$,
\begin{equation}\label{eq:approx_CMV}
  -\int_0^T\!\!\int_\Omega \bu^h\cdot\partial_t\boldsymbol\varphi
  - \int_0^T\!\!\int_\Omega u^h_i\,u^h_j\,\partial_j\varphi_i
  = \int_\Omega \bu^h(0)\cdot\boldsymbol\varphi(\cdot,0)
  + R^h(\boldsymbol\varphi),
\end{equation}
where $|R^h(\boldsymbol\varphi)| \le C(\boldsymbol\varphi,\Ekin_0)\,h^{\min(r_{\rm rec},\,r_\star)}$.
\end{lemma}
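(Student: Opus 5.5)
The plan is to test the discrete Euler ODE against the discrete interpolant of the test field and transfer each term to the continuum through the Whitney map. Fix a divergence-free $\boldsymbol\varphi\in C_c^\infty(\Omega\times[0,T))$ and set $\boldsymbol\phi^h(t):=\bP_h\mathcal{R}_h\boldsymbol\varphi^\flat(t)\in V_h$. By \Cref{lem:proj_error}(i), $\nrm{\mathcal{R}_h\boldsymbol\varphi^\flat-\boldsymbol\phi^h}_{L_h^2}\le Ch^{r_\star}$, uniformly in $t$, and the same holds for $\partial_t$.

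\textbf{Step 1: discrete identity.} Pair the projected ODE \eqref{eq:proj_ODE} with $\boldsymbol\phi^h$ in $\ip{\cdot}{\cdot}_1$ and integrate by parts in time:
\[
-\int_0^T\ip{\bv^h}{\partial_t\boldsymbol\phi^h}_1\,dt+\int_0^T\ip{\Iv(\tD_1\bv^h)}{\boldsymbol\phi^h}_1\,dt=\ip{\bv^h(0)}{\boldsymbol\phi^h(0)}_1 .
\]
Since $\boldsymbol\phi^h\in V_h$ and $\bP_h$ is $\bM_1$-selfadjoint, the projector can be dropped.

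\textbf{Step 2: linear terms.} Replace $\boldsymbol\phi^h$ by $\mathcal{R}_h\boldsymbol\varphi^\flat$. By Cauchy--Schwarz and energy conservation $\nrm{\bv^h}_{L_h^2}^2=2\Ekin_0$ (\Cref{thm:energy_bound}), this costs $O(h^{r_\star})$. Then use \Cref{lem:hodge_bilinear} with $k=1$ to write $\ip{\bv^h}{\mathcal{R}_h\psi}_1=\int_\Omega\mathcal{W}_h\bv^h\cdot\psi+O(h^{r_\star})\nrm{\bv^h}_{L_h^2}$. This holds because the Whitney form interpolates the edge values, and the dual-cell flux pairing equals $\int \mathcal{W}_h\bv^h\wedge\star\psi$ up to the Hodge quadrature error, as in \ref{H:Whitney_interp}. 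This treats both the $\partial_t$ term and the initial term.

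\textbf{Step 3: the nonlinear term (main obstacle).} Using \eqref{eq:Q_bilinear} and antisymmetry, the nonlinear pairing equals
\[
\ip{\bQ(\bv^h,\bv^h)}{\boldsymbol\phi}_1=\tfrac12\bigl(\boldsymbol\phi^T\tU(\bv^h)\tD_1\bv^h-(\tD_1\boldsymbol\phi)^T\tU(\bv^h)^T\bv^h\bigr).
\]
I would first polarise to move derivatives off $\bv^h$. With $\boldsymbol\phi=\mathcal{R}_h\boldsymbol\varphi^\flat$ and $\tD_1\boldsymbol\phi=\mathcal{R}_h(\dd\boldsymbol\varphi^\flat)$ exact by \ref{H:deRham_commutativity}, the second term is a sum over faces of (reconstructed velocity)$\,\times\,$(interpolated curl of $\boldsymbol\varphi$)$\,\times\,v^h_j$. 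The first term would be rewritten by summation by parts on $\tD_1$, putting the difference onto $\tU^T\boldsymbol\phi$, with the error controlled by \Cref{prop:extrusion}(3), the Leibniz defect $O(h^2)$ per cell. Each resulting sum is a quadrature of $\int(\operatorname{curl}\boldsymbol\varphi\times\bu^h)\cdot\bu^h$-type expressions, and by vector identities and $\nabla\cdot\boldsymbol\varphi=0$ these reduce to $-\int u_iu_j\partial_j\varphi_i$ modulo a gradient that integrates to zero. The consistency error is the reconstruction error $O(h^{r_{\rm rec}})$ from \Cref{prop:recon_accuracy}. Since the reconstruction acts on the rough $\bv^h$, the Taylor argument must be applied to the smooth factor $\boldsymbol\varphi$ only. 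The rough factors are bounded quadratically by $\nrm{\bv^h}_{L_h^2}^2\le 2\Ekin_0$, using \eqref{eq:Qh_bdd} and the equivalence of $\bar\u_j$ with $\Qh^1\bv^h$ on the simplicial refinement. The step I expect to be delicate is the comparison $\Qh^1\bv^h$ versus $\mathcal{W}_h\bv^h$ inside a quadratic expression. I would first try to show that their difference, tested against a smooth weight, is $O(h)\nrm{\bv^h}^2$ by local norm equivalence on each simplex. If that fails for rough $\bv^h$, the fallback is to accept a remainder tending to zero, which would still suffice for \Cref{thm:CMV}.

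Collecting the three steps gives \eqref{eq:approx_CMV}, with $R^h$ bounded by $C(\boldsymbol\varphi,\Ekin_0)h^{\min(r_{\rm rec},r_\star)}$, where $C$ involves $\nrm{\boldsymbol\varphi}_{W^{2,\infty}}$ and $T$.
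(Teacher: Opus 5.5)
Steps~1 and~2 are fine: the linear terms cost $\OO(h^{r_\star})$, as in the paper. The gap is in Step~3, in two places.

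\textbf{The unaccounted swap of test functions.} In Step~3 you silently go back from $\boldsymbol\phi^h=\bP_h\mathcal{R}_h\boldsymbol\varphi^\flat$ to $\mathcal{R}_h\boldsymbol\varphi^\flat$ in the nonlinear term. The Step~2 argument does not cover this, because $\bQ(\bv^h,\bv^h)$ is not bounded in $L_h^2$ by $\Ekin_0$. Write $(I-\bP_h)\mathcal{R}_h\boldsymbol\varphi^\flat=\tD_0\psi$. Since $\tD_1\tD_0=0$, the discrepancy is $\tfrac12(\tD_0\psi)^T\tU(\bv^h)\tD_1\bv^h$. \Cref{lem:proj_error}\,(ii) together with inverse estimates bounds it only by $C\,h^{r_\star+2-d}|\log h|\,\Ekin_0$, which for $d=3$ in case (A) does not tend to zero. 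Moreover, because $\ddt\bv^h\in V_h\perp\operatorname{range}(\tD_0)$, this term equals $-\ip{\tD_0B}{\mathcal{R}_h\boldsymbol\varphi^\flat}_1$. That is exactly the pressure pairing the paper keeps explicit: it tests the unprojected momentum equation with $\mathcal{R}_h\boldsymbol\varphi^\flat$ and bounds the pairing by the divergence residual of \Cref{lem:interp_error}\,(iii) times $\nrm{B}_{L_h^\infty}$, a quantity energy conservation does not control. So projecting the test function relocates the pressure rather than removing it, and you still owe this estimate.

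\textbf{The nonlinear transfer itself.} \Cref{prop:recon_accuracy}, the Leibniz defect of \Cref{prop:extrusion}\,(3) and \ref{H:reconstruction_accuracy} are consistency statements for de~Rham interpolants of smooth forms. For $\bv^h$ you know only the $L_h^2$ norm, and at $\nu=0$ there is no compactness, so these results give nothing.

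The term $\tfrac12\boldsymbol\phi^T\tU(\bv^h)\tD_1\bv^h$ contains a discrete curl of $\bv^h$ and is a priori only $\OO(h^{2-d})\nrm{\bv^h}_{L_h^2}^2$. Summation by parts merely moves $\tD_1^T$ onto $\tU(\bv^h)^T\boldsymbol\phi$, i.e.\ onto the reconstruction of $\bv^h$, so any discrete product rule regenerates differences of rough data.

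What is missing is an identity valid for \emph{arbitrary} $\bv\in V_h$, exact or with defect bounded by $h^{r}\nrm{\bv}_{L_h^2}^2$. It would rewrite $\ip{\bQ(\bv,\bv)}{\mathcal{R}_h\boldsymbol\varphi^\flat}_1$ in flux form: a quadratic form in undifferentiated $\bv$ against differences of $\boldsymbol\varphi$, with the remainder removed by $\bD_2\bM_1\bv=0$. The continuous reduction of $\int(\bom\times\bu)\cdot\boldsymbol\varphi$ to $-\int u_iu_j\partial_j\varphi_i$ needs $\nabla\cdot\bu=0$, not only $\nabla\cdot\boldsymbol\varphi=0$, and your Step~3 never uses the discrete constraint on $\bv^h$.

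The $\Qh^1$ versus $\mathcal{W}_h$ comparison fails for the same reason. Local norm equivalence makes the two reconstructions comparable on mesh-scale oscillations, not $\OO(h)$-close.

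The fallback does not help either, for two reasons:
\begin{itemize}
\item an $o(1)$ remainder would not give the stated rate;
\item without strong compactness, two different quadratic forms in $\bv^h$ can have different weak-$*$ limits, so the defect in the momentum equation need not be the one whose trace enters the energy identity of \Cref{thm:CMV}.
\end{itemize}

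For this step the paper's own proof appeals to the smooth-solution mechanism of \Cref{app:consistency_proof}, whose Taylor and reconstruction arguments likewise presuppose a smooth velocity in the extrusion slot. It therefore provides no estimate you could borrow here.
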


\begin{proof}
Let $\boldsymbol\psi(t) := \mathcal{R}_h\boldsymbol\varphi^\flat(t)$ denote the
de~Rham interpolant of the test function.
The discrete momentum equation
$\ddt\bv + I_\bv(\bom) + \tD_0 B = 0$
tested against $\boldsymbol\psi$ gives
\[
  \ip{\ddt\bv}{\boldsymbol\psi}_1
  + \ip{I_\bv(\bom)}{\boldsymbol\psi}_1
  + \ip{\tD_0 B}{\boldsymbol\psi}_1 = 0.
\]
Summation by parts gives
$\ip{\tD_0 B}{\boldsymbol\psi}_1 = B^T \bD_2\bM_1\boldsymbol\psi$.
This pairing vanishes on the discretely divergence-free subspace
$V_h = \ker(\bD_2\bM_1)$, and the interpolant
$\boldsymbol\psi = \mathcal{R}_h\boldsymbol\varphi^\flat$ of the
divergence-free field $\boldsymbol\varphi$ is \emph{not} in $V_h$ in
general: de~Rham interpolation does not commute with the discrete
divergence, so $\bD_2\bM_1\boldsymbol\psi$ is not identically zero but
$\OO(h^{r_\star})$ per cell by the Hodge$\star$ approximation
(\cref{lem:interp_error}\,(iii)). The pressure pairing therefore does not
drop out; it is bounded by
$|\ip{\tD_0 B}{\boldsymbol\psi}_1|
\le C\,h^{r_\star}\,\nrm{B}_{L_h^\infty}\,\nrm{\boldsymbol\varphi}_{W^{1,\infty}}$
and contributes to the residual $R^h$ below.
The discrete inner product
$\ip{\ddt\bv}{\boldsymbol\psi}_1
= \sum_j(\bM_1)_{jj}\ddt v_j\,\psi_j$
approximates $\int_\Omega\ddt\bu^h\cdot\boldsymbol\varphi$
up to the Hodge$\star$ error:
$|\ip{\ddt\bv}{\boldsymbol\psi}_1
- \int_\Omega(\mathcal{W}_h\ddt\bv)\cdot\boldsymbol\varphi|
\le C\,h^{r_\star}\,\nrm{\ddt\bv}_{L_h^2}\,\nrm{\boldsymbol\varphi}_{W^{1,\infty}}$.
The discrete pairing
$\ip{I_\bv(\bom)}{\boldsymbol\psi}_1$
is the discrete counterpart of the continuous Lamb pairing
$\int_\Omega(\bom\times\bu)\cdot\boldsymbol\varphi$, which for
divergence-free $\boldsymbol\varphi$ equals
$-\int_\Omega u_i\,u_j\,\partial_j\varphi_i$ after integration by parts,
the Bernoulli gradient $\nabla(\tfrac12|\bu|^2)$ dropping against
$\nabla\!\cdot\boldsymbol\varphi = 0$. The discrete pairing reproduces
this value up to $\OO(h^{r_{\rm rec}})$:
the error arises from replacing the continuous contraction
$\iota_{\bu}\bom$ by the discrete extrusion $I_\bv(\bom)$
and the continuous inner product by the $\bM_1$-pairing,
both of which introduce errors controlled by the
reconstruction accuracy $r_{\rm rec}$ and the cochain scaling
$h^{d-1}$ (the same mechanism as in the consistency
proof, \Cref{app:consistency_proof}).
Combining the three pairings, integrating over $[0,T]$,
and integrating the time-derivative term by parts
converts the
cochain identity into
~\eqref{eq:approx_CMV}.
Each dual pairing
introduces an error that is absorbed into the residual $R^h$.
The residual collects the Hodge$\star$ errors ($\OO(h^{r_\star})$)
from the pressure and the inner-product transfer,
and the extrusion error ($\OO(h^{r_{\rm rec}})$)
from the convective term.
Since $r_\star = r_{\rm rec}$ under \Cref{conv:cases},
$|R^h| \le C(\boldsymbol\varphi,\Ekin_0)\,h^{\min(r_{\rm rec},\,r_\star)} = C(\boldsymbol\varphi,\Ekin_0)\,h^{r_\star}$.
\end{proof}

\begin{theorem}[Convergence to a CMV Euler solution]\label{thm:CMV}%
Let $\bu_0 \in L^2(\Omega)$ with $\nabla\cdot\bu_0 = 0$ and
$E_0 = \nrm{\bu_0}_{L^2}^2$.
Let $\bu^h = \mathcal{W}_h\bv^h$ with
$\bv^h(0) = \bP_h\mathcal{R}_h\bu_0^\flat$.
Then there exists a subsequence $h_k \to 0$ and a CMV Euler solution
$(\bw, \sigma)$ such that:
\begin{enumerate}[label=\textup{(\roman*)},nosep]
\item $\bu^{h_k}(t) \rightharpoonup \bw(t)$ weakly in $L^2(\Omega)$
  for every $t$;
\item $\nrm{\bu^{h_k}(t)}_{L^2}^2 \to E_0$ for every $t$;
\item $\bu^h(0) \to \bu_0$ in $L^2$ and $\sigma(0) = 0$.
\end{enumerate}
\end{theorem}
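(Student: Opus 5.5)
The proof follows the standard DiPerna--Majda / Brenier--De~Lellis--Sz\'ekelyhidi compactness route. The only inputs are the uniform energy bound and the approximate consistency of \Cref{lem:weak_consistency}.

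\emph{Step 1: uniform bounds.} By \Cref{thm:energy} and \Cref{thm:energy_bound}, $\nrm{\bv^h(t)}_{L_h^2}=\nrm{\bv^h(0)}_{L_h^2}$ for all $t$. Since $\bP_h$ is an orthogonal projection (\ref{H:bounded_projection}), $\nrm{\bv^h(0)}_{L_h^2}\le\nrm{\mathcal{R}_h\bu_0^\flat}_{L_h^2}$. By the Whitney bound, $\nrm{\bu^h(t)}_{L^2}\le C_W\nrm{\bv^h(0)}_{L_h^2}$ uniformly in $h$ and $t$.

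For rough $\bu_0\in L^2$ the interpolant $\mathcal{R}_h\bu_0^\flat$ is not defined directly. I would first mollify $\bu_0$ at a scale $\epsilon(h)\to0$ slowly enough that the interpolation, projection (\Cref{lem:proj_error}) and Whitney errors \eqref{eq:Whitney_approx} vanish. This gives $\bu^h(0)\to\bu_0$ strongly in $L^2$, which is part (iii) for $\bu^h$. Using the $L^2$ equivalence of \ref{H:Whitney_interp}, it also gives $\nrm{\bu^h(t)}_{L^2}^2 = \nrm{\bu^h(0)}_{L^2}^2+o(1)\to E_0$ uniformly in $t$, which is (ii). This use of norm equivalence up to $o(1)$ multiplicative error is the delicate point, because a mere constant $C_W$ does not suffice. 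I would use the refined version of \ref{H:Whitney_interp}, which has equivalence up to $\OO(h^{r_\star})$ factors.

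\emph{Step 2: compactness in time and the weak limit.} Testing the discrete equation against $\mathcal{R}_h\boldsymbol\varphi^\flat$ for a fixed divergence-free smooth $\boldsymbol\varphi$, as in \Cref{lem:weak_consistency}, shows that $t\mapsto\int\bu^h\cdot\boldsymbol\varphi$ is equi-Lipschitz. The bound comes from the $L^2$ bound on $\bu^h$: the quadratic term is estimated by $\nrm{\nabla\boldsymbol\varphi}_\infty E_0$. A separability argument combined with Arzel\`a--Ascoli (equivalently, Aubin--Lions in $C([0,T];H^{-s}_w)$) yields a subsequence with $\bu^{h_k}(t)\rightharpoonup\bw(t)$ weakly in $L^2$ for every $t$. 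That gives (i). Divergence-freeness of $\bw$ follows from the $\OO(h^{r_\star})$ discrete divergence consistency \eqref{eq:div_consistency} transferred through $\mathcal{W}_h$.

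\emph{Step 3: defect measure and the limit equation.} The tensors $u^{h}_iu^{h}_j$ are bounded in $L^\infty(0,T;L^1)$, so a further subsequence converges weak-$*$ as Radon measures to some $\mu_{ij}$. Set $\sigma_{ij}:=\mu_{ij}-w_iw_j$. This $\sigma$ is symmetric and positive semidefinite: for any vector $\xi$ and $\phi\ge0$, weak lower semicontinuity of $\int\phi|\xi\cdot\bu|^2$ does the job. Passing to the limit in \eqref{eq:approx_CMV}, with $R^h\to0$ and $\bu^h(0)\to\bu_0$, gives \eqref{eq:CMV_Euler}.

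\emph{Step 4: energy identity and initial defect.} Taking the trace gives $\int\operatorname{tr}\sigma(t)=\lim\nrm{\bu^h(t)}^2-\nrm{\bw(t)}^2=E_0-\nrm{\bw(t)}^2$ by (ii). The subtle point is that this is needed for every $t$, not just a.e.\ $t$; I would handle it through the uniform-in-$t$ convergence of Step 1. Finally, $\bw(0)=\bu_0$ and strong convergence at $t=0$ give $\sigma(0)=0$.

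The main obstacle is Step 1: obtaining $\nrm{\bu^h(t)}^2\to E_0$ exactly, rather than only up to a constant, from discrete conservation together with the Whitney/Hodge norm transfer for rough data.
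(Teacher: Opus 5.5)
Your proposal is correct and follows essentially the paper's route. It uses energy conservation plus the refined \ref{H:Whitney_interp} norm transfer (the same chain $\nrm{\bu^h(t)}_{L^2}^2=\nrm{\bv^h(t)}_{L_h^2}^2+\OO(h^{r_\star})=\nrm{\bu^h(0)}_{L^2}^2+\OO(h^{r_\star})$ the paper invokes), equicontinuity from increments of \eqref{eq:approx_CMV}, Arzel\`a--Ascoli with a diagonal argument, Banach--Alaoglu for $\mu_{ij}$, the trace identity, and passage to the limit in \eqref{eq:approx_CMV}. Where you deviate, you improve on the paper: your weak-lower-semicontinuity proof of $\sigma\ge0$ handles concentration directly, whereas the paper's Young-measure/Jensen step tacitly identifies $\mu_{ij}$ with the second moment of $\nu_{t,x}$. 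In addition, your $h$-dependent mollification of $\bu_0$ and your attention to ``every $t$'' versus ``a.e.\ $t$'' make precise what the paper leaves as a ``density argument'' and does not address.
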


\begin{proof}[Proof idea]
Discrete energy conservation (\cref{thm:energy_bound}) bounds
$\bu^h=\mathcal{W}_h\bv^h$ uniformly in $L^\infty(0,T;L^2)$, and the
vector-invariant weak identity of \Cref{lem:weak_consistency} gives
equicontinuity of $t\mapsto\ip{\bu^h(t)}{\boldsymbol\varphi}_{L^2}$
against divergence-free tests; Arzel\`a--Ascoli with a diagonal
argument extracts a time-pointwise weak limit $\bw$, divergence-free
by the $\OO(h^{r_\star})$ approximate incompressibility. The Young
measure generated by $\bu^{h_k}$ yields the concentration defect
$\sigma_{ij}$, and energy conservation passes to the trace,
giving $\int_\Omega\mathrm{tr}(\sigma)(t)=E_0-\nrm{\bw(t)}_{L^2}^2$
with $\sigma(0)=0$; passing to the limit in \eqref{eq:approx_CMV}
identifies $(\bw,\sigma)$ as a CMV Euler solution. The full argument
is given in \Cref{app:CMV_proof}.
\end{proof}

\subsubsection{Strong convergence at H\"older regularity}

\begin{definition}[Measure-valued weak\,--\,strong uniqueness]\label{def:MVWSU}
\emph{MVWSU} holds for a weak Euler solution
$\bu \in L^\infty(0,T;L^2)$ if: whenever $(\bw,\sigma)$ is a CMV
Euler solution with $\bw(0) = \bu(0)$, then $\bw = \bu$ and
$\sigma = 0$ for all $t$.
\end{definition}

\begin{theorem}[Conditional convergence at $\alpha \ge 1/3$]\label{thm:Holder_conditional}
Let $\bu \in L^\infty(0,T;\,C^{0,\alpha}(\Omega))$ with $\alpha \ge 1/3$
be a weak Euler solution that conserves energy.
If MVWSU holds for $\bu$,
then $\bu^h(t) \to \bu(t)$ strongly in $L^2$ for all $t$
(full sequence).
\end{theorem}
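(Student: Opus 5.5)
The plan is a standard compactness-plus-uniqueness argument built on \Cref{thm:CMV}, followed by an upgrade from weak to strong convergence using energy conservation.

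\emph{Step 1 (every subsequence has a CMV limit).} Take an arbitrary subsequence $h_k\to0$ of the discrete Euler solutions with $\bv^h(0)=\bP_h\mathcal{R}_h\bu_0^\flat$, where $\bu_0=\bu(0)$. \Cref{thm:CMV} applies to any such subsequence, because its proof uses only the uniform energy bound of \Cref{thm:energy_bound} and the approximate consistency of \Cref{lem:weak_consistency}. It yields a further subsequence and a CMV Euler solution $(\bw,\sigma)$ with
\begin{itemize}[nosep]
\item $\bu^{h_k}(t)\rightharpoonup\bw(t)$ weakly in $L^2$ for every $t$;
\item $\nrm{\bu^{h_k}(t)}_{L^2}^2\to E_0$ for every $t$;
\item $\bw(0)=\bu_0=\bu(0)$ and $\sigma(0)=0$.
\end{itemize}

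\emph{Step 2 (identify the limit by MVWSU).} Since $\bw(0)=\bu(0)$, \Cref{def:MVWSU} gives $\bw=\bu$ and $\sigma=0$ for all $t$.

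\emph{Step 3 (weak plus norm convergence gives strong convergence).} By the energy identity of \Cref{def:CMV} with $\sigma=0$, we have $\nrm{\bu(t)}_{L^2}^2=E_0$. This is consistent with the hypothesis that $\bu$ conserves energy, and $\nrm{\bu_0}^2=E_0$ because of the paper's normalisation $E_0=\nrm{\bu_0}_{L^2}^2$. Combining this with item (ii) of \Cref{thm:CMV},
\[
\nrm{\bu^{h_k}(t)-\bu(t)}_{L^2}^2=\nrm{\bu^{h_k}(t)}^2-2\ip{\bu^{h_k}(t)}{\bu(t)}+\nrm{\bu(t)}^2\to E_0-2E_0+E_0=0,
\]
where the cross term converges by the weak convergence of Step 1. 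Hence the subsequence converges strongly in $L^2$ for every $t$.

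\emph{Step 4 (full sequence).} Every subsequence has a further subsequence converging strongly to the same limit $\bu(t)$, which is uniquely determined by Step 2. The usual sub-subsequence argument then upgrades this to convergence of the full sequence for each $t$.

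The regularity hypothesis $\alpha\ge1/3$ enters only to make the assumption that $\bu$ conserves energy natural, through Constantin--E--Titi; the proof itself uses only energy conservation and MVWSU.

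\emph{Main obstacle.} The delicate point is the constant in item (ii) of \Cref{thm:CMV}. Norm convergence to $E_0$ for every $t$ relies on two facts: the discrete energy is exactly conserved, and the Whitney norm matches the discrete norm up to $\OO(h^{r_\star})$, by \ref{H:Whitney_interp}. It also relies on $\nrm{\bu^h(0)}\to\nrm{\bu_0}$, which is item (iii) of \Cref{thm:CMV}. I would check that the constant $E_0$ in \Cref{def:CMV} is consistent with $\tfrac12$ versus unit normalisations, so that $\nrm{\bu(t)}^2$ and $\lim\nrm{\bu^{h_k}(t)}^2$ really coincide. If there were a mismatch, I would instead invoke the CMV energy identity directly, $\int\operatorname{tr}\sigma=E_0-\nrm{\bw}^2=0$. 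This identifies the limiting norm with $\nrm{\bu(t)}^2$, since the trace of the defect measures precisely the gap between $\lim\nrm{\bu^{h_k}}^2$ and $\nrm{\bw}^2$.
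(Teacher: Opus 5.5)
Your proposal is correct and follows the paper's proof step for step: you extract a further subsequence via \Cref{thm:CMV}, identify $\bw=\bu$ and $\sigma=0$ by MVWSU, upgrade weak convergence to strong convergence using norm convergence (Radon--Riesz, which you write out explicitly), and conclude with the sub-subsequence argument. The only difference is cosmetic. You obtain $\nrm{\bu(t)}_{L^2}^2=E_0$ from the CMV energy identity with $\sigma=0$, whereas the paper uses the energy-conservation hypothesis on $\bu$ directly; your route shows that this hypothesis is redundant once MVWSU is assumed.
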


\begin{proof}
Let $(\bu^h_k)_k$ be any subsequence.  By \Cref{thm:CMV},
a further subsequence $(\bu^h_{k'})_{k'}$ and a CMV Euler solution
$(\bw,\sigma)$ exist with
$\bu^{h_{k'}}(t)\rightharpoonup\bw(t)$ weakly in $L^2$ for every $t$,
and $\bw(0) = \bu_0$, $\sigma(0) = 0$.
Since the discrete and reference initial data coincide
the MVWSU hypothesis applies and gives
$\bw = \bu$, $\sigma = 0$ for all $t$.

It remains to raise weak to strong convergence.
By~\eqref{eq:CMV_energy_limit},
$\nrm{\bu^{h_{k'}}(t)}_{L^2}^2 \to E_0$ for every $t$.
The energy-conservation on $\bu$ gives
$\nrm{\bu(t)}_{L^2}^2 = \nrm{\bu_0}_{L^2}^2 = E_0$.
Hence $\nrm{\bu^{h_{k'}}(t)}_{L^2} \to \nrm{\bu(t)}_{L^2}$;
combined with $\bu^{h_{k'}}(t)\rightharpoonup\bu(t)$,
the Radon--Riesz theorem gives strong convergence
$\bu^{h_{k'}}(t)\to\bu(t)$ in $L^2$ for every $t$.
Since every subsequence contains a further subsequence
converging to the same limit $\bu$, the full sequence
$\bu^h(t)\to\bu(t)$ strongly in $L^2$.
\end{proof}

\begin{theorem}[Convergence for Lipschitz weak Euler solutions]\label{thm:Holder_unconditional}%
Suppose there exists a weak Euler solution
$\bu \in L^\infty(0,T;\,W^{1,\infty}(\Omega))$ on $[0,T]$.
Then $\bu^h(t) \to \bu(t)$ strongly in $L^2$ for all $t$ (full sequence).
\end{theorem}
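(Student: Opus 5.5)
We reduce \Cref{thm:Holder_unconditional} to \Cref{thm:Holder_conditional}. A Lipschitz weak solution $\bu\in L^\infty(0,T;W^{1,\infty})$ lies in $L^\infty(0,T;C^{0,\alpha})$ for every $\alpha\le 1$, in particular for some $\alpha>1/3$. Two hypotheses of \Cref{thm:Holder_conditional} must then be checked: that $\bu$ conserves energy, and that MVWSU (\Cref{def:MVWSU}) holds for $\bu$. Once both are in hand, the subsequence-of-subsequence argument and the Radon--Riesz upgrade in the proof of \Cref{thm:Holder_conditional} give full-sequence strong $L^2$ convergence for every $t$. We identify the initial data by taking $\bu(0)=\bu_0$, the datum from which $\bv^h(0)=\bP_h\mathcal{R}_h\bu_0^\flat$ is built, so that \Cref{thm:CMV}\,(iii) supplies $\bw(0)=\bu_0=\bu(0)$ for every CMV limit.

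Energy conservation is the Constantin--E--Titi theorem~\cite{CET1994}. Since $\bu\in L^\infty(0,T;C^{0,\alpha})$ with $\alpha>1/3$ (indeed $\alpha=1$), the commutator estimate shows that the mollified energy flux $\int(\bu\otimes\bu)_\epsilon-\bu_\epsilon\otimes\bu_\epsilon:\nabla\bu_\epsilon$ is $\OO(\epsilon^{3\alpha-1})\to 0$. Hence $\nrm{\bu(t)}_{L^2}^2=\nrm{\bu_0}_{L^2}^2=E_0$ for all $t$, which is the identity used in the Radon--Riesz step. On a closed Riemannian manifold the mollification is carried out in charts with a partition of unity. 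The curvature commutators are lower order and vanish as $\epsilon\to 0$ at Lipschitz regularity.

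MVWSU is the content of Brenier--De~Lellis--Sz\'ekelyhidi~\cite{BDLS2011}. Given a CMV solution $(\bw,\sigma)$ in the sense of \Cref{def:CMV} with $\bw(0)=\bu(0)$, consider the relative energy
\[
\mathcal{E}(t):=\tfrac12\nrm{\bw(t)-\bu(t)}_{L^2}^2+\tfrac12\int_\Omega\operatorname{tr}\sigma(t).
\]
We expand it using the energy identity of \Cref{def:CMV} for $(\bw,\sigma)$ and the energy conservation of $\bu$ just established. We then test \eqref{eq:CMV_Euler} with $\boldsymbol\varphi=\bu$, which must be justified because $\bu$ is only Lipschitz. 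This yields
\[
\mathcal{E}(t)\le \int_0^t\!\!\int_\Omega \bigl|\bigl((\bw-\bu)\otimes(\bw-\bu)+\sigma\bigr):\nabla\bu\bigr| \le 2\nrm{\nabla\bu}_{L^\infty_{t,x}}\int_0^t\mathcal{E}(s)\,ds.
\]
The pressure does not appear because $\bu$ is divergence free. Since $\mathcal{E}(0)=0$ (because $\sigma(0)=0$ and $\bw(0)=\bu(0)$), Gr\"onwall gives $\mathcal{E}\equiv 0$, that is, $\bw=\bu$ and $\sigma=0$.

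The main obstacle is this last step. \cite{BDLS2011} assume a classical $C^1$ solution, whereas here $\bu$ is only a Lipschitz weak solution, so $\partial_t\bu$ is merely a distribution and $\bu$ is not an admissible test function in \eqref{eq:CMV_Euler}. To handle this, we first mollify in space, $\bu_\epsilon$, and use that $\bu_\epsilon$ satisfies Euler up to the commutator $\div R_\epsilon$, with $R_\epsilon=(\bu\otimes\bu)_\epsilon-\bu_\epsilon\otimes\bu_\epsilon$ and $\nrm{R_\epsilon}_{L^\infty}\lesssim\epsilon^2\nrm{\nabla\bu}_{L^\infty}^2$. This gives $\partial_t\bu_\epsilon\in L^\infty$, and a time cutoff makes $\bu_\epsilon$ admissible. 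We then run the relative-energy computation with $\bu_\epsilon$, keeping the Gr\"onwall constant $\nrm{\nabla\bu_\epsilon}_{L^\infty}\le\nrm{\nabla\bu}_{L^\infty}$ uniform in $\epsilon$, and let $\epsilon\to0$ using the $\OO(\epsilon)$ smallness of the commutator. The conclusion holds only a.e.\ in $t$ at first. It extends to every $t$ because $\bw$ is weakly continuous in time (by \Cref{thm:CMV}) and the energy identity of \Cref{def:CMV} holds for all $t$. With MVWSU established, \Cref{thm:Holder_conditional} applies verbatim.
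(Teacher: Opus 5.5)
Your proposal follows the paper's proof: you reduce to \Cref{thm:Holder_conditional}, obtain energy conservation from Constantin--E--Titi~\cite{CET1994}, and obtain MVWSU from the Brenier--De~Lellis--Sz\'ekelyhidi relative-energy functional $\mathcal{E}$, closing with the same Gr\"onwall bound via $|\sigma:\nabla\bu|\le\nrm{\nabla\bu}_{L^\infty}\operatorname{tr}\sigma$. Your spatial mollification (which makes the Lipschitz field an admissible test function and gives meaning to pairing the measure $\sigma$ with a merely $L^\infty$ gradient) and your passage from a.e.\ $t$ to every $t$ supply justifications the paper's sketch omits; the one caveat is that on a general closed manifold chart-wise mollification does not preserve $\nabla\cdot\bu_\epsilon=0$, so you need a Leray correction (or heat-semigroup smoothing) to keep $\bu_\epsilon$ admissible in \eqref{eq:CMV_Euler}, whereas on a torus plain convolution suffices.
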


\begin{proof}%
By \Cref{thm:Holder_conditional} it suffices to verify
two hypotheses for $\bu$: energy conservation, and MVWSU.

On a closed manifold,
$W^{1,\infty}(\Omega) = C^{0,1}(\Omega)\hookrightarrow C^{0,\alpha}(\Omega)$
for any $\alpha\in(1/3,1]$ (Morrey).
The Constantin--E--Titi theorem~\cite{CET1994}
gives energy conservation
$\nrm{\bu(t)}_{L^2} = \nrm{\bu_0}_{L^2}$ for all $t$
when $\bu\in L^3(0,T;C^{0,\alpha})$ with $\alpha>1/3$;
this is a fortiori satisfied for
$\bu\in L^\infty(0,T;C^{0,1})$.

Brenier--De~Lellis--Sz\'ekelyhidi~\cite[Thm.~1]{BDLS2011}
establish weak--strong uniqueness for CMV solutions
against a Lipschitz reference: if $(\bw,\sigma)$
is a CMV Euler solution with $\bw(0) = \bu(0)$
and $\bu\in L^\infty(0,T;\mathrm{Lip}(\Omega))$,
then $\bw = \bu$ and $\sigma = 0$ for all $t$.
We sketch the relative-energy argument
to identify the role of $\sigma$ in the Gr\"onwall bound.
Define
\[
  \mathcal{E}(t) := \tfrac{1}{2}\nrm{\bw(t) - \bu(t)}_{L^2}^2
  + \tfrac{1}{2}\int_\Omega\operatorname{tr}(\sigma)(t).
\]
The CMV energy identity (\cref{def:CMV})
$\nrm{\bw(t)}_{L^2}^2 + \int\operatorname{tr}(\sigma)(t) = E_0$
combined with the energy conservation
$\nrm{\bu(t)}_{L^2}^2 = E_0$ just established
gives $\mathcal{E}(0) = 0$
(using $\bw(0) = \bu_0 = \bu(0)$, $\sigma(0) = 0$).
Testing the CMV equation~\eqref{eq:CMV_Euler} against $\bu$
and the strong Euler equation against $\bw$,
combining the two,
and using the matrix Cauchy--Schwarz inequality
$|\sigma_{ij}\partial_j u_i|
\le \nrm{\nabla\bu}_{L^\infty}\operatorname{tr}(\sigma)$
to bound the contribution of the defect measure,
one obtains the inequality
\[
  \ddt\mathcal{E}(t)
  \le C\,\nrm{\nabla\bu(t)}_{L^\infty}\,\mathcal{E}(t).
\]
Gr\"onwall and $\mathcal{E}(0) = 0$ give $\mathcal{E}(t) \equiv 0$,
hence $\bw = \bu$ and $\sigma = 0$.
\end{proof}

\begin{proposition}[Exclusion of dissipative Euler solutions]\label{prop:no_dissipative}%
No subsequence of discrete Euler solutions can converge
strongly in $L^2$ to a weak Euler solution $\bu$ with
$\nrm{\bu(t)}_{L^2} < \nrm{\bu_0}_{L^2}$ for some $t > 0$.
\end{proposition}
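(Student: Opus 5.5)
The plan is a contradiction argument that passes the discrete energy identity to the limit. Suppose a subsequence $h_k\to 0$ has $\bu^{h_k}=\mathcal{W}_{h_k}\bv^{h_k}\to\bu$ strongly in $L^2$. Take this to mean $\bu^{h_k}(t)\to\bu(t)$ in $L^2(\Omega)$ for the given $t>0$, or for a.e.\ $t$. If only $L^2(0,T;L^2)$ convergence is given, first pass to a further subsequence converging for a.e.\ $t$. In that case I would also use weak continuity in time of $\bu$, which follows from the CMV framework of \Cref{thm:CMV}, to transfer the strict inequality $\nrm{\bu(t)}_{L^2}<\nrm{\bu_0}_{L^2}$ to a set of times of positive measure. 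I will show that $\nrm{\bu(t)}_{L^2}=\nrm{\bu_0}_{L^2}$ for all such $t$, which contradicts the dissipation hypothesis.

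\textbf{Step 1 (discrete conservation).} By \Cref{thm:energy} and \Cref{thm:energy_bound}, $\nrm{\bv^h(t)}_{L_h^2}=\nrm{\bv^h(0)}_{L_h^2}$ for all $t$ and every $h$. The time step plays no role here, because the statement concerns the semi-discrete flow. For a full discretisation I would require an energy-conserving symmetric integrator such as implicit midpoint, as in \Cref{thm:time_reversal}.

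\textbf{Step 2 (transfer to Whitney norms).} Use the $L^2$-equivalence of \ref{H:Whitney_interp}, which holds up to $\OO(h^{r_\star})$ multiplicative error, to get
\[
\nrm{\bu^h(t)}_{L^2}^2=\bigl(1+\OO(h^{r_\star})\bigr)\nrm{\bv^h(t)}_{L_h^2}^2=\bigl(1+\OO(h^{r_\star})\bigr)\nrm{\bv^h(0)}_{L_h^2}^2 .
\]
This is the same fact used in \Cref{thm:CMV}(ii).

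\textbf{Step 3 (initial data).} Show that $\bu^h(0)=\mathcal{W}_h\bP_h\mathcal{R}_h\bu_0^\flat\to\bu_0$ in $L^2$, which is \Cref{thm:CMV}(iii). Then $\nrm{\bu^h(t)}_{L^2}^2\to\nrm{\bu_0}_{L^2}^2$ for every $t$. Strong convergence gives $\nrm{\bu^{h_k}(t)}_{L^2}\to\nrm{\bu(t)}_{L^2}$, so $\nrm{\bu(t)}_{L^2}=\nrm{\bu_0}_{L^2}$, and the contradiction follows.

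The main obstacle is Step 3 together with the uniformity in Step 2 for merely $L^2$ initial data. The de~Rham map is defined only on regular forms, so for $\bu_0\in L^2$ I would mollify: take $\bu_0^\epsilon$ smooth and divergence-free. Then use $\nrm{\bP_h}=1$ from \ref{H:bounded_projection}, the Whitney bound $C_W$, and the approximation estimates \eqref{eq:Whitney_approx} and \Cref{lem:proj_error} applied to $\bu_0^\epsilon$, finishing with an $\epsilon/3$ argument. The delicate point is that $\mathcal{R}_h$ is not uniformly bounded on $L^2$. I would therefore interpret $\mathcal{R}_h\bu_0^\flat$ through cell averages, or assume $\bu_0$ regular enough, which is the same convention already used in \Cref{thm:CMV}(iii).

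Note that strong convergence alone forces conservation at every regularity, independent of H\"older exponents. This is why the exclusion holds ``at any H\"older regularity''. Under weak convergence only, the lost energy would instead appear in the defect $\sigma$ of \Cref{def:CMV}.
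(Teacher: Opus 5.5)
Your argument is correct and is the paper's own proof written out in more detail: discrete energy conservation, the Hodge/Whitney norm equivalence, convergence of the initial energy (via the same density argument the paper uses for \Cref{thm:CMV}(iii)), and norm convergence under strong $L^2$ convergence at the given time $t$. The one misstep is in your side remark on mere $L^2(0,T;L^2)$ convergence. Weak continuity in time only gives $\nrm{\bu(t)}_{L^2}\le\liminf_{s\to t}\nrm{\bu(s)}_{L^2}$, and this cannot spread a strict inequality at one instant to a set of times of positive measure. That variant would therefore need strong continuity of $\bu$ in time or a non-increasing energy (admissibility); otherwise drop it, since the paper simply assumes convergence at the time $t$.
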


\begin{proof}
Discrete energy conservation and Hodge* consistency give
$\nrm{\bu^{h_k}(t)}_{L^2}^2 \to \nrm{\bu_0}_{L^2}^2$,
contradicting
$\nrm{\bu^{h_k}(t)}_{L^2} \to \nrm{\bu(t)}_{L^2} < \nrm{\bu_0}_{L^2}$.
\end{proof}

\begin{remark}[Commutativity of the inviscid and mesh-refinement limits]
\label{rem:commuting_limits}
Three paths connect $\bu^{h,\nu}$ to $\bu^0$:
$h\to 0$ then $\nu\to 0$;
$\nu\to 0$ then $h\to 0$;
and the diagonal $\nu = \nu(h)\to 0$.
For smooth solutions all three converge at $\OO(h^{\min(r_{\rm rec},\,r_\star)})$.
For Lipschitz solutions ($\alpha = 1$), Paths~1 and~2 reach the same limit by uniqueness.
Without regularity, each path produces a CMV solution,
reducing the double limit to the single PDE problem $\nu\to 0$.
\end{remark}

\subsubsection{Defect measure at the Onsager threshold}
\label{subsect:Duchon_Robert}

The unconditional convergence in \Cref{thm:Holder_unconditional}
requires Lipschitz regularity because the
MVWSU proof necessitates
 $\alpha \ge 1$.
We show that $\sigma$ vanishes once the discrete
solutions satisfy a uniform $C^{0,\alpha}$ bound with $\alpha > 1/3$:
the compact embedding $C^{0,\alpha}\hookrightarrow L^2$
upgrades the weak convergence from \Cref{thm:CMV}
to strong $L^2$ convergence, so the $L^2$~norms converge
$\nrm{\bw(t)}_{L^2}^2 = \lim_k\nrm{\bu^{h_k}(t)}_{L^2}^2 = E_0$;
the CMV energy identity then forces $\sigma = 0$.

\begin{theorem}[Vanishing of the defect measure at
the Onsager threshold]
\label{thm:sigma_vanish}%
Let $(\bu^h)_{h>0}$ be the Whitney-reconstructed discrete Euler
solutions from \Cref{thm:CMV}, satisfying the uniform H\"older bound
\begin{equation}\label{eq:uniform_Holder}
  \sup_{h>0}\,[\bu^h]_{L^\infty(0,T;\,C^{0,\alpha}(\Omega))} \le M
\end{equation}
for some $\alpha > 1/3$ and $M > 0$.
Then every subsequential CMV limit $(\bw,\sigma)$ satisfies
$\sigma(t) = 0$ for all $t\in[0,T]$,
and $\bu^{h_k}(t)\to\bw(t)$ strongly in $L^2(\Omega)$.
\end{theorem}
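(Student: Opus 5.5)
The argument follows the route sketched just before the statement: the uniform Hölder bound gives compactness at each fixed time, hence strong $L^2$ convergence. Strong convergence makes the $L^2$ norms of the limit equal $E_0$, and the CMV energy identity then leaves no room for the defect. Fix a subsequence $h_k\to0$ and its CMV limit $(\bw,\sigma)$ from \Cref{thm:CMV}. For each $t\in[0,T]$ the family $\{\bu^{h_k}(t)\}_k$ satisfies two bounds:
\begin{itemize}
\item it is bounded in $L^2$ by the discrete energy conservation of \Cref{thm:energy_bound} together with the Whitney bound $\nrm{\mathcal{W}_h\bv}_{L^2}\le C_W\nrm{\bv}_{L_h^2}$;
\item it has uniformly bounded Hölder seminorm by \eqref{eq:uniform_Holder}.
\end{itemize}
On the compact manifold $\Omega$, an $L^2$ bound combined with a bounded $C^{0,\alpha}$ seminorm controls the sup norm: a function with small $L^2$ norm and bounded oscillation cannot be large at any point. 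So the family is bounded in $C^{0,\alpha}$. By Arzelà--Ascoli it is precompact in $C^0$, hence in $L^2$.

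One technical caveat enters here. Whitney 1-forms are piecewise polynomial and only tangentially continuous, so \eqref{eq:uniform_Holder} should be read as a hypothesis on the reconstructed fields themselves. If one insists on a genuine discontinuous Whitney field, I would instead use the Kolmogorov--Riesz criterion, with translation estimates $\nrm{\bu^h(t,\cdot+y)-\bu^h(t)}_{L^2}\le CM|y|^\alpha$. That criterion delivers the same $L^2$ precompactness.

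Next I identify the strong limit. Every $L^2$-strong cluster point of $\bu^{h_k}(t)$ must coincide with the weak limit $\bw(t)$ given by \Cref{thm:CMV}(i). Since the family is precompact and has only one possible cluster point, the whole subsequence satisfies $\bu^{h_k}(t)\to\bw(t)$ strongly in $L^2$, for every $t$. This gives the second assertion of the theorem.

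For the first assertion, strong convergence gives $\nrm{\bw(t)}_{L^2}^2=\lim_k\nrm{\bu^{h_k}(t)}_{L^2}^2$, and \Cref{thm:CMV}(ii) gives $\lim_k\nrm{\bu^{h_k}(t)}_{L^2}^2=E_0$. The CMV energy identity of \Cref{def:CMV} then yields $\int_\Omega\operatorname{tr}\sigma(t)=E_0-\nrm{\bw(t)}_{L^2}^2=0$. Because $\sigma(t)$ is a non-negative symmetric matrix-valued measure, $|\sigma_{ij}|\le\operatorname{tr}\sigma$, and therefore $\sigma(t)=0$ for every $t$. As a consistency check, this matches the Young-measure construction of $\sigma$ in the proof of \Cref{thm:CMV}: there $\sigma$ is the weak-$*$ limit of $u^{h_k}_iu^{h_k}_j-w_iw_j$, which vanishes under strong convergence. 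The threshold $\alpha>1/3$ is not used beyond $\alpha>0$ in this compactness route. It becomes relevant only through the Constantin--E--Titi interpretation that $\bw$ is energy-conserving.

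The main obstacle is the second ingredient. \Cref{thm:CMV}(ii) asserts the convergence $\nrm{\bu^{h}(t)}_{L^2}^2\to E_0$ with $E_0=\nrm{\bu_0}_{L^2}^2$ defined through the continuous initial data, and the argument relies on it without reproving it. That convergence requires comparing $\nrm{\mathcal{W}_h\bv^h(t)}_{L^2}^2$ with the conserved discrete energy $\ip{\bv^h(t)}{\bv^h(t)}_1$, uniformly in $t$, up to $\OO(h^{r_\star})$ Hodge errors per \ref{H:Whitney_interp}. It also requires $\ip{\bv^h(0)}{\bv^h(0)}_1\to\nrm{\bu_0}_{L^2}^2$, which follows from $\bP_h\mathcal{R}_h\bu_0^\flat$ approximating $\bu_0$. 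For merely $L^2$ data, $\mathcal{R}_h\bu_0^\flat$ is not defined, so I would mollify $\bu_0$ at scale $\epsilon$, use the $L^2$-boundedness of $\bP_h$ and of $\mathcal{W}_h$, and send $h\to0$ and then $\epsilon\to0$.
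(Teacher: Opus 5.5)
Your proposal is correct and follows the paper's own route: Arzel\`a--Ascoli at each fixed $t$, then identification of the strong limit with the weak limit $\bw(t)$ from \Cref{thm:CMV}(i), then $\nrm{\bw(t)}_{L^2}^2=E_0$ via \Cref{thm:CMV}(ii), and finally the CMV energy identity forcing $\int_\Omega\operatorname{tr}\sigma(t)=0$, hence $\sigma(t)=0$. Your extra observations are accurate refinements of steps the paper passes over: you use the $L^2$ bound to turn the seminorm hypothesis into a genuine $C^{0,\alpha}$ bound, you note the continuity caveat for Whitney fields, and you point out that only $\alpha>0$ is actually used.
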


\begin{proof}
Let $\bu^{h_k}\to (\bw,\sigma)$ be a CMV subsequence
from \Cref{thm:CMV}.
The subsequence $h_k$ is fixed by \Cref{thm:CMV}.
For each $t$, the family $\{\bu^{h_k}(t)\}_k$ is uniformly
bounded in $C^{0,\alpha}(\Omega)$ by~\eqref{eq:uniform_Holder}.
Since $\Omega$ is compact, Arzel\`a--Ascoli gives a further
subsequence converging in $C^0(\Omega)$; the limit must
be $\bw(t)$ by uniqueness of the weak $L^2$ limit
(\cref{thm:CMV}(i)), so the full subsequence converges:
$\bu^{h_k}(t)\to\bw(t)$ in $C^0(\Omega)$ and hence in $L^2(\Omega)$.
In particular,
$\nrm{\bw(t)}_{L^2}^2 = \lim_k\nrm{\bu^{h_k}(t)}_{L^2}^2 = E_0$,
using discrete energy conservation
(\cref{thm:CMV}).
The uniform bound
$[\bu^{h_k}]_{C^{0,\alpha}}\le M$ is preserved in the
$C^0$~limit by lower semicontinuity:
$[\bw]_{C^{0,\alpha}}\le\liminf_k[\bu^{h_k}]_{C^{0,\alpha}}\le M$.
Hence $\bw\in L^\infty(0,T;\,C^{0,\alpha})$.
The CMV energy identity (\cref{thm:CMV}) gives
$\int\operatorname{tr}(\sigma)(t) = E_0 - \nrm{\bw(t)}_{L^2}^2 = 0$
by Step~1.
Since $\sigma\ge 0$, this implies $\sigma(t) = 0$ for all~$t$.
The convergence
$\nrm{\bu^{h_k}(t)}_{L^2}^2 \to E_0 = \nrm{\bw(t)}_{L^2}^2$
combined with
$\bu^{h_k}(t)\rightharpoonup\bw(t)$ (weak convergence, \Cref{thm:CMV}(i))
gives $\bu^{h_k}(t)\to\bw(t)$ strongly in $L^2$ by Radon--Riesz.
\end{proof}

\begin{remark}[The Onsager landscape]\label{rem:Onsager_revised}
Two H\"older thresholds organise the passage:
$\alpha \ge 1$ (Lipschitz) gives unconditional full-sequence convergence
with $\sigma = 0$ (\cref{thm:Holder_unconditional});
$1/3 < \alpha < 1$ gives $\sigma = 0$ (\cref{thm:sigma_vanish})
and strong subsequential convergence, with full-sequence convergence
conditional on the open problem of weak--strong uniqueness;
$\alpha \le 1/3$ allows anomalous dissipation, but dissipative
solutions are excluded (\cref{prop:no_dissipative}).
\end{remark}

\section{Extension to Bounded Domains with Dirichlet Boundary Conditions}
\label{sect:bounded_domains}

The preceding theory is developed on closed manifolds without boundary.
This section shows that all results extend to bounded domains
$\Omega \subset \mathbb{R}^3$ with homogeneous Dirichlet (no-slip)
conditions $\bu|_{\partial\Omega} = 0$.
The extension requires only two modifications: the function space $V_h$ is
replaced by $V_h^0$ (the subspace of cochains vanishing on boundary dual
edges), and the Leray projector $\bP_h$ by its boundary-adapted
counterpart $\bP_h^0$.
All conservation, well-posedness, and convergence proofs carry over
with these replacements; the only nontrivial new argument is the
boundary consistency estimate (\cref{app:bdy_consistency_proof}), which
shows that the truncation error at boundary cells is of the same order as
in the interior.

\begin{assumption}[Mesh Assumptions]
\label{ass:bdy_mesh}
In addition to \Cref{ass:mesh_reg}, we require:
\begin{enumerate}[nosep]
  \item \textit{Boundary alignment:}
    $\partial\Omega$ is a union of primal 2-cells (faces).
    Every primal face $f_j\subset\partial\Omega$ is called a {boundary face};
    its dual edge $e_j^*$ has one endpoint at the circumcentre of the
    unique interior prism $T_a$ with $f_j\prec T_a$, and the other endpoint
    at the boundary face $f_j$ itself (a ``half-edge'').
  \item \textit{Truncated dual cells:}
    Each truncated dual 3-cell of a boundary vertex remains star-shaped,
    so that the Hodge* ratios are well-defined and positive.
  \item \textit{Uniform regularity up to the boundary:}
    the mesh regularity constants in \Cref{ass:mesh_reg}
    hold uniformly for all cells, including those adjacent to $\partial\Omega$.
\end{enumerate}
\end{assumption}

We partition dual 1-edges into
$\mathcal{E}^* = \mathcal{E}^*_{\mathrm{int}} \;\dot\cup\; \mathcal{E}^*_{\partial}$,
where $\mathcal{E}^*_\partial = \{e_j^* : f_j\subset\partial\Omega\}$.
The no-slip condition becomes $v_j = 0$ for $e_j^*\in\mathcal{E}^*_\partial$.

\begin{definition}[Boundary-adapted spaces]\label{def:bdy_spaces}
$i)$ $C^1_0(\KKs) := \{\bv\in C^1(\KKs) : v_j = 0 \;\forall\; e_j^*\in\mathcal{E}^*_\partial\}$.
\quad
$ii)$ $V_h^0 := \ker(\bD_2\bM_1) \cap C^1_0(\KKs)$.
\quad
$iii)$ The {Leray projector} $\bP_h^0 : C^1_0(\KKs) \to V_h^0$
is defined by $\bP_h^0\bw = \bw - \tD_0\phi$, where $\phi$ solves
$L_h^0\phi = \bD_2\bM_1\bw$ with
$L_h^0 := \bD_2\bM_1\tD_0$ and Dirichlet conditions on $\phi$.
\end{definition}

The discrete Navier--Stokes system on a bounded domain is
\begin{equation}\label{eq:NS_proj_ODE_bdy}
  \ddt\bv = -\bP_h^0\Iv(\tD_1\bv) - \nu\Deltah\bv
  =: \bff_h^{0,\nu}(\bv),
  \qquad \bv(0) = \bv_0\in V_h^0.
\end{equation}

\begin{theorem}[Bounded-domain results]
\label{thm:global_bdy}
Under \Cref{ass:bdy_mesh}, all results from the closed-manifold
theory extend to bounded domains with $V_h\to V_h^0$,
$\bP_h\to\bP_h^0$:
\begin{enumerate}[nosep]
  \item \textit{Well-posedness.}
    For $\nu\ge 0$ and $\bv_0\in V_h^0$,
    \eqref{eq:NS_proj_ODE_bdy} has a unique global solution
    $\bv\in C^1([0,\infty);\,V_h^0)$, with energy
    $\Ekin(t)\le\Ekin_0$.  
  \item \textit{Consistency.}\label{thm:consistency_bdy}
    Under \Cref{ass:smooth_ref}, the truncation error satisfies
$ \sup_{t\in[0,T]}\nrm{\tau_h^{0,\nu}(t)}_{L_h^2}\le C\,h$;
    for $\nu=0$ the rate improves to $\OO(h^{r_{\rm rec}})$.
  \item \textit{Convergence.}\label{thm:convergence_bdy}
    Under \Cref{ass:smooth_ref}, the error satisfies
$ \sup_{t\in[0,T]}\nrm{\bv^h(t) - \mathcal{R}_h\bu^\flat(t)}_{L_h^2}
      \le C(T)\,h^{\min(r_{\rm rec},\,r_\star)}$,
    uniformly in $\nu\ge 0$.
  \item \textit{Leray--Hopf weak solutions.}\label{thm:weak_bdy}
    For $\bu_0\in L^2(\Omega)$ divergence-free with
    $\bu_0\cdot\bn = 0$ on $\partial\Omega$, the Whitney
    reconstructions $\bu^h = \mathcal{W}_h\bv^h$ converge
    (along a subsequence) strongly in $L^2(0,T;L^2)$ to a
    Leray--Hopf weak solution satisfying the energy inequality.
\end{enumerate}
\end{theorem}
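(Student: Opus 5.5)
The plan is to prove the four items in order. Each one is obtained by re-running the closed-manifold argument with $V_h\to V_h^0$ and $\bP_h\to\bP_h^0$, and by checking that every algebraic identity used there survives the restriction to $C^1_0(\KKs)$. The single place where genuinely new analysis is needed is the boundary truncation estimate.

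\emph{Well-posedness.} First I verify that $\bP_h^0$ is an $\bM_1$-orthogonal projector from $C^1_0(\KKs)$ onto $V_h^0$.
\begin{itemize}
\item With Dirichlet data on $\phi$, the operator $L_h^0=\bD_2\bM_1\tD_0$ is symmetric positive definite on interior dual $0$-cochains, so $\phi$ is unique.
\item Restricting $\phi$ to interior cells forces $\tD_0\phi$ to vanish on $\mathcal{E}^*_\partial$; hence $\bP_h^0$ maps $C^1_0(\KKs)$ into itself.
\item Summation by parts against $\bw\in V_h^0$ shows that the range of $\tD_0$ (with Dirichlet $\phi$) is the $\bM_1$-orthogonal complement of $V_h^0$ inside $C^1_0(\KKs)$.
\end{itemize}
The antisymmetry \eqref{eq:antisym_11} is a matrix identity valid for every cochain, so it persists. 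The same holds for $\ip{\bv}{\Deltah\bv}_1=\nrm{\tD_1\bv}^2_{\bM_2}\ge0$. Together these give $\ddt\Ekin=-\nu\nrm{\bom}^2_{\bM_2}\le0$. Local Lipschitz continuity (\Cref{lem:Lip_extrusion}) is unchanged, so Picard--Lindel\"of plus the energy bound give global existence exactly as in \Cref{thm:global,thm:NS_global}.

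\emph{Consistency.} I split the faces into those whose reconstruction stencil $\bar\u_j$ stays in the interior and those touching $\partial\Omega$.
\begin{itemize}
\item On interior-stencil faces, \Cref{thm:consistency,thm:NS_consistency} apply verbatim.
\item On boundary-layer faces, which number $\OO(h^{-(d-1)})$, the first-order accuracy of the reconstruction (\Cref{prop:recon_accuracy}(i)) still holds, because the truncated Gram matrices stay well conditioned under \Cref{ass:bdy_mesh}.
\item Each boundary-layer face carries an $\bM_1$-weight of order $h^{d-2}$ relative to $\ell^*$. Summed over the layer, the local $\OO(h)$ errors therefore contribute $\OO(h)$ in $L_h^2$, not worse.
\end{itemize}
The viscous part is the delicate piece. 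The half-edges and the truncated dual faces perturb the Hodge stars $\bM_1,\bM_2$ only by $\OO(1)$-relative factors on a layer of measure $\OO(h)$. Since $\bu=0$ on $\partial\Omega$, the interpolant $\bar\bv$ vanishes on the boundary edges, and the curl-curl residual in the layer is $\OO(1)$ per cell. After Cauchy--Schwarz this gives a $\nu\,\OO(h^{1/2})$ contribution, which I would sharpen to $\OO(h)$ by putting the residual in weak form and pairing it with $\tD_1e$, as in \Cref{thm:NS_consistency}. This yields the stated $\OO(h)$ bound for $\nu>0$. For $\nu=0$ the viscous residual is absent, and the interior reconstruction error $\OO(h^{r_{\rm rec}})$ dominates as long as the boundary-layer reconstruction error is shown to be $\OO(h^{r_{\rm rec}})$ in $L_h^2$. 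This is the step I expect to be the main obstacle: I would handle it by exploiting $\bu|_{\partial\Omega}=0$, so that near-boundary velocities are $\OO(h)$ and their reconstruction errors gain an extra factor of $h$.

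\emph{Convergence.} Here I redo the Gr\"onwall stability argument of \Cref{app:stability_proof}. It uses only Lamb antisymmetry, the trilinear estimate of \Cref{lem:trilinear}, which is local and survives with boundary-uniform constants, and the projection bound of \Cref{lem:proj_error}. For the last of these I reprove $\nrm{(I-\bP_h^0)\mathcal{R}_h\bu^\flat}\le Ch^{r_\star}$ via the Dirichlet finite-volume Laplacian, for which the discrete maximum principle and the logarithmic Green's function bounds hold up to the boundary. The viscous dissipation then absorbs the $\tD_1 e$ part of the truncation, giving the rate uniformly in $\nu$.

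\emph{Leray--Hopf solutions.} I use the uniform bounds of \Cref{lem:uniform_apriori}, the Whitney reconstruction on the truncated complex (which lands in $H_0(\curl)$-type spaces with vanishing tangential trace), and Aubin--Lions compactness. The limit satisfies $\bu\cdot\bn=0$ and the no-slip trace through the $L^2(0,T;H^1)$ bound. Passing to the limit in the weak formulation against divergence-free $\boldsymbol\varphi\in C_c^\infty$ works as in \Cref{thm:weak_convergence}, and lower semicontinuity gives the energy inequality.
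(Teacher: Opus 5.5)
Your well-posedness and Leray--Hopf parts follow the paper's line that the closed-manifold arguments carry over. The gap is in consistency and convergence, which is the only place where the paper does new work.

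\textbf{What the paper does.} The paper's argument rests on the observation that $\bv^h\in V_h^0$ and the interpolant $\bar\bv=\mathcal R_h\bu^\flat$ of the no-slip field both vanish on $\mathcal E^*_\partial$, so $e$ vanishes there. The boundary-edge entries of $\tau_h^{0,\nu}$ (one-sided extrusion, degraded Hodge star) then drop out of $\ip{e}{\tau_h^{0,\nu}}_1$. Only this pairing, not $\nrm{\tau_h^{0,\nu}}_{L_h^2}$, enters the Gr\"onwall argument of \Cref{thm:NS_stability}. For the remaining near-boundary edges the paper invokes the interior analysis unchanged.

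\textbf{Where your route stalls.} You record that $\bar\bv$ vanishes on boundary edges but never use the consequence for $e$. Instead you try to bound the layer residual in $L_h^2$, and this stalls exactly at the step you flag. Relative to \Cref{lem:edge_assembly}, a layer of $\OO(h^{-(d-1)})$ faces with weights $\OO(h^{d-2})$ gains only $h^{1/2}$. A first-order local Lamb error ($\OO(h^2)$ per edge) therefore yields $\OO(h^{3/2})$, short of the $h^2$ needed for $\nu=0$ in case (B).

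\textbf{Why your remedy fails.} The idea that $|\bu|=\OO(h)$ near the wall gives an extra factor $h$ does not work, because the reconstruction error is governed by $\nabla\bu$ and the stencil moment, not by $|\bu|$. Its first-order part is $\tfrac12 G_i^{-1}(T:\nabla\bu)$ with $T_{jkl}=\sum_n\ell_n^*(\hat t_n)_j(\hat t_n)_k(\hat t_n)_l$. Take a wall shear $u_\tau=\gamma\,\mathrm{dist}(x,\partial\Omega)$, so $|\bu|\le\gamma h$ on the first layer. The error is then $\tfrac12\gamma\,G_i^{-1}T_{\tau n\,\cdot}=\OO(\gamma h)$ on a generic truncated stencil. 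The vorticity it multiplies in the Lamb term is $\OO(\gamma)$, not small.

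\textbf{What would be needed.} A genuine gain requires a hypothesis on the boundary stencils. For instance, if their only defect relative to a symmetric stencil~\eqref{eq:recon_symmetry} is along the wall normal $\hat t_b$, the first-order error is proportional to $\partial_n u_n$. That vanishes on a no-slip wall by incompressibility. Some such hypothesis is also what the paper's appeal to the interior analysis on near-boundary edges implicitly needs in case (B), since those edges use the boundary-cell reconstructions.

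\textbf{A smaller problem with your projector.} Your first two well-posedness bullets are incompatible.
\begin{itemize}
\item With Dirichlet data on $\phi$ at the wall end of each half-edge, $L_h^0$ is nonsingular. But then $(\tD_0\phi)_j=\pm\phi_a\neq0$ on $\mathcal E^*_\partial$, so $\bP_h^0$ leaves $C^1_0(\KKs)$ and creates flux through the wall.
\item If instead the half-edges are dropped from $\tD_0$, the gradient does vanish there. But $L_h^0$ becomes the Neumann operator, which has the constants in its kernel and is solvable only because $\bw\in C^1_0$ has zero net boundary flux.
\end{itemize}
Whichever closure you adopt, you must also recheck that $e^\perp=(I-\bP_h^0)\bar\bv$ is curl-free on the truncated dual faces of boundary primal edges. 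The identity \eqref{eq:eperp_kills_curl} drives the polarised eliminations in Steps~2b--2c of \Cref{app:stability_proof}, and it is missing from your list of ingredients for the stability argument.
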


\appendix
\section{Appendix: Proofs}\label{sect:appendix}

\subsection{Fundamental Properties }
\label{app:basic}

In this subsection we prove four fundamental properties
of DEC operators (\cref{prop:extrusion}):
cochain complex, energy identity, approximate Leibniz, and Cartan formula.

\begin{proof}[{\bf Proof of \Cref{prop:extrusion}}]
\textit{Property~1}.
The matrix $D_k$ (resp.\ $\tD_k$) is the signed incidence matrix of the
primal (resp.\ dual) cell complex: $(D_k)_{\sigma^{k+1},\sigma^k} =
[\sigma^{k+1}:\sigma^k]\in\{\pm 1,0\}$ records whether the $k$-cell $\sigma^k$ lies in
the boundary of the $(k+1)$-cell $\sigma^{k+1}$ with the induced
orientation.
Since the boundary of a boundary is empty, $\partial^2 = 0$, the product
$D_{k+1}D_k$ counts each interior $k$-cell twice with opposite signs and
therefore vanishes: $D_{k+1}D_k = 0$.
The dual identity $\tD_{k+1}\tD_k = 0$ follows by an analogous 
argument on the dual cell complex.

\medskip
\textit{Property~2}. (Energy identity, $\langle\bv, \Iv(\tD_1\bv)\rangle_1 = 0$).
We use the matrix form of \Cref{def:contraction}. Setting
$\bom = \tD_1\bv$ and pairing with $\bv$ on both sides yields
\begin{equation}\label{eq:energy_cancel}
  2\,\bv^T\bM_1\,\Iv(\tD_1\bv)
    = \langle \tU(\bv)\,\tD_1\bv, \bv   \rangle_1 - \langle \tD_1^T\,\tU(\bv)^T\bv, \bv \rangle_1 = 0,
\end{equation}
where $\tU(\bv)\in\mathbb{R}^{N_E\times N_F}$ is the velocity-weighted
matrix~\eqref{eq:Utilde_def} (linear in $\bv$),
$\tD_1\in\mathbb{R}^{N_F\times N_E}$ is the discrete curl, and
$\tD_1^T\in\mathbb{R}^{N_E\times N_F}$ its transpose.
The argument uses only the bilinearity of $\tU$ in $\bv$; no
specific reconstruction is required.

\medskip
\textit{Property~3}.
We define the de~Rham interpolants
$\tilde\alpha := \mathcal{R}_h\alpha$, $\tilde\beta := \mathcal{R}_h\beta$
of smooth 1-forms $\alpha,\beta$, and the Leibniz defect on a dual 3-cell $\sigma^*$  
\begin{equation}\label{eq:defect}
\mathcal{D}(\sigma^*):=
\Bigl(\tdd(\tilde\alpha\twdg\tilde\beta)
 - \tdd\tilde\alpha\twdg\tilde\beta
 - (-1)^p\tilde\alpha\twdg\tdd\tilde\beta\Bigr)(\sigma^*),
 \qquad p=q=1.
\end{equation}
We prove that
\begin{equation}\label{eq:defect_bound}
|\mathcal{D}(\sigma^*)|
 \le C\,h^2\,\nrm{\alpha}_{W^{1,\infty}}\nrm{\beta}_{W^{1,\infty}},
\end{equation}
with $C$ depending on the mesh-regularity.

The defect has two sources.
First, the discrete wedge product approximates
$\int_{\sigma^*}\alpha\wedge\beta$ using products of
face-centred values rather than integrals of pointwise products;
this quadrature-product error is present even if
all continuous values were known.
Second, the continuous vector proxies $\mathbf{a}(\mathbf{x}_j)$
are approximated by reconstructed values $\bar\bu_j(\tilde\alpha)$,
thus producing the reconstruction error.

\medskip
\noindent\textit{Step~1: Defect expansion.}
The product $\tilde\alpha\twdg\tilde\beta$ is a dual
2-cochain, so $\tdd(\tilde\alpha\twdg\tilde\beta)$
and the defect are dual 3-cochains.
We evaluate on a dual 3-cell $K_i^*$ (a primal prism in 3D).
The first term of the Leibniz defect \eqref{eq:defect} on $K_i^*$ expanded by discrete Stokes applied to the
2-cochain $\tilde\alpha\twdg\tilde\beta$
\begin{equation}\label{eq:term1}
\tdd(\tilde\alpha\twdg\tilde\beta)(K_i^*)
 = (\tilde\alpha\twdg\tilde\beta)(\partial K_i^*)
 = \sum_{f_k^*\prec K_i^*}
   [K_i^*:f_k^*]
   (\tilde\alpha\twdg\tilde\beta)(f_k^*),
\end{equation}
where the sum runs over dual 2-faces $f_k^*$ of $\partial K_i^*$
and $[K_i^*:f_k^*]=\pm 1$ is the incidence sign.
Applying the wedge definition~\eqref{eq:wedge_11} to each face gives
\begin{equation}\label{eq:term1b}
(\tilde\alpha\twdg\tilde\beta)(f_k^*)
= \sum_{e_j^*\prec f_k^*} w_{jk}(\tilde\alpha)\,\tilde\beta(e_j^*),
\end{equation}
where the extrusion-based wedge weight is
$
w_{jk}(\tilde\alpha)
 = \tfrac{1}{2}D_{1,jk}\,\bar\bu_j(\tilde\alpha)\cdot\hat{e}_k
$.
with $\bar\bu_j(\tilde\alpha)$ the velocity reconstructed from the
1-cochain $\tilde\alpha$ (\cref{def:averaging_recon}) and $D_{1,jk}$
the incidence coefficient.

We expand the second and third terms in~\eqref{eq:defect} analogously:
$\tdd\tilde\alpha$ and $\tdd\tilde\beta$ are 2-cochains, so
$\tdd\tilde\alpha\twdg\tilde\beta$ and $\tilde\alpha\twdg\tdd\tilde\beta$
are 3-cochains, and their evaluation on $K_i^*$ sums over
dual 2-faces of $K_i^*$, followed by the wedge formula on each face.
Collecting signs, the defect on $K_i^*$ is a double sum
over dual 2-faces $f_k^*\prec K_i^*$ and dual edges $e_j^*\prec f_k^*$:
\begin{equation}\label{eq:defect_expanded}
\mathcal{D}(K_i^*)
 = \sum_{f_k^*\prec K_i^*}[K_i^*\!:\!f_k^*]
   \sum_{e_j^*\prec f_k^*}
   \Bigl[
   w_{jk}(\tilde\alpha)\,\tilde\beta(e_j^*)
 - w_{jk}(\tdd\tilde\alpha)\,\tilde\beta(e_j^*)
 - w_{jk}(\tilde\alpha)\,\tdd\tilde\beta(e_j^*)
   \Bigr].
\end{equation}
Here the weight $w_{jk}(\tdd\tilde\alpha)$ is formed from the
2-cochain $\tdd\tilde\alpha$ via its vector proxy $(d\alpha)^\sharp$
(the vorticity pseudovector), which is the Hodge dual
of the 2-form $d\alpha$.

\medskip
\noindent\textit{Step~2: Decomposition.}
We define the true weight at dual edge $e_j^*$ by evaluating the
continuous vector proxies at the face centre $\mathbf{x}_j$:
\[
w_{jk}^{\rm ex}(\alpha) = \tfrac{1}{2}D_{1,jk}\,
 \mathbf{a}(\mathbf{x}_j)\cdot\hat{e}_k,
 \qquad
 w_{jk}^{\rm ex}(d\alpha) = \tfrac{1}{2}D_{1,jk}\,
 (d\alpha)^\sharp(\mathbf{x}_j)\cdot\hat{e}_k.
\]
The error of each weight is defined by
\[
\delta w_{jk} := w_{jk}(\tilde\alpha) - w_{jk}^{\rm ex}(\alpha)
\text{ and }  \delta w_{jk}^{d} := w_{jk}(\tdd\tilde\alpha) - w_{jk}^{\rm ex}(d\alpha).
\]
Substituting $w_{jk} = w_{jk}^{\rm ex} + \delta w_{jk}$ into the
inner sum of~\eqref{eq:defect_expanded} and expanding in
$\delta w_{jk}$, $\delta w_{jk}^d$, we obtain
\begin{equation}\label{eq:L_split}
\mathcal{D}(K_i^*)
 = \mathcal{D}^{\rm prod}(K_i^*)
 + \mathcal{D}^{\rm recon}(K_i^*)
 + \OO(h^3),
\end{equation}
where
\begin{align*}
\mathcal{D}^{\rm prod}(K_i^*)
 &:= \sum_{f_k^*\prec K_i^*}[K_i^*\!:\!f_k^*]
   \sum_{e_j^*\prec f_k^*}
   \Bigl(w_{jk}^{\rm ex}(\alpha)\,\tilde\beta(e_j^*)
 - w_{jk}^{\rm ex}(d\alpha)\,\tilde\beta(e_j^*)
 - w_{jk}^{\rm ex}(\alpha)\,\tdd\tilde\beta(e_j^*)
   \Bigr),\\
\mathcal{D}^{\rm recon}(K_i^*)
 &:= \sum_{f_k^*\prec K_i^*}[K_i^*\!:\!f_k^*]
   \sum_{e_j^*\prec f_k^*}
   \Bigl(\delta w_{jk}\,\tilde\beta(e_j^*)
 - \delta w_{jk}^d\,\tilde\beta(e_j^*)
 - \delta w_{jk}\,\tdd\tilde\beta(e_j^*)
   \Bigr).
\end{align*}
The product-rule error $\mathcal{D}^{\rm prod}$ uses face-centred
values and is independent of the reconstruction.
The reconstruction error $\mathcal{D}^{\rm recon}$ is proportional to
$\delta w_{jk}$ and vanishes for exact reconstructions.

\medskip
\noindent\textit{Step~3: Bounding the quadrature-product error.}
This error arises because
the product $w_{jk}^{\rm ex}(\alpha)\,\tilde\beta(e_j^*)$
approximates $\int_{e_j^*} \mathbf{a}\cdot\mathbf{b}$
at the midpoint $\mathbf{x}_j$ of $e_j^*$.
By a Taylor expansion on the edge $e_j^*$ of length $\OO(h)$
\[
\Bigl|\mathbf{a}(\mathbf{x}_j)\cdot\mathbf{b}(\mathbf{x}_j)\,\ell_j^*
  - \int_{e_j^*}\mathbf{a}\cdot\mathbf{b}\,\dd s\Bigr|
\le C\,\nrm{\nabla(\mathbf{a}\cdot\mathbf{b})}_{L^\infty}\,(\ell_j^*)^2
\le C\,\nrm{\nabla\alpha}_{W^{0,\infty}}\nrm{\nabla\beta}_{W^{0,\infty}}\,h^2.
\]
The inner sum in $\mathcal{D}^{\rm prod}(K_i^*)$ runs over at most
$C_\sigma$ dual edges $e_j^*$ for each $f_k^*$,
and the outer sum over at most $C_\sigma$ faces $f_k^*$.
Consequently
\[
|\mathcal{D}^{\rm prod}(K_i^*)|
 \le C_\sigma\,h^2\,\nrm{\alpha}_{W^{1,\infty}}\nrm{\beta}_{W^{1,\infty}}.
\]

\medskip
\noindent\textit{Step~4: Bounding the reconstruction error.}
The weight error $\delta w_{jk}$ is bounded by the reconstruction
accuracy.
The face velocity is $\bar\bu_j = \tfrac{1}{2}(\bu(v_a^*)+\bu(v_b^*))$
with $v_a^*, v_b^*$ the two dual vertices adjacent to face $f_j$.
By \Cref{prop:recon_accuracy}\,(i), each vertex reconstruction satisfies
$|\bu(v_i^*) - \mathbf{a}(v_i^*)| \le C_R h\,\nrm{\alpha}_{W^{1,\infty}}$,
so the face velocity shares the same bound.
Combined with $|D_{1,jk}|\le 1$, this gives
\begin{equation}\label{eq:w_error}
|\delta w_{jk}| = |w_{jk}(\tilde\alpha) - w_{jk}^{\rm ex}(\alpha)|
 \le C_1\,h\,\nrm{\alpha}_{W^{1,\infty}}.
\end{equation}
For the de~Rham interpolant we find
$|\tilde\beta(e_j^*)| \le \nrm{\beta}_{L^\infty}\,\ell_j^* \le C_2\,h\,\nrm{\beta}_{L^\infty}$,
since it is the integral of a bounded 1-form over an edge of length $\OO(h)$.
Each product satisfies $|\delta w_{jk}||\tilde\beta(e_j^*)| \le C_1 C_2 h^2$.
Summing over the boundary edges gives
\[
|\mathcal{D}^{\rm recon}(K_i^*)|
 \le C_r\,h^2\,\nrm{\alpha}_{W^{1,\infty}}\nrm{\beta}_{W^{1,\infty}}.
\]
Under reconstruction symmetry
\Cref{prop:recon_accuracy}\,(ii), we get
 $|\delta w_{jk}| \le C_1'h^2$,
so $\mathcal{D}^{\rm recon}(K_i^*) = \OO(h^3)$.

\medskip
Combining Step~3 and Step~4 implies
\[
  |\mathcal{D}(K_i^*)|
  \le |\mathcal{D}^{\rm prod}(K_i^*)| + |\mathcal{D}^{\rm recon}(K_i^*)|
  \le C\,h^2\,\nrm{\alpha}_{W^{1,\infty}}\nrm{\beta}_{W^{1,\infty}},
\]
which establishes~\eqref{eq:defect_bound}; the reconstruction-limited
contribution improves to $\OO(h^3)$ under reconstruction
symmetry~\eqref{eq:recon_symmetry}.


\textit{Property~4}.
The discrete Lie derivative is defined by Cartan's formula
$\tLie_{\bv} := \tdd\circ\Iv + \Iv\circ\tdd$ (\cref{def:Lie}),
and Property~4 holds by definition.
We verify that this definition is consistent with the geometric Lie
derivative at leading order.
For a smooth 1-form $\alpha$ and velocity field $\mathbf{u}$, the
continuous Cartan formula gives
$\mathcal{L}_{\mathbf{u}}\alpha = d(\iota_{\mathbf{u}}\alpha) +
\iota_{\mathbf{u}}(d\alpha)$.
For de~Rham interpolants $\tilde\alpha = \mathcal{R}_h\alpha$ and
$\bv = \mathcal{R}_h\bu^\flat$, the discrete Lie derivative satisfies
\[
(\tLie_{\bv}\tilde\alpha)(e_j^*)
 = \bigl(\tdd\Iv(\tilde\alpha) + \Iv(\tdd\tilde\alpha)\bigr)(e_j^*).
\]
We bound the two contributions separately:
(a)~$\Iv(\tilde\alpha) - \mathcal{R}_h(\iota_{\mathbf{u}}\alpha)$
is an $\OO(h^{r_{\rm rec}})$ edge error in the reconstruction of the
contraction, via \Cref{prop:recon_accuracy} combined with the bilinear quadrature in the wedge underlying
$\Iv$, \eqref{eq:wedge_11}. Applying $\tdd$ preserves the rate on dual 2-faces, 
$\OO(h^{r_{\rm rec}})$. (b)~$\tdd\tilde\alpha = \mathcal{R}_h(d\alpha)$
(\Cref{def:ext-deriv}). $\Iv(\tdd\tilde\alpha) - \mathcal{R}_h(\iota_{\mathbf{u}}d\alpha)$
is again $\OO(h^{r_{\rm rec}})$ by reconstruction analysis, now applied at $k=2$.
Combining and using the de~Rham property
$\mathcal{R}_h(d(\iota_{\mathbf{u}}\alpha) + \iota_{\mathbf{u}}d\alpha)
= \mathcal{R}_h(\mathcal{L}_{\mathbf{u}}\alpha)$:
\[
(\tLie_{\bv}\tilde\alpha)(e_j^*)
 = \mathcal{R}_h(\mathcal{L}_{\mathbf{u}}\alpha)(e_j^*)
   + \OO(h^{r_{\rm rec}+1}),
\]
where $h$ in the edge estimate is due to the
edge length $|e_j^*| = O(h)$, the de~Rham interpolant integrates a
1-form along the edge, and an $O(h^{r_{\rm rec}})$ pointwise consistency
error integrated over a length-$O(h)$ edge gives an
$O(h^{r_{\rm rec}+1})$ edge bound. This proves that the discrete
Lie derivative approximates the continuous Lie derivative to first order in $r_{\rm rec}$,
and second order under reconstruction symmetry.
\end{proof}

\subsection{Approximation Properties}
\label{app:approximation}

We prove the interpolation and Hodge-star consistency
of \Cref{lem:interp_error}. The identities (gradient
and curl consistency) follow from the de~Rham
commutativity of the discrete differential with the cochain
reconstruction; the divergence and Laplace consistency arises from the diagonal Hodge-star
approximation and inherit its rate.

\begin{proof}[{\bf Proof of \Cref{lem:interp_error}}]
\textit{Items~(i) and (ii): Gradient and curl consistency.}
Both identities follow from the de~Rham commutativity property
(\cref{def:ext-deriv}).

For (i): for any primal 0-cochain (dual edge) $e_j^*$ we have,
\[
  (\tD_0\mathcal{R}_h p)_j
  = (\mathcal{R}_h p)(\partial e_j^*)
  = \int_{\partial e_j^*} p
  = \int_{e_j^*} dp
  = (\mathcal{R}_h(dp))_j,
\]
so $\tD_0\mathcal{R}_h p = \mathcal{R}_h(dp)$ and the norm vanishes.

For (ii): for any dual 2-face $f_k^*$,
\[
  (\tD_1\mathcal{R}_h\bu^\flat)_k
  = (\mathcal{R}_h\bu^\flat)(\partial f_k^*)
  = \int_{\partial f_k^*}\bu^\flat
  = \int_{f_k^*}d(\bu^\flat)
  = (\mathcal{R}_h(d\bu^\flat))_k,
\]
so $\tD_1\mathcal{R}_h\bu^\flat = \mathcal{R}_h(d\bu^\flat)$.

\medskip
\textit{Item~(iii): Divergence consistency.}
For a smooth divergence-free field $\bu\in W^{r_\star,\infty}(\Omega)$,
the Hodge* converts the circulation cochain to a flux cochain:
$(\bM_1\mathcal{R}_h\bu^\flat)_j = (|f_j|/|e_j^*|)\int_{e_j^*}\bu\cdot d\ell$.
By \Cref{lem:hodge_error} ($k=1$), the face error is
\[
  \bigl|(\bM_1\mathcal{R}_h\bu^\flat)_j - \Phi_j\bigr|
  \le C_\star\,h^{r_\star}\nrm{\bu}_{W^{r_\star,\infty}}\,|f_j|.
\]
Applying $\bD_2$ to the correct fluxes gives us
$(\bD_2\boldsymbol\Phi)_i = \int_{\partial K_i}\bu\cdot d\mathbf{A}
= \int_{K_i}\nabla\cdot\bu\,dV = 0$ by the divergence theorem.
Therefore
\[
  |(\bD_2\bM_1\mathcal{R}_h\bu^\flat)_i|
  \le \sum_{f_j\prec K_i}|(\bM_1\mathcal{R}_h\bu^\flat)_j - \Phi_j|
  \le C_\star h^{r_\star}\nrm{\bu}_{W^{r_\star,\infty}}\sum_{f_j\prec K_i}|f_j|.
\]
Each prism $K_i$ has a uniformly bounded number of faces
(\cref{ass:mesh_reg}) and each face has area $|f_j| = \OO(h^{d-1})$,
so $\sum_{f_j\prec K_i}|f_j| = \OO(h^{d-1})$, giving the bound
\begin{equation}\label{eq:div_pointwise}
  |(\bD_2\bM_1\mathcal{R}_h\bu^\flat)_i|
  \le C\,h^{r_\star + d - 1}\nrm{\bu}_{W^{r_\star,\infty}}.
\end{equation}
In the $\ell^2$-norm, summing over $N_{\rm cells} = \OO(h^{-d})$ cells results in
\[
  \nrm{\bD_2\bM_1\mathcal{R}_h\bu^\flat}_{\ell^2}^2
  \le C^2 h^{2(r_\star+d-1)} N_{\rm cells}\nrm{\bu}_{W^{r_\star,\infty}}^2
  = C^2 h^{2r_\star + d - 2}\nrm{\bu}_{W^{r_\star,\infty}}^2.
\]
Hence $\nrm{\bD_2\bM_1\mathcal{R}_h\bu^\flat}_{\ell^2}
= \OO(h^{r_\star + (d-2)/2})$.
Since $(d-2)/2 \ge 0$ for $d\ge 2$, this is $\OO(h^{r_\star})$ as
claimed in~\eqref{eq:div_consistency}, where the actual bound is 
stronger for $d \ge 3$.

\medskip
\textit{Interpolation error~\eqref{eq:interp_error_v}.}
Define $e^\perp := (I-\bP_h)\mathcal{R}_h\bu^\flat$. Since $\bP_h$ is the
$\bM_1$-orthogonal projection onto $V_h = \ker(\bD_2\bM_1)$, we have
$e^\perp = \tD_0\phi\in V_h^\perp = \operatorname{range}(\tD_0)$ for the
potential $\phi$ of \cref{eq:Leray}, and $\bM_1$-orthogonality gives
\begin{equation}\label{eq:eperp_self}
  \nrm{e^\perp}_{L_h^2}^2
  = \ip{e^\perp}{(I-\bP_h)\mathcal{R}_h\bu^\flat}_1
  = \ip{e^\perp}{\mathcal{R}_h\bu^\flat}_1,
\end{equation}
the middle step uses $\ip{e^\perp}{\bP_h\mathcal{R}_h\bu^\flat}_1 = 0$.
Let $\boldsymbol\Phi$ be the  dual-flux $2$-cochain,
$\Phi_j := \int_{(\sigma_j^1)^*}\!\star\bu^\flat$, and define the
Hodge residual at a face by
\begin{equation}\label{eq:flux_residual}
  \br := \bM_1\mathcal{R}_h\bu^\flat - \boldsymbol\Phi.
\end{equation}
The fluxes are
$(\bD_2\boldsymbol\Phi)_i = \int_{\partial K_i}\bu\cdot\dd\mathbf A
= \int_{K_i}\nabla\!\cdot\!\bu\,\dd V = 0$, because
$e^\perp = \tD_0\phi$ and $\bD_2 = \pm\tD_0^T$, 
\begin{equation}\label{eq:Phi_orthogonal}
  (e^\perp)^T\boldsymbol\Phi
  = (\tD_0\phi)^T\boldsymbol\Phi
  = \pm\,\phi^T\bD_2\boldsymbol\Phi
  = 0.
\end{equation}
Substituting $\bM_1\mathcal{R}_h\bu^\flat = \boldsymbol\Phi + \br$ into
\eqref{eq:eperp_self} and using \eqref{eq:Phi_orthogonal},
\begin{equation}\label{eq:eperp_to_residual}
  \nrm{e^\perp}_{L_h^2}^2
  = \ip{e^\perp}{\bM_1^{-1}\br}_1
  \le \nrm{e^\perp}_{L_h^2}\,\nrm{\bM_1^{-1}\br}_{L_h^2},
\end{equation}
by Cauchy--Schwarz, where
$\nrm{\bM_1^{-1}\br}_{L_h^2}^2 = \nrm{\br}_{L_{h,1}^2}^2$.
The factor $\nrm{\br}_{L_{h,1}^2}$ is the global Hodge-error
estimate of \Cref{lem:hodge_error} (second bound, $k=1$):
\begin{equation}\label{eq:r_hodge}
  \nrm{\br}_{L_{h,1}^2}
  = \nrm{\bM_1\mathcal{R}_h\bu^\flat - \boldsymbol\Phi}_{L_{h,1}^2}
  \le C_\star\,h^{r_\star}\,\nrm{\bu}_{W^{r_\star,\infty}}.
\end{equation}
By dividing \eqref{eq:eperp_to_residual} by $\nrm{e^\perp}_{L_h^2}$
and inserting \eqref{eq:r_hodge} we obtain
\[
  \nrm{(I-\bP_h)\mathcal{R}_h\bu^\flat}_{L_h^2}
  \le C_\star\,h^{r_\star}\,\nrm{\bu}_{W^{r_\star,\infty}},
\]
i.e.\ \eqref{eq:interp_error_v} with $C_{\rm int} = C_\star$. 
The bound for \Cref{lem:proj_error}\,(i) is proven analogously.
\end{proof}
\begin{proof}[{\bf Proof of \Cref{lem:hodge_error}}]
Let $\alpha$ be a smooth 1-form and $f_j$ a primal face
with unit normal $\hat{n}_j$ and dual edge $e_j^*$.
We treat $k=1$ in detail (Steps~1--4, below) and handle $k=0,2$ in Step~5.
We compare the Hodge* action
$(\bM_1)_{jj}\,(\mathcal{R}_h\alpha)_j$ with the primal flux
$\Phi_j = \int_{f_j}\alpha(\hat{n}_j)\,dA$.

\medskip
\textit{Step~1: Error functional and cancellation on constants.}
The de~Rham map gives
$(\mathcal{R}_h\alpha)_j = \int_{e_j^*}\alpha(\hat{t}_j)\,ds$,
where $\hat{t}_j$ is the unit tangent to~$e_j^*$.
By local orthogonality \Cref{ass:mesh_reg} item~(3)
both integrands involve the same normal component $\alpha_n := \alpha(\hat{n}_j)$.
We define the error functional
\[
  E_j(\alpha) := \frac{|f_j|}{|e_j^*|}\int_{e_j^*}\alpha_n\,ds
  - \int_{f_j}\alpha_n\,dA.
\]
For a constant 1-form, $\alpha_n = c$,
$E_j = c(|f_j|/|e_j^*|)\cdot|e_j^*| - c|f_j|  = 0$.
Hence $E_j$ vanishes on constants, i.e.\ on $P_0(\Omega_j)$.

\medskip
\textit{Step~2: Bramble--Hilbert bound (general meshes).}
Let $\Omega_j := K_a\cup K_b$ be the union of the two primal
cells sharing face~$f_j$.
We verify the three hypotheses of the Bramble--Hilbert lemma:
(a)~ by the triangle inequality, $|E_j(\alpha)| \le 2|f_j|\|\alpha_n\|_{L^\infty(\Omega_j)}$,
so $E_j$ is bounded with constant $2|f_j|$;
(b)~$E_j$ vanishes on $P_0(\Omega_j)$ (Step~1);
(c)~$\Omega_j$ has diameter $\le C_{\rm qu}h$ and satisfies the
cone condition with radius $\ge\sigma_0 h$ (\cref{ass:mesh_reg}).
Bramble--Hilbert then gives
\begin{equation}\label{Bramble}
|E_j(\alpha)| \le C\,h\,|f_j|\,|\alpha_n|_{W^{1,\infty}(\Omega_j)}
  \le C_\star\,h\,|f_j|\,\nrm{\alpha}_{W^{1,\infty}},
\end{equation}
where the factor $|f_j|$ follows from the $L^\infty$ boundedness constant in (a).

\medskip
\textit{Step~3: Bramble--Hilbert bound (centroid proximity).}
Let $\alpha_n(\mathbf{x}) = \mathbf{c}\cdot\mathbf{x}+d$ be affine,
$\mathbf{c}: = \nabla\alpha_n$.
The midpoint rule is exact for affine integrands on line segments, so
$\frac{|f_j|}{|e_j^*|}\int_{e_j^*}\alpha_n\,ds = |f_j|\,\alpha_n(\bar{x}_{e_j^*})$,
where $\bar{x}_{e_j^*}$ is the midpoint of~$e_j^*$.
This implies
$\int_{f_j}\alpha_n\,dA = \alpha_n(\bar{x}_{f_j})\,|f_j|$,
and the error functional equates to,
\begin{equation}\label{eq:Ej_linear}
  E_j(\alpha)
  = |f_j|\,\bigl[\alpha_n(\bar{x}_{e_j^*}) - \alpha_n(\bar{x}_{f_j})\bigr]
  = |f_j|\,\mathbf{c}\cdot\bigl(\bar{x}_{e_j^*} - \bar{x}_{f_j}\bigr).
\end{equation}
Under \Cref{prop:centroid_proximity},
$|\bar{x}_{e_j^*} - \bar{x}_{f_j}| \le C_{\rm cg}\,h^2$
so
$|E_j(\alpha)| \le C_{\rm cg}h^2|\mathbf{c}||f_j|$.
For smooth $\alpha$, decompose into
$\alpha_n = c + \ell + \rho$, where $c = \alpha_n(\bar{x}_{f_j})$
is the centroid value (constant),
$\ell(\mathbf{x}) = \nabla\alpha_n(\bar{x}_{f_j})\cdot(\mathbf{x}-\bar{x}_{f_j})$
is the linear part, and
$\|\rho\|_{L^\infty(\Omega_j)} \le Ch^2|\alpha_n|_{W^{2,\infty}(\Omega_j)}$
is the quadratic Taylor remainder.
Then $E_j(c) = 0$ (Step~1),
$|E_j(\ell)| \le C_{\rm cg}h^2\|\nabla\alpha_n\|_{L^\infty}|f_j|$
 \eqref{eq:Ej_linear}, and
$|E_j(\rho)| \le 2|f_j|\|\rho\|_{L^\infty(\Omega_j)}
\le Ch^2|f_j||\alpha_n|_{W^{2,\infty}(\Omega_j)}$
 \eqref{Bramble}.
Combining gives
\[
|E_j(\alpha)| \le C_\star\,h^2\,\nrm{\alpha}_{W^{2,\infty}}\,|f_j|.
\]

\textit{Step~4: Global estimate.}
The error $\bM_1\mathcal{R}_h\alpha - \boldsymbol\Phi$ is a primal
2-cochain; we measure it in the $(\bM_1^{-1})$-norm
$\nrm{\mathbf{w}}_{L_{h,1}^2}^2 := \sum_j(\bM_1^{-1})_{jj}w_j^2
= \sum_j(|e_j^*|/|f_j|)w_j^2$ (denoted $L_{h,1}^2$ to distinguish it from the dual 1-cochain norm $L_h^2$).
With $r_\star = 1$ (Step~2) or $r_\star = 2$ (Step~3):
$|E_j| \le C_\star h^{r_\star}|f_j|\|\alpha\|_{W^{r_\star,\infty}}$, so
\begin{align*}
  \nrm{\bM_1\mathcal{R}_h\alpha - \boldsymbol\Phi}_{L_{h,1}^2}^2
  = \sum_j \frac{|e_j^*|}{|f_j|}|E_j|^2
  \le C_\star^2\,h^{2r_\star}\,\nrm{\alpha}_{W^{r_\star,\infty}}^2
      \sum_j |e_j^*|\,|f_j|.
\end{align*}
On a quasi-uniform mesh, $|e_j^*| = \OO(h)$, $|f_j| = \OO(h^{d-1})$,
and $N_{\rm edges} = \OO(h^{-d})$, so
$\sum_j|e_j^*||f_j| = \OO(h^d)\cdot\OO(h^{-d}) = \OO(1)$.
Hence $\nrm{\bM_1\mathcal{R}_h\alpha - \boldsymbol\Phi}_{L_{h,1}^2}
\le C_\star h^{r_\star}\nrm{\alpha}_{W^{r_\star,\infty}}$.

\medskip
\textit{Step~5: Cases $k = 0$ and $k = 2$.}
The proofs are analogous. For $k=0$, the scalar Hodge*, the error
functional $E_i(f) = |K_i^*|f(x_i^*) - \int_{K_i}f\,dV$ on a convex,
shape-regular primal cell~$K_i$ satisfies $E_i(1)=0$ and
$|E_i(f)| \le C|K_i|\|f\|_{L^\infty}$; Bramble--Hilbert implies
$|E_i(f)| \le Ch\,|K_i|\,|f|_{W^{1,\infty}}$,under centroid proximity this improves to
$Ch^2|K_i|\|f\|_{W^{2,\infty}}$.
For $k=2$, the vorticity Hodge*, the Delaunay--Voronoi orthogonality
aligns the integrands of $E_k(\omega)=(|e_k|/|f_k^*|)\int_{f_k^*}\omega
- \int_{e_k}\star\omega$ along the same normal component, so
Step~1--3 apply: $|E_k(\omega)|\le C_\star h|e_k|\|\omega\|_{W^{1,\infty}}$,
improving to $C_\star h^2|e_k|\|\omega\|_{W^{2,\infty}}$ under centroid
proximity. The global $L_{h,k}^2$ estimate follows by
summing as in Step~4.
\end{proof}

Because the mass-weighting estimate of Step~4 of the preceding proof
recurs in the convergence proofs, we state it as an independent lemma.

\begin{lemma}[Mass-weighted edge assembly]\label{lem:edge_assembly}
Let $\{a_j\}$ be a dual $1$-cochain on a 
Delaunay--Voronoi mesh satisfying \Cref{ass:mesh_reg}, with uniform
edge bound $|a_j|\le B\,h^{p}$ for some $p\in\mathbb{R}$ and all
edges $j$. Then, with the dual $1$-cochain norm
$\nrm{a}_{L_h^2}^2=\sum_j(\bM_1)_{jj}a_j^2$ and
$(\bM_1)_{jj}=|f_j|/|e_j^*|=\OO(h^{d-2})$ over $N_{\rm edges}=\OO(h^{-d})$
edges,
\[
  \nrm{a}_{L_h^2}\;\le\;C\,B\,h^{\,p-1},
\]
the  factor $h^{-1}$ arising as the square root of
$(\bM_1)_{jj}\cdot N_{\rm edges}=\OO(h^{-2})$. 
\end{lemma}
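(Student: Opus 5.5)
The bound follows by direct summation, in the same way as Step~4 of the proof of \Cref{lem:hodge_error}; no structure of the cochain beyond the uniform edge bound is used. I would write
\[
  \nrm{a}_{L_h^2}^2=\sum_j(\bM_1)_{jj}\,a_j^2\le B^2h^{2p}\sum_j(\bM_1)_{jj},
\]
and then bound the remaining sum using only the mesh geometry.

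For the remaining sum, \Cref{ass:mesh_reg} does the work. Quasi-uniformity and shape-regularity give $|f_j|\le C h^{d-1}$ and $|e_j^*|\ge h_{\min}\ge h/C_{\rm qu}$. Hence $(\bM_1)_{jj}=|f_j|/|e_j^*|\le C h^{d-2}$ uniformly in $j$.

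The edge count I would bound by a volume argument. Each dual edge $e_j^*$ joins the circumcentres of the two prisms sharing $f_j$, so edges correspond to faces. By bounded valence each prism has a uniformly bounded number of faces, and each prism has volume $|K_i|\ge c h^d$. Since $\sum_i|K_i|=|\Omega|$, the number of prisms is at most $C h^{-d}$, and therefore $N_{\rm edges}\le C h^{-d}$.

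Combining these gives $\sum_j(\bM_1)_{jj}\le C h^{d-2}\,h^{-d}=C h^{-2}$, so $\nrm{a}_{L_h^2}^2\le C B^2h^{2p-2}$. Taking square roots yields $\nrm{a}_{L_h^2}\le C B h^{p-1}$, with $C$ depending only on $C_{\rm qu}$, $\sigma_0$, the valence bound and $|\Omega|$.

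The only point needing care is the lower bound $|K_i|\ge c h^d$ used in the edge count. I would obtain it from the inscribed-ball condition in shape-regularity together with quasi-uniformity, which makes the vertical extent of each prism comparable to $h$. The rest of the argument is bookkeeping.
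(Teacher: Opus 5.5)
Your argument is correct and is the paper's proof: bound $\sum_j(\bM_1)_{jj}a_j^2$ by $B^2h^{2p}\,\max_j(\bM_1)_{jj}\,N_{\rm edges}=\OO(B^2h^{2p-2})$ and take square roots. The only difference is that the paper takes the scalings $(\bM_1)_{jj}=\OO(h^{d-2})$ and $N_{\rm edges}=\OO(h^{-d})$ as given, since they are built into the statement, whereas you also derive them from \Cref{ass:mesh_reg} by a volume-counting argument; that is extra justification, not a different route.
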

\begin{proof}
On a quasi-uniform mesh we have
$(\bM_1)_{jj}=|f_j|/|e_j^*|=\OO(h^{d-2})$ over $N_{\rm edges}=\OO(h^{-d})$
edges, so
\[
  \nrm{a}_{L_h^2}^2=\sum_j(\bM_1)_{jj}a_j^2
  \le B^2h^{2p}\,\bigl(\max_j(\bM_1)_{jj}\bigr)\,N_{\rm edges}
  =\OO\!\left(B^2h^{2p-2}\right),
\]
giving $\nrm{a}_{L_h^2}\le CBh^{p-1}$. The primal $2$-cochain statement
is identical with $(\bM_1^{-1})_{jj}=|e_j^*|/|f_j|$ and
$\sum_j|e_j^*||f_j|=\OO(1)$.
\end{proof}

\subsection{Conservation Properties}
\label{app:conservation}

\subsubsection{Energy Conservation}
\label{app:energy}

\begin{proof}[{\bf Proof of \Cref{thm:energy}}]
Differentiating the kinetic energy $\Ekin$ gives
\[
  \frac{d\Ekin}{dt} = \ip{\bv}{\ddt\bv}_1
  = -\ip{\bv}{\Iv(\bom)}_1
    -\ip{\bv}{\tD_0 B}_1=0.
\]
The first term vanishes by the energy identity
(\cref{prop:extrusion}, Property~2, \eqref{eq:antisym_11}).
The second vanishes due to  the SBP identity
and the incompressibility \cref{eq:D}.
\end{proof}

\subsubsection{Kelvin Circulation Theorem}
\label{app:kelvin}
\begin{proof}[Proof of \Cref{thm:kelvin}]
The circulation at time $t$ is given by the duality pairing
$\Gamma(t) =\bigl( \v(t),\gamma(t)\bigr) = \sum_j\alpha_j(t)\,v_j(t)$.
This pairing is bilinear in $(\v,\gamma)$. The Leibniz rule gives
\begin{equation}\label{eq:K1app}
  \ddt\bigl[\v\bigl(\gamma(t)\bigr)\bigr]
  = \bigl(\ddt{\v}\bigr)(\gamma) \;+\; \v\bigl(\ddt\gamma\bigr).
\end{equation}
By definition of material advection (\cref{def:circulation}, part~$ii$),
the second term becomes $\v\bigl(\tLie_{\v}^{\mathrm{chain}}\gamma\bigr)$.
The chain Lie derivative $\tLie_{\v}^{\mathrm{chain}}$ is defined as the
adjoint of the cochain Lie derivative with respect to the duality pairing:
$\alpha\bigl(\tLie_{\v}^{\mathrm{chain}}\gamma\bigr)
= (\tLie_{\v}\alpha)(\gamma)$
for $\alpha\in C^1(\KKs)$. This implies
\begin{equation}\label{eq:K2app}
  \v\bigl(\tLie_{\v}^{\mathrm{chain}}\gamma\bigr)
  = (\tLie_{\v}\v)(\gamma).
\end{equation}
The discrete Cartan formula (\cref{def:Lie}) applied to $\tLie_{\v}\v$ yields
\begin{equation}\label{eq:K3app}
  \tLie_{\v}\v
  = \tdd_0(\Iv\v) + \Iv(\tdd_1\v)
  = \tdd_0(\Iv\v) + \Iv(\bom),
\end{equation}
where $\Iv\v\in C^0(\KKs)$ is the 0-cochain from applying
the extrusion to $\bv$. 
We substitute the momentum equation~\eqref{eq:M}
and the Cartan decomposition~\eqref{eq:K3app}
into \eqref{eq:K1app}--\eqref{eq:K2app} and obtain
\begin{equation}\label{eq:K4app}
  \ddt\Gamma
  = \bigl(-\Iv(\bom) - \tdd_0 B\bigr)(\gamma)
  \;+\; \bigl(\tdd_0(\Iv\v) + \Iv(\bom)\bigr)(\gamma)
  = \bigl(\tdd_0(\Iv\v - B)\bigr)(\gamma).
\end{equation}
It suffices to show that $(\tdd_0\psi)(\gamma) = 0$ for a 0-cochain $\psi$.

We claim $\partial\circ\tLie_\bv^{\rm chain} = \tLie_\bv^{\rm chain}\circ\partial$.
By the definition of $\tLie_\bv^{\rm chain}$ (\cref{def:circulation})
and the discrete Stokes theorem ($\ip{\tdd_0\phi}{\gamma}_1 = \ip{\phi}{\partial\gamma}_0$)
it holds for any 0-cochain $\phi$,
\[
  \ip{\phi}{\partial(\tLie_\bv^{\rm chain}\gamma)}_0
  = \ip{\tdd_0\phi}{\tLie_\bv^{\rm chain}\gamma}_1
  = \ip{\tLie_\bv(\tdd_0\phi)}{\gamma}_1
  = \ip{\tdd_0(\tLie_\bv\phi)}{\gamma}_1
  = \ip{\tLie_\bv\phi}{\partial\gamma}_0
  = \ip{\phi}{\tLie_\bv^{\rm chain}(\partial\gamma)}_0,
\]
where the third equality uses $\tLie_\bv\circ\tdd = \tdd\circ\tLie_\bv$,
which follows from the Cartan formula and $\tdd^2=0$.
Since the identity holds for all 0-cochains $\phi$ the claim follows:
$\partial\tLie_\bv^{\rm chain} = \tLie_\bv^{\rm chain}\partial$. 
We show that $\partial\gamma(0) = 0$ implies $\partial\gamma(t) = 0$ for all $t$.
With $\ddt\gamma = \tLie_\bv^{\rm chain}\gamma$, we get
\[
  \ddt\bigl(\partial\gamma\bigr)
  = \partial\bigl(\ddt\gamma\bigr)
  = \partial\bigl(\tLie_\bv^{\rm chain}\gamma\bigr).
\]
Since $\partial\gamma(0) = 0$ and $\ddt(\partial\gamma) = \tLie_\bv^{\rm chain}(\partial\gamma)$
is a linear ODE with initial value zero, it follows $\partial\gamma(t) = 0$ for all $t$.
By discrete Stokes, for a 0-cochain $\psi$
\[
  (\tdd_0\psi)(\gamma) = \psi(\partial\gamma) = \psi(0) = 0.
\]
Setting $\psi = \Iv\v - B$ yields $\ddt\Gamma = 0$.
\end{proof}


\subsubsection{Helicity Conservation}
\label{app:helicity}

\begin{lemma}[Helicity consistency]\label{lem:helicity_consistency}
Let $\alpha$ be a smooth 1-form and $\beta$ a smooth 2-form.  Then
the reconstructions of \Cref{def:helicity} satisfy
\begin{equation}\label{eq:helicity_consistency}
  \nrm{\Qh^1\mathcal{R}_h\alpha - \alpha}_{L^2}
  \le C\,h^{r_{\rm rec}}\,\nrm{\alpha}_{W^{r_{\rm rec},\infty}},
  \qquad
  \nrm{\Qh^2\mathcal{R}_h\beta - \beta}_{L^2}
  \le C\,h^{r_{\rm rec}}\,\nrm{\beta}_{W^{r_{\rm rec},\infty}}.
\end{equation}
\end{lemma}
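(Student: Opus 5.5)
The plan is to reduce the $L^2$ estimate to the pointwise nodal accuracy of the averaging reconstruction (\Cref{prop:recon_accuracy}), plus a standard piecewise-linear interpolation estimate on the simplicial refinement $\KK_h^{\rm simp}$. For the velocity, write $\alpha=\bu^\flat$ and let $I_h\bu$ denote the piecewise-linear nodal interpolant of $\bu$ on $\KK_h^{\rm simp}$, taking the nodal values at the vertices of the refinement used to define $\Qh^1$. Split
\[
\Qh^1\mathcal{R}_h\alpha-\alpha=\bigl(\Qh^1\mathcal{R}_h\alpha-I_h\bu\bigr)+\bigl(I_h\bu-\bu\bigr).
\]
The second term is classical. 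On shape-regular simplices, $\nrm{I_h\bu-\bu}_{L^\infty}\le Ch^{r}\nrm{\bu}_{W^{r,\infty}}$ for $r=1,2$, so its $L^2$ norm is bounded by $C|\Omega|^{1/2}h^{r_{\rm rec}}\nrm{\bu}_{W^{r_{\rm rec},\infty}}$.

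The first term is a piecewise-linear field whose nodal values are the reconstruction errors $\u(v_i^*)-\bu(v_i^*)$. \Cref{prop:recon_accuracy} bounds these by $C_Rh\nrm{\bu}_{W^{1,\infty}}$ in general, and by $C_R'h^2\nrm{\bu}_{W^{2,\infty}}$ under reconstruction symmetry. A piecewise-linear function is bounded in $L^\infty$ by its nodal maximum, so the first term is $\OO(h^{r_{\rm rec}})$ in $L^\infty$, and hence in $L^2$ on the compact $\Omega$. This gives the first inequality. If the refinement has nodes other than circumcentres $v_i^*$ (for example face points carrying the averaged values $\bar\u_j$), note that $\bar\u_j$ is the mean of the two adjacent values. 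Its error against $\bu(\mathbf x_j)$ is the nodal error plus a midpoint-averaging error, which is $\OO(h^2)\nrm{\bu}_{W^{2,\infty}}$ when $\mathbf x_j$ is the midpoint of $e_j^*$ and $\OO(h)$ otherwise, so the same rate holds.

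For the vorticity the argument is the same, with the 2-form reconstruction described before \Cref{def:helicity}. The values $\beta(f_k^*)/|f_k^*|$ are combined with the unit normals and the same Gram-matrix inversion. The step I expect to be the main obstacle is re-proving \Cref{prop:recon_accuracy} at degree $k=2$. For (i), on a constant field the normalised face integrals equal $\bom\cdot\hat n_k$ exactly, so the Gram inversion reproduces constants, and Taylor expansion about $v_i^*$ gives $\OO(h)$. For (ii), a linear field gives errors proportional to the face-centroid offsets from $v_i^*$. These errors cancel through a third-moment condition analogous to \eqref{eq:recon_symmetry}, with the normals $\hat n_k$ and centroid offsets in place of $\hat t_n$ and $\ell_n^*/2$. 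I would invoke reconstruction symmetry (case (B) of \Cref{conv:cases}) for this moment condition, or read it as part of that hypothesis. Bounded condition number of the Gram matrix (\Cref{ass:mesh_reg}) keeps all constants $h$-independent.

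Finally, the constants depend only on mesh regularity and $|\Omega|$. Since the paper never uses an $L^2$-sharper bound, the $L^\infty\to L^2$ passage is sufficient.
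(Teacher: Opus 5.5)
Your proposal is correct and follows the paper's proof. Both arguments insert the piecewise-linear nodal interpolant on $\KK_h^{\rm simp}$ and bound $\Qh^1\mathcal{R}_h\alpha$ minus that interpolant by the nodal reconstruction error of \Cref{prop:recon_accuracy}, using the maximum principle for barycentric interpolation. The interpolant-versus-smooth part is handled by standard $P^1$ approximation, and the local bounds are then summed to $L^2$. Your version is also somewhat tighter and more candid than the paper's. The rate-matched estimate $\nrm{I_h\bu-\bu}_{L^\infty}\le Ch^{r}\nrm{\bu}_{W^{r,\infty}}$ keeps only the $W^{r_{\rm rec},\infty}$ norm claimed in the statement, whereas the paper invokes $W^{2,\infty}$ in both mesh cases. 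You also rightly note that second-order accuracy of the 2-form reconstruction needs its own normal/centroid moment condition. That condition is not literally \eqref{eq:recon_symmetry}, and the paper passes over it with ``the argument is identical''.
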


\begin{proof}
We prove the bound for $\Qh^1$, for $\Qh^2$ the argument is identical.
The reconstruction $\Qh^1\mathcal{R}_h\alpha$ is
piecewise linear on the simplicial refinement $\KK_h^{\rm simp}$ of
the DV mesh, which subdivides each Voronoi cell into simplices using cell
and face centres as auxiliary vertices. We estimate
$\nrm{\Qh^1\mathcal{R}_h\alpha - \alpha}_{L^2}$. 

We insert the standard
$P^1$ interpolant $I_K\alpha^\sharp$, the unique linear function on
simplex $K$ matching $\alpha^\sharp$ at the simplex vertices, which
lies in the same discrete space as $\Qh^1\mathcal{R}_h\alpha$. The
error splits into a $P^1$-vs-$P^1$ part, controlled by the
reconstruction error at the nodes via the maximum principle for
barycentric interpolation, and a $P^1$-vs-smooth part, controlled by
standard finite-element approximation theory.

On each simplex $K$ of the simplicial refinement $\KK_h^{\rm simp}$, the error decomposes as
\begin{equation}\label{error_eq}
  \Qh^1\mathcal{R}_h\alpha - \alpha
  = (\Qh^1\mathcal{R}_h\alpha - I_K\alpha^\sharp)
  + (I_K\alpha^\sharp - \alpha).
\end{equation}
For the first term in \Cref{error_eq}, the reconstruction error,
\Cref{prop:recon_accuracy} gives
$|\tilde\bu(v_j^*)-\alpha^\sharp(v_j^*)|
\le C_R\,h^{r_{\rm rec}}\,\nrm{\alpha}_{W^{r_{\rm rec},\infty}}$
at each vertex.
Since the $L^\infty$ norm of a barycentric interpolant on a simplex
is bounded by the maximum of its nodal values,
\[
\nrm{\Qh^1\mathcal{R}_h\alpha - I_K\alpha^\sharp}_{L^\infty(K)}
\le C_R\,h^{r_{\rm rec}}\,\nrm{\alpha}_{W^{r_{\rm rec},\infty}},
\]
we arrive at $\nrm{\Qh^1\mathcal{R}_h\alpha - I_K\alpha^\sharp}_{L^2(K)}
\le C_R\,h^{r_{\rm rec}}\,|K|^{1/2}\,\nrm{\alpha}_{W^{r_{\rm rec},\infty}}$.

\medskip
For the second term in \Cref{error_eq} we obtain
from standard approximation theory
$\nrm{I_K\alpha^\sharp - \alpha^\sharp}_{L^2(K)}
\le C\,h^2\,|K|^{1/2}\,\nrm{\alpha}_{W^{2,\infty}(K)}$.
This requires $W^{2,\infty}$-regularity; however, since $r_{\rm rec} \le 2$,
the reconstruction error $\OO(h^{r_{\rm rec}})$ dominates and equals or exceeds
the interpolation error $\OO(h^2)$, so the overall bound is
$\OO(h^{r_{\rm rec}}\nrm{\alpha}_{W^{r_{\rm rec},\infty}})$.
Squaring and summing over all simplices of the refinement
gives~\eqref{eq:helicity_consistency}.
\end{proof}

\begin{proof}[Proof of \Cref{thm:helicity}]
Differentiating $H_h(t) = \int_\Omega(\Qh^1\bv)\wedge(\Qh^2\bom)$ in
time and using $\bom = \tD_1\bv$, the Whitney--de~Rham commutativity
$\dd\circ\Qh^1 = \Qh^2\circ\tD_1$, and integrating by parts
\begin{align}
  \ddt H_h
  &= \int_\Omega(\Qh^1\dot\bv)\wedge(\Qh^2\bom)
    + \int_\Omega(\Qh^1\bv)\wedge(\Qh^2\tD_1\dot\bv)%
  = \int_\Omega(\Qh^1\dot\bv)\wedge(\Qh^2\bom)
    + \int_\Omega(\Qh^1\bv)\wedge\dd(\Qh^1\dot\bv) \notag\\
  &= \int_\Omega(\Qh^1\dot\bv)\wedge(\Qh^2\bom)
    + \int_\Omega\dd(\Qh^1\bv)\wedge(\Qh^1\dot\bv)%
  = \int_\Omega(\Qh^1\dot\bv)\wedge(\Qh^2\bom)
    + \int_\Omega(\Qh^2\bom)\wedge(\Qh^1\dot\bv)\notag\\
  &= 2\int_\Omega(\Qh^1\dot\bv)\wedge(\Qh^2\bom),
  \label{eq:Hh_rate_id}
\end{align}
where the third equality uses IBP on $\Omega$,
and the fourth uses wedge
commutation $\alpha^p\wedge\beta^q = (-1)^{pq}\beta^q\wedge\alpha^p$
with $p = q = 2$.

Since the Bernoulli term $\tD_0 B$ lies in the gradient subspace
$\tD_0(C^0(\KKs)) \perp_{\bM_1} V_h$, the projection
$\bP_h\tD_0 B = 0$ vanishes. Decomposing
$\bP_h\bQ(\bv,\bv) = \bQ(\bv,\bv) - \tD_0\psi$ for the discrete
pressure $\psi$,
\[
  \ddt H_h
  = -2\int_\Omega\Qh^1\bP_h\bQ(\bv,\bv)\wedge\Qh^2\bom
  = -2\int_\Omega\Qh^1\bQ(\bv,\bv)\wedge\Qh^2\bom
  + 2\int_\Omega\dd(\Qh^0\psi)\wedge\Qh^2\bom,
\]
using $\Qh^1\tD_0\psi = \dd\Qh^0\psi$. The pressure correction
vanishes by IBP,
$\int\dd(\Qh^0\psi)\wedge\Qh^2\bom = -\int\Qh^0\psi\wedge\dd\Qh^2\bom
= -\int\Qh^0\psi\wedge\Qh^3(\tD_2\bom) = 0$,
since $\tD_2\bom = \tD_2\tD_1\bv = 0$ by
\Cref{prop:extrusion} Property~1. Hence
\begin{equation}\label{eq:Hh_Lamb_only}
  \ddt H_h = -2\int_\Omega\Qh^1\bQ(\bv,\bv)\wedge\Qh^2\bom.
\end{equation}
We have the continuous identity
\begin{equation}\label{eq:Lamb_helicity_zero}
  \int_\Omega(\bom\times\bu)\wedge\bom^\flat = 0.
\end{equation}
Subtracting~\eqref{eq:Lamb_helicity_zero} from~\eqref{eq:Hh_Lamb_only}
and adding/subtracting the intermediate term
$\int(\bom\times\bu)\wedge\Qh^2\bom$:
\begin{align*}
  \tfrac{1}{2}\ddt H_h
  &= -\int_\Omega\bigl[\Qh^1\bQ(\bv,\bv) - \bom\times\bu\bigr]
    \wedge\Qh^2\bom
    -\int_\Omega(\bom\times\bu)\wedge\bigl[\Qh^2\bom - \bom^\flat\bigr].
\end{align*}
Cauchy--Schwarz and the consistency estimates
(\Cref{lem:helicity_consistency} and \Cref{prop:recon_accuracy})
give
\begin{align*}
  |\tfrac{1}{2}\ddt H_h|
  &\le \nrm{\Qh^1\bQ(\bv,\bv) - \bom\times\bu}_{L^2}\,
       \nrm{\Qh^2\bom}_{L^2}
   + \nrm{\bom\times\bu}_{L^2}\,
     \nrm{\Qh^2\bom - \bom^\flat}_{L^2} \\
  &\le C\,h^{r_{\rm rec}}\,\nrm{\bu}_{W^{r_{\rm rec},\infty}}^2
       \nrm{\bom}_{L^\infty}
     + C\,h^{r_{\rm rec}}\,\nrm{\bu}_{L^\infty}\nrm{\bom}_{L^\infty}
       \nrm{\bom}_{W^{r_{\rm rec},\infty}}.
\end{align*}
The first factor of the first term combines the reconstruction error
of $\bv$ ($O(h^{r_{\rm rec}})$ via \Cref{prop:recon_accuracy})
and the Whitney consistency of $\Qh^1$ applied to the Lamb
form, both of order $h^{r_{\rm rec}}$.

Under \Cref{conv:cases}: in case (A),
$r_{\rm rec} = 1$ giving $|\ddt H_h| = \OO(h)$;
in case (B), $r_{\rm rec} = 2$ giving $|\ddt H_h| = \OO(h^2)$.
The bound holds uniformly in $d = 2, 3$ and uses only
\eqref{eq:Hh_rate_id}--\eqref{eq:Lamb_helicity_zero},
together with reconstruction and Whitney consistency at level
$k = 1, 2$.
\end{proof}

\subsection{Finite-Dimensional Well-Posedness}
\label{app:wellposedness}

We show the Lipschitz continuity of the
Lamb vector (\cref{lem:Lip_extrusion}), from which short-time
(\cref{thm:picard}) and global (\cref{thm:global}) well-posedness
follow by standard ODE theory and the energy invariant.

\begin{proof}[Proof of \Cref{lem:Lip_extrusion}]
The map $\bv\mapsto\bQ(\bv,\bv):=\Iv(\tD_1\bv)$ is quadratic in $\bv$.
We claim there exists $C_Q(h) > 0$ such that
$\nrm{\bQ(\bu_1,\bu_2)}_2 \le C_Q(h)\,\nrm{\bu_1}_2\,\nrm{\bu_2}_2$
for $\bu_1,\bu_2\in\RR^N$.
For $\bQ$ it holds
$\nrm{\bM_1\bQ(\bu_1,\bu_2)}_2
\le \tfrac{1}{2}\bigl(\nrm{\tU(\bu_1)\,\tD_1\bu_2}_2
+ \nrm{\tD_1\,\tU(\bu_1)^T\bu_2}_2\bigr)$.
Each term is bounded by the spectral norms of $\tU(\bu_1)$
and $\tD_1$ .
Hence $C_Q(h)$ depends on $\nrm{\bM_1^{-1}}_2$, $\nrm{\tD_1}_2$.
For $\bv,\bw\in\RR^N$ with $\nrm{\bv}_2,\nrm{\bw}_2 \le R$,
the polarisation identity
gives
$  \bQ(\bv,\bv) - \bQ(\bw,\bw)
  = \bQ(\bv,\bv{-}\bw) + \bQ(\bv{-}\bw,\bw)$. 
Applying the bilinear bound yields
\[
\nrm{\bQ(\bv,\bv)-\bQ(\bw,\bw)}_2
  \le C_Q(h)\bigl(\nrm{\bv}_2+\nrm{\bw}_2\bigr)\nrm{\bv-\bw}_2
  \le 2R\,C_Q(h)\,\nrm{\bv-\bw}_2.
\]
This establishes the Lipschitz bound for the  extrusion.
Since $\nrm{\bP_h}_2\le 1$ (orthogonal projection), the Lipschitz continuity
for the Leray-projected map $\bff_h(\bv) = -\bP_h\bQ(\bv,\bv)$
follows with the same constant~$L_Q(h,R)$.
\end{proof}

\subsubsection{Global Well-Posedness
  (\texorpdfstring{\Cref{thm:picard,thm:global}}{})}

\begin{proof}[Proof of \Cref{thm:global} (Global well-posedness)]
Local existence and uniqueness on some interval $[0,T_*^h)$ follow from
applying the Picard--Lindel\"of theorem to the ODE $\ddt\bv = \bff_h(\bv)$
on the Banach space $(V_h,\nrm{\cdot}_2)$, and using \Cref{lem:Lip_extrusion}.
The solution can fail to be global only if $\nrm{\bv(t)}_2\to\infty$
as $t\nearrow T_*^h$ (blow-up in finite time).
Since $\bff_h(\bv) = -\bP_h\bQ(\bv,\bv)$ and $\bP_h$ projects
onto $V_h$, the energy identity (Property~2 of \Cref{prop:extrusion})
gives $\frac{d}{dt}\Ekin(t) = 0$, so
$\nrm{\bv(t)}_{\bM_1}^2 = 2\Ekin(0)$ for all $t\in[0,T_*^h)$.
Since $\nrm{\cdot}_{\bM_1}$ and $\nrm{\cdot}_2$ are equivalent norms
on the finite-dimensional space $V_h$, $\nrm{\bv(t)}_2$ remains bounded on $[0,T_*^h)$,
contradicting the blow-up assumption.
Hence $T_*^h = +\infty$.
\end{proof}

\subsection{Convergence Proofs}
\label{app:convergence}

In this subsection we provide the convergence analysis:
discrete Poincar\'e inequality (\cref{app:poincare}),
consistency estimate (\cref{app:consistency_proof}),
the nonlinear stability argument (\cref{app:stability_proof}),
and the trilinear estimate (\cref{app:trilinear_proof}).
The Navier--Stokes extensions and the Leray--Hopf weak convergence
follow in \cref{app:NS_stability_proof}--\cref{app:Leray_Hopf_proof}.
Three estimates occur repeatedly: an edge consistency bound of order
$h^{r_{\rm rec}+1}$, uniform in $d$
, the Helmholtz splitting
$e = \varepsilon + e^\perp$ used with the energy
identity, and a comparison of the Hodge star with the exact
dual-cell integral by means of the Whitney interpolation operator
$\mathcal{W}_h$.

\subsubsection{Discrete Poincar\'e Inequality}
\label{app:poincare}
\phantomsection\label{thm:poincare}%

\begin{proof}
The inequality is equivalent to a lower bound, uniform in $h$, on the lowest
non-zero eigenvalue $\lambda_1^h$ of $\Deltah$ over the non-harmonic
divergence-free cochains. Applying the Whitney interpolation operator
$\mathcal{W}_h$ to a cochain produces a piecewise-linear differential
form, and the Rayleigh quotient of the discrete Laplacian agrees with
the Rayleigh quotient of the continuous Laplacian evaluated on that form
up to a factor $1+\OO(h^{r_\star})$. The lower-order finite-element
Poincar\'e constant of~\cite{arnold2006,arnold2010}, which is uniform in
$h$, therefore transfers to $\lambda_1^h$; the convergence
$\lambda_1^h\to\lambda_1$ follows by evaluating Rayleigh quotients on
de~Rham interpolants of continuous eigenforms.

\smallskip
On a closed manifold $\Omega$, the space of divergence-free cochains
$V_h = \ker(\bD_2\bM_1)$ decomposes as
$V_h = \mathcal{H}_h^1 \oplus \widetilde V_h$, where
$\mathcal{H}_h^1 = \ker(\tD_1)\cap V_h$ is the space of discrete harmonic
1-cochains (curl-free and divergence-free) and
$\widetilde V_h$ is its $\bM_1$-orthogonal complement \emph{within $V_h$}.
Note that $\widetilde V_h$ is distinct from the $V_h^\perp$ used elsewhere
in the paper, which denotes the $\bM_1$-orthogonal complement of $V_h$
in the full cochain space, i.e.\ the gradient piece $\tD_0\phi$ from
the Helmholtz decomposition. On a simply connected $\Omega$,
$\mathcal{H}_h^1 = \{0\}$ and $\widetilde V_h = V_h$.
On a multiply connected closed $\Omega$, $\mathcal{H}_h^1$ is
finite-dimensional with $\dim\mathcal{H}_h^1 = \dim H^1(\Omega;\mathbb{R})$ uniformly
in $h$;
uses of \eqref{eq:disc_Poincare} are limited
to the non-harmonic component, which suffices for the convergence
analysis since the harmonic component of the velocity error is
controlled by the initial-data projection.
For ${e}\in \widetilde V_h$,
\begin{equation}\label{eq:disc_Poincare}
  \nrm{\tD_1{e}}_{\bM_2}^2 =
  \ip{e}{\Deltah{e}}_1 \ge
  \lambda_1^h\nrm{{e}}_{L_h^2}^2,
\end{equation}
where $\lambda_1^h$ is the smallest eigenvalue of $\Deltah$ on $\widetilde V_h$.  By the min-max principle for the self-adjoint operator $\Deltah$,
\[
  \lambda_1^h
  = \min_{{e}\in \widetilde V_h,\,\nrm{{e}}_{L_h^2}=1}
    \nrm{\tD_1{e}}_{\bM_2}^2
    = \min_{{e}\in \widetilde V_h\setminus\{0\}}
    \frac{\nrm{\tD_1{e}}_{\bM_2}^2}{\nrm{{e}}_{L_h^2}^2}
  > 0.
\]
We must show that $\lambda_1^h$ is bounded below independently of $h$ and that $\lambda_1^h\to\lambda_1$ as $h\to 0$.
The discrete Poincar\'e inequality for the lowest-order FEEC complex is
proved in \cite{arnold2006,arnold2010} with a constant $\lambda_1^{\rm FEEC}>0$
uniform in $h$.
On Delaunay--Voronoi meshes satisfying \Cref{ass:mesh_reg},
the DEC and FEEC Rayleigh quotients are uniformly comparable.
Let $\mathcal{W}_h : C^1(\KKs)\to \Lambda^1_h$ denote the Whitney lift from
dual 1-cochains to the lowest-order FEEC 1-form finite-element space.
For any $e\in C^1(\KKs)$, \Cref{lem:hodge_error} applied with $k=1$ gives
\[
  \bigl|\,\ip{e}{e}_1 - \nrm{\mathcal{W}_h e}_{L^2}^2\,\bigr|
  \le C_\star\,h^{r_\star}\,\nrm{\mathcal{W}_h e}_{L^2}^2,
  \qquad
  \bigl|\,\nrm{\tD_1 e}_{\bM_2}^2 - \nrm{\dd\,\mathcal{W}_h e}_{L^2}^2\,\bigr|
  \le C_\star\,h^{r_\star}\,\nrm{\dd\,\mathcal{W}_h e}_{L^2}^2,
\]
where in the second inequality we used the Whitney--de~Rham commutativity
$\dd\,\mathcal{W}_h e = \mathcal{W}_h(\tD_1 e)$ and \Cref{lem:hodge_error} at
level $k=2$. Hence both the $L^2$ and $H^1$ ratios satisfy
\[
  \frac{\ip{e}{e}_1}{\nrm{\mathcal{W}_h e}_{L^2}^2}
  = 1+\OO(h^{r_\star}),\qquad
  \frac{\nrm{\tD_1 e}_{\bM_2}^2}{\nrm{\dd\,\mathcal{W}_h e}_{L^2}^2}
  = 1+\OO(h^{r_\star}),
\]
uniformly in $e$ and $h$. Consequently the Rayleigh quotient
satisfies
$\nrm{\tD_1 e}_{\bM_2}^2 / \ip{e}{e}_1 \;\ge\; (1-C\,h^{r_\star})\cdot \nrm{\dd\,\mathcal{W}_h e}_{L^2}^2 / \nrm{\mathcal{W}_h e}_{L^2}^2$,
and the RHS is uniformly bounded below by $\lambda_1^{\rm FEEC}$ by the FEEC
Poincaré inequality, giving
\[
  \lambda_1^h \ge (1-C\,h^{r_\star})\,\lambda_1^{\rm FEEC} > 0
  \quad\text{uniformly in }h\le h_0.
\]
Spectral convergence $|\lambda_1^h - \lambda_1| \le C\,h$
follows by testing Rayleigh quotients with de~Rham
interpolants of continuous eigenforms \cite{dodziuk1976}.
\end{proof}



\subsubsection{Consistency (\texorpdfstring{\Cref{thm:consistency}}{})}
\label{app:consistency_proof}

\begin{proof}
We denote by $\bu\in C^1([0,T];W^{r_{\rm rec},\infty}(\Omega))$ the smooth
Euler solution of \Cref{ass:smooth_ref}. We estimate in three
steps. Step~1 bounds $\nrm{\bP_h\tau_h}_{L_h^2}$ through its pairing
against $\bw\in V_h$; the relations $\bP_h\tD_0 = 0$
 and $\bP_h\bw = \bw$ reduce the pairing to 
the Lamb consistency cochain
$\alpha := I_{\bar\bv}(\mathcal{R}_h\omega) - \mathcal{R}_h(\iota_\bu\omega)$,
the discrete interior product minus its true value, so that
$\nrm{\bP_h\tau_h}_{L_h^2}\le\nrm{\alpha}_{L_h^2}$ bounds the
consistency error. The contraction
(\Cref{def:contraction}) splits $\alpha = \tilde\alpha - \xi$, with
$\tilde\alpha$ the trapezoidal extrusion error and $\xi$ the
antisymmetrisation cochain, whose leading part is the discrete gradient
of kinetic energy. The two pieces require different estimators:
$\tilde\alpha$ is bounded in $L_h^2$ directly at rate $h^{r_{\rm rec}}$
(Step~2), whereas $\xi$ is of gradient size and only its projection
$\bP_h\xi$ is small, recovered by comparison with a discrete
gradient (Step~3). Both reductions rest on the same edge estimate:
a reconstruction or quadrature error $\OO(h^{r_{\rm rec}})$ against a
face-cochain factor $\OO(h^{d-1})$ provides a residual of
order $h^{r_{\rm rec}+d-1}$ on dual edges; the inverse Hodge-star weight
$(\bM_1^{-1})_{jj} = \OO(h^{2-d})$  converts this into a cochain estimate of order
$h^{r_{\rm rec}+1}$, uniformly in $d$;
\Cref{lem:edge_assembly}, (with $p = r_{\rm rec}+1$), implies the
$L_h^2$ rate $h^{r_{\rm rec}}$, using \Cref{conv:cases}. The dimensional
factors $h^{d-1}$ and $h^{2-d}$ cancel to a $d$-independent
power of $h$.

\medskip
\noindent\emph{Step~1: Reduction to cochain.}
We have $\ddt{\bar\bv} = \mathcal{R}_h(\partial_t\bu^\flat)$. After substituting the
continuous Euler equation
$\partial_t\bu^\flat = -\iota_\bu\omega - \dd B$ and using de~Rham
commutativity $\mathcal{R}_h(\dd B) = \tD_0\mathcal{R}_h B$ with
$\tD_1\bar\bv = \mathcal{R}_h\omega$, we obtain
\begin{equation}\label{eq:tau_h_expanded}
  \tau_h
  = \ddt{\bar\bv} + \bP_h I_{\bar\bv}(\tD_1\bar\bv)
  = -\mathcal{R}_h(\iota_\bu\omega)
    - \tD_0\mathcal{R}_h B
    + \bP_h I_{\bar\bv}(\mathcal{R}_h\omega).
\end{equation}
We test against $\bw\in V_h$ and expand as follows
\begin{align*}
\ip{\bw}{\bP_h\tau_h}_1
  &= \ip{\bw}{\tau_h}_1
   && \text{(self-adjointness, $\bw\in V_h$)}\\
  &= -\ip{\bw}{\mathcal{R}_h(\iota_\bu\omega)}_1
     - \ip{\bw}{\tD_0\mathcal{R}_h B}_1
     + \ip{\bw}{\bP_h I_{\bar\bv}(\mathcal{R}_h\omega)}_1
   && \text{(linearity)}\\
  &= -\ip{\bw}{\mathcal{R}_h(\iota_\bu\omega)}_1
     + \ip{\bw}{I_{\bar\bv}(\mathcal{R}_h\omega)}_1= \ip{\bw}{\alpha}_1
   && \text{(SBP, $\bw\in V_h$, idempotence)}
\end{align*}
Riesz representation in $V_h$ then implies
\begin{equation}\label{eq:dual_reduction}
  \nrm{\bP_h\tau_h}_{L_h^2}
  \;=\; \sup_{\bw\in V_h,\,\nrm{\bw}_{L_h^2}\le 1}\ip{\bw}{\alpha}_1
  \;\le\; \nrm{\alpha}_{L_h^2},
\end{equation}
this reduces the assertion to
\begin{equation}\label{eq:extrusion_consistency}
  \nrm{\alpha}_{L_h^2}
  \;\le\; C_I\,h^{r_{\rm rec}}\,
  \nrm{\bu}_{W^{r_{\rm rec}+1,\infty}}\bigl(\nrm{\bu}_{W^{r_{\rm rec}+1,\infty}} + \nrm{\omega}_{L^{\infty}}\bigr).
\end{equation}
The contraction (\Cref{def:contraction}) gives
\begin{equation}\label{eq:Iv_weighted}
  \bigl[I_{\bar\bv}(\mathcal{R}_h\omega)\bigr]_j
  \;=\; \tfrac{1}{2}\bigl(\bM_1^{-1}\tU\,\mathcal{R}_h\omega\bigr)_j - \xi_j,
  \qquad
  \xi_j := \tfrac{1}{2}\bigl(\bM_1^{-1}\tD_1\tU^T\bar\bv\bigr)_j,
\end{equation}
and with the trapezoidal extrusion error
\begin{equation}\label{eq:alpha_def}
  \tilde\alpha_j := \tfrac{1}{2}\bigl(\bM_1^{-1}\tU\,\mathcal{R}_h\omega\bigr)_j - \int_{e_j^*}\iota_\bu\omega,
\end{equation}
the Lamb cochain decomposes as
\begin{equation}\label{eq:alpha_decomp}
  \alpha \;=\; \tilde\alpha \;-\; \xi.
\end{equation}

\medskip
\noindent\emph{Step~2: Edge bound.}
Fix a dual edge $e_j^*$ of length $\ell_j^*$ connecting dual vertices
$v_a^*, v_b^*$. By the de~Rham definition,
$\int_{e_j^*}\iota_\bu\omega = (\mathcal{R}_h(\iota_\bu\omega))_j$.
The contraction term is computed as
\[
  \tfrac{1}{2}(\bM_1^{-1}\tU\,\mathcal{R}_h\omega)_j
  = (\bM_1^{-1})_{jj}\sum_{f_k^*\prec e_j^*}
    D_{1,jk}\,w_{jk}(\bar\bv)\,(\mathcal{R}_h\omega)(f_k^*),
\]
with $D_{1,jk}=\pm1$ the incidence, the diagonal weight
$(\bM_1^{-1})_{jj} = \ell_j^*/|f_j| = \OO(h^{2-d})$, and the extrusion weight
$w_{jk}(\bar\bv) = \tfrac{1}{2}\bigl(\tilde\bu(v_a^*)\cdot\hat{e}_k
+ \tilde\bu(v_b^*)\cdot\hat{e}_k\bigr)$
is the average of the reconstructed velocity projected
onto $\hat{e}_k$. The edge error splits into two
weights, the trapezoidal formula with exact vertex values gives,
\[
  w_{jk}^{\rm trap}
  := \tfrac{1}{2}\bigl(\bu(v_a^*)\cdot\hat{e}_k
  + \bu(v_b^*)\cdot\hat{e}_k\bigr),
\]
and the exact integral weight,
\[
  w_{jk}^{\rm exact}
  := \frac{1}{|e_j^*|}\int_{e_j^*}(\bu(x)\cdot\hat{e}_k)\,d\ell(x).
\]

\emph{Step~(2a) weight error.}
By \Cref{prop:recon_accuracy},
$|\tilde\bu(v_i^*) - \bu(v_i^*)|
\le C_R\,h^{r_{\rm rec}}\,\nrm{\bu}_{W^{r_{\rm rec},\infty}}$,
so
\[
  |w_{jk}(\bar\bv) - w_{jk}^{\rm trap}|
  \le \tfrac{1}{2}(|\tilde\bu(v_a^*)-\bu(v_a^*)|
    + |\tilde\bu(v_b^*)-\bu(v_b^*)|)
  \le C_R\,h^{r_{\rm rec}}\,\nrm{\bu}_{W^{r_{\rm rec},\infty}}.
\]
For the trapezoidal-vs-integral error, the classical bound implies
$|w_{jk}^{\rm trap} - w_{jk}^{\rm exact}|
\le C\,h^{r_{\rm rec}}\,\nrm{\bu}_{W^{r_{\rm rec},\infty}}$.
After combining we arrive at
\begin{equation}\label{eq:weight_error}
  |w_{jk}(\bar\bv) - w_{jk}^{\rm exact}|
  \le C\,h^{r_{\rm rec}}\,\nrm{\bu}_{W^{r_{\rm rec},\infty}}.
\end{equation}

\emph{Step~(2b) geometric approximation error.}
The difference between the exact-weight discrete formula
and the continuous interior-product integral is
\[
  \Delta_j^B := (\bM_1^{-1})_{jj}\sum_{f_k^*\prec e_j^*}
    D_{1,jk}\,w_{jk}^{\rm exact}\,(\mathcal{R}_h\omega)(f_k^*)
    - \int_{e_j^*}\iota_\bu\omega.
\]
Since $(\mathcal{R}_h\omega)(f_k^*) = \int_{f_k^*}\omega\cdot d\bA$ is exact
and
$w_{jk}^{\rm exact} = \frac{1}{|e_j^*|}\int_{e_j^*}\bu\cdot\hat{e}_k\,d\ell$,
the weighted product $(\bM_1^{-1})_{jj}\,w_{jk}^{\rm exact}\cdot(\mathcal{R}_h\omega)(f_k^*)$
approximates the contribution of face $f_k^*$ to the line integral
$\int_{e_j^*}\iota_\bu\omega$ by replacing the spatially varying
integrand with the product of its edge-average and its face-integral.
For constant fields $\bu\equiv\bu_0$, $\omega\equiv\omega_0$, the
factorisation is exact and $\Delta_j^B = 0$.

For non-constant fields, expand $\bu(x) = \bu(x_0)
+ (x - x_0)\cdot\nabla\bu(x_0) + \OO(|x - x_0|^2)$ around the midpoint
$x_0$ of $e_j^*$. The constant term equates to zero. The linear term contributes
$\sum_{f_k^*\prec e_j^*} D_{1,jk}\,(\nabla\bu(x_0)\cdot\hat e_k)
\cdot\int_{f_k^*}(x - x_0)\,\omega\cdot d\bA$,
where $D_{1,jk} = \pm 1$ is the incidence matrix. This
is the geometric quantity that appears in the
reconstruction-accuracy analysis of \Cref{prop:recon_accuracy}: the contribution
vanishes if and only if the reconstruction stencil at $e_j^*$
is symmetric about $x_0$, which is the reconstruction
symmetry~\eqref{eq:recon_symmetry}. When that condition
holds, the remainder is
controlled by the quadratic term, of size
$\OO(h^2)\nrm{\bu}_{W^{2,\infty}}|f_k^*|$. Without the condition, the
linear term survives at size $\OO(h)\nrm{\bu}_{W^{1,\infty}}|f_k^*|$.
We sum over the bounded number of faces incident to $e_j^*$
and get
\begin{equation}\label{eq:DeltaB_general}
  |\Delta_j^B|
  \le C\,(\bM_1^{-1})_{jj}\,h\,\nrm{\bu}_{W^{1,\infty}}\cdot
    \sum_{f_k^*\prec e_j^*}|(\mathcal{R}_h\omega)(f_k^*)|
  \le C\,h^{2-d}\,h\,h^{d-1}\,\nrm{\bu}_{W^{1,\infty}}\,\nrm{\omega}_{L^\infty}
  = C\,h^{2}\,\nrm{\bu}_{W^{1,\infty}}\,\nrm{\omega}_{L^\infty}.
\end{equation}
Under reconstruction symmetry~\eqref{eq:recon_symmetry}
and $\bu\in W^{2,\infty}$, the bound improves by one order
\begin{equation}\label{eq:DeltaB_sym}
  |\Delta_j^B|
  \le C\,h^{3}\,\nrm{\bu}_{W^{2,\infty}}\,\nrm{\omega}_{L^\infty}
  \qquad\text{(under~\eqref{eq:recon_symmetry})}.
\end{equation}
In both cases $|\Delta_j^B| = \OO(h^{r_{\rm rec}+1})$, uniformly in
$d$: $\OO(h^2)$ for $r_{\rm rec}=1$ on general meshes, and $\OO(h^3)$
for $r_{\rm rec}=2$ under symmetry; the face-cochain factor $h^{d-1}$
and the inverse Hodge-star weight $h^{2-d}$ become a dimension-independent power of $h$.

\medskip
The vorticity cochain satisfies
$|(\mathcal{R}_h\omega)(f_k^*)|
\le |f_k^*|\nrm{\omega}_{L^\infty}
= \OO(h^{d-1})\nrm{\omega}_{L^\infty}$.
The weight errors of Part~(2a) enter through
\[
  (\bM_1^{-1})_{jj}\sum_{f_k^*\prec e_j^*}
  |w_{jk}(\bar\bv) - w_{jk}^{\rm exact}|\,|(\mathcal{R}_h\omega)(f_k^*)|
  \le C\,h^{2-d}\cdot h^{r_{\rm rec}}\cdot h^{d-1}
  = C\,h^{r_{\rm rec}+1},
\]
and Part~(2b) contributes $|\Delta_j^B| \le C\,h^{r_{\rm rec}+1}$ by
\eqref{eq:DeltaB_general}--\eqref{eq:DeltaB_sym}. Combining the two
gives the uniform edge bound
\begin{equation}\label{eq:extrusion_pointwise}
  |\tilde\alpha_j|
  \le C\,h^{r_{\rm rec}+1}\,
  \nrm{\bu}_{W^{r_{\rm rec},\infty}}\nrm{\omega}_{L^\infty},
\end{equation}
uniformly in $d = 2,3$, with $C$ depending on mesh regularity, not on $h$ or $j$.

\smallskip
With the uniform edge bound~\eqref{eq:extrusion_pointwise} of order
$h^{r_{\rm rec}+1}$,
\Cref{lem:edge_assembly}, (with $p = r_{\rm rec}+1$) gives
\begin{equation}\label{eq:tildealpha_assembly}
  \nrm{\tilde\alpha}_{L_h^2}
  \;\le\; C_{\tilde\alpha}\,h^{r_{\rm rec}}\,
       \nrm{\bu}_{W^{r_{\rm rec},\infty}}\,\nrm{\omega}_{L^\infty}.
\end{equation}
\medskip
\noindent\emph{Step~3: Bound on $\bP_h\xi$.}
Since $\bP_h\tD_0 = 0$, $\bP_h\xi = \bP_h(\xi - \tD_0 f)$ for any
$0$-cochain $f$, and
$\nrm{\bP_h\xi}_{L_h^2}\le\nrm{\xi - \tD_0 f}_{L_h^2}$. 
We choose the de~Rham interpolant of the continuous
kinetic energy:
\begin{equation}\label{eq:f_choice_smooth}
  f \;:=\; \mathcal{R}_h\bigl(\tfrac{1}{2}|\bu|^2\bigr)\in C^0(\KKs),
  \qquad
  \bigl(\mathcal{R}_h\tfrac{1}{2}|\bu|^2\bigr)_i = \tfrac{1}{2}|\bu(v_i^*)|^2.
\end{equation}
Both objects are gradients, annihilated by $\bP_h$, but $\tD_0 f$
is the discrete gradient of a smooth continuous quantity, whose
vertex-to-vertex variation is governed by smoothness of $\bu$. 
Step~3 reduces to
\begin{equation}\label{eq:xi_approx_gradient}
  \nrm{\xi - \tD_0\mathcal{R}_h(\tfrac12|\bu|^2)}_{L_h^2}
  \;\le\; C_\xi\,h^{r_{\rm rec}}\,
  \nrm{\bu}_{W^{r_{\rm rec}+1,\infty}}^2.
\end{equation}
The bound goes through one intermediate weight: $\xi^{\rm c}$ is $\xi$ with
$\tilde\bu(v_i^*)$ replaced by $\bu(v_i^*)$, and the triangle
inequality splits~\eqref{eq:xi_approx_gradient} into a reconstruction
error ( first term) and a quadrature comparison (second term):
\begin{equation}\label{eq:xi_triangle}
  \nrm{\xi - \tD_0\mathcal{R}_h(\tfrac12|\bu|^2)}_{L_h^2}
  \;\le\;
\nrm{\xi - \xi^{\rm c}}_{L_h^2}
  +
\bigl\|\xi^{\rm c} - \tD_0\mathcal{R}_h\bigl(\tfrac{1}{2}|\bu|^2\bigr)\bigr\|_{L_h^2}.
\end{equation}
The difference $\xi - \xi^{\rm c}$ in \eqref{eq:xi_triangle} is
bilinear in $\bar\bv$ and is linearly dependent on the
reconstruction error
$\delta_i := \tilde\bu(v_i^*) - \bu(v_i^*)$, each bounded by
$|\delta_i| \le C h^{r_{\rm rec}}\nrm{\bu}_{W^{r_{\rm rec},\infty}}$
(\Cref{prop:recon_accuracy}). The matrix structure of $\xi$ gives
$(\xi - \xi^{\rm c})_j$ as $\tfrac12(\bM_1^{-1})_{jj}$ times a sum over
primal faces $f_k$ adjacent to $e_j^*$ of products
$(\delta\text{-error})\times(\text{face-cochain factor }|f_k^*|)$;
with $(\bM_1^{-1})_{jj} = \OO(h^{2-d})$ and $|f_k^*| = \OO(h^{d-1})$
the per-edge entry is of size $h^{r_{\rm rec}+1}$, uniformly in $d$.
\Cref{lem:edge_assembly} (with $p = r_{\rm rec}+1$) yields
\begin{equation}\label{eq:xi_recon}
  \nrm{\xi - \xi^{\rm c}}_{L_h^2}
  \;\le\; C\,h^{r_{\rm rec}}\,\nrm{\bu}_{W^{r_{\rm rec},\infty}}^2.
\end{equation}

For the second term in \cref{eq:xi_triangle} we obtain
from~\eqref{eq:Iv_weighted}
 $\xi^{\rm c}_j = \tfrac{1}{2}(\bM_1^{-1})_{jj}\sum_{f_k\succ e_j^*}
(\tD_1)_{jk}\,[\tU^{T,{\rm c}}\bu]_k$, where $f_k$ ranges over primal
faces incident to the dual edge $e_j^*$ and $[\tU^{T,{\rm c}}\bu]_k$
encodes the trapezoidal average of $\bu\cdot\hat e_l$ over the dual
edges $e_l^*\in \partial f_k^*$. 
For constant velocities the discrete Stokes implies
$\sum_{l\prec\partial f_k^*}\bigl[\bu\cdot\hat e_l\bigr]\,|e_l^*|
= \int_{\partial f_k^*}\bu^\flat = \int_{f_k^*}\dd\bu^\flat$. Taylor expansion of
$\bu(x)\cdot\hat e_l$ at edge midpoints reduces $[\tU^{T,{\rm c}}\bu]_k$
to
$\mathcal{R}_h(\tfrac{1}{2}|\bu|^2)(v_b^*) - \mathcal{R}_h(\tfrac{1}{2}|\bu|^2)(v_a^*)$
plus a Taylor remainder $r_k$ of the same shape as $\Delta_j^B$ in
Part~(2b):
$|r_k| \le C\,h^{r_{\rm rec}}\,|f_k^*|\,\nrm{\bu}_{W^{r_{\rm rec}+1,\infty}}^2$
Hence at dual edges it holds
$|\xi^{\rm c}_j - [\tD_0\mathcal{R}_h(\tfrac12|\bu|^2)]_j|
\le C\,h^{r_{\rm rec}+1}\,\nrm{\bu}_{W^{r_{\rm rec}+1,\infty}}^2$,
the same mass-weighted edge product as in (3a), converted to a cochain
by $\tfrac12(\bM_1^{-1})_{jj} = \OO(h^{2-d})$,
and \Cref{lem:edge_assembly} (with $p = r_{\rm rec}+1$) yields
\begin{equation}\label{eq:3b_bound}
  \bigl\|\xi^{\rm c} - \tD_0\mathcal{R}_h\bigl(\tfrac{1}{2}|\bu|^2\bigr)\bigr\|_{L_h^2}
  \;\le\; C\,h^{r_{\rm rec}}\,\nrm{\bu}_{W^{r_{\rm rec}+1,\infty}}^2.
\end{equation}

We substitute~\eqref{eq:xi_recon} and~\eqref{eq:3b_bound} into the
triangle inequality~\eqref{eq:xi_triangle} and get
~\eqref{eq:xi_approx_gradient}.
Combining with~\eqref{eq:alpha_decomp}, \eqref{eq:dual_reduction},
and the triangle inequality implies
$\nrm{\bP_h\alpha}_{L_h^2} \le \nrm{\tilde\alpha}_{L_h^2} + \nrm{\bP_h\xi}_{L_h^2}$,
\begin{equation}\label{eq:PhT_by_tildealpha}
  \nrm{\bP_h\tau_h}_{L_h^2}
  \;\le\; \nrm{\tilde\alpha}_{L_h^2}
  \;+\; C_\xi\,h^{r_{\rm rec}}\,\nrm{\bu}_{W^{r_{\rm rec}+1,\infty}}^2.
\end{equation}
Using the bound~\eqref{eq:tildealpha_assembly},
this yields~\eqref{eq:extrusion_consistency} and implies the assertion.
\end{proof}

\subsubsection{Euler Stability (\texorpdfstring{\Cref{thm:stability}}{})}
\label{app:stability_proof}

\begin{proof}
We denote by $r := r_{\rm rec}$ and $r_\star$ the consistency and
Hodge-star rates from \Cref{thm:consistency,lem:proj_error}. The
reference is  the de~Rham interpolant of the smooth solution $\bar\bv(t) := \mathcal{R}_h\bu^\flat(t)$, 
in general $\bar\bv\notin V_h$. The error $e := \bv^h - \bar\bv$ admits the
orthogonal decomposition
\begin{equation}\label{eq:helmholtz_split}
  e = \varepsilon + e^\perp,
  \qquad
  \varepsilon := \bP_h e \in V_h,
  \qquad
  e^\perp := (I-\bP_h)e = -(I-\bP_h)\bar\bv,
\end{equation}
where the second equality uses $\bv^h\in V_h$. By \Cref{lem:proj_error},
\begin{equation}\label{eq:eperp_bounds}
  \nrm{e^\perp}_{L_h^2} \le C_\pi\,h^{r_\star}\,\nrm{\bu}_{W^{r_\star,\infty}},
  \qquad
  \nrm{e^\perp}_{\rm rec} \le C_{\rm rec}\,h^{r_\star}\,|\log h|^{\beta_d}\,
  \nrm{\bu}_{W^{r_\star+1,\infty}}.
\end{equation}
Subtracting the equation $\dot{\bv}^h = \bff_h(\bv^h)$ with
$\bff_h = -\bP_h\bQ(\cdot,\cdot)$ from the truncation identity
$\dot{\bar\bv} = -\bP_h\bQ(\bar\bv,\bar\bv) + \tau_h$ and using
bilinearity of $\bQ$, we obtain the error equation
\begin{equation}\label{eq:error_ODE}
  \ddt e \;=\;
  -\bP_h\bigl[\,
    \underbrace{\bQ(\bar\bv,\,e)}_{\text{(I) smooth $\times$ error}}
    \;+\;
    \underbrace{\bQ(e,\,\bar\bv)}_{\text{(II) error $\times$ smooth}}
    \;+\;
    \underbrace{\bQ(e,\,e)}_{\text{(III) cubic self-interaction}}
  \,\bigr]
  \;-\; \tau_h.%
\end{equation}
Term~(I) can be estimated by a bilinear bound because the smooth
velocity $\bar\bv$ in the extrusion slot is pointwise-controlled.
Term~(II) lacks such pointwise control on $e$  and requires the trilinear estimate (\Cref{lem:trilinear}) and the
energy identity to convert the Helmholtz residual into a
smooth-velocity contribution. Term~(III) vanishes at leading order
by the energy identity $\ip{e}{\bQ(e,e)}_1 = 0$
(\Cref{prop:extrusion}\,(2)); the surviving Helmholtz residual is
again resolved by polarisation.

The proof has three steps. Step~1 derives the energy equation;
Step~2 bounds the three cross-terms in sub-steps 2a, 2b, 2c;
Step~3 closes by Gr\"onwall.

\bigskip
\noindent\textit{Step~1. Truncation residual.}
We take the $\bM_1$-inner product of~\eqref{eq:error_ODE} with $e$
\begin{equation}\label{eq:energy_balance}
  \tfrac12\ddt\nrm{e}_{L_h^2}^2
  \;=%
        -\ip{e}{\bP_h \bQ(\bar\bv,\,e)}_1 -  \ip{e}{\bP_h \bQ(e,\,\bar\bv)}_1 - \ip{e}{\bP_h \bQ(e,\,e)}_1
        \;-\; \ip{e}{\tau_h}_1.
\end{equation}
Next we bound the truncation residual: splitting
$\tau_h = \bP_h\tau_h + (I-\bP_h)\tau_h$, the first piece is
controlled by \Cref{thm:consistency}:
$\nrm{\bP_h\tau_h}_{L_h^2}\le C_\tau h^r$. For the second piece,
with $\bP_h\bQ(\bar\bv,\bar\bv)\in V_h$,
\[
  (I-\bP_h)\tau_h = (I-\bP_h)\dot{\bar\bv}
                 = (I-\bP_h)\mathcal{R}_h(\partial_t\bu^\flat),
\]
and \Cref{lem:proj_error} applied to the divergence-free field
$\partial_t\bu$ yields
$\nrm{(I-\bP_h)\tau_h}_{L_h^2} \le C h^{r_\star}\nrm{\partial_t\bu}_{W^{r_\star,\infty}}$.
Using $\ip{e}{(I-\bP_h)\tau_h}_1 = \ip{e^\perp}{(I-\bP_h)\tau_h}_1$
and Cauchy--Schwarz gives,
\begin{equation}\label{eq:tau_residual}
  \bigl|\ip{e}{\tau_h}_1\bigr|
  \;\le\; C_\tau\,h^r\,\nrm{e}_{L_h^2}
       \;+\; C\,h^{2r_\star}\,\nrm{\bu}_{C^1_t W^{r_\star,\infty}_x}.
\end{equation}
The first contribution is of order $h^r$, linear in
$\nrm{e}_{L_h^2}$; the second is of order $h^{2r_\star}$.

\bigskip
\noindent\textit{Step~2a. Term (I).}
The smooth velocity $\bar\bv$ in the extrusion slot satisfies
$|\bar\u_j(\bar\bv)| \le C\nrm{\bu}_{W^{1,\infty}}$ uniformly for
$h\le h_0 := 1/C_R$ (\Cref{prop:recon_accuracy}\,(i)). Combined with
bounded stencil size, and overlap (\Cref{ass:mesh_reg}),
this results in the bound
\begin{equation}\label{eq:bilinear_assembly}
  \nrm{\bQ(\bar\bv,\,\eta)}_{L_h^2}
  \;\le\; C_Q\,\nrm{\bu}_{W^{1,\infty}}\,\nrm{\eta}_{L_h^2}
  \qquad \forall\,\eta\in C^1(\KKs).
\end{equation}
Self-adjointness of $\bP_h$ and Cauchy--Schwarz imply,
\begin{equation}\label{eq:I_bound}
  \bigl|\ip{e}{\bP_h\bQ(\bar\bv,e)}_1\bigr|
  \;=\; \bigl|\ip{\varepsilon}{\bQ(\bar\bv,e)}_1\bigr|
  \;\le\; \nrm{\varepsilon}_{L_h^2}\,\nrm{\bQ(\bar\bv,e)}_{L_h^2}
  \;\le\; C_Q\,\nrm{\bu}_{W^{1,\infty}}\,\nrm{e}_{L_h^2}^2,
\end{equation}
using $\nrm{\varepsilon}_{L_h^2}\le\nrm{e}_{L_h^2}$ in the last step.

\bigskip
\noindent\textit{Step~2b. Term (II).}
The averaging reconstruction (\Cref{def:averaging_recon}) and the
extrusion weight (\Cref{def:contraction}) are continuous in the
pointwise-reconstruction norm with mesh-regularity-only constants,
$|\bar\u_j(\bv)|\le C_R\nrm{\bv}_{\rm rec}$ and
$|w_{jk}(\bv)|\le C_w\nrm{\bv}_{\rm rec}$. Combining
with~\eqref{eq:eperp_bounds},
\[
  |\bar\u_j(e^\perp)|
  \le C_R\,\nrm{e^\perp}_{\rm rec}
  \le C\,h^{r_\star}\,|\log h|^{\beta_d}\,\nrm{\bu}_{W^{r_\star+1,\infty}}.
\]
Repeating the argument of~\eqref{eq:bilinear_assembly} yields
\begin{equation}\label{eq:bilinear_eperp}
  \nrm{\bQ(e^\perp,\,\eta)}_{L_h^2}
  \;\le\; C\,h^{r_\star}\,|\log h|^{\beta_d}\,\nrm{\bu}_{W^{r_\star+1,\infty}}\,\nrm{\eta}_{L_h^2}.
\end{equation}
The logarithmic factor $|\log h|^{\beta_d}$ is present in both $d=2$
and $d=3$ ($\beta_2=\beta_3=1$); it is dimension-independent (\Cref{app:proj_error_proof}, part~(ii.3)) .

\smallskip
On a closed manifold with diagonal Hodge-star we have $V_h^\perp = \mathrm{ran}(\tD_0)$
(cf.~\eqref{H:bounded_projection}). Hence $e^\perp = \tD_0\phi$ for
 0-cochains $\phi$, and
\begin{equation}\label{eq:eperp_kills_curl}
  \tD_1 e^\perp = \tD_1\tD_0\phi = 0,
  \qquad \text{so}\qquad
  \bQ(\,\cdot\,,e^\perp) = \I_{(\cdot)}(\tD_1 e^\perp) \equiv 0.
\end{equation}
Define the trilinear form $\Phi(u,v,w) := \ip{u}{\bQ(v,w)}_1$. The
energy identity $\Phi(\bv,\bv,\bv) = 0$
(\Cref{prop:extrusion}\,(2)) holds for $\bv\in C^1(\KKs)$.
We substitute $\bv = aX+bY+cZ$,
where $X,Y,Z\in C^1(\KKs)$ are arbitrary cochains,
$a,b,c\in\mathbb{R}$.
Extracting the coefficient of $abc$ implies the identity
\begin{equation}\label{eq:polarised}
\begin{aligned}
  0 \;=\;& \Phi(X,Y,Z) + \Phi(X,Z,Y) + \Phi(Y,X,Z)
         + \Phi(Y,Z,X) + \Phi(Z,X,Y) + \Phi(Z,Y,X),
\end{aligned}
\end{equation}
the coefficient of $a^2b$ in the two-variable substitution gives
\begin{equation}\label{eq:polarised_two}
  0 \;=\; \Phi(X,X,Y) + \Phi(X,Y,X) + \Phi(Y,X,X).
\end{equation}
The hypothesis of \Cref{lem:trilinear} requires
$\bu \in W^{1,\infty}(\Omega;\Lambda^1)$ for the smooth velocity
producing the curl $\bar\bom = \tD_1\mathcal{R}_h\bu^\flat
= \tD_1\bar\bv$, and $e\in C^1(\KKs)$ for the discrete cochain. Both
are satisfied: the first by \Cref{ass:smooth_ref} (which provides
$\bu\in C([0,T];H^s)$ with $s\ge 3$, hence $W^{1,\infty}$ by Sobolev
embedding for $d\le 3$); the second by construction.

\smallskip
With these ingredients, we conclude by self-adjointness of $\bP_h$,
\[
  \ip{e}{\bP_h\bQ(e,\bar\bv)}_1
  \;=\; \ip{e}{\bQ(e,\bar\bv)}_1
        \;-\; \ip{e^\perp}{\bQ(e,\bar\bv)}_1.
\]
The first term is bounded by \Cref{lem:trilinear}:
$|\ip{e}{\bQ(e,\bar\bv)}_1|\le C_{\rm tri}\nrm{\bu}_{W^{1,\infty}}\nrm{e}_{L_h^2}^2$.
For the second term, one decomposes $e = \varepsilon + e^\perp$ in
the extrusion slot:
\begin{equation}\label{eq:Helmholtz_split_b}
  \ip{e^\perp}{\bQ(e,\bar\bv)}_1
  \;=\; \ip{e^\perp}{\bQ(\varepsilon,\bar\bv)}_1
        \;+\; \ip{e^\perp}{\bQ(e^\perp,\bar\bv)}_1.
\end{equation}
The second summand has $e^\perp$ in the extrusion slot and is
bounded by~\eqref{eq:bilinear_eperp} and Cauchy--Schwarz:
\begin{equation}\label{eq:Helmholtz_b_direct}
  \bigl|\ip{e^\perp}{\bQ(e^\perp,\bar\bv)}_1\bigr|
  \;\le\; C\,h^{2r_\star}\,\nrm{\bu}_{W^{r_\star+1,\infty}}\,\nrm{\bu}_{W^{1,\infty}}.
\end{equation}
The first summand has $\varepsilon$ in the extrusion slot, 
We apply~\eqref{eq:polarised} with
$(X,Y,Z) = (\varepsilon,\bar\bv,e^\perp)$. The terms
$\Phi(\varepsilon,\bar\bv,e^\perp)$ and
$\Phi(\bar\bv,\varepsilon,e^\perp)$ vanish
because of~\eqref{eq:eperp_kills_curl}, giving
\begin{equation}\label{eq:polarised_solve_b}
  \ip{e^\perp}{\bQ(\varepsilon,\bar\bv)}_1
  \;=\; -\ip{\varepsilon}{\bQ(e^\perp,\bar\bv)}_1
        \;-\; \ip{\bar\bv}{\bQ(e^\perp,\varepsilon)}_1
        \;-\; \ip{e^\perp}{\bQ(\bar\bv,\varepsilon)}_1.
\end{equation}
Each right-hand-side pairing now has either $\bar\bv$ or $e^\perp$ in the
extrusion slot.
Applying~\eqref{eq:bilinear_assembly} or~\eqref{eq:bilinear_eperp}, with
Cauchy--Schwarz gives
\begin{equation}\label{eq:Helmholtz_b_polarised}
  \bigl|\ip{e^\perp}{\bQ(\varepsilon,\bar\bv)}_1\bigr|
  \;\le\; C\,h^{r_\star}\,\nrm{\bu}_{W^{r_\star+1,\infty}}\,\nrm{\varepsilon}_{L_h^2}.
\end{equation}
Using the trilinear estimate
with~\eqref{eq:Helmholtz_b_direct},~\eqref{eq:Helmholtz_b_polarised},
results in
\begin{equation}\label{eq:II_bound}
  \bigl|\ip{e}{\bP_h\bQ(e,\bar\bv)}_1\bigr|
  \;\le\; C_{\rm tri}\nrm{\bu}_{W^{1,\infty}}\nrm{e}_{L_h^2}^2
       \;+\; C h^{r_\star}\nrm{\bu}_{W^{r_\star+1,\infty}}\nrm{e}_{L_h^2}
       \;+\; C h^{2r_\star},
\end{equation}
where smooth-solution norms have been absorbed into the constants in
the second and third terms.

\bigskip
\noindent\textit{Step~2c. Term (III).}
Through the energy identity $\ip{e}{\bQ(e,e)}_1 = 0$ and self-adjointness of
$\bP_h$ the cubic term becomes
\[
  \ip{e}{\bP_h\bQ(e,e)}_1
  \;=\; \ip{\varepsilon}{\bQ(e,e)}_1
  \;=\; \ip{e}{\bQ(e,e)}_1 - \ip{e^\perp}{\bQ(e,e)}_1
  \;=\; -\ip{e^\perp}{\bQ(e,e)}_1.
\]
Bilinearity implies $\bQ(e,e) = \bQ(\varepsilon,\varepsilon) +
\bQ(\varepsilon,e^\perp) + \bQ(e^\perp,\varepsilon) + \bQ(e^\perp,e^\perp)$.
Pairings of $e^\perp$ with $\bQ(\varepsilon,e^\perp)$ and with
$\bQ(e^\perp,e^\perp)$ vanish by~\eqref{eq:eperp_kills_curl}. For the
remaining two pairings we find with ~\eqref{eq:bilinear_eperp} and Cauchy--Schwarz 
\begin{align}
\bigl|\ip{e^\perp}{\bQ(e^\perp,\varepsilon)}_1\bigr|
&\leq 
 Ch^{2r_\star}\nrm{\bu}_{W^{r_\star+1,\infty}}\nrm{\varepsilon}_{L_h^2},\label{pairing_1}\\
\bigl|\ip{e^\perp}{\bQ(\varepsilon,\varepsilon)}_1\bigr|
&\leq 
Ch^{r_\star}\nrm{\bu}_{W^{r_\star+1,\infty}}\nrm{\varepsilon}_{L_h^2}^2,\label{pairing_2}
\end{align}
where we have used for the second estimate~\eqref{eq:polarised_two} with
$(X,Y) = (\varepsilon,e^\perp)$. After one eliminates
$\Phi(\varepsilon,\varepsilon,e^\perp) = 0$
via~\eqref{eq:eperp_kills_curl},
$\ip{e^\perp}{\bQ(\varepsilon,\varepsilon)}_1
= -\ip{\varepsilon}{\bQ(e^\perp,\varepsilon)}_1$.
Then~\eqref{eq:bilinear_eperp} yields \Cref{pairing_2}.

\noindent
By combining, and  using $\nrm{\varepsilon}_{L_h^2}\le\nrm{e}_{L_h^2}$ we have
\begin{equation}\label{eq:III_bound}
  \bigl|\ip{e}{\bP_h\bQ(e,e)}_1\bigr|
  \;\le\; C h^{r_\star}\nrm{\bu}_{W^{r_\star+1,\infty}}\nrm{e}_{L_h^2}^2
       \;+\; C h^{2r_\star}\nrm{\bu}_{W^{r_\star+1,\infty}}\nrm{e}_{L_h^2}.
\end{equation}

\bigskip
\noindent\textit{Step~3. Gr\"onwall closure.}
From ~\eqref{eq:tau_residual},~\eqref{eq:I_bound},~\eqref{eq:II_bound},~\eqref{eq:III_bound},
follows for the energy balance~\eqref{eq:energy_balance}
\begin{equation}\label{eq:gronwall_inequality}
  \tfrac12\ddt\nrm{e}_{L_h^2}^2
  \;\le\; C_L(h)\,\nrm{e}_{L_h^2}^2 \;+\; F(h)\,\nrm{e}_{L_h^2} \;+\; G(h),
\end{equation}
where, absorbing smooth-solution norms into constants,
\begin{align}
  C_L(h) &:= (C_Q + C_{\rm tri})\nrm{\bu}_{W^{1,\infty}}
          + C_1 h^{r_\star}\nrm{\bu}_{W^{r_\star+1,\infty}}, \label{eq:CL_def}\\
  F(h)   &:= C_\tau h^r + C_2 h^{r_\star}\nrm{\bu}_{W^{r_\star+1,\infty}}
          + C_3 h^{2r_\star}\nrm{\bu}_{W^{r_\star+1,\infty}}, \label{eq:F_def}\\
  G(h)   &:= C_4 h^{2r_\star}\nrm{\bu}_{C^1_t W^{r_\star+1,\infty}_x}^2.
              \label{eq:G_def}
\end{align}
Set
$h_1 := \bigl(\nrm{\bu}_{W^{1,\infty}}/(C_1\nrm{\bu}_{W^{r_\star+1,\infty}})\bigr)^{1/r_\star}$.
For $h\le \min(h_0, h_1)$, the second summand in~\eqref{eq:CL_def}
is bounded by $\nrm{\bu}_{W^{1,\infty}}$, hence
\begin{equation}\label{eq:tildeCL_def}
  C_L(h)\;\le\;\tilde C_L
  \;:=\;(C_Q + C_{\rm tri} + 1)\nrm{\bu}_{C([0,T];W^{1,\infty})},
\end{equation}
which is independent of $h$ and depends only on the smooth solution
through $\nrm{\bu}_{C([0,T];W^{1,\infty})}$ and on mesh constants.

\smallskip
From Young's inequality
follows for~\eqref{eq:gronwall_inequality}
\begin{equation}\label{eq:gronwall_standard}
  \ddt\nrm{e}_{L_h^2}^2
  \;\le\; (2\tilde C_L + 1)\,\nrm{e}_{L_h^2}^2
       + \bigl(F(h)^2 + 2G(h)\bigr).
\end{equation}
Gr\"onwall's inequality yields
\begin{equation}\label{eq:gronwall_final}
  \nrm{e(t)}_{L_h^2}^2
  \;\le\; e^{(2\tilde C_L+1)t}\Bigl[\nrm{e(0)}_{L_h^2}^2
            + \bigl(F(h)^2 + 2G(h)\bigr)\,t\Bigr].
\end{equation}
Set $r^\ast := \min(r_{\rm rec},r_\star)$. By
$(a+b+c)^2 \le 3(a^2+b^2+c^2)$,
\[
  F(h)^2
  \;\le\; 3\bigl(C_\tau^2 h^{2r_{\rm rec}}
              + C_2^2 \nrm{\bu}^2 h^{2r_\star}
              + C_3^2 \nrm{\bu}^2 h^{4r_\star}\bigr).
\]
The exponents satisfy $2r_{\rm rec}\ge 2r^\ast$,
$2r_\star\ge 2r^\ast$, and $4r_\star\ge 2r_\star\ge 2r^\ast$ since
$r_\star\ge 1$ under \Cref{conv:cases}. For $h\le 1$ each term is
bounded by a constant times $h^{2r^\ast}$, and similarly
$2G(h)\le C h^{2r_\star}\le C h^{2r^\ast}$. Hence
\begin{equation}\label{eq:F2G_rate}
  F(h)^2 + 2G(h) \;=\; \OO(h^{2r^\ast}).
\end{equation}
For $\bv^h(0) = \bP_h\bar\bv(0)$, $e(0) = -e^\perp(0)$ and
$\nrm{e(0)}_{L_h^2}\le C_0 h^{r_\star}\le C_0 h^{r^\ast}$
by~\eqref{eq:eperp_bounds}. Substituting
into~\eqref{eq:gronwall_final},
\[
  \sup_{t\in[0,T]}\nrm{e(t)}_{L_h^2}
  \;\le\; C(T)\,h^{r^\ast},
\]
with
$C(T) = e^{(\tilde C_L+1/2)T}\sqrt{C_0^2 + (\tilde F^2 + 2\tilde G)T}$
depending on $T$,
$\nrm{\bu}_{C^1([0,T];W^{r_\star+1,\infty})}$, and mesh regularity,
but not on $h$. Under \Cref{conv:cases}, $r^\ast = 1$ in case~(A) and
$r^\ast = 2$ in case~(B).
\end{proof}

\subsubsection{Trilinear Estimate (\texorpdfstring{\Cref{lem:trilinear}}{})}
\label{app:trilinear_proof}

\begin{proof}
The trilinear form
$I_{\rm cross} = \ip{e}{\bQ(e, \bar\bv)}_1$ has the discrete error
$e$ in two slots; the smooth $\bar\bv$ supplies pointwise control on
$\bar\omega = \tD_1\bar\bv$. We use Cauchy--Schwarz over dual edges
with two factors: the first via mesh regularity
$\ell_j^* \ge c_\ell h$ to relate the edge-scaled error norm to
$\nrm{e}_{L_h^2}$; the second via a Gram inequality for the
averaging reconstruction to relate pointwise reconstructed velocities at dual
vertices back to $\nrm{e}_{L_h^2}$. The two $h^{-1/2}$-factors combine with 
the inner $h^{d-1}$ from the face-area sum to
give the final $h^{d-2}$ scaling: bounded in $d=2$, decaying in $d=3$.

\smallskip
We define $\epsilon_l := e_l/\ell_l^*$
and $\Omega_k := \bar\omega_k/|f_k^*|$,
so that $e_l = \epsilon_l\,\ell_l^*$ and $\bar\omega_k = \Omega_k\,|f_k^*|$.
The variable $\epsilon_l$ has the dimensions of velocity, the
de~Rham interpretation of $e_l$ being a line integral over $e_l^*$;
analogously $\Omega_k$ has the dimensions of vorticity, the
de~Rham interpretation of $\bar\omega_k$ as flux integral over
$f_k^*$. By de~Rham commutativity,
$|\Omega_k| = |\bar\omega_k|/|f_k^*|
\le \nrm{\dd\bu^\flat}_{L^\infty}
\le \nrm{\bu}_{W^{1,\infty}}$,
because the smooth vorticity bounds the dual-face average.
The trilinear form is
$I_{\rm cross}
= \sum_j(\bM_1)_{jj}\,e_j\sum_k w_{jk}({e})\,\bar\omega_k$.
Using $(\bM_1)_{jj} = A_j/\ell_j^*$ where $A_j := |f_j|$ is the primal
face area, so $(\bM_1)_{jj}\,e_j = A_j\epsilon_j$:
\begin{equation}\label{eq:bound_tri}
  I_{\rm cross}
  = \sum_j A_j\,\epsilon_j
    \sum_k w_{jk}({e})\,\Omega_k\,|f_k^*|.
\end{equation}
The extrusion weight $w_{jk}({e})$ is, by \Cref{def:contraction},
determined by the averaging reconstruction (\cref{def:averaging_recon}).
The reconstruction at a dual vertex $v_i^*$ of cell $K_i^*$ is
$\tilde\bu({e})(v_i^*)
= G_i^{-1}\sum_{e_n^*\prec K_i^*}
  \epsilon_n\,\hat{t}_n$,
where $G_i = \sum_{n\prec K_i^*}\hat{t}_n\otimes\hat{t}_n$ is the Gram
matrix at $v_i^*$, symmetric positive definite with $\nrm{G_i^{-1}} \le C_G$
under \Cref{ass:mesh_reg}. The extrusion weight is the trapezoidal
average of $\tilde\bu({e})\cdot\hat{e}_k$ over the two endpoints of
the dual edge $e_j^*$, projected onto the edge $\hat{e}_k$
of the dual face $f_k^*$:
\begin{equation}\label{eq:weight_bound_tri}
  |w_{jk}({e})|
  \le C_w\,\max_{i\sim j}|\tilde\bu({e})(v_i^*)|,
\end{equation}
where $C_w$ depends only on the mesh regularity constant and $i\sim j$
denotes that $v_i^*$ is one of the two endpoints of $e_j^*$.

\smallskip
Since 
$|\Omega_k| \le \nrm{\bu}_{W^{1,\infty}}$,
and
$\sum_{k\prec j}|f_k^*| \le C_{\rm st}\,h^{d-1}$:
\[
  \Bigl|\sum_k w_{jk}({e})\,\Omega_k\,|f_k^*|\Bigr|
  \le C_1\,\nrm{\bu}_{W^{1,\infty}}\,h^{d-1}\,
      \max_{i\sim j}|\tilde\bu({e})(v_i^*)|.
\]
We substitute into~\eqref{eq:bound_tri} and apply
Cauchy--Schwarz over $j$ with weights $A_j$
\begin{equation}\label{eq:Icross_CS_incomp}
  |I_{\rm cross}|
  \le C_1\,\nrm{\bu}_{W^{1,\infty}}\,h^{d-1}
    \Bigl(\sum_j A_j\epsilon_j^2\Bigr)^{1/2}
    \Bigl(\sum_j A_j
      \bigl(\max_{i\sim j}|\tilde\bu({e})(v_i^*)|
      \bigr)^2\Bigr)^{1/2}.
\end{equation}
For the first factor in \eqref{eq:Icross_CS_incomp} we have with
$\nrm{{e}}_{L_h^2}^2
= \sum_j(\bM_1)_{jj}e_j^2
= \sum_j A_j\ell_j^*\epsilon_j^2$
and $\ell_j^* \ge c_\ell h$
\begin{equation}\label{eq:first_factor}
  \sum_j A_j\epsilon_j^2
  \le (c_\ell h)^{-1}\nrm{{e}}_{L_h^2}^2.
\end{equation}
The factor $(c_\ell h)^{-1}$ contributes one $h^{-1/2}$ to the final estimate.

For the second factor in \eqref{eq:Icross_CS_incomp} it holds
by the Gram-inverse bound and Cauchy--Schwarz on the cell sum
\begin{equation}\label{eq:avg_recon_bound}
  |\tilde\bu({e})(v_i^*)|^2
  \le C_G^2\,C_{\rm val}
    \sum_{l\prec K_i}\epsilon_l^2,
\end{equation}
where $C_{\rm val}$ is the maximal number of dual edges
incident to a dual vertex. Each cell $K_i$ is adjacent to at most
$C_{\rm st}$ edges $j$, and $A_j = \OO(h^{d-1})$.
Summing over $j$ and exchanging the order of summation:
\[
  \sum_j A_j\bigl(\max_{i\sim j}|\tilde\bu({e})(v_i^*)|
  \bigr)^2
  \le C_2\,h^{d-1}
      \sum_l\epsilon_l^2.
\]
To convert $\sum_l\epsilon_l^2$ back to $\nrm{e}_{L_h^2}$, we observe that
$\nrm{{e}}_{L_h^2}^2
= \sum_l A_l\ell_l^*\epsilon_l^2$
and $A_l\ell_l^* = \OO(h^d)$, thus
\begin{equation}\label{eq:second_factor}
  \sum_l\epsilon_l^2
  \le \frac{1}{\min_l(A_l\ell_l^*)}
      \nrm{{e}}_{L_h^2}^2
  \le C_3\,h^{-d}\nrm{{e}}_{L_h^2}^2.
\end{equation}
This gives
$\sum_j A_j(\max_{i\sim j}|\tilde\bu({e})(v_i^*)|)^2
\le C_4\,h^{-1}\nrm{{e}}_{L_h^2}^2$. The factor $h^{-1}$ inside the
square root contributes another $h^{-1/2}$.

From~\eqref{eq:first_factor},~\eqref{eq:second_factor},
and~\eqref{eq:Icross_CS_incomp}:
\[
  |I_{\rm cross}|
  \le C_1\,\nrm{\bu}_{W^{1,\infty}}\,h^{d-1}
    (c_\ell h)^{-1/2}\nrm{{e}}_{L_h^2}
     C_4^{1/2}\,h^{-1/2}\nrm{{e}}_{L_h^2}
  = \hat C\,\nrm{\bu}_{W^{1,\infty}}\,
    h^{d-2}\nrm{{e}}_{L_h^2}^2.
\]
The accounting is: $h^{d-1}$ from the face-area sum, $h^{-1/2}$ from
the first factor, $h^{-1/2}$ from the second factor; net $h^{d-2}$.
For $d=2$: $h^{d-2}=1$, giving an $h$-independent bound.
For $d=3$: $h^{d-2}=h$.
In both cases~\eqref{eq:trilinear_bound} holds
with $C_{\rm tri} = \hat C$ ($h$-independent), which is what 
is used in  the stability analysis (\Cref{app:stability_proof}, Step~2b).
\end{proof}

\subsubsection{Convergence of Conserved Quantities (\texorpdfstring{\Cref{thm:conv_cons}}{})}
\label{app:thm:conv_cons_proof}
\begin{proof}
We begin with the convergence of initial data and then extend to all $t\in[0,T]$.

\emph{Initial energy.}
The convergence $\Ekin_h(0)\to \Ekin(0)$ follows from the de~Rham property (\cref{lem:interp_error}) and the Hodge* consistency (\cref{lem:hodge_error}).
Writing
\begin{align*}
  \Ekin_h(0) &= \tfrac{1}{2}\ip{\bP_h\bar\bv}{\bP_h\bar\bv}_1
  = \tfrac{1}{2}\ip{\bar\bv}{\bar\bv}_1 - \tfrac{1}{2}\nrm{(I-\bP_h)\bar\bv}_{\bM_1}^2,
\end{align*}
the first term satisfies
$\ip{\bar\bv}{\bar\bv}_1 = \int_\Omega|\bu_0|^2\,dV + \OO(h^{r_\star}\nrm{\bu_0}_{W^{r_\star,\infty}})$
by \Cref{lem:hodge_error} ($k=1$) and de~Rham interpolation accuracy.
The projection error satisfies $\nrm{(I-\bP_h)\bar\bv}_{\bM_1} \le C\,h^{r_\star}\nrm{\bu_0}_{W^{r_\star,\infty}}$ by \Cref{lem:proj_error}; squaring implies $\OO(h^{2r_\star})$.  Both corrections are $\OO(h^{r_\star})$, giving 
$|\Ekin_h(0) - \Ekin(0)| = \OO(h^{r_\star}\nrm{\bu_0}_{W^{r_\star,\infty}})$.

\emph{Initial helicity.}
By \Cref{lem:helicity_consistency} and Cauchy--Schwarz we have
\begin{align*}
  |H_h(0) - H(0)|
  &\le \nrm{\Qh^1\bar\bv - \bu_0}_{L^2}\nrm{\bom_0}_{L^2}
     + \nrm{\bu_0}_{L^2}\nrm{\Qh^2\bar\bom - \bom_0}_{L^2}
     + \OO(h^{2r_{\rm rec}}) \\
  &\le C\,h^{r_{\rm rec}}\bigl(
       \nrm{\bu_0}_{W^{r_{\rm rec},\infty}}\nrm{\bom_0}_{L^2}
     + \nrm{\bu_0}_{L^2}\nrm{\bom_0}_{W^{r_{\rm rec},\infty}}\bigr)
   = \OO(h^{r_{\rm rec}}).
\end{align*}

\emph{Initial circulation.}
The path $\gamma_h$ is a chain of dual edges forming a closed 1-cycle that approximates the smooth curve 
$\gamma$.  On a mesh with spacing $h$, the dual vertices (circumcentres) do not necessarily lie on
$\gamma$, and the distance between the piecewise-linear path $\gamma_h$ and $\gamma$ is $\OO(h)$.  The quadrature error of replacing $\oint_\gamma\bu_0\cdot d\ell$ by the sum of line integrals over dual edges has two contributions, the $\OO(h)$ path-approximation error and the $\OO(h^2)$ midpoint-rule error on each edge segment.  The dominant term gives $|\Gamma_h^{(\gamma)}(0) - \Gamma(0)| = \OO(h\,\nrm{\bu_0}_{W^{1,\infty}}\,\mathrm{length}(\gamma))$.

\emph{Extension to all times.}
For the conserved quantities, $\Ekin_h(t) = \Ekin_h(0)$ and $\Gamma_h^{(\gamma)}(t) = \Gamma_h^{(\gamma)}(0)$ by \Cref{thm:energy,thm:kelvin}.
Hence $|\Ekin_h(t) - \Ekin(t)| = |\Ekin_h(0) - \Ekin(0)| = \OO(h^{r_\star})$, and $|\Gamma_h^{(\gamma)}(t) - \Gamma(t)| = |\Gamma_h^{(\gamma)}(0) - \Gamma(0)| = \OO(h)$.
For helicity, the approximate conservation $|\ddt H_h| \le C\,h^{\min(r_{\rm rec},\,r_\star)}$ (\cref{thm:helicity}) gives $|H_h(t) - H_h(0)| \le C\,h^{\min(r_{\rm rec},\,r_\star)}\,t$.  Combining with the initial-data convergence $|H_h(0) - H(0)| = \OO(h^{r_{\rm rec}})$ via the triangle inequality we obtain
$  |H_h(t) - H(t)| \le |H_h(t) - H_h(0)| + |H_h(0) - H(0)| = \OO(h^{\min(r_{\rm rec},\,r_\star)}).$
\end{proof}
\subsubsection{Projection Error (\texorpdfstring{\Cref{lem:proj_error}}{})}
\label{app:proj_error_proof}
\begin{proof}
We prove the two parts independently. 
Part~(i) is a $\bD_2\bM_1$-divergence
analysis; part~(ii) is a discrete $W^{1,\infty}$-stability estimate for the
Laplacian. Both use regularity $W^{r_\star,\infty}$.
Throughout,
$\Phi_j$ is the exact and divergence-free flux of $\bu$ across face $j$;
 and $\br := \bM_1\mathcal{R}_h\bu^\flat -\boldsymbol\Phi$ 
 is defined in \Cref{app:approximation},~\eqref{eq:flux_residual}.
 \Cref{lem:hodge_error} gives the face bound $|r_j|\le C_\star h^{r_\star}|f_j|\,
\nrm{\bu}_{W^{r_\star,\infty}}$ and
$\nrm{\br}_{L_{h,1}^2}\le C_\star h^{r_\star}\nrm{\bu}_{W^{r_\star,\infty}}$. 

\medskip
\emph{Part (i): $L_h^2$-bound at $W^{r_\star,\infty}$.}
This is estimate~\eqref{eq:interp_error_v} of \Cref{lem:interp_error},
proved in \Cref{app:approximation}. We define
$e^\perp := (I-\bP_h)\mathcal{R}_h\bu^\flat = \tD_0\phi$ and use
$\bM_1$-orthogonality of $\bP_h$,
\[
  \nrm{e^\perp}_{L_h^2}^2
  = \ip{e^\perp}{\mathcal{R}_h\bu^\flat}_1
  = (e^\perp)^T\bM_1\mathcal{R}_h\bu^\flat
  = (e^\perp)^T\boldsymbol\Phi + (e^\perp)^T\br
  = (e^\perp)^T\br,
\]
because  $\boldsymbol\Phi$ is divergence-free,
$(e^\perp)^T\boldsymbol\Phi = 0$. 
\Cref{lem:hodge_error} then gives
$\nrm{e^\perp}_{L_h^2}\le C_\star h^{r_\star}\nrm{\bu}_{W^{r_\star,\infty}}$,
i.e.\ part~(i) with $C_\pi = C_\star$.

\medskip
\emph{Part (ii): pointwise-reconstruction bound at $W^{r_\star+1,\infty}$.}
With $e^\perp$ as in $i)$, setting $e^\perp = \tD_0\phi$ where
$\phi\in C^0(\KKs)$ solves
\begin{equation}\label{eq:phi_poisson}
 L_h\phi := \bD_2\bM_1\tD_0\phi = \bD_2\bM_1\mathcal{R}_h\bu^\flat =: f_h,
\end{equation}
where $\sum_i(\bM_0)_{ii}\phi_i = 0$.
Recall from \Cref{def:disc_norms} that
$\nrm{e^\perp}_{\rm rec} = \max_j|(\tD_0\phi)_j|/|e_j^*| = \max_j|(\nabla_h\phi)_j|$
with $(\nabla_h\phi)_j := (\phi_{i'}-\phi_i)/|e_j^*|$. We have to show
a discrete $W^{1,\infty}$ estimate for $L_h$ with data $f_h$. 
The bound rests on three facts: $L_h$ is a (finite-element)
Laplacian (ii.1), its solution operator is $W^{1,\infty}$-stable up to a
logarithm (ii.2), and $f_h$ is small in the dual norm (ii.3).

\smallskip
\emph{(ii.1) $L_h$ is $P_1$-stiffness matrix.}
With the Hodge star $(\bM_1)_{jj} = |f_j|/|e_j^*|$,
\begin{equation}\label{eq:Lh_cotangent}
 (L_h\phi)_i \;=\; \sum_{K_{i'}\sim K_i}
   \frac{|f_{i,i'}|}{|e_{i,i'}^*|}\,(\phi_{i'}-\phi_i),
\end{equation}
which on orthogonal Delaunay--Voronoi meshes (\Cref{ass:mesh_reg}\,(3))
coincides with the stiffness matrix of continuous $P_1$-finite
elements on the Delaunay triangulation, such that $L_h$ is the
$P_1$ Laplacian. This is the finite-volume/finite-element
equivalence on Delaunay meshes \cite{eymard2000finite}.

On a flat domain there is no quadrature error in the stiffness; the only finite-volume
modification is the right-hand side $f_h$. On a curved domain a geometric
perturbation of the stiffness appears, of the same order, handled by the
surface framework of \cite{demlow2009higher,poveda2023pointwise}.

\emph{(ii.2) $W^{1,\infty}$ stability.}
For the $P_1$ Laplacian \eqref{eq:Lh_cotangent}, the solution operator is
stable in the max-norm of the gradient up to a single logarithm: for any cell
functional $g:= (g_i)$ and $\psi$ solving $L_h\psi = g$,
\begin{equation}\label{eq:FV_Linfty_reg}
 \max_j|(\nabla_h\psi)_j| \;\le\; C_{\rm st}\,|\log h|\,\nrm{g}_{W^{-1}_\infty},
 \qquad
 \nrm{g}_{W^{-1}_\infty} := \!\!\sup_{\chi\in C^0(\KKs),\,\chi\neq{\rm const}}\!\!
   \frac{\bigl|\sum_i g_i\chi_i\bigr|}{\nrm{\nabla_h\chi}_{L_h^1}},
\end{equation}
valid in $d = 2,3$, where $\nrm{\nabla_h\chi}_{L_h^1} := \sum_j |f_j|\,|e_j^*|\,
|(\nabla_h\chi)_j|$ is the discrete $W^{1,1}$ seminorm. This is the
discrete-Green's-function estimate of Schatz--Wahlbin \cite{schatzwahlbin1995}
and Schatz \cite{schatz1998} for $P_1$ elements on flat domains, extended to
Voronoi--Delaunay surface meshes by Demlow
\cite[Thm~3.2, eq.~(3.15)]{demlow2009higher} and \cite{poveda2023pointwise}.
The term $|\log h|$ is the cost $\nrm{\nabla_h G_h^x}_{L_h^1}\approx|\log h|$
of the discrete Green's function $G_h^x$ and is dimension-independent (\cite[Lem.~3.3]{demlow2009higher}); without it no $L^\infty$ bound on
$\nabla_h\psi$ holds, since $\nrm{\nabla\psi}_{L^\infty}$ is not controlled by
$\nrm{\Delta\psi}_{L^\infty}$.

\smallskip
\emph{(ii.3) Data smallness in dual norm.}
By incompressibility $\bD_2\boldsymbol\Phi = 0$ (\eqref{eq:Phi_orthogonal}),
$f_h = \bD_2\bM_1\mathcal{R}_h\bu^\flat = \bD_2(\boldsymbol\Phi + \br)
= \bD_2\br$. For any $\chi\in C^0(\KKs)$, we obtain with integration by parts
\[
 \sum_i (f_h)_i\,\chi_i \;=\; \pm\,\chi^T\tD_0^T\br
   \;=\; \pm\sum_j r_j\,(\tD_0\chi)_j
   \;=\; \pm\sum_j r_j\,|e_j^*|\,(\nabla_h\chi)_j,
\]
so, with $|f_j|\,|e_j^*|$ the volume of face $j$ and the 
Hodge bound $|r_j|\le C_\star h^{r_\star}|f_j|\,\nrm{\bu}_{W^{r_\star,\infty}}$,
it follows
\begin{equation}\label{eq:fh_Linfty}
 \nrm{f_h}_{W^{-1}_\infty}
 \;\le\; \max_j\frac{|r_j|\,|e_j^*|}{|f_j|\,|e_j^*|}
 \;=\; \max_j\frac{|r_j|}{|f_j|}
 \;\le\; C_\star\,h^{r_\star}\,\nrm{\bu}_{W^{r_\star,\infty}}.
\end{equation}
Applying \eqref{eq:FV_Linfty_reg} with $g = f_h$, $\psi = \phi$, and inserting
\eqref{eq:fh_Linfty},
\[
 \nrm{e^\perp}_{\rm rec} = \max_j|(\nabla_h\phi)_j|
 \;\le\; C_{\rm rec}\,h^{r_\star}\,|\log h|^{\beta_d}\,
   \nrm{\bu}_{W^{r_\star,\infty}}, \qquad \beta_2 = \beta_3 = 1,
\]
with $C_{\rm rec} := C_{\rm st}C_\star$. In particular the bound of
\Cref{lem:proj_error}\,(ii), stated at the stronger norm
$\nrm{\bu}_{W^{r_\star+1,\infty}}$, holds a fortiori. 
The logarithm 
is dimension-independent ($\beta_2 = \beta_3 = 1$).
\end{proof}

\subsubsection{Viscous Consistency (\texorpdfstring{\Cref{thm:NS_consistency}}{}, viscous part)}
\label{app_proof_lem:visc_trunc_weak}
\begin{proof}
The viscous truncation error is the viscous part of $\tau_h^\nu$
(\Cref{thm:NS_consistency}),
$\tau_h^{\rm visc} := \nu\,\Deltah\,\bar\bv - \nu\,\mathcal{R}_h(\delta\dd\bu^\flat)$,
where $\delta\dd\bu^\flat = \delta\omega$, and the vector
Laplacian satisfies $-\Delta\bu = (\delta\dd + \dd\delta)\bu^\flat
= \delta\dd\bu^\flat$ since $\delta\bu^\flat = 0$. Thus
$\tau_h^{\rm visc}$ measures how well $\Deltah = \codiff\,\tD_1$
approximates $\delta\dd$ on $\bar\bv$.
We need to control $\ip{e}{\tau_h^{\rm visc}}_1$
in terms of $\nrm{\tD_1 e}_{\bM_2}$ and $\nrm{e}_{L_h^2}$, with
$\nrm{\tD_1 e}_{\bM_2}^2$ supplied by the energy
estimate. The identity
$\ip{e}{\Deltah\bar\bv}_1 = \ip{\tD_1 e}{\tD_1\bar\bv}_2$ 
reduces the bound to a
primal--dual cell-pairing analysis through Whitney forms.

\smallskip
Since $\tD_1\bar\bv = \mathcal{R}_h\omega$ by de~Rham commutativity,
$\Deltah\bar\bv = \codiff\,\tD_1\bar\bv = \codiff\,\mathcal{R}_h\omega$,
and the viscous truncation error becomes
$\tau_h^{\rm visc} = \nu\,\tau_\star$
with $\tau_\star = \codiff\,\mathcal{R}_h\omega - \mathcal{R}_h(\delta\omega)$.
Testing against a cochain $e$, the $\bM_1$-inner product cancels
$\bM_1^{-1}$ 
\[
\ip{e}{\Deltah\bar\bv}_1
  = \ip{\tD_1 e}{\mathcal{R}_h\omega}_2,
\]
so 
\begin{equation}\label{eq:weak_visc_split}
\ip{e}{\tau_h^{\rm visc}}_1
  = \nu\bigl(\ip{\tD_1 e}{\mathcal{R}_h\omega}_2
  - \ip{e}{\mathcal{R}_h(\delta\omega)}_1\bigr).
\end{equation}
Both terms are bilinear pairings of a cochain against the de~Rham
interpolant of a smooth form. 
Each bilinear pairing has a controlled cell error.

\smallskip
Define the dual-cell sums
\begin{equation}\label{eq:SA_SB_def}
  S_A := \sum_{f_k^*} (\tD_1 e)_k \int_{f_k^*}\!\star\,\omega,
  \qquad
  S_B := \sum_{e_j^*} e_j \int_{e_j^*}\!\star\,\delta\omega,
\end{equation}
where $f_k^*$ ranges over the dual 2-cells on which the dual 2-cochain
$\tD_1 e$ is indexed, $e_j^*$ over the dual 1-cells on which
the dual 1-cochain $e$ is indexed.
Equivalently, $S_A$ and $S_B$ are obtained from the discrete pairings
$\ip{\tD_1 e}{\mathcal{R}_h\omega}_2$ and
$\ip{e}{\mathcal{R}_h(\delta\omega)}_1$ by replacing, on each cell, the
diagonal Hodge-star weight with this integral. By
\Cref{lem:hodge_bilinear},
\begin{align}
  \bigl|\ip{\tD_1 e}{\mathcal{R}_h\omega}_2
    - S_A\bigr|
  &\le C_2\,h^{r_\star}\,\nrm{\omega}_{W^{r_\star,\infty}}\,\nrm{\tD_1 e}_{\bM_2},
  \label{eq:weak_visc_A}\\
  \bigl|\ip{e}{\mathcal{R}_h(\delta\omega)}_1
    - S_B\bigr|
  &\le C_1\,h^{r_\star}\,\nrm{\delta\omega}_{W^{r_\star,\infty}}\,\nrm{e}_{L_h^2},
  \label{eq:weak_visc_B}
\end{align}
with the $h^{r_\star}$ from \Cref{lem:hodge_error}.

\smallskip
Abbreviate $\mathrm{A} := \ip{\tD_1 e}{\mathcal{R}_h\omega}_2$ and
$\mathrm{B} := \ip{e}{\mathcal{R}_h(\delta\omega)}_1$, so that
$\nu^{-1}\ip{e}{\tau_h^{\rm visc}}_1 = \mathrm{A} - \mathrm{B}$ by
\eqref{eq:weak_visc_split}.  The triangle inequality implies
\begin{equation}\label{eq:triangle_weak_visc}
  |\mathrm{A} - \mathrm{B}|
  \le |\mathrm{A} - S_A| + |S_A - S_B| + |S_B - \mathrm{B}|.
\end{equation}
We decompose the dual-cell integral as
$\int_{\sigma^*}\!\star\alpha = \int_\Omega W_\sigma\wedge\star\alpha + \epsilon_\sigma(\alpha)$,
where $W_\sigma$ is the Whitney $k$-form dual to the primal $k$-cell~$\sigma$.
The Hodge star reproduces the dual-cell volume, so
$\int_\Omega W_\sigma\wedge\star\mathbf{1} = |\sigma^*|$ \cite{bossavit1998}
and the decomposition reproduces constants. 
 The remainder $\epsilon_\sigma(\alpha)$ satisfies
\begin{equation}\label{eq:whitney_remainder}
|\epsilon_\sigma(\alpha)|
\le C\,h^{r_\star}\,\nrm{\alpha}_{W^{r_\star,\infty}}\,|\sigma^*|,
\end{equation}
which is the Hodge-error estimate of \Cref{lem:hodge_error}
applied to $\alpha$ on the dual cell $\sigma^*$.
The Whitney parts of $S_A$ and $S_B$ cancel
\begin{equation}\label{eq:whitney_cancellation}
\int_\Omega\mathcal{W}_h(\tD_1 e)\wedge\star\,\omega
= \int_\Omega \dd(\mathcal{W}_h e)\wedge\star\,\omega
= \int_\Omega \mathcal{W}_h(e)\wedge\star\,\delta\omega,
\end{equation}
where the first equality uses the Whitney--de~Rham commutativity
$\dd\circ\mathcal{W}_h = \mathcal{W}_h\circ\tD_1$. This is the classical
simplicial identity \cite{whitney1957,dodziuk1976,arnold2006} on the
simplicial refinement $\KK_h^{\rm simp}$ of \Cref{def:Whitney}: 
 each dual edge lies along a simplex edge and each dual face is a union of simplex faces, so $\tD_1$ acts as
the coboundary on the cochains and the commutativity
holds cell by cell. The second equality is integration by parts: $\int_\Omega \dd\beta\wedge\star\omega = (-1)^{k+1}\int_\Omega\beta\wedge\dd\star\omega
= \int_\Omega\beta\wedge\star\,\delta\omega$ for a $k$-form $\beta = \mathcal{W}_h e$
(here $k = 1$). The two Whitney integrals therefore equal each other;
the difference $S_A - S_B$ collects only the cell remainders
$\epsilon_\sigma$, each of size $\OO(h^{r_\star}|\sigma^*|)$.
Combining~\eqref{eq:whitney_remainder} over cells via Cauchy--Schwarz:
\begin{equation}\label{eq:SA_SB_bound}
|S_A - S_B|
\le C\,h^{r_\star}\bigl(\nrm{\omega}_{W^{r_\star,\infty}}\nrm{\tD_1 e}_{\bM_2}
  + \nrm{\delta\omega}_{W^{r_\star,\infty}}\nrm{e}_{L_h^2}\bigr).
\end{equation}

Substituting~\eqref{eq:weak_visc_A},~\eqref{eq:weak_visc_B},
and~\eqref{eq:SA_SB_bound} into the triangle inequality~\eqref{eq:triangle_weak_visc}
gives
\[
  |\ip{e}{\tau_h^{\rm visc}}_1|
  \le \nu\,C_\delta\,h^{r_\star}\bigl(
  \nrm{\omega}_{W^{r_\star,\infty}}\nrm{\tD_1 e}_{\bM_2}
  + \nrm{\delta\omega}_{W^{r_\star,\infty}}\nrm{e}_{L_h^2}\bigr).
\]
Sobolev embedding gives
$\nrm{\omega}_{W^{r_\star,\infty}}
  \le C\nrm{\bu}_{W^{r_\star+1,\infty}}$
and
$\nrm{\delta\omega}_{W^{r_\star,\infty}}
  \le C\nrm{\bu}_{W^{r_\star+2,\infty}}
  \le C\nrm{\bu}_{H^{s+2}}$, where we have used 
$H^{s+2}\hookrightarrow W^{r_\star+2,\infty}$
for $s+2 > r_\star+2+d/2$, which holds for
$s\ge 3$ when $r_\star = 1$ and $s\ge 4$ when $r_\star = 2$.
This yields~\eqref{eq:NS_trunc_bound}. The factor of $\nu$ on the
right-hand side, from $\tau_h^{\rm visc} = \nu\,\tau_\star$,
allows to absorb the viscous truncation into the dissipation
$\nu\nrm{\tD_1 e}_{\bM_2}^2$ via Young's inequality, as exploited in
the NS stability proof. \qedhere
\end{proof}

\subsubsection{Navier--Stokes Stability (\texorpdfstring{\Cref{thm:NS_stability}}{})}
\label{app:NS_stability_proof}

\begin{proof}
The proof inherits the framework of the Euler stability proof
(\Cref{app:stability_proof}). The reference field
$\bar\bv = \mathcal{R}_h\bu^\flat$, the Helmholtz
decomposition~\eqref{eq:helmholtz_split}, and the projection-error
bounds~\eqref{eq:eperp_bounds} are unchanged.  New is the
sign-definite dissipation $-\nu\nrm{\tD_1 e}_{\bM_2}^2 \le 0$ in the energy equation,
and that the truncation has an additional viscous part
$\tau_h^{\rm visc}$ controlled only weakly through
\Cref{app_proof_lem:visc_trunc_weak}. 

\smallskip
Taking the $\bM_1$-inner product of the NS error
equation~\eqref{eq:NS_error_expanded} with $e$ results in (cf. \eqref{eq:energy_balance})
\begin{equation}\begin{split}\label{eq:NS_energy_balance}
   \tfrac12\ddt\nrm{e}_{L_h^2}^2 + \nu\nrm{\tD_1 e}_{\bM_2}^2
  &\;=
        -\ip{e}{\bP_h \bQ(\bar\bv,\,e)}_1 -  \ip{e}{\bP_h \bQ(e,\,\bar\bv)}_1 - \ip{e}{\bP_h \bQ(e,\,e)}_1\\
       & \;-\; \ip{e}{\tau_h}_1\;-\; \ip{e}{\tau_h^{\rm visc}}_1.
\end{split}\end{equation}
The three nonlinear inner products and the Euler truncation
$\tau_h^{\rm Euler}$ are bounded as in the Euler
proof. Combining~\eqref{eq:I_bound},~\eqref{eq:II_bound},~\eqref{eq:III_bound},
and the $h^r$-piece of~\eqref{eq:tau_residual} yields
\begin{equation}\label{eq:NS_euler_inherited}
  \sum_{X}\bigl|\ip{e}{\bP_h X}_1\bigr|
  + \bigl|\ip{e}{\tau_h^{\rm Euler}}_1\bigr|
  \;\le\; C_L^{\rm E}(h)\,\nrm{e}_{L_h^2}^2
       + F^{\rm E}(h)\,\nrm{e}_{L_h^2}
       + G^{\rm E}(h),
\end{equation}
where $X$ runs over the $\bQ$-terms in \eqref{eq:NS_energy_balance}, and 
with $C_L^{\rm E},F^{\rm E},G^{\rm E}$ defined as
in~\eqref{eq:CL_def}--\eqref{eq:G_def}. The term
$G^{\rm E}(h) = \OO(h^{2r_\star})$ arises from
the truncation Helmholtz residual $(I-\bP_h)\tau_h^{\rm Euler}$ and
from the direct Step~2b summand $\ip{e^\perp}{\bQ(e^\perp,\bar\bv)}_1$.

\smallskip
\Cref{app_proof_lem:visc_trunc_weak} gives
\begin{equation}\label{eq:NS_visc_weak_bound}
  \bigl|\ip{e}{\tau_h^{\rm visc}}_1\bigr|
  \;\le\; \nu\,C_\delta'\,h^{r_\star}\,\nrm{\bu}_{H^{s+2}}\,
       \bigl(\nrm{\tD_1 e}_{\bM_2} + \nrm{e}_{L_h^2}\bigr).
\end{equation}
Apply Young's inequality $AX \le \tfrac{\nu}{2}X^2 + \tfrac{A^2}{2\nu}$
with $X = \nrm{\tD_1 e}_{\bM_2}$ and
$A = \nu\,C_\delta'\,h^{r_\star}\,\nrm{\bu}_{H^{s+2}}$ to get
\begin{equation}\label{eq:NS_visc_young}
  \nu\,C_\delta'\,h^{r_\star}\,\nrm{\bu}_{H^{s+2}}\,\nrm{\tD_1 e}_{\bM_2}
  \;\le\; \tfrac{\nu}{2}\nrm{\tD_1 e}_{\bM_2}^2
          + \tfrac{\nu (C_\delta')^2 h^{2r_\star}\nrm{\bu}_{H^{s+2}}^2}{2}.
\end{equation}
Substituting this into~\eqref{eq:NS_visc_weak_bound} gives
\begin{equation}\label{eq:NS_visc_assembled}
  \bigl|\ip{e}{\tau_h^{\rm visc}}_1\bigr|
  \;\le\; \tfrac{\nu}{2}\nrm{\tD_1 e}_{\bM_2}^2
       + \nu\,C_\delta'\,h^{r_\star}\,\nrm{\bu}_{H^{s+2}}\,\nrm{e}_{L_h^2}
       + \tfrac{\nu(C_\delta')^2 h^{2r_\star}\nrm{\bu}_{H^{s+2}}^2}{2}.
\end{equation}
Substituting~\eqref{eq:NS_euler_inherited}, \eqref{eq:NS_visc_assembled} into~\eqref{eq:NS_energy_balance}
and moving the dissipation to the right-hand side implies
\[
  \tfrac12\ddt\nrm{e}_{L_h^2}^2 + \nu\nrm{\tD_1 e}_{\bM_2}^2
  \;\le\; C_L^{\rm E}\nrm{e}_{L_h^2}^2 + F^{\rm E}\nrm{e}_{L_h^2}
       + G^{\rm E}
       + \tfrac{\nu}{2}\nrm{\tD_1 e}_{\bM_2}^2
       + \nu C_\delta' h^{r_\star}\nrm{\bu}_{H^{s+2}}\nrm{e}_{L_h^2}
       + \tfrac{\nu(C_\delta')^2 h^{2r_\star}\nrm{\bu}_{H^{s+2}}^2}{2}.
\]
The $\tfrac{\nu}{2}\nrm{\tD_1 e}_{\bM_2}^2$ contribution on the
right is absorbed into the dissipation $\nu\nrm{\tD_1 e}_{\bM_2}^2$
on the left, the term $\tfrac{\nu}{2}\nrm{\tD_1 e}_{\bM_2}^2\ge 0$ is skipped,
\begin{equation}\label{eq:NS_gronwall_inequality}
  \tfrac12\ddt\nrm{e}_{L_h^2}^2
  \;\le\; C_L^{\rm E}\nrm{e}_{L_h^2}^2 + F^{\rm NS}(h)\nrm{e}_{L_h^2}
       + G^{\rm NS}(h),
\end{equation}
with
\begin{align}
  F^{\rm NS}(h) &= F^{\rm E}(h) + \nu C_\delta' h^{r_\star}\nrm{\bu}_{H^{s+2}},
                  \label{eq:FNS_def}\\
  G^{\rm NS}(h) &= G^{\rm E}(h)
                  + \tfrac{\nu(C_\delta')^2 h^{2r_\star}\nrm{\bu}_{H^{s+2}}^2}{2}.
                  \label{eq:GNS_def}
\end{align}
The dissipation converts the weak
viscous truncation bound into a linear-$\nrm{e}$ forcing plus
a constant of order $\nu h^{2r_\star}$.

\smallskip
The argument from Step~3 of the Euler proof applies without change. 
Reduction to $\tilde C_L$ via~\eqref{eq:tildeCL_def} (the
dependence of $C_L^{\rm E}$ on $h$ is identical to the Euler case),
Young's inequality $2F^{\rm NS}\nrm{e}\le\nrm{e}^2 + (F^{\rm NS})^2$,
and Gr\"onwall's inequality yield
\begin{equation}\label{eq:NS_gronwall_final}
  \nrm{e(t)}_{L_h^2}^2
  \;\le\; e^{(2\tilde C_L+1)t}
       \bigl[\nrm{e(0)}_{L_h^2}^2 + \bigl((F^{\rm NS}(h))^2 + 2G^{\rm NS}(h)\bigr)\,t\bigr].
\end{equation}
Adding the new viscous summands to the Euler-part analysis,
$(F^{\rm NS})^2 \le 2(F^{\rm E})^2 + 2\nu^2(C_\delta')^2 h^{2r_\star}\nrm{\bu}_{H^{s+2}}^2$,
and $G^{\rm NS} = G^{\rm E} + \tfrac{\nu(C_\delta')^2}{2}h^{2r_\star}\nrm{\bu}_{H^{s+2}}^2$.
The Euler estimate~\eqref{eq:F2G_rate} gives
$(F^{\rm E})^2 + 2G^{\rm E} = \OO(h^{2r^\ast})$, and the
contributions are $\OO(h^{2r_\star})\le \OO(h^{2r^\ast})$. Hence
$(F^{\rm NS}(h))^2 + 2G^{\rm NS}(h) = \OO(h^{2r^\ast})$.

\smallskip
The hypothesis
$\nrm{e(0)}_{L_h^2}\le C_0 h^r$ of \Cref{thm:NS_stability} with
$r\le r^\ast$ combines with~\eqref{eq:NS_gronwall_final} to give
\[
  \sup_{t\in[0,T]}\nrm{e(t)}_{L_h^2}\;\le\; C^\nu(T)\,h^r,
\]
where
\[
  C^\nu(T) \;:=\; e^{(\tilde C_L+1/2)T}
    \sqrt{\,C_0^2 + (\tilde F_{\rm NS}^2 + 2\tilde G_{\rm NS})\,T\,},
\]
with $\tilde F_{\rm NS}, \tilde G_{\rm NS}$ the smooth-solution
upper bounds of $F^{\rm NS}/h^{r^\ast}$ and $G^{\rm NS}/h^{2r^\ast}$.

\smallskip
For $\nu\in[0,\nu_0]$ each summand under
the square root is bounded uniformly: $C_0^2$ is independent of
$\nu$; $(\tilde F^{\rm NS})^2$ depends on $\nu$ through
$F^{\rm E} + \nu C_\delta'\nrm{\bu}_{H^{s+2}}$ which is bounded by
$\tilde F^{\rm E} + \nu_0 C_\delta'\nrm{\bu}_{H^{s+2}}$;
$\tilde G^{\rm NS}$ likewise. Hence $C^\nu(T)$ is uniformly bounded
on $[0,\nu_0]$. As $\nu\to 0$, $F^{\rm NS}\to F^{\rm E}$ and
$G^{\rm NS}\to G^{\rm E}$, so $C^\nu(T)\to C(T)$, the Euler
constant. The discrete viscous solution therefore converges to the
discrete inviscid solution on the same mesh as $\nu\to 0$ at the
same rate, uniformly in $\nu\in[0,\nu_0]$.
\end{proof}

\subsubsection{Leray--Hopf Solutions (\texorpdfstring{\Cref{thm:weak_convergence}}{})}
\label{app:Leray_Hopf_proof}

\begin{proof}
We keep viscosity $\nu > 0$ fixed. Constants depend on $\nu$ but not on~$h$.
The proof follows the classical Leray construction, adapted to our
 setting in five stages: uniform bounds, time-derivative control,
compactness, passage to the limit in the weak formulation,
and recovery of the energy inequality.

\emph{Step~1: Uniform bounds.}
By \Cref{lem:uniform_apriori}, for fixed $\nu > 0$:
\[
  \nrm{\bu^h}_{L^\infty(0,T;\,L^2)} \le C_W\sqrt{E_0},
  \qquad
  \nrm{\bu^h}_{L^2(0,T;\,H^1)}
  \le \sqrt{\tfrac{C_W^2 E_0}{\nu}} + \OO(h^{r_\star/2}),
\]
both estimates are uniform in $h$. The consistency remainder
of \Cref{lem:uniform_apriori}\,(2) is $\OO(h^{r_\star})\nrm{\bu^h}_{H^1}^2$,
which by the $L^2_t H^1$ bound is $\OO(h^{r_\star})$ in $L^1(0,T)$ and
hence vanishes; the constant degrades as $\nu\to 0$, so the
compactness below is a fixed-positive-viscosity statement.

\emph{Step~2: Time-derivative bound.}
For $\boldsymbol\varphi\in H^1(\Omega)$, divergence-free,
the continuous $H^{-1}$--$H^1$ pairing converts the momentum equation
to the cochain pairing
\begin{equation}\label{eq:LH_pairing_conv}
  \ip{\partial_t\bu^h}{\boldsymbol\varphi}_{L^2}
  = \ip{\partial_t\bv^h}{\mathcal{R}_h\boldsymbol\varphi^\flat}_1
  + \OO(h^{r_\star})\,\nrm{\partial_t\bu^h}_{H^{-1}}\,\nrm{\boldsymbol\varphi}_{H^1}.
\end{equation}
Substituting the cochain momentum equation and using the bounds yields
\begin{align*}
  |\ip{\bP_h\bQ(\bv^h,\bv^h)}{\mathcal{R}_h\boldsymbol\varphi^\flat}_1|
  &\le C\,\nrm{\bu^h}_{L^4}^2\,\nrm{\boldsymbol\varphi}_{H^1}, \\
  |\ip{\Deltah\bv^h}{\mathcal{R}_h\boldsymbol\varphi^\flat}_1|
  &= |\ip{\tD_1\bv^h}{\tD_1\mathcal{R}_h\boldsymbol\varphi^\flat}_2|
  \le C\,\nrm{\bu^h}_{H^1}\,\nrm{\boldsymbol\varphi}_{H^1},
\end{align*}
which together give
\[
  |\ip{\partial_t\bu^h}{\boldsymbol\varphi}_{L^2}|
  \le C\,\bigl(\nrm{\bu^h}_{L^4}^2 + \nu\nrm{\bu^h}_{H^1}\bigr)\,
    \nrm{\boldsymbol\varphi}_{H^1}.
\]
By Sobolev interpolation we obtain
$\nrm{\bu^h}_{L^4} \le C\nrm{\bu^h}_{L^2}^{1/4}\nrm{\bu^h}_{H^1}^{3/4}$,
$\nrm{\bu^h}_{L^4}^2 \le C\nrm{\bu^h}_{L^2}^{1/2}\nrm{\bu^h}_{H^1}^{3/2}$,
which is bounded in $L^{4/3}_t$ by the uniform $L^\infty_tL^2$
and $L^2_tH^1$ estimates of Step~1.
Hence we have uniformly in $h$
\begin{equation}\label{eq:LH_dt_bound}
  \nrm{\partial_t\bu^h}_{L^{4/3}(0,T;\,H^{-1}(\Omega))}
  \le C(E_0,\nu,T).
\end{equation}

\emph{Step~3: Compactness.}
By Banach--Alaoglu, a subsequence, still denoted $h_k$, satisfies
$\bu^{h_k}\rightharpoonup\bu$ weakly in $L^2(0,T;\,H^1)$
and weakly-$*$ in $L^\infty(0,T;\,L^2)$;
correspondingly $\bom^{h_k} = \tD_1\bv^{h_k}$
satisfies $\bom^{h_k}\rightharpoonup\bom = \mathrm{curl}\,\bu$
weakly in $L^2(0,T;\,L^2)$.
The Aubin--Lions compactness lemma, applied with
$\bu^h$ bounded in $L^2(0,T;\,H^1)$, $\partial_t\bu^h$ bounded in
$L^{4/3}(0,T;\,H^{-1})$, and the compact embedding
$H^1(\Omega) \hookrightarrow\hookrightarrow L^2(\Omega)$,
implies
\[
  \bu^{h_k}\to\bu \quad\text{strongly in } L^2(0,T;\,L^2(\Omega)).
\]

\emph{Step~4: Passage to limit.}
For $\boldsymbol\varphi\in C_c^\infty(\Omega\times[0,T))$,
divergence-free, we set $\bar{\boldsymbol\varphi}^h(t)
:= \mathcal{R}_h\boldsymbol\varphi^\flat(t)$.
Test the cochain equation against $\bar{\boldsymbol\varphi}^h$,
integrate by parts in time, using $\boldsymbol\varphi(\cdot,T) = 0$,
\begin{equation}\label{eq:LH_discrete_weak}
  -\!\!\int_0^T\!\!\ip{\bv^h}{\partial_t\bar{\boldsymbol\varphi}^h}_1
  + \!\!\int_0^T\!\!\ip{\bQ(\bv^h,\bv^h)}{\bar{\boldsymbol\varphi}^h}_1
  + \nu\!\!\int_0^T\!\!\ip{\tD_1\bv^h}{\tD_1\bar{\boldsymbol\varphi}^h}_2
  = \ip{\bv^h(0)}{\bar{\boldsymbol\varphi}^h(0)}_1
\end{equation}
For the time-derivative term, Hodge/Whitney consistency gives
$\ip{\bv^h}{\partial_t\bar{\boldsymbol\varphi}^h}_1
= \ip{\bu^h}{\partial_t\boldsymbol\varphi}_{L^2}
+ \OO(h^{r_\star})\,\nrm{\bu^h}_{L^2}\,\nrm{\partial_t\boldsymbol\varphi}_{H^1}$.
The first term converges to
$\int_0^T\int_\Omega\bu\cdot\partial_t\boldsymbol\varphi$
by the strong convergence $\bu^{h_k}\to\bu$ in $L^2(0,T;\,L^2)$;
the remainder vanishes uniformly.

For the viscous term we have by Hodge consistency on 2-forms
$\ip{\tD_1\bv^h}{\tD_1\bar{\boldsymbol\varphi}^h}_2
= \int_\Omega\nabla\bu^h:\nabla\boldsymbol\varphi
+ \OO(h^{r_\star})\,\nrm{\bu^h}_{H^1}\,\nrm{\boldsymbol\varphi}_{H^2}$.
The first term converges to
$\int_0^T\int_\Omega\nabla\bu:\nabla\boldsymbol\varphi$
by weak convergence $\nabla\bu^{h_k}\rightharpoonup\nabla\bu$
in $L^2(0,T;L^2)$ and $\nabla\boldsymbol\varphi$ as multiplier.

The discrete pairing $\ip{\bQ(\bv^h,\bv^h)}{\bar{\boldsymbol\varphi}^h}_1$
approximates the continuous Lamb pairing.
By the analogue of \Cref{lem:weak_consistency} applied to
$\boldsymbol\varphi$
\begin{equation}\label{eq:LH_nonlinear_id}
  \ip{\bQ(\bv^h,\bv^h)}{\bar{\boldsymbol\varphi}^h}_1
  = \int_\Omega(\bom^h\times\bu^h)\cdot\boldsymbol\varphi\,\mathrm{d}V
  + R_h(\boldsymbol\varphi),
\end{equation}
where $R_h$ collects the reconstruction error
($\OO(h^{r_{\rm rec}})$), the Hodge$\star$ error ($\OO(h^{r_\star})$),
and the trapezoidal averaging error in the extrusion.
The remainder satisfies
\[
  |R_h(\boldsymbol\varphi)|
  \le C\,h\,\nrm{\bu^h}_{L^2}\,\nrm{\bu^h}_{H^1}\,
  \nrm{\boldsymbol\varphi}_{W^{1,\infty}}
\]
pointwise in $t$. Integrating in time and applying H\"older's inequality 
\[
  \int_0^T|R_h(\boldsymbol\varphi)|\,dt
  \le C\,h\,\nrm{\bu^h}_{L^\infty_t L^2}
    \,T^{1/2}\,\nrm{\bu^h}_{L^2_t H^1}
    \nrm{\boldsymbol\varphi}_{L^\infty_t W^{1,\infty}}
  \le C(E_0,\nu,T)\,h \to 0.
\]
For the continuous Lamb integral,
$(\bom^h \times \bu^h)\cdot\boldsymbol\varphi
= \bom^h\cdot(\bu^h\times\boldsymbol\varphi)$.
Since $\bu^h\to\bu$ strongly in $L^2(0,T;L^2)$
and $\boldsymbol\varphi\in L^\infty$,
$\bu^h\times\boldsymbol\varphi \to \bu\times\boldsymbol\varphi$
strongly in $L^2(0,T;L^2)$;
combined with the weak convergence
$\bom^{h_k}\rightharpoonup\bom$ in $L^2(0,T;L^2)$,
weak-strong duality gives
\[
  \int_0^T\!\!\int_\Omega\bom^{h_k}\cdot(\bu^{h_k}\times\boldsymbol\varphi)
  \to
  \int_0^T\!\!\int_\Omega\bom\cdot(\bu\times\boldsymbol\varphi)
  = \int_0^T\!\!\int_\Omega(\bom\times\bu)\cdot\boldsymbol\varphi.
\]
By Whitney approximation~\eqref{eq:Whitney_approx},
$\bu^h(0) = \mathcal{W}_h\bP_h\mathcal{R}_h\bu_0^\flat \to \bu_0$
strongly in $L^2$.
Combined with $\bar{\boldsymbol\varphi}^h(0)\to\boldsymbol\varphi(\cdot,0)$
strongly in $L^2$,
$\ip{\bv^h(0)}{\bar{\boldsymbol\varphi}^h(0)}_1
\to \int_\Omega\bu_0\cdot\boldsymbol\varphi(\cdot,0)$.

\smallskip
Combining the four limits in~\eqref{eq:LH_discrete_weak}
yields~\eqref{eq:weak_NS}.
\smallskip

\emph{Step~5: Energy inequality.}
The energy equality
$\frac{1}{2}\nrm{\bv^h(t)}_{L_h^2}^2
+ \nu\int_0^t\nrm{\tD_1\bv^h}_{\bM_2}^2\,ds
= \frac{1}{2}\nrm{\bv^h(0)}_{L_h^2}^2$
(\cref{lem:NS_energy_bound})
combined with Hodge equivalence
$\nrm{\bv^h(t)}_{L_h^2}^2 = \nrm{\bu^h(t)}_{L^2}^2 + \OO(h^{r_\star})$
and
$\nrm{\tD_1\bv^h}_{\bM_2}^2 = \nrm{\nabla\bu^h}_{L^2}^2 + \OO(h^{r_\star})\nrm{\bu^h}_{H^1}^2$
gives, for $t\in[0,T]$,
\[
  \tfrac{1}{2}\nrm{\bu^h(t)}_{L^2}^2
  + \nu\int_0^t\nrm{\nabla\bu^h}_{L^2}^2\,ds
  = \tfrac{1}{2}\nrm{\bu^h(0)}_{L^2}^2 + \OO(h^{r_\star}).
\]
Pass to $\liminf_{k\to\infty}$ along a further subsequence
on which $\bu^{h_k}(t) \to \bu(t)$ strongly in $L^2$
for a.e.\ $t\in[0,T]$.
The kinetic term passes pointwise;
the viscous term passes via weak lower semicontinuity:
$\int_0^t\nrm{\nabla\bu}_{L^2}^2\,ds
\le \liminf_k\int_0^t\nrm{\nabla\bu^{h_k}}_{L^2}^2\,ds$.
Equality becomes inequality because $\liminf$ may be strict.
Combined with $\nrm{\bu^h(0)}_{L^2}^2 \to \nrm{\bu_0}_{L^2}^2$,
this yields~\eqref{eq:Leray_energy_ineq}.
\end{proof}

\subsubsection{Convergence to a CMV Euler Solution (\texorpdfstring{\Cref{thm:CMV}}{})}
\label{app:CMV_proof}

\begin{proof}[Proof of \Cref{thm:CMV}]
In the inviscid case there is no
Aubin--Lions strong compactness available: velocities converge only
weakly, and weak limits do not pass through the nonlinearity,
so the limit is measure-valued. Energy conservation
supplies the two bounds the viscous estimate would otherwise give: it
yields weak compactness against divergence-free test
functions (Step~1) and controls the total energy at every
time~(\eqref{eq:CMV_energy_limit}). The gap between the
product $w_iw_j$ of the weak limit and the weak-$*$ limit $\mu_{ij}$ of
the momentum-flux products $u^h_iu^h_j$ is the concentration defect
$\sigma_{ij} = \mu_{ij} - w_iw_j$ (Step~2), identified through the
Young-measure representation as a non-negative variance (Step~3); energy
conservation then fixes its trace (Step~4), and passage to the limit in
the discrete weak form closes the equation (Step~5).

\smallskip
\emph{Step 1: Uniform bounds and weak compactness at each time.}
Energy conservation (\cref{thm:energy_bound}) gives
$\nrm{\bv^h(t)}_{L_h^2}^2 = \nrm{\bv^h(0)}_{L_h^2}^2$, and Whitney norm-boundedness
(\cref{def:Whitney}) gives
$\nrm{\bu^h(t)}_{L^2} \le C_W\nrm{\bv^h(t)}_{L_h^2} \le C\sqrt{E_0}$
uniformly in $h,t$.
For equicontinuity of $t\mapsto\ip{\bu^h(t)}{\boldsymbol\varphi}_{L^2}$
against smooth divergence-free $\boldsymbol\varphi\in C_c^\infty(\Omega)$,
we note that the discrete scheme propagates $\bv^h$
in vector-invariant form (\cref{eq:V}),
not in conservation form;
only after testing against divergence-free $\boldsymbol\varphi$
does the Bernoulli gradient $\tD_0 B^h$ drop out and
the rotational term reduce (up to $\OO(h^r)$) to a
quadratic in $\bu^h = \mathcal{W}_h\bv^h$.
Taking the increment over $[t_1,t_2]$ of the discrete weak
identity~\eqref{eq:approx_CMV} of \Cref{lem:weak_consistency}
yields for every divergence-free $\boldsymbol\varphi$
\begin{equation}\label{eq:CMV_duhdt_bound}
  |\ip{\bu^h(t_2)-\bu^h(t_1)}{\boldsymbol\varphi}_{L^2}|
  \le \int_{t_1}^{t_2}\!\!\int_\Omega|u^h_i u^h_j\,\partial_j\varphi_i|
    \,\mathrm{d}x\,\mathrm{d}t
  + \OO(h^r)\,\nrm{\boldsymbol\varphi}_{C^1}\,(t_2-t_1),
\end{equation}
and by H\"older's inequality the right-hand side is bounded by
$C(\boldsymbol\varphi, E_0)\,(t_2-t_1) + \OO(h^r)$,
uniformly in $h$.
Arzel\`a--Ascoli on a countable dense set
$\{\boldsymbol\varphi_m\}\subset C_c^\infty(\Omega)\cap\{\nabla\cdot\boldsymbol\varphi=0\}$
and a diagonal argument
produces a subsequence $(\bu^h_k)_k$ and
$\bw \in L^\infty(0,T;\,L^2)$ with
$\bu^{h_k}(t) \rightharpoonup \bw(t)$ weakly in $L^2$ for every $t$:
equicontinuity against divergence-free tests combined with the
uniform $L^2$-bound extracts a limit $\bw(t)$ in the divergence-free
subspace; the uniform approximate incompressibility
$\nrm{\bD_2\bM_1\bv^{h_k}}\le C h^{r_\star}\to 0$
(\cref{lem:proj_error}) forces the limit $\bw(t)$ to be
divergence-free and extends the weak convergence to all of $L^2$
by Leray decomposition.

Strong convergence of initial data $\bu^h(0) \to \bu_0$ in $L^2$
follows from Whitney approximation~\eqref{eq:Whitney_approx}
and a density argument. Combining the chain
\[
  \nrm{\bu^h(t)}_{L^2}^2
  = \nrm{\bv^h(t)}_{L_h^2}^2 + \OO(h^{r_\star})
  = \nrm{\bv^h(0)}_{L_h^2}^2 + \OO(h^{r_\star})
  = \nrm{\bu^h(0)}_{L^2}^2 + \OO(h^{r_\star}),
\]
where the first and third equalities use Hodge$\star$ consistency
(\cref{lem:hodge_error}) and the middle
equality is energy conservation,
with $\nrm{\bu^h(0)}_{L^2}^2\to E_0$ gives
\begin{equation}\label{eq:CMV_energy_limit}
  \lim_{k\to\infty}\nrm{\bu^{h_k}(t)}_{L^2}^2 = E_0
  \qquad\text{for every } t\in[0,T].
\end{equation}

\emph{Step 2: Space-time extraction of the defect measure.}
The products $(u^{h_k}_i u^{h_k}_j)_{k}$ are uniformly bounded in
$L^\infty(0,T;\,L^1(\Omega))$:
$\sup_t\int_\Omega|u^{h_k}_i u^{h_k}_j|\,\mathrm{d}x
\le \nrm{\bu^{h_k}(t)}_{L^2}^2 \le C E_0$.
By Banach--Alaoglu on
$\mathcal{M}([0,T]\times\Omega) = C_c([0,T]\times\Omega)^*$,
a further subsequence, still denoted $h_k$, satisfies
$u^{h_k}_i u^{h_k}_j\,\mathrm{d}t\,\mathrm{d}x
\overset{*}{\rightharpoonup}\mu_{ij}$
in $\mathcal{M}([0,T]\times\Omega)$ with
$\mu_{ij}$ a finite Radon measure.
Disintegrating $\mu_{ij}$ against the Lebesgue measure on $[0,T]$,
one obtains a weak-$*$ measurable family
$\mu_{ij}(t) \in L^\infty_{w^*}(0,T;\,\mathcal{M}(\Omega))$.
Symmetry $\mu_{ij}=\mu_{ji}$ is inherited from
$u^{h_k}_i u^{h_k}_j = u^{h_k}_j u^{h_k}_i$ under weak-$*$ limit.
We define the concentration defect
\begin{equation}\label{eq:CMV_sigma_def}
  \sigma_{ij}(t) := \mu_{ij}(t) - w_i(t)\,w_j(t)\,\mathrm{d}x
  \qquad\in \mathcal{M}(\Omega),
\end{equation}
which is measurable in $t$.

\smallskip

\emph{Step 3: $\sigma$ is non-negative and symmetric.}
For the Young measure
$\{\nu_{t,x}\}_{(t,x)}$ associated with the sequence
$(\bu^{h_k})$
(e.g.\ \cite{Tartar1979}),
\[
  w_i(t,x) = \int_{\mathbb{R}^d} v_i\,\mathrm{d}\nu_{t,x}(v),
  \qquad
  \mu_{ij}(t,x) = \int_{\mathbb{R}^d} v_i v_j\,\mathrm{d}\nu_{t,x}(v).
\]
For $\xi\in\mathbb{R}^d$,
Jensen's inequality applied to the convex function
$v\mapsto(v\cdot\xi)^2$ gives
\[
  \xi_i\xi_j\sigma_{ij}(t,x)
  = \int(v\cdot\xi)^2\,\mathrm{d}\nu_{t,x}
  - \Bigl(\int v\cdot\xi\,\mathrm{d}\nu_{t,x}\Bigr)^2
  \ge 0.
\]
Hence $\sigma_{ij}$ is a non-negative symmetric matrix-valued
Radon measure, the variance of the Young measure.

\emph{Step 4: Energy identity.}
On a closed manifold $\Omega$ of finite measure, the constant $\mathbf{1}$
is an admissible test function for the weak-$*$ limit.
Testing $\mu_{ij}(t)$ against $\delta_{ij}\mathbf{1}$
and using~\eqref{eq:CMV_energy_limit},
$\int_\Omega\mathrm{tr}(\mu)(t) = \lim_k\nrm{\bu^{h_k}(t)}_{L^2}^2 = E_0$,
we conclude with~\eqref{eq:CMV_sigma_def}
$
  \int_\Omega\mathrm{tr}(\sigma)(t)
  = E_0 - \nrm{\bw(t)}_{L^2}^2.
$
At $t=0$, the strong $L^2$-convergence $\bu^h(0)\to\bu_0$ implies
$\mu_{ij}(0)\to w_i(0)w_j(0)\,\mathrm{d}x$ in $\mathcal{M}(\Omega)$,
so $\sigma(0) = 0$ and $\bw(0) = \bu_0$.

\emph{Step 5: CMV Euler equation.}
Pass to the limit in~\eqref{eq:approx_CMV}.
The time-derivative term $-\int_0^T\int_\Omega\bu^{h_k}\cdot\partial_t\boldsymbol\varphi$
converges to $-\int_0^T\int_\Omega\bw\cdot\partial_t\boldsymbol\varphi$
by the time-pointwise weak convergence
$\bu^{h_k}(t)\rightharpoonup\bw(t)$ and dominated convergence.
The nonlinear term $\int_0^T\int_\Omega u^{h_k}_i u^{h_k}_j\,\partial_j\varphi_i$
converges to $\int_0^T\int_\Omega(w_iw_j + \sigma_{ij})\partial_j\varphi_i$
by the weak-$*$ convergence
$u^{h_k}_i u^{h_k}_j\,\mathrm{d}t\,\mathrm{d}x
\overset{*}{\rightharpoonup} w_iw_j\,\mathrm{d}t\,\mathrm{d}x + \sigma_{ij}$
from Step~2,
pairing against the continuous test function $\partial_j\varphi_i
\in C_c([0,T]\times\Omega)$.
The initial term $\int_\Omega\bu^{h_k}(0)\cdot\boldsymbol\varphi(\cdot,0)$
converges to $\int_\Omega\bu_0\cdot\boldsymbol\varphi(\cdot,0)$
by strong convergence of initial data.
The residual $R^{h_k}(\boldsymbol\varphi) = \OO(h_k^r) \to 0$
by \Cref{lem:weak_consistency}.
This yields~\eqref{eq:CMV_Euler}.
\end{proof}



\subsection{Bounded Domains: Proof of Consistency}
\label{app:bdy_consistency_proof}

\begin{proof}
We expand the truncation error as in the closed-manifold case
(\cref{thm:NS_consistency})
$\boldsymbol\tau_h^{0,\nu}
= \boldsymbol\tau_h^{\rm Euler} + \boldsymbol\tau_h^{\rm visc}$.
For interior edges, the estimates of
\Cref{thm:consistency,thm:NS_consistency} apply without change.
For boundary-adjacent edges
$e_j^*\in\mathcal{E}^*_\partial$ (those with at least one endpoint on
$\partial\Omega$) two modifications arise.
\begin{enumerate}[nosep]
  \item \emph{Extrusion:}
    Boundary edges have one endpoint on $\partial\Omega$.  By the Dirichlet boundary condition
    $\bu(v_\partial^*) = 0$, the extrusion weight
    $w_{jk}$ reduces to the one-sided average
    $\frac{1}{2}\bu(v_a^*)\cdot\hat{e}_k$.
  \item \emph{Hodge*:}
    Under \Cref{ass:bdy_mesh},
    the Voronoi cells at the boundary retain the required shape regularity,
    but the Hodge* approximation $(\bM_1)_{jj} = \star_1|_{e_j^*} + \OO(h^2)$
    degrades to $\OO(h)$ on the boundary layer of width $\OO(h)$.
\end{enumerate}

The interpolant $\bar\bv = \mathcal{R}_h\bu^\flat$ satisfies
$\bar v_j = \int_{e_j^*}\bu\cdot d\ell = 0$ for $j\in\mathcal{E}^*_\partial$
(since $\bu|_{\partial\Omega} = 0$), and the solution $\bv\in V_h^0$
satisfies $v_j = 0$ on the same edges.
Hence the error $e := \bv - \bar\bv$ vanishes on
$\mathcal{E}^*_\partial$. In the bilinear pairing
\[
  \langle e, \boldsymbol\tau_h^{0,\nu}\rangle_1
  = \sum_{j\notin\mathcal{E}^*_\partial}(\bM_1)_{jj}\,e_j\,\tau_j^{0,\nu}
  + \sum_{j\in\mathcal{E}^*_\partial}(\bM_1)_{jj}\,e_j\,\tau_j^{0,\nu},
\]
the second term vanishes, because $e_j=0$. Consequently no contribution from boundary edges 
enters the energy estimate of \Cref{app:NS_stability_proof}. Truncation accuracy at boundary edges does
therefore not affect the convergence rate.

The remaining contribution comes from boundary-adjacent edges
$j\notin\mathcal{E}^*_\partial$ but adjacent to a boundary edge through
the operators $\tD_1, \codiff$. On these edges the smooth
velocity satisfies $|\bu(x)|\le C\,\mathrm{dist}(x,\partial\Omega)\,\nrm{\nabla\bu}_{L^\infty}
\le Ch\,\nrm{\bu}_{W^{1,\infty}}$ within $O(h)$ of $\partial\Omega$,
but mesh shape-regularity and the standard Hodge*/extrusion analysis
of \Cref{thm:consistency,thm:NS_consistency} apply unchanged. The pointwise truncation
on near-boundary edges therefore matches the interior rate $\OO(h)$
in case (A) and $\OO(h^2)$ in case (B) of \Cref{conv:cases}.

After applying $\bP_h^0$, the $\OO(h^{r_\star})$ discrepancy from the
Leray projection is absorbed into the truncation error at no loss of
order. Combining interior and near-boundary contributions and
discarding the boundary-edge component (which vanishes against $e$)
gives
\[
  |\langle e,\boldsymbol\tau_h^{0,\nu}\rangle_1|
  \le C_\tau^{0,\nu}\,h^r\,\nrm{e}_{L_h^2},
  \qquad r = r_{\rm rec}\text{ under \Cref{conv:cases}}.
\]
This is the bound that enters the bounded-domain analogue of
\Cref{thm:NS_stability}; the convergence rate of the bounded-domain
scheme matches the closed-manifold rate of \Cref{thm:NS_convergence}.
\end{proof}

\addcontentsline{toc}{section}{References}
\bibliographystyle{abbrv}%
\bibliography{references_short}

@book{john2018finite,
  title={Finite Element Methods for Incompressible Flow Problems},
  author={John, Volker},
  series={Springer Series in Computational Mathematics},
  volume={51},
  year={2018},
  publisher={Springer International Publishing},
  doi={10.1007/978-3-319-83366-8},
  isbn={978-3-319-83366-8}
}

@article{guermond2006,
  author  = {Guermond, J.-L.},
  title   = {Finite-element-based {F}aedo--{G}alerkin weak solutions to the
             {N}avier--{S}tokes equations in the three-dimensional torus
             are suitable},
  journal = {J. Math. Pures Appl. (9)},
  volume  = {85},
  number  = {3},
  pages   = {451--464},
  year    = {2006},
}

@article{guermond2007,
  author  = {Guermond, J.-L.},
  title   = {{F}aedo--{G}alerkin weak solutions of the {N}avier--{S}tokes
             equations with {D}irichlet boundary conditions are suitable},
  journal = {J. Math. Pures Appl. (9)},
  volume  = {88},
  number  = {1},
  pages   = {87--106},
  year    = {2007},
}

@article{christiansen2007,
  author    = {Christiansen, S. H.},
  title     = {Stability of {Hodge} decompositions in finite element spaces of differential forms in arbitrary dimension},
  journal   = {Numer. Math.},
  volume    = {107},
  number    = {1},
  pages     = {87--106},
  year      = {2007},
  doi       = {10.1007/s00211-007-0081-2},
}

@incollection{raviartthomas1977,
  author    = {Raviart, P.-A. and Thomas, J.-M.},
  title     = {A mixed finite element method for 2nd order elliptic problems},
  booktitle = {Mathematical Aspects of Finite Element Methods},
  series    = {Lecture Notes in Math.},
  volume    = {606},
  pages     = {292--315},
  publisher = {Springer},
  year      = {1977},
}

@article{nedelec1980,
  author    = {N{\'e}d{\'e}lec, J.-C.},
  title     = {Mixed finite elements in {$\mathbb{R}^3$}},
  journal   = {Numer. Math.},
  volume    = {35},
  number    = {3},
  pages     = {315--341},
  year      = {1980},
}

@incollection{arakawalamb1977,
  author    = {Arakawa, A. and Lamb, V. R.},
  title     = {Computational design of the basic dynamical processes of the {UCLA} general circulation model},
  booktitle = {Methods in Computational Physics},
  volume    = {17},
  pages     = {173--265},
  publisher = {Academic Press},
  year      = {1977},
}

@article{thuburncotter2012,
  author    = {Thuburn, J. and Cotter, C. J.},
  title     = {A framework for mimetic discretization of the rotating shallow-water equations on arbitrary polygonal grids},
  journal   = {SIAM J. Sci. Comput.},
  volume    = {34},
  number    = {3},
  pages     = {B203--B225},
  year      = {2012},
}

@article{cotter2023,
  author    = {Cotter, C. J.},
  title     = {Compatible finite element methods for geophysical fluid dynamics},
  journal   = {Acta Numer.},
  volume    = {32},
  pages     = {291--393},
  year      = {2023},
}

@incollection{Desbrun,
  author    = {Desbrun, M. and Kanso, E. and Tong, Y.},
  title     = {Discrete Differential Forms for Computational Modeling},
  booktitle = {Discrete Differential Geometry},
  series    = {Oberwolfach Seminars},
  volume    = {38},
  publisher = {Birkh{\"a}user},
  pages     = {287--324},
  year      = {2008},
}

@phdthesis{Hirani,
  author    = {Hirani, A. N.},
  title     = {Discrete Exterior Calculus},
  school    = {California Institute of Technology},
  year      = {2003},
}

@book{Marsden,
  author    = {Marsden, J. E. and Ratiu, T. S.},
  title     = {Introduction to Mechanics and Symmetry},
  publisher = {Springer},
  series    = {Texts in Applied Mathematics},
  volume    = {17},
  edition   = {2nd},
  year      = {1999},
}

@article{bossavit1998,
  author    = {Bossavit, A.},
  title     = {Computational Electromagnetism: Variational Formulations, Complementarity, Edge Elements},
  journal   = {Academic Press},
  year      = {1998},
  note      = {Monograph},
}

@article{elcott2007,
  author    = {Elcott, S. and Tong, Y. and Kanso, E. and Schr{\"o}der, P. and Desbrun, M.},
  title     = {Stable, circulation-preserving, simplicial fluids},
  journal   = {ACM Trans. Graph.},
  volume    = {26},
  number    = {1},
  pages     = {4},
  year      = {2007},
}

@inproceedings{mullen2009,
  author    = {Mullen, P. and Crane, K. and Pavlov, D. and Tong, Y. and Desbrun, M.},
  title     = {Energy-preserving integrators for fluid animation},
  booktitle = {ACM SIGGRAPH 2009 Papers},
  pages     = {1--8},
  year      = {2009},
}

@article{pavlov2011,
  author    = {Pavlov, D. and Mullen, P. and Tong, Y. and Kanso, E. and Marsden, J. E. and Desbrun, M.},
  title     = {Structure-preserving discretization of incompressible fluids},
  journal   = {Phys. D},
  volume    = {240},
  number    = {6},
  pages     = {443--458},
  year      = {2011},
}

@article{arnold2006,
  author    = {Arnold, D. N. and Falk, R. S. and Winther, R.},
  title     = {Finite element exterior calculus, homological techniques, and applications},
  journal   = {Acta Numer.},
  volume    = {15},
  pages     = {1--155},
  year      = {2006},
}

@article{arnold2010,
  author    = {Arnold, D. N. and Falk, R. S. and Winther, R.},
  title     = {Finite element exterior calculus: from {H}odge theory to numerical stability},
  journal   = {Bull. Amer. Math. Soc. (N.S.)},
  volume    = {47},
  number    = {2},
  pages     = {281--354},
  year      = {2010},
}

@article{thuburncotter2015,
  author    = {Thuburn, J. and Cotter, C. J.},
  title     = {A primal--dual mimetic finite element scheme for the rotating shallow water equations on polygonal spherical meshes},
  journal   = {J. Comput. Phys.},
  volume    = {290},
  pages     = {274--297},
  year      = {2015},
}

@article{ringler2010,
  author    = {Ringler, T. D. and Thuburn, J. and Klemp, J. B. and Skamarock, W. C.},
  title     = {A unified approach to energy conservation and potential vorticity dynamics for arbitrarily-structured {C}-grids},
  journal   = {J. Comput. Phys.},
  volume    = {229},
  number    = {9},
  pages     = {3065--3090},
  year      = {2010},
}

@article{korn2017,
  author    = {Korn, P.},
  title     = {Formulation of an unstructured grid model for global ocean dynamics},
  journal   = {J. Comput. Phys.},
  volume    = {339},
  pages     = {525--552},
  year      = {2017},
}

@article{moffatt1969,
  author    = {Moffatt, H. K.},
  title     = {The degree of knottedness of tangled vortex lines},
  journal   = {J. Fluid Mech.},
  volume    = {35},
  number    = {1},
  pages     = {117--129},
  year      = {1969},
}

@article{charnyi2017,
  author    = {Charnyi, S. and Heister, T. and Olshanskii, M. A. and Rebholz, L. G.},
  title     = {On conservation laws of {N}avier--{S}tokes {G}alerkin discretizations},
  journal   = {J. Comput. Phys.},
  volume    = {337},
  pages     = {289--308},
  year      = {2017},
}

@article{zhang2022,
  author    = {Zhang, Y. and Palha, A. and Gerritsma, M. and Rebholz, L. G.},
  title     = {A mass-, kinetic energy- and helicity-conserving mimetic dual-field discretization for three-dimensional incompressible {N}avier--{S}tokes equations, {P}art {I}: {P}eriodic domains},
  journal   = {J. Comput. Phys.},
  volume    = {451},
  pages     = {110868},
  year      = {2022},
}

@article{beiraoVeiga2013basic,
  author    = {Beir{\~a}o da Veiga, L. and Brezzi, F. and Cangiani, A.
               and Manzini, G. and Marini, L. D. and Russo, A.},
  title     = {Basic principles of virtual element methods},
  journal   = {Math. Models Methods Appl. Sci.},
  volume    = {23},
  number    = {1},
  pages     = {199--214},
  year      = {2013},
}

@article{beiraoVeiga2018NS,
  author    = {Beir{\~a}o da Veiga, L. and Lovadina, C. and Vacca, G.},
  title     = {Virtual elements for the {N}avier--{S}tokes problem on polygonal meshes},
  journal   = {SIAM J. Numer. Anal.},
  volume    = {56},
  number    = {3},
  pages     = {1210--1242},
  year      = {2018},
}

@article{dodziuk1976,
  author    = {Dodziuk, J. and Patodi, V. K.},
  title     = {Riemannian structures and triangulations of manifolds},
  journal   = {J. Indian Math. Soc. (N.S.)},
  volume    = {40},
  number    = {1--4},
  pages     = {1--52},
  year      = {1976},
}

@book{whitney1957,
  author    = {Whitney, H.},
  title     = {Geometric Integration Theory},
  publisher = {Princeton University Press},
  address   = {Princeton, NJ},
  year      = {1957},
}

@article{Leray1934,
  author    = {Leray, J.},
  title     = {Sur le mouvement d'un liquide visqueux emplissant l'espace},
  journal   = {Acta Math.},
  volume    = {63},
  pages     = {193--248},
  year      = {1934},
}

@article{Perot_JCP,
  author    = {Perot, B.},
  title     = {Conservation properties of unstructured staggered mesh schemes},
  journal   = {J. Comput. Phys.},
  volume    = {159},
  number    = {1},
  pages     = {58--89},
  year      = {2000},
}

@article{scheffer1993,
  author    = {Scheffer, V.},
  title     = {An inviscid flow with compact support in space-time},
  journal   = {J. Geom. Anal.},
  volume    = {3},
  number    = {4},
  pages     = {343--401},
  year      = {1993},
}

@article{shnirelman1997,
  author    = {Shnirelman, A.},
  title     = {On the nonuniqueness of weak solution of the {E}uler equation},
  journal   = {Comm. Pure Appl. Math.},
  volume    = {50},
  number    = {12},
  pages     = {1261--1286},
  year      = {1997},
}

@article{DeLellisSzekelyhidi2009,
  author    = {De Lellis, C. and Sz{\'e}kelyhidi Jr., L.},
  title     = {The {E}uler equations as a differential inclusion},
  journal   = {Ann. of Math. (2)},
  volume    = {170},
  number    = {3},
  pages     = {1417--1436},
  year      = {2009},
}

@article{DeLellisSzekelyhidi2013,
  author    = {De Lellis, C. and Sz{\'e}kelyhidi Jr., L.},
  title     = {Dissipative continuous {E}uler flows},
  journal   = {Invent. Math.},
  volume    = {193},
  pages     = {377--407},
  year      = {2013},
}

@article{Isett2018,
  author    = {Isett, P.},
  title     = {A proof of {O}nsager's conjecture},
  journal   = {Ann. of Math. (2)},
  volume    = {188},
  number    = {3},
  pages     = {871--963},
  year      = {2018},
}

@article{BDLSV2019,
  author    = {Buckmaster, T. and De Lellis, C. and Sz{\'e}kelyhidi Jr., L. and Vicol, V.},
  title     = {Onsager's conjecture for admissible weak solutions},
  journal   = {Comm. Pure Appl. Math.},
  volume    = {72},
  number    = {2},
  pages     = {229--274},
  year      = {2019},
}

@article{CET1994,
  author    = {Constantin, P. and E, W. and Titi, E. S.},
  title     = {Onsager's conjecture on the energy conservation for solutions
               of {E}uler's equation},
  journal   = {Comm. Math. Phys.},
  volume    = {165},
  number    = {1},
  pages     = {207--209},
  year      = {1994},
}

@article{BDLS2011,
  author    = {Brenier, Y. and De Lellis, C. and Sz{\'e}kelyhidi Jr., L.},
  title     = {Weak-strong uniqueness for measure-valued solutions},
  journal   = {Comm. Math. Phys.},
  volume    = {305},
  number    = {2},
  pages     = {351--361},
  year      = {2011},
}

@article{Hopf1951,
  author    = {Hopf, E.},
  title     = {{\"U}ber die {A}nfangswertaufgabe f{\"u}r die hydrodynamischen
               {G}rundgleichungen},
  journal   = {Math. Nachr.},
  volume    = {4},
  pages     = {213--231},
  year      = {1951},
}

@article{DuchonRobert2000,
  author    = {Duchon, J. and Robert, R.},
  title     = {Inertial energy dissipation for weak solutions of incompressible
               {E}uler and {N}avier--{S}tokes equations},
  journal   = {Nonlinearity},
  volume    = {13},
  number    = {1},
  pages     = {249--255},
  year      = {2000},
}

@article{arnold1966,
  author  = {Arnold, Vladimir I.},
  title   = {Sur la g{\'e}om{\'e}trie diff{\'e}rentielle des groupes de {L}ie de dimension infinie et ses applications {\`a} l'hydrodynamique des fluides parfaits},
  journal = {Annales de l'Institut Fourier},
  volume  = {16},
  number  = {1},
  pages   = {319--361},
  year    = {1966},
}

@article{korn2022james,
  author  = {Korn, Peter and Br{\"u}ggemann, Nils and Jungclaus, J. H.
             and Lorenz, S. J. and Gutjahr, O. and Haak, H. and
             Linardakis, L. and Mehlmann, C. and Mikolajewicz, U. and
             Notz, D. and M{\"u}ller, W. A. and Putrasahan, D. A. and
             Singh, V. and von Storch, J.-S. and Zhu, X. and Marotzke, J.},
  title   = {ICON-O: The ocean component of the ICON Earth System Model
             --- Global simulation characteristics and local telescoping
             capability},
  journal = {Journal of Advances in Modeling Earth Systems},
  year    = {2022},
  volume  = {14},
  number  = {10},
  pages   = {e2021MS002952},
  doi     = {10.1029/2021MS002952}
}

@article{hohenegger2023gmd,
  author  = {Hohenegger, Cathy and Korn, Peter and Linardakis, Leonidas
             and Redler, Ren{\'e} and Schnur, Reiner and Adamidis,
             Panagiotis and Bao, Jiawei and Bastin, Sebastian and
             Behravesh, Milad and Bergemann, Martin and others},
  title   = {ICON-Sapphire: simulating the components of the {E}arth
             system and their interactions at kilometer and
             subkilometer scales},
  journal = {Geoscientific Model Development},
  year    = {2023},
  volume  = {16},
  number  = {2},
  pages   = {779--811},
  doi     = {10.5194/gmd-16-779-2023}
}

@article{BrezziLipnikovShashkov2005,
  author  = {Brezzi, F. and Lipnikov, K. and Shashkov, M.},
  title   = {Convergence of the mimetic finite difference method for
             diffusion problems on polyhedral meshes},
  journal = {SIAM J. Numer. Anal.},
  volume  = {43},
  number  = {5},
  pages   = {1872--1896},
  year    = {2005},
}

@article{bossavit1988,
  author  = {Bossavit, A.},
  title   = {Whitney forms: a class of finite elements for
             three-dimensional computations in electromagnetism},
  journal = {IEE Proc. A},
  volume  = {135},
  number  = {8},
  pages   = {493--500},
  year    = {1988},
}

@article{bossavit2000generalized,
  author  = {Bossavit, A.},
  title   = {Generalized finite differences in computational electromagnetics},
  journal = {Progress in Electromagnetics Research},
  volume  = {32},
  pages   = {45--64},
  year    = {2001},
}

@article{hiptmair2001,
  author  = {Hiptmair, R.},
  title   = {Discrete {H}odge operators},
  journal = {Numer. Math.},
  volume  = {90},
  number  = {2},
  pages   = {265--289},
  year    = {2001},
}

@misc{desbrun2005,
  author  = {Desbrun, M. and Hirani, A. N. and Leok, M. and Marsden, J. E.},
  title   = {Discrete Exterior Calculus},
  year    = {2005},
  eprint  = {math/0508341},
  archivePrefix = {arXiv},
  primaryClass  = {math.DG},
  note    = {arXiv:math/0508341},
}

@incollection{eymard2000finite,
  author    = {Eymard, R. and Gallou{\"e}t, T. and Herbin, R.},
  title     = {Finite volume methods},
  booktitle = {Handbook of Numerical Analysis, Vol.~{VII}},
  editor    = {Ciarlet, P. G. and Lions, J. L.},
  publisher = {North-Holland},
  address   = {Amsterdam},
  pages     = {713--1020},
  year      = {2000},
}

@incollection{Tartar1979,
  author    = {Tartar, L.},
  title     = {Compensated compactness and applications to partial
               differential equations},
  booktitle = {Nonlinear Analysis and Mechanics: Heriot--Watt Symposium,
               Vol.~{IV}},
  editor    = {Knops, R. J.},
  series    = {Research Notes in Mathematics},
  volume    = {39},
  publisher = {Pitman},
  address   = {Boston},
  pages     = {136--212},
  year      = {1979},
}

@article{demlow2009higher,
  author  = {Demlow, Alan},
  title   = {Higher-Order Finite Element Methods and Pointwise Error Estimates for Elliptic Problems on Surfaces},
  journal = {SIAM Journal on Numerical Analysis},
  volume  = {47},
  number  = {2},
  pages   = {805--827},
  year    = {2009},
  doi     = {10.1137/070708135}
}

@article{poveda2023pointwise,
  author  = {Poveda, Leonardo A. and Peixoto, Pedro},
  title   = {On Pointwise Error Estimates for {Vorono\"i}-Based Finite Volume Methods for the {Poisson} Equation on the Sphere},
  journal = {Advances in Computational Mathematics},
  year    = {2023},
  doi     = {10.1007/s10444-023-10041-3},
  note    = {arXiv:2206.03685}
}

@article{schatzwahlbin1995,
  author  = {Schatz, A. H. and Wahlbin, L. B.},
  title   = {Interior Maximum-Norm Estimates for Finite Element Methods, Part {II}},
  journal = {Mathematics of Computation},
  volume  = {64},
  pages   = {907--928},
  year    = {1995}
}

@article{schatz1998,
  author  = {Schatz, A. H.},
  title   = {Pointwise Error Estimates and Asymptotic Error Expansion Inequalities for the Finite Element Method on Irregular Grids. {I}. Global Estimates},
  journal = {Mathematics of Computation},
  volume  = {67},
  pages   = {877--899},
  year    = {1998}
}
\end{document}